\documentclass[a4paper,10pt%,draft
]{amsart}

\usepackage{amsmath,amssymb}

\usepackage[alphabetic, abbrev]{amsrefs}

\usepackage{enumerate}
\usepackage{hyperref}
\usepackage[all]{xy}
\newdir{ >}{{}*!/-5pt/@{>}} %% cf xyguide exercise 14
\SelectTips{cm}{} %% cf xyguide page 9

\numberwithin{equation}{section}
\newtheorem{theorem}[subsection]{Theorem}
\newtheorem{corollary}[subsection]{Corollary}
\newtheorem{lemma}[subsection]{Lemma}
\newtheorem{proposition}[subsection]{Proposition}
\theoremstyle{definition}
\newtheorem{definition}[subsection]{Definition}
\newtheorem{remark}[subsection]{Remark}
\newtheorem{example}[subsection]{Example}

\newtheorem{construction}[subsection]{Construction}

\newcommand{\bC}{\mathbb{C}}

\newcommand{\bN}{\mathbb{N}}
\newcommand{\bS}{\mathbb{S}}
\newcommand{\bZ}{\mathbb{Z}}
\newcommand{\bF}{\mathbb{F}}
\newcommand{\bQ}{\mathbb{Q}}

\newcommand{\bm}{\mathbf{m}}
\newcommand{\bn}{\mathbf{n}}
\newcommand{\bzero}{\mathbf{0}}

\newcommand{\cB}{\mathcal{B}}
\newcommand{\cC}{\mathcal{C}}
\newcommand{\cE}{\mathcal{E}}
\newcommand{\cI}{\mathcal{I}}
\newcommand{\cJ}{\mathcal{J}}
\newcommand{\cS}{\mathcal{S}}
\newcommand{\SJ}{\cS^{\cJ}}
\newcommand{\CSJ}{\cC\cS^{\cJ}}

\newcommand{\cof}{\rightarrowtail}

\newcommand{\longto}{\longrightarrow}

\DeclareMathOperator{\hocolim}{hocolim}

\DeclareMathOperator{\THH}{THH}
\DeclareMathOperator{\TC}{TC}
\DeclareMathOperator{\Mod}{Mod}

\DeclareMathOperator{\Tor}{Tor}

\renewcommand{\:}{\colon}

\newcommand{\ot}{\leftarrow}
\newcommand{\sm}{\wedge}
\newcommand{\iso}{\cong}
\newcommand{\tensor}{\otimes}
\newcommand{\bld}[1]{{\mathbf{#1}}}
\newcommand{\gp}{\mathrm{gp}}
\newcommand{\rep}{\mathrm{rep}}
\newcommand{\cy}{\mathrm{cy}}

\newcommand{\Spsym}{{\mathrm{Sp}^{\Sigma}}}

\DeclareMathOperator{\capitalGL}{GL}

\newcommand{\GLoneJof}[1]{\capitalGL^{\cJ}_1\!\!{#1}}
\newcommand{\concat}{\sqcup}
\DeclareMathOperator{\colim}{colim}
\newcommand{\modmod}[2]{#1/(#2_{>0})}
\newcommand{\modcofelement}[2]{#1^{\mathrm{cof}}\wedge_{\bS^{\cJ}[\bC\langle #2\rangle^{\mathrm{cof}}]}\bS}
\newcommand{\Fzero}{F}
\newcommand{\C}[1]{\bC\langle{#1}\rangle}
\newcommand{\postnikovsec}[3]{#1[{#2},{#3}\rangle}
\newcommand{\Cone}{C}

% Links 
\newcommand{\arxivlink}[1]{\href{http://arxiv.org/abs/#1}{\texttt{arXiv:#1}}}
% Examples: \arxivlink{math/0611808v3} \arxivlink{0711.4499v2}

\begin{document}

\title[Localization sequences for log
\texorpdfstring{$\THH$}{THH}]{Localization sequences for\\ logarithmic
  topological Hochschild homology}

\author{John Rognes}
\address{John Rognes, Department of Mathematics, University of Oslo,  Box 1053 Blindern, 0316 Oslo, Norway}
\email{rognes@math.uio.no}

\author{Steffen Sagave} \address{Steffen Sagave, Department of Mathematics and Informatics, Bergische Universit{\"a}t Wuppertal,  Gau{\ss}\-str. 20, 42119 Wuppertal,
% Mathematical Institute, University of Bonn, Endenicher Allee 60, 53115 Bonn,
Germany} \email{sagave@math.uni-wuppertal.de
%sagave@math.uni-bonn.de
}

\author{Christian Schlichtkrull} \address{Christian Schlichtkrull,
    Department of Mathematics, University of Bergen, P.O. Box 7800, N-5020 Bergen, Norway} \email{christian.schlichtkrull@math.uib.no}

\date{\today}

\begin{abstract}
  We study the logarithmic topological Hochschild homology of ring spectra
  with logarithmic structures and establish localization sequences for
  this theory. Our results apply, for example, to connective covers of
  periodic ring spectra like real and complex topological $K$-theory.
%\subclass{14F10, 19D55, 55P43}
%\keywords{logarithmic ring spectrum \and replete bar construction}
\end{abstract}
\maketitle

\section{Introduction}
Algebraic $K$-theory provides a powerful invariant encoding deep
arithmetic properties.  For computations of algebraic $K$-theory of
rings it is often useful to invoke trace maps to topological
Hochschild homology ($\THH$) and to topological cyclic homology
($\TC$), since this makes tools from equivariant stable homotopy
theory applicable. This method is effective for rings satisfying
suitable finiteness conditions, as a consequence of Quillen's work for
finite fields~\cite{Quillen_finite-fields} and McCarthy's
theorem~\cite{McCarthy_Relative-K-TC}.  Important examples of this
approach are found in papers of
B{\"o}kstedt--Madsen~\cite{Boekstedt-M_TC-Z,Boekstedt-M_unramified}
and of Hesselholt--Madsen~\cite{Hesselholt-M_K-theoy-finite-alg-Witt}.

There are, however, examples of rings $A$ where the cyclotomic trace
map to $\TC(A)$ fails to provide a good approximation to the algebraic
$K$-theory $K(A)$. One explanation for this is that $\THH$ and $\TC$
do not admit the same localization sequences as algebraic
$K$-theory. We illustrate this with an example from the work of
Hesselholt--Madsen~\cite{Hesselholt-M_local_fields}: If $p$ is a prime
and $F$ is a finite field extension of $\mathbb{Q}_p$ with valuation
ring $A$ and residue field~$k$, then there is a localization homotopy
cofiber sequence of $K$-theory spectra
\begin{equation}\label{eq:Kth-loc-k-A-K} K(A) \to K(F) \to \Sigma K(k) \end{equation} 
established by Quillen~\cite{Quillen_higher}. Replacing $K$-theory by
$\THH$ or $\TC$, the corresponding diagrams do not form homotopy
cofiber sequences, and the trace maps from $K(F)$ to $\THH(F)$ and
$\TC(F)$ detect little information about $K(F)$, compared to the trace
maps from $K(A)$ and $K(k)$. In~\cite{Hesselholt-M_local_fields},
Hesselholt and Madsen overcome this by constructing relative forms
$\THH(A|F)$ and $\TC(A|F)$ of $\THH$ and $\TC$ that fit into a
localization homotopy cofiber sequence
\begin{equation}\label{eq:THH-loc-k-A-K} \THH(A) \to \THH(A|F) \to \Sigma \THH(k) \end{equation} and
a corresponding sequence for $\TC$, and they use $\TC(A|F)$ to
determine $K(F)$. While the definition of $\THH(A|F)$ given
in~\cite{Hesselholt-M_local_fields} uses linear Waldhausen categories,
the homotopy groups of $\THH(A|F)$ and $\TC(A|F)$ exhibit a close
connection to a logarithmic de\,Rham complex and a logarithmic
de\,Rham--Witt complex associated with the direct image logarithmic
structure on $A$ inherited from $F$.  This indicates a relation
between homotopy theory and logarithmic geometry in the sense
of~\cite{Kato_logarithmic-structures}.  A systematic investigation of
the interplay of these two subjects was taken up in the first author's
work on \emph{topological logarithmic
  structures}~\cite{Rognes_TLS}. The aim of the present paper is to
continue and extend this investigation with a focus on $\THH$ of
structured ring spectra.

\subsection{Algebraic \texorpdfstring{$K$}{K}-theory and
  \texorpdfstring{$\THH$}{THH} of structured ring spectra}
Trace maps to $\THH$ and $\TC$ also provide a good tool for computing
the algebraic $K$-theory of structured ring spectra, by Dundas'
theorem~\cite{Dundas_relative-K-TC}. They are, however, only directly
useful for \emph{connective} ring spectra satisfying suitable finiteness
conditions on~$\pi_0$.  Again we illustrate this shortcoming by an
example of a localization sequence. Analogously
to~\eqref{eq:Kth-loc-k-A-K}, there is a homotopy cofiber sequence
\begin{equation}\label{eq:Kth-loc-Z-ku-KU}  K(ku) \to K(KU)\to \Sigma K(\bZ) \end{equation} 
relating the algebraic $K$-theory spectra of the periodic complex
topological $K$-theory spectrum $KU$, its connective cover $ku$, and
the integers. The existence of this homotopy cofiber sequence was
conjectured by Ausoni and Rognes~\cite{Ausoni-R_Kku} and established
by Blumberg and Mandell~\cite{Blumberg-M_loc-sequenceKtheory}.
Replacing $K$-theory by $\THH$ in~\eqref{eq:Kth-loc-Z-ku-KU}, the
corresponding sequence of spectra fails to be a homotopy cofiber
sequence. To obtain a $\THH$ localization sequence analogous
to~\eqref{eq:THH-loc-k-A-K}, Blumberg and
Mandell~\cite{Blumberg-M_loc-sequenceTHH} have constructed a relative
$\THH$ term $\THH(ku | KU)$ that fits in a homotopy cofiber sequence
with $\THH(ku)$ and $\THH(\bZ)$. Their approach uses a version of
$\THH$ for simplicially enriched Waldhausen categories, by analogy
with~\cite{Hesselholt-M_local_fields}.

In the present paper we offer a different approach to such relative
$\THH$ terms by defining them as the \emph{logarithmic $\THH$} of
certain \emph{logarithmic ring spectra}, to be introduced
next. Compared to the relative $\THH$ terms defined by Blumberg and
Mandell, our approach is more directly connected to logarithmic
de\,Rham and de\,Rham--Witt complexes, more amenable to homology
computations, and applies to new examples including the real
topological $K$-theory spectrum. This answers questions that remained
open in~\cite{Rognes_TLS}, and builds on more recent foundational work
by the last two authors~\cite{Sagave-S_diagram,Sagave_spectra-of-units}.

\subsection{Logarithmic ring spectra}
A \emph{pre-log ring} $(A,M)$ is a commutative ring $A$ together with
a commutative monoid $M$ and a monoid homomorphism $\alpha\colon M \to
(A,\cdot)$ to the multiplicative monoid of $A$. It is a log ring if
the base change $\alpha^{-1}(A^{\times}) \to A^{\times}$ of $\alpha$
along the inclusion of the units $A^{\times} \to (A,\cdot)$ is an
isomorphism. If $A$ is any commutative ring, we can use $A^{\times} \to
(A,\cdot)$ to form the \emph{trivial} log ring $(A,A^{\times})$. A log
ring $(A,M)$ determines a localization $A[M^{-1}]$, and the map of
trivial log rings associated with $A \to A[M^{-1}]$ factors through
$(A,M)$ in a non-trivial way.

In order to form a homotopical generalization of pre-log rings, one
can consider commutative symmetric ring spectra $A$ together with maps
of commutative $\cI$-space monoids $M \to \Omega^{\cI}(A)$ in the sense of \cite[Section 3]{Sagave-S_diagram}.  Here,
commutative $\cI$-space monoids give one possible model for the more
commonly studied $E_{\infty}$ spaces, and may be viewed as a
homotopical counterpart of commutative monoids. The commutative
$\cI$-space monoid $\Omega^{\cI}(A)$ encodes the underlying
multiplicative $E_{\infty}$ space of~$A$. This $\cI$-space version of
\emph{pre-log ring spectra} was considered in~\cite{Rognes_TLS}. It
has the disadvantage that it appears to be difficult to extend $A$ to
a pre-log ring spectrum $(A,M)$ in a sufficiently interesting way if
$A$ is not an Eilenberg--Mac\,Lane spectrum. One reason for this is that
commutative $\cI$-space monoids and $E_\infty$ spaces are inherently
connective, hence cannot be group completed in such a way that
positive dimensional homotopy classes are inverted. To remedy this, we
proceed as in~\cite[\S 4.30]{Sagave-S_diagram}
and~\cite{Sagave_log-on-k-theory} and replace the commutative
$\cI$-space monoids in the previous definition by the commutative
$\cJ$-space monoids developed by the last two authors.

Let $\cJ$ be the category $\Sigma^{-1}\Sigma$ given by Quillen's
localization construction~\cite{Grayson_higher} on the category
$\Sigma$ of finite sets and bijections. A \emph{commutative
  $\cJ$-space monoid} is a lax symmetric monoidal functor from $\cJ$
to the category of unbased spaces~$\cS$.  Equivalently, it is a
commutative monoid object with respect to a convolution product on the
functor category $\cS^{\cJ}$. The resulting category $\cC\cS^{\cJ}$ of
commutative $\cJ$-space monoids admits a model structure making it
Quillen equivalent to the category of $E_{\infty}$ spaces over the
underlying additive $E_{\infty}$ space $QS^0$ of the sphere
spectrum. We therefore think of commutative $\cJ$-space monoids as a
model for ($QS^0$-)\emph{graded $E_{\infty}$ spaces}.

For a commutative symmetric ring spectrum $A$, one can form a
commutative $\cJ$-space monoid $\Omega^{\cJ}(A)$ that is a graded
version of the multiplicative $E_{\infty}$ space of~$A$. There is a
sub commutative $\cJ$-space monoid $\GLoneJof{(A)}$ of $\Omega^{\cJ}(A)$
that corresponds to the inclusion of multiplicative units
$\pi_*(A)^{\times} \subset\pi_*(A)$. In contrast, the usual
$E_{\infty}$ space of units of $A$ only corresponds to the inclusion
$\pi_0(A)^{\times} \subset\pi_0(A)$ of units in degree
$0$. 

A \emph{pre-log ring spectrum} is then a commutative symmetric ring
spectrum $A$ together with a commutative $\cJ$-space monoid $M$ and a
map $\alpha\colon M \to \Omega^{\cJ}(A)$ of commutative $\cJ$-space
monoids. It is a \emph{log ring spectrum} if the base change
$\alpha^{-1} \GLoneJof(A) \to \GLoneJof(A)$ of its structure map
$\alpha$ along $ \GLoneJof{(A)}\to\Omega^{\cJ}(A)$ is a weak
equivalence of $\cJ$-spaces. The easiest example of a log ring
spectrum is the trivial log ring spectrum $(A, \GLoneJof(A))$
associated to any commutative symmetric ring spectrum $A$. A more
elaborate example is given by the following construction, which plays
an important role in this paper: If $j\colon e \to E$ is the
connective cover map of a periodic commutative symmetric ring spectrum
$E$, then we let $j_*\!\GLoneJof(E)$ be the pullback of the following
diagram of commutative $\cJ$-space monoids:
\[\GLoneJof(E) \to \Omega^{\cJ}(E)
\ot \Omega^{\cJ}(e).\] Together with the canonical map
$j_*\!\GLoneJof(E) \to \Omega^{\cJ}(e)$ from the pullback, this defines
a log ring spectrum $(e, j_*\!\GLoneJof(E))$. In analogy with a
similar construction in algebraic geometry, we call this the
\emph{direct image} log ring spectrum associated with the trivial log
ring spectrum $(E, \GLoneJof(E))$. We note that if we were using
the $\cI$-space version of pre-log ring spectra the same construction
would only provide a trivial log structure, because the map of
$\cI$-space units associated with $e \to E$ is an equivalence.

It follows from the definition that the map of trivial log ring
spectra associated with $e \to E$ factors through $(e,
j_*\!\GLoneJof(E))$.  One indication for why $(e, j_*\!\GLoneJof(E))$ is
interesting is the following result proved in~\cite[Theorem
4.4]{Sagave_log-on-k-theory}: If $E$ is periodic, it can be recovered as the trivial locus of $(e, j_*\!\GLoneJof(E))$ (see~\cite[Definition 7.15]{Rognes_TLS}) by forming the homotopy pushout of
\[ \bS^{\cJ}[(j_*\!\GLoneJof(E))^{\gp}] \ot \bS^{\cJ}[j_*\!\GLoneJof(E)] \to
e. \] Here $\bS^{\cJ}$ denotes the graded spherical monoid ring
functor that is left adjoint to $\Omega^{\cJ}$, the right hand
map is the adjoint of the structure map of $(e, j_*\!\GLoneJof(E))$, and
the left hand map is induced by the group completion of
$j_*\!\GLoneJof(E)$ as defined in~\cite{Sagave_spectra-of-units}.

\subsection{Logarithmic \texorpdfstring{$\THH$}{THH}}
If $A$ is a commutative symmetric ring spectrum, then its topological
Hochschild homology $\THH(A)$ can be defined as the realization of the
\emph{cyclic bar construction} $[q] \mapsto A^{\sm (q+1)}$. For a
commutative $\cJ$-space monoid $M$, one can define $B^{\cy}(M)$ as the realization of the analogous cyclic bar construction
$[q] \mapsto M^{\boxtimes (q+1)}$, where $\boxtimes$ is the convolution
product of $\cJ$-spaces.  If $(A,M)$ is a pre-log ring spectrum, then
the adjoint structure map $\bS^{\cJ}[M] \to A$ induces a map
$\bS^{\cJ}[B^{\cy}(M)] \to \THH(A)$ of commutative symmetric ring spectra.

The last ingredient in the definition of logarithmic $\THH$ is the
\emph{replete bar construction} $B^{\rep}(M)$. This is a subtle
variant of the cyclic bar construction. One motivation for using the
replete bar construction is that its algebraic counterpart can be used
to define the logarithmic Hochschild homology of log rings, which in
the log smooth case agrees with the logarithmic de\,Rham
complex~\cite[\S 3, \S 5]{Rognes_TLS}. Using the group completion $M
\to M^{\gp}$ for commutative $\cJ$-space monoids constructed
in~\cite{Sagave_spectra-of-units}, the commutative $\cJ$-space monoid
$B^{\rep}(M)$ is defined as the homotopy pullback of the diagram $M
\to M^{\gp} \ot B^{\cy}(M^{\gp})$. It comes with a canonical \emph{repletion
map} $\rho\colon B^{\cy}(M)\to B^{\rep}(M)$.

The logarithmic $\THH$ of a pre-log ring spectrum 
$(A,M)$ is then defined as the homotopy pushout of the following diagram of
commutative symmetric ring spectra:
\[ \bS^{\cJ}[B^{\rep}(M)] \ot \bS^{\cJ}[B^{\cy}(M)] \to \THH(A).\] It
is not difficult to see that the canonical map $\THH(A) \to \THH(A,M)$
is a stable equivalence if $M$ is grouplike. This applies in
particular for the trivial log ring spectrum $(A, \GLoneJof(A))$. A
useful but more involved property of log $\THH$ is its invariance
under logification. This means that the \emph{logification map} $(A,M)
\to (A^a,M^a)$, which naturally associates a log ring spectrum $(A^a,M^a)$
to each pre-log ring spectrum $(A,M)$, induces a stable equivalence
$\THH(A,M) \to \THH(A^a,M^a)$.

Our main theorem states that under a certain condition on $M$, the
logarithmic $\THH$ of a pre-log ring spectrum $(A,M)$ participates in
a localization homotopy cofiber sequence, where the two other terms
are given by ordinary topological Hochschild homology.  To formulate
the condition, we use that every $\cJ$-space inherits a $\bZ$-grading from the isomorphism $\pi_0(B\cJ)\iso \bZ$. A commutative $\cJ$-space monoid 
$M$ is said to be \emph{repetitive} if it is $\cJ$-equivalent to the
non-negative part of its group completion~$M^{\gp}$, and if in addition the positive part of $M$ is nonempty.
\begin{theorem}\label{thm:THH-cofiber-sequence-with-repetitive}
  Let $(A,M)$ be a pre-log ring spectrum with $M$ repetitive.
  Then there is a natural homotopy cofiber sequence
  \[\THH(A) \xrightarrow{\rho}
  \THH(A,M) \xrightarrow{\partial} \Sigma
  \THH(\modmod{A}{M})\] of $\THH(A)$-module spectra
  with circle action.
\end{theorem}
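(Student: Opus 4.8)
The plan is to realize the asserted sequence as the base change, along the map $\bS^{\cJ}[B^{\cy}(M)]=\THH(\bS^{\cJ}[M])\to\THH(A)$ induced by the adjoint structure map, of a universal cofiber sequence depending only on $M$. From the definition of $\THH(A,M)$ as a homotopy pushout there is a natural equivalence $\THH(A,M)\simeq\bS^{\cJ}[B^{\rep}(M)]\wedge_{\bS^{\cJ}[B^{\cy}(M)]}\THH(A)$ under which $\rho$ is obtained from the repletion map $\bS^{\cJ}[\rho]\colon\bS^{\cJ}[B^{\cy}(M)]\to\bS^{\cJ}[B^{\rep}(M)]$ of $\bS^{\cJ}[B^{\cy}(M)]$-algebras by applying the exact functor $(-)\wedge_{\bS^{\cJ}[B^{\cy}(M)]}\THH(A)$; hence $\cone(\rho)\simeq\cone(\bS^{\cJ}[\rho])\wedge_{\bS^{\cJ}[B^{\cy}(M)]}\THH(A)$ as $\THH(A)$-modules with circle action. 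Writing $M_0$ for the weight-zero part of $M$, one has $\modmod{A}{M}\simeq A\wedge_{\bS^{\cJ}[M]}\bS^{\cJ}[M_0]$ (for repetitive $M$ the positive part of $M$ is the ideal generated by a weight generator, so the quotient is the weight-zero part), and since $\THH$ is the tensoring with $S^1$ and hence preserves homotopy pushouts of commutative ring spectra, $\THH(\modmod{A}{M})\simeq\bS^{\cJ}[B^{\cy}(M_0)]\wedge_{\bS^{\cJ}[B^{\cy}(M)]}\THH(A)$, the module structure being along the ring map $\THH(\bS^{\cJ}[M])\to\THH(\bS^{\cJ}[M_0])$. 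It therefore suffices to produce a natural, circle-equivariant cofiber sequence of $\bS^{\cJ}[B^{\cy}(M)]$-modules
\[
\bS^{\cJ}[B^{\cy}(M)]\xrightarrow{\ \bS^{\cJ}[\rho]\ }\bS^{\cJ}[B^{\rep}(M)]\longrightarrow\Sigma\,\bS^{\cJ}[B^{\cy}(M_0)].
\]

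To build this I would analyse the repletion map $\rho\colon B^{\cy}(M)\to B^{\rep}(M)$ weight by weight, using that $M$ is repetitive. From the homotopy pullback defining $B^{\rep}(M)$ and the multiplicative shear equivalence $B^{\cy}(M^{\gp})\simeq M^{\gp}\times B(M^{\gp})$ over $M^{\gp}$, together with the identification of $M$ with the non-negative part of $M^{\gp}$ (so that $M_0\to (M^{\gp})_0$ is an equivalence), one finds that $\rho$ is a weak equivalence in every positive weight, while its weight-zero part is the basepoint inclusion $B^{\cy}(M_0)=B^{\cy}(M_0)\times\{\ast\}\hookrightarrow B^{\cy}(M_0)\times S^1$, where the extra circle $S^1=B\bZ$ is the classifying space of the weight grading $\pi_0(B\cJ)\cong\bZ$ (equivalently the $B(M^{\gp})$-direction of the shear splitting, restricted to weight zero). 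Applying $\bS^{\cJ}[-]$, a left Quillen functor that splits the weight grading into a wedge, then shows that $\bS^{\cJ}[\rho]$ is an equivalence on the positive-weight summands, while on the weight-zero summand it becomes $\bS^{\cJ}[B^{\cy}(M_0)]\to\bS^{\cJ}[B^{\cy}(M_0)]\wedge\Sigma^{\infty}_{+}S^1$ induced by the unit $\bS\to\Sigma^{\infty}_{+}S^1$, whose cofiber is $\bS^{\cJ}[B^{\cy}(M_0)]\wedge\bS^1=\Sigma\,\bS^{\cJ}[B^{\cy}(M_0)]$ with the circle acting diagonally through its cyclic action on $B^{\cy}(M_0)$ and by rotation on $S^1$. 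Assembling the two weight ranges, and checking that the resulting equivalence $\cone(\bS^{\cJ}[\rho])\simeq\Sigma\,\bS^{\cJ}[B^{\cy}(M_0)]$ respects the $\bS^{\cJ}[B^{\cy}(M)]$-module structures and the circle actions, gives the required cofiber sequence; its naturality in $(A,M)$ is clear from the construction, and $\partial$ is the composite $\THH(A,M)\to\cone(\rho)\simeq\Sigma\THH(\modmod{A}{M})$.

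The main obstacle is the positive-weight vanishing: for an arbitrary repetitive $M$, and not merely a split example $M\simeq M_0\times\bN$, one must show that $\rho$ is a weak equivalence in each weight $d\ge 1$. Since $M_0$ sits in weight zero, this reduces to showing that the group-completion map $B^{\cy}(M)_d\to B^{\cy}(M^{\gp})_d$ is a weak equivalence for $d\ge 1$, equivalently that on $\THH(\bS^{\cJ}[M])$ multiplication by a positive-weight element $u$ of $M^{\gp}$ is an equivalence in positive weights---a $\cJ$-space form of the statement that $\THH$ commutes with the localization inverting $u$. I would prove this by filtering the cyclic bar constructions by simplicial and weight degree and using the repetitive identifications $M_{d+d_0}\simeq M_d$ to reduce to the basic computation $B^{\cy}(\bN)_d\xrightarrow{\simeq}B^{\cy}(\bZ)_d$ for $d\ge 1$ (the Bökstedt-type splitting of $\THH$ of a polynomial monoid ring). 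Once this underlying equivalence, together with the accompanying identification of $\cone(\bS^{\cJ}[\rho])$, is in place compatibly with the module and circle structures, the remaining assertions---that $\bS^{\cJ}[\rho]$ underlies a map of $\bS^{\cJ}[B^{\cy}(M)]$-modules, that $\partial$ is circle-equivariant, and that the whole package is natural in $(A,M)$---follow formally from the model-categorical framework set up in the earlier sections.
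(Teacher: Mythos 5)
Your proposal is correct and follows essentially the same route as the paper: reduce by base change along $\bS^{\cJ}[B^{\cy}(M)]\to\THH(A)$ to a universal cofiber sequence $\bS^{\cJ}[B^{\cy}(M)]\to\bS^{\cJ}[B^{\rep}(M)]\to\Sigma\bS^{\cJ}[B^{\cy}(M_{\{0\}})]$, prove that the repletion map is a $\cJ$-equivalence in positive $\cJ$-degrees by comparison with the free monoid case $B^{\cy}(d\bN_0)\to B^{\cy}(d\bZ)$, and identify the $\cJ$-degree-zero cone as a suspension by splitting off the circle $B\bZ$. The only difference is in implementation: where you invoke a shear splitting $B^{\cy}_{\{0\}}(M^{\gp})\simeq B^{\cy}(M^{\gp}_{\{0\}})\times S^1$ and a filtration argument, the paper uses the augmentation to $B^{\cy}(d\bZ)$, the Bousfield--Friedlander theorem, and an explicit cyclic-set model $E\bZ/2\to C\to B^{\cy}_{\{0\}}(d\bZ)$ to control the module structures and circle actions that you defer as checks.
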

In the theorem, the commutative symmetric ring spectrum
$\modmod{A}{M}$ is the homotopy pushout of the diagram $A \ot
\bS^{\cJ}[M] \to \bS^{\cJ}[M_{\{0\}}]$ induced by the adjoint
structure map of $(A,M)$ and the map $\bS^{\cJ}[M] \to
\bS^{\cJ}[M_{\{0\}}]$ that collapses the positive degree parts of $M$.

In examples of interest, we can describe $\modmod{A}{M}$ more
explicitly. Let $E$ be a commutative symmetric ring spectrum such that
$0\neq 1$ in $\pi_0(E)$. We say that $E$ is \emph{$d$-periodic} if
$\pi_*(E)$ has a unit of positive degree and $d$ is the minimal degree
of such a unit. If $E$ is $d$-periodic and $j\colon e \to E$ is the
connective cover, then the commutative $\cJ$-space monoid
$j_*\GLoneJof(E)$ participating in the log ring spectrum
$(e,j_*\GLoneJof(E))$ is repetitive, and $\modmod{e}{j_*\GLoneJof(E)}$
is stably equivalent to the $(d-1)$-th Postnikov section
$\postnikovsec{e}{0}{d}$ of~$e$.  In this situation
Theorem~\ref{thm:THH-cofiber-sequence-with-repetitive} leads to the
following statement:

\begin{theorem}\label{thm:THH-cofiber-sequence-with-direct-image}
  Let $E$ be a $d$-periodic commutative symmetric ring spectrum with
  connective cover $j\colon e \to E$. There is a natural homotopy
  cofiber sequence
  \[\THH(e) \xrightarrow{\rho}
  \THH(e,j_*\!\GLoneJof(E)) \xrightarrow{\partial} \Sigma
  \THH(\postnikovsec{e}{0}{d})\] of $\THH(e)$-module spectra
  with circle action.
\end{theorem}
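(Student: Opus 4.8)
The plan is to obtain the asserted sequence as the instance of Theorem~\ref{thm:THH-cofiber-sequence-with-repetitive} for the pre-log ring spectrum $(e,M)$ with $M:=j_*\!\GLoneJof(E)$, whose structure map is the canonical map $M\to\Omega^{\cJ}(e)$ out of the pullback defining the direct image. Two inputs are needed, both recorded in the discussion preceding the theorem: that $M$ is repetitive, and that $\modmod{e}{M}$ is stably equivalent, as a commutative ring spectrum, to $\postnikovsec{e}{0}{d}$. Given these, Theorem~\ref{thm:THH-cofiber-sequence-with-repetitive} applied to $(e,M)$ yields a natural homotopy cofiber sequence
\[\THH(e)\xrightarrow{\rho}\THH(e,M)\xrightarrow{\partial}\Sigma\THH(\modmod{e}{M})\]
of $\THH(e)$-module spectra with circle action, and applying the functor $\THH$ to the equivalence $\modmod{e}{M}\simeq\postnikovsec{e}{0}{d}$ turns it into the stated sequence. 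Naturality in $E$ is inherited from the naturality in Theorem~\ref{thm:THH-cofiber-sequence-with-repetitive}, together with the functoriality of the direct image construction $E\mapsto(e,j_*\!\GLoneJof(E))$ and of Postnikov truncation.

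For the repetitiveness of $M$ I would argue degree by degree in the $\bZ$-grading, comparing $M$ with its group completion $M^{\gp}$. Since $e$ is connective, $\Omega^{\cJ}(e)$ is concentrated in non-negative degrees, while $\GLoneJof(E)$ is concentrated in the set of degrees $n$ for which $\pi_n(E)$ contains a unit; by $d$-periodicity this set is exactly $d\bZ$, with component set in degree $dk$ a torsor under $\pi_0(E)^{\times}=\pi_0(e)^{\times}$. Using that $e\to E$ is a $\pi_*$-isomorphism in non-negative degrees, one sees that the pullback $\GLoneJof(E)\to\Omega^{\cJ}(E)\ot\Omega^{\cJ}(e)$ defining $M$ is concentrated in degrees $d\bZ_{\ge0}$ with the same component sets there as $\GLoneJof(E)$, and that $M\to M^{\gp}$ identifies $M^{\gp}$ with $\GLoneJof(E)$. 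Hence $M\to M^{\gp}_{\ge0}$ is a $\cJ$-equivalence, and since a choice of unit $u\in\pi_d(E)$ makes the degree-$d$ part of $M$ nonempty, $M$ is repetitive.

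It remains to identify $\modmod{e}{M}=e\wedge_{\bS^{\cJ}[M]}\bS^{\cJ}[M_{\{0\}}]$, where $M_{\{0\}}$ denotes the degree-$0$ part of $M$ and $\bS^{\cJ}[M]\to\bS^{\cJ}[M_{\{0\}}]$ is the augmentation collapsing the positive-degree summands. After base change along the adjoint structure map $\bS^{\cJ}[M]\to e$, this augmentation has the effect of killing the image in $e$ of the positive part of $M$; since $M$ is repetitive with positive part generated over $M_{\{0\}}$ by the degree-$d$ class corresponding to $u$, one shows that $\modmod{e}{M}$ is stably equivalent to the cofiber $\cone(\Sigma^{d}e\xrightarrow{u}e)$ of multiplication by the image of $u$ in $\pi_d(e)$. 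This image is injective on $\pi_*(e)$ and satisfies $\pi_n(e)=u\cdot\pi_{n-d}(e)$ for $n\ge d$, because $u$ becomes invertible in $\pi_*(E)$ and $\pi_*(e)\to\pi_*(E)$ is an isomorphism in non-negative degrees; hence $\pi_*\cone(\Sigma^{d}e\xrightarrow{u}e)$ agrees with $\pi_*(e)$ in degrees $<d$ and vanishes in degrees $\ge d$, so the universal property of Postnikov truncation gives $\modmod{e}{M}\simeq\postnikovsec{e}{0}{d}$. I expect the passage from the relative smash product to the cofiber $\cone(\Sigma^{d}e\xrightarrow{u}e)$ to be the main obstacle, since $\bS^{\cJ}[M]$ is not freely generated by $u$ over $\bS^{\cJ}[M_{\{0\}}]$ and one must control the homotopy types of these graded spherical monoid rings and the behaviour of $(-)\wedge_{\bS^{\cJ}[M]}(-)$; the remaining steps are formal.
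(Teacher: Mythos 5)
Your reduction to Theorem~\ref{thm:THH-cofiber-sequence-with-repetitive} and your treatment of repetitiveness follow the paper's own route (Corollary~\ref{cor:istarGLoneJofE-repetitive} via Lemma~\ref{lem:Mgeq0-repetetive}). One correction there: $\Omega^{\cJ}(e)$ is \emph{not} concentrated in non-negative $\cJ$-degrees as a $\cJ$-space --- the spaces $\Omega^{n_2}e_{n_1}$ are non-empty in every degree. The relevant point is that every component of $\Omega^{\cJ}(e)$ in negative $\cJ$-degree maps to a non-unit component of $\Omega^{\cJ}(E)$, so the pullback defining $j_*\!\GLoneJof(E)$ is empty there and the pullback is $\cJ$-equivalent to $\GLoneJof(E)_{\geq 0}$. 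Moreover, identifying $M^{\gp}$ with $\GLoneJof(E)$ is not a componentwise check: one must show that the inclusion $\GLoneJof(E)_{\geq 0}\to\GLoneJof(E)$ is a group completion, which the paper does by applying the Bousfield--Friedlander theorem to the map of bar constructions lying over $B_{\bullet}(d\bN_0)\to B_{\bullet}(d\bZ)$.

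The genuine gap is exactly the step you flag, and the idea that fills it is this. Since $M^{\gp}$ is grouplike, multiplication by any $0$-simplex is a $\cJ$-equivalence (Lemma~\ref{lem:grouplike-periodic}: after applying $(-)_{h\cJ}$ it becomes translation by an element of a grouplike monoid, using that $(F^{\cJ}_{(\bld{d_1},\bld{d_2})}(*))_{h\cJ}$ is contractible). Because $M$ is repetitive of period $d$, multiplication by a $0$-simplex $x$ of $\cJ$-degree $d$ therefore induces a $\cJ$-equivalence $F^{\cJ}_{(\bld{d_1},\bld{d_2})}(*)\boxtimes M \to M_{>0}$ (Corollary~\ref{cor:components-of-repetitive-repete}). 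One never needs $\bS^{\cJ}[M]$ to be free over $\bS^{\cJ}[M_{\{0\}}]$: the wedge splitting $\bS^{\cJ}[M]\cong\bS^{\cJ}[M_{\{0\}}]\vee\bS^{\cJ}[M_{>0}]$ together with this equivalence exhibits $\pi\colon\bS^{\cJ}[M]\to\bS^{\cJ}[M_{\{0\}}]$ as the cofiber of the $\bS^{\cJ}[M]$-module map $\bS^{\cJ}[F^{\cJ}_{(\bld{d_1},\bld{d_2})}(*)\boxtimes M]\to\bS^{\cJ}[M]$ (Lemma~\ref{lem:SJ-repetitive}), and base change along $\bS^{\cJ}[M]\to e$ --- homotopy invariant by Lemma~\ref{lem:ext-along-cof-pre-st-eq} because the adjoint structure map is a cofibration --- converts this into the cofiber sequence $F_{d_1}S^{d_2}\sm e\xrightarrow{\widetilde{x}}e\to\modmod{e}{M}$ of Lemma~\ref{lem:A-mod-M-as-hty-cofiber}. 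From there your homotopy-group computation identifying $\modmod{e}{M}$ with $\postnikovsec{e}{0}{d}$ goes through as stated. Note finally the standing hypotheses the paper needs to make all of this precise: $E$ positive fibrant (so that $\GLoneJof(E)$ and hence $j_*\!\GLoneJof(E)$ have the correct homotopy types) and a cofibrant replacement of $(e,j_*\!\GLoneJof(E))$ before forming $\THH$ and $\modmod{e}{M}$.
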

The theorem applies, for example, to the $8$-periodic real $K$-theory
spectrum $KO$ and its connective cover $j\colon ko \to KO$, where we
obtain a homotopy cofiber sequence
\[ \THH(ko) \xrightarrow{\rho} \THH(ko,j_*\!\GLoneJof(KO))
\xrightarrow{\partial} \Sigma \THH(\postnikovsec{ko}{0}{8}). 
\]
The analogy with the homotopy cofiber
sequence~\eqref{eq:THH-loc-k-A-K} indicates that one may view the
Postnikov section $\postnikovsec{ko}{0}{8}$ as a nilpotent extension
of the \emph{residue ring (spectrum)} of $ko$. Consequently, one may wonder about the $K$-theoretic significance of $\postnikovsec{ko}{0}{8}$, and we expect that $\postnikovsec{ko}{0}{8} \to H\bZ$ will induce an equivalence in $G$-theory (compare~\cite{Barwick-L_regularity}). 

In the case of the $p$-local complex topological $K$-theory spectra
$ku_{(p)}\to KU_{(p)}$, Theorem~\ref{thm:THH-cofiber-sequence-with-direct-image}
provides a homotopy cofiber sequence
\[ \THH(ku_{(p)}) \xrightarrow{\rho} \THH(ku_{(p)},j_*\!\GLoneJof(KU_{(p)}))
\xrightarrow{\partial} \Sigma \THH(\bZ_{(p)}), 
\]
and similarly for the Adams summand $\ell$ of $ku_{(p)}$ and the map
$j\colon \ell \to L$ to its periodic counterpart. In a
sequel~\cite{RSS_LogTHH-II} to this paper, we will determine the
$V(1)$-homotopy of $\THH(\ell, j_*\!\GLoneJof(L))$, show that the
inclusion of the Adams summand $\ell \to ku_{(p)}$ induces a stable
equivalence
\[ku_{(p)}\sm_{\ell}\THH(\ell, j_*\!\GLoneJof(L)) \to \THH(ku_{(p)},
j_*\!\GLoneJof(KU_{(p)})), \] and use this to calculate the
$V(1)$-homotopy of $\THH(ku_{(p)}, j_*\!\GLoneJof(KU_{(p)}))$. In this way we complete the conjectural program outlined by Ausoni and Hesselholt for 
simplifying Ausoni's computation in~\cite{Ausoni_THH-ku} of the
$V(1)$-homotopy of $\THH(ku_{(p)})$.

The construction of the localization sequence in 
 Theorem~\ref{thm:THH-cofiber-sequence-with-repetitive} is based on a general principle that also applies to pre-log ring spectra that arise from pre-log
rings $(B,N)$ in the algebraic sense. Let
$B$ be a commutative ring and let $\beta \colon N \to (B,\cdot)$ be such
that $N$ is a free commutative monoid on one generator and $\beta$ maps that
generator to an $x \in B$ that does not divide zero. Then there is a homotopy
cofiber sequence
\[\THH(B) \to \THH(B,N) \to \Sigma \THH(B/(x)).  \]
In Section~\ref{sec:discrete-pre-log-rings} we calculate the mod $p$
homotopy of $\THH(B,N)$ in the case $B = \bZ_{(p)}$ and $x = p$ and
show that it agrees with that of Hesselholt and Madsen's construction
$\THH(\bZ_{(p)}|\bQ)$.

\subsection{Notation and conventions}
We assume some familiarity with model categories, and mostly use
Hirschhorn's book~\cite{Hirschhorn_model} as a reference. In
particular, we frequently use the notions of homotopy cartesian and
cocartesian squares in proper model categories; see e.g.~\cite[\S
13]{Hirschhorn_model}. When working with symmetric spectra, we shall
use both the simplicial version introduced in \cite{HSS} and the
topological version discussed in \cite{MMSS}. Given a symmetric ring
spectrum $A$, we shall use the expression: \emph{a homotopy cofiber
  sequence
\[X\xrightarrow{f} Y \xrightarrow{\partial} Z
\] 
of $A$-modules}, to mean a map $f\colon X \to Y$ of $A$-module spectra
together with an $A$-module spectrum $Z$ and an implicit chain of
stable equivalences of $A$-module spectra between the mapping cone
$C(f)$ and $Z$, all of this understood internally to the category of
symmetric spectra. So, by abuse of notation, $\partial$ denotes the
canonical map $Y \to C(f)$ followed by the chain of stable
equivalences. To avoid keeping track of semistability and fibrancy of
symmetric spectra, we use the notation $\pi_*(A)$ for the stable
homotopy groups of a (positive) fibrant replacement of a symmetric
spectrum~$A$.

\subsection{Organization}
We begin in Section~\ref{sec:Spsym-SJ} with a brief review of
$\cJ$-spaces and their relation to symmetric spectra. In
Section~\ref{sec:cy-and-rep} we recall the definition of $\THH$ in the
setting of symmetric ring spectra and introduce the cyclic and replete
bar constructions of commutative $\cJ$-space monoids.
Section~\ref{sec:log-thh} contains the definition of (pre-)log ring
spectra and their log $\THH$, and we prove the invariance of log
$\THH$ under logification.  In
Section~\ref{sec:discrete-pre-log-rings} we study the logarithmic
$\THH$ of pre-log ring spectra arising from the algebraic version of
pre-log rings, and we set up the relevant localization sequences in
this case.  In Section~\ref{sec:hty-cof-log-thh} we turn to repetitive
pre-log ring spectra and construct the localization sequences in
Theorems~\ref{thm:THH-cofiber-sequence-with-repetitive}
and~\ref{thm:THH-cofiber-sequence-with-direct-image} from the
introduction.  The final Section~\ref{sec:proposition-proof} contains
the proof of the main result about homotopy cofiber sequences needed
to prove Theorem~\ref{thm:THH-cofiber-sequence-with-repetitive}. An
Appendix collects homotopy invariance properties of the functor
$\bS^{\cJ}$ from $\cJ$-spaces to symmetric spectra.

\subsection{Acknowledgments} The authors would like to thank the
  referee for useful comments.

\section{Symmetric spectra and \texorpdfstring{$\cJ$}{J}-spaces}\label{sec:Spsym-SJ}
In this section we review the definition of $\cJ$-spaces and
commutative $\cJ$-space monoids from~\cite{Sagave-S_diagram} and
explain their relation to symmetric spectra.

The category of symmetric spectra $\Spsym$ introduced in~\cite{HSS} is
a stable model category whose homotopy category is the stable homotopy
category. It has a symmetric monoidal smash product denoted by $\sm$
whose monoidal unit is the sphere spectrum~$\bS$.  The commutative
monoids with respect to the smash product are known as
\emph{commutative symmetric ring spectra}. They may be viewed as
strictly commutative models for $E_{\infty}$ ring spectra. We will use
that the category of commutative symmetric ring spectra $\cC\Spsym$
inherits a proper simplicial \emph{positive stable model structure}
from $\Spsym$~\cite{MMSS}. The book project~\cite{Schwede_SymSp}
provides extensive background about symmetric spectra.

\subsection{\texorpdfstring{$\cJ$}{J}-spaces and commutative
  \texorpdfstring{$\cJ$}{J}-space monoids} 
We recall from~\cite{Sagave-S_diagram} how one can use commutative
monoid objects in the category of space valued functors on an
appropriate indexing category as a model for a graded version of
$E_{\infty}$ spaces.

\begin{definition}[{\cite[Definition 4.2]{Sagave-S_diagram}}]
  Let $\cJ$ be the category whose objects are pairs
  $(\bld{m_1},\bld{m_2})$ of finite sets $\bld{m_i}=\{1,\dots,m_i\}$
  with each $m_i\geq 0$. A morphism \[(\alpha_1,\alpha_2,\rho)\colon
  (\bld{m_1},\bld{m_2}) \to (\bld{n_1},\bld{n_2})\] in $\cJ$ consists of two
  injective functions $\alpha_i\colon \bld{m_i} \to \bld{n_i}$ and a
  bijection $\rho \colon \bld{n_1}\setminus \alpha_1 \to
  \bld{n_2}\setminus \alpha_2$ identifying the complement of the image
  of $\alpha_1$ with the complement of the image of
  $\alpha_2$. Consequently, the set of morphisms from
  $(\bld{m_1},\bld{m_2})$ to $(\bld{n_1},\bld{n_2})$ is empty unless
  $m_2-m_1 = n_2 -n_1$.
\end{definition}
It is proven in~\cite[Proposition 4.4]{Sagave-S_diagram} that $\cJ$ is
isomorphic to Quillen's localization construction $\Sigma^{-1}\Sigma$
on the permutative category $\Sigma$ of finite sets and
bijections. Combining this with the Barratt--Priddy--Quillen theorem
shows that the classifying space $B\cJ$ of $\cJ$ is weakly equivalent
to $QS^0 = \Omega^{\infty}\Sigma^{\infty}S^0$ as an infinite loop
space.

\begin{definition}
  A \emph{$\cJ$-space} is a functor $X \colon \cJ \to \cS$ from $\cJ$
  to the category of unbased simplicial sets $\cS$. The functor
  category of $\cJ$-spaces is denoted by~$\cS^{\cJ}$.
\end{definition}
The ordered concatenation $-\concat -$ of sets in both entries
makes $\cJ$ a symmetric monoidal category. Its monoidal unit is
$(\bld{0},\bld{0})$.  Defining $X \boxtimes Y$ to be the left Kan
extension of the object-wise product along $ - \concat -\colon \cJ
\times \cJ \to \cJ$ makes $\cS^{\cJ}$ a symmetric monoidal category with
product $\boxtimes$, unit $U^{\cJ} = \cJ((\bld{0},\bld{0}),-)$, and symmetry
isomorphism $\tau \colon X\boxtimes Y \to Y \boxtimes X$. 

\begin{definition} A \emph{commutative $\cJ$-space monoid} is a
  commutative monoid in $(\cS^{\cJ},\boxtimes, U^{\cJ},\tau)$, and $\CSJ$
  denotes the category of commutative $\cJ$-space monoids.
\end{definition}
Although we are mostly concerned with commutative $\cJ$-space monoids,
we will occasionally consider $\cJ$-space monoids, that is,
associative but not necessarily commutative monoid objects in
$(\cS^{\cJ},\boxtimes, U^{\cJ})$.
\begin{example}\label{ex:free-J-space}
  Evaluating a $\cJ$-space at the object $(\bld{d_1},\bld{d_2})$ of $\cJ$
  defines a functor $\mathrm{Ev}^{\cJ}_{(\bld{d_1},\bld{d_2})}\colon
  \cS^{\cJ} \to \cS$. It is right adjoint to the free functor
  $F^{\cJ}_{(\bld{d_1},\bld{d_2})} \colon \cS \to \cS^{\cJ}$ given by
  $F^{\cJ}_{(\bld{d_1},\bld{d_2})}(K) = \cJ((\bld{d_1},\bld{d_2}), - )
  \times K$, \emph{the free $\cJ$-space on $K$ in bidegree}
  $(\bld{d_1},\bld{d_2})$. The values of the free functors for varying
  $(\bld{d_1},\bld{d_2})$ are important examples of $\cJ$-spaces.  We
  note that $0$-simplices $x \in X(\bld{d_1},\bld{d_2})$ correspond to
  $\cJ$-space maps $\bar x\colon F^{\cJ}_{(\bld{d_1},\bld{d_2})}(*)
  \to X$ from the free $\cJ$-space on a point in bidegree
  $(\bld{d_1},\bld{d_2})$.

  If $M$ is a commutative $\cJ$-space monoid and $x \in
  M(\bld{d_1},\bld{d_2})_0$ is a $0$-simplex, then $x$ determines a map of
  commutative $\cJ$-space monoids
  \[ \C{\bld{d_1},\bld{d_2}} = \coprod_{k\geq 0}
    F^{\cJ}_{(\bld{d_1},\bld{d_2})}(*)^{\boxtimes
      k}/\Sigma_k \to M.\] The object $\C{\bld{d_1},\bld{d_2}}$ defined here is
  the \emph{free commutative $\cJ$-space monoid} on a generator in
  bidegree~$(\bld{d_1},\bld{d_2})$. It will often be convenient to use
  the notation $\C{x}$ for $\C{\bld{d_1},\bld{d_2}}$ when discussing
  that map.
\end{example}
The point of defining the category $\cJ$ in this way is the
following interplay with symmetric spectra:
\begin{lemma}\label{lem:adj-J-spaces-SpSym}
There are two adjoint pairs of functors
\begin{equation}\label{eq:adj-J-spaces-SpSym}
  \bS^{\cJ}\colon \cS^{\cJ} \rightleftarrows \Spsym\colon \Omega^{\cJ} \qquad \text{ and } \qquad \bS^{\cJ}\colon \cC \cS^{\cJ} \rightleftarrows \cC\Spsym\colon \Omega^{\cJ}.
\end{equation}
The functor $\bS^{\cJ}\colon (\cS^{\cJ},\boxtimes, U^{\cJ}) \to
(\Spsym, \sm, \bS)$ is strong symmetric monoidal. \qed
\end{lemma}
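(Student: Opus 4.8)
The plan is to produce the right adjoint $\Omega^{\cJ}$ by an explicit formula, recognise $\bS^{\cJ}$ as the (essentially forced) colimit-preserving left adjoint, upgrade the adjunction to a strong symmetric monoidal one using that $\boxtimes$ is a Day convolution product, and finally transport everything to commutative monoids. For a symmetric spectrum $E$, set $\Omega^{\cJ}(E)(\bld{m_1},\bld{m_2})=\mathrm{Map}_*(S^{\bld{m_1}},E_{\bld{m_2}})$, where $S^{\bld{m_1}}$ is the $\bld{m_1}$-fold smash power of $S^1$ with its $\Sigma_{m_1}$-permutation action and $E_{\bld{m_2}}$ carries the $\Sigma_{m_2}$-action from the symmetric spectrum structure. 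A morphism $(\alpha_1,\alpha_2,\rho)\colon(\bld{m_1},\bld{m_2})\to(\bld{n_1},\bld{n_2})$ should act on $f\colon S^{\bld{m_1}}\to E_{\bld{m_2}}$ by precomposing with the canonical identification $S^{\bld{n_1}}\iso S^{\bld{m_1}}\sm S^{\bld{n_1}\setminus\alpha_1}$, smashing $f$ with the identification $S^{\bld{n_1}\setminus\alpha_1}\iso S^{\bld{n_2}\setminus\alpha_2}$ induced by $\rho$, and composing with the iterated structure map $E_{\bld{m_2}}\sm S^{\bld{n_2}\setminus\alpha_2}\to E_{\bld{n_2}}$. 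The first thing to check is that this is well defined and functorial: decomposing the sets into ``old'' and ``new'' coordinates involves a choice of shuffle, but $\Sigma_{m_i}$-equivariance of the spheres and $\Sigma$-equivariance of the structure maps of $E$ render the choice irrelevant, and associativity of iterated structure maps yields functoriality. This is the construction of $\Omega^{\cJ}$ carried out in~\cite{Sagave-S_diagram}.

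Since every $\cJ$-space is canonically a colimit of the representables $F^{\cJ}_{(\bld{m_1},\bld{m_2})}(*)=\cJ((\bld{m_1},\bld{m_2}),-)$, a colimit-preserving functor out of $\cS^{\cJ}$ is determined by its values on these, and I would \emph{define} $\bS^{\cJ}[X]=\int^{(\bld{m_1},\bld{m_2})\in\cJ}F_{m_2}(S^{\bld{m_1}})\sm X(\bld{m_1},\bld{m_2})_+$, where $F_d(K)$ is the free symmetric spectrum generated by $K$ in level $d$. The adjunction bijection $\Spsym(\bS^{\cJ}[X],E)\iso\cS^{\cJ}(X,\Omega^{\cJ}(E))$ then unwinds by a coend/end manipulation: maps out of the coend form an end over $\cJ$ of $\Spsym(F_{m_2}(S^{\bld{m_1}})\sm X(\bld{m_1},\bld{m_2})_+,E)\iso\cS_*(S^{\bld{m_1}}\sm X(\bld{m_1},\bld{m_2})_+,E_{\bld{m_2}})\iso\cS(X(\bld{m_1},\bld{m_2}),\Omega^{\cJ}(E)(\bld{m_1},\bld{m_2}))$, and this end is exactly the set of natural transformations $X\to\Omega^{\cJ}(E)$. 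Here one checks that the $\cJ$-variance put into $\Omega^{\cJ}$ above is precisely what makes the end-condition coincide with naturality in $\cJ$.

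For the monoidal statement I would use that $\boxtimes$ is Day convolution along $\concat\colon\cJ\times\cJ\to\cJ$ and that $\bS^{\cJ}$ is the colimit-preserving extension of $\phi\colon(\bld{m_1},\bld{m_2})\mapsto F_{m_2}(S^{\bld{m_1}})$. By the standard Day-convolution principle, $\bS^{\cJ}$ is then strong symmetric monoidal as soon as $\phi$ is strong symmetric monoidal from $(\cJ^{\op},\concat,(\bld{0},\bld{0}))$ to $(\Spsym,\sm,\bS)$; and indeed $\phi(\bld{0},\bld{0})=F_0(S^0)=\bS$, while the familiar isomorphism $F_a(K)\sm F_b(L)\iso F_{a+b}(K\sm L)$ for free symmetric spectra supplies the multiplicativity $\phi(\bld{m_1},\bld{m_2})\sm\phi(\bld{n_1},\bld{n_2})\iso\phi((\bld{m_1},\bld{m_2})\concat(\bld{n_1},\bld{n_2}))$. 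Concretely this gives $\bS^{\cJ}[X\boxtimes Y]\iso\bS^{\cJ}[X]\sm\bS^{\cJ}[Y]$ and $\bS^{\cJ}[U^{\cJ}]=\bS$ by writing $X$ and $Y$ as coends of representables and commuting colimits past $\sm$ and $\boxtimes$. \emph{The main obstacle is compatibility with the symmetry isomorphisms.} Pushing the twist $F_a(K)\sm F_b(L)\to F_b(L)\sm F_a(K)$ through the identifications with $F_{a+b}$ introduces the block permutation of the $a+b$ generating coordinates, and one must verify that this matches both the symmetry of $\concat$ on $\cJ$ and the way a morphism of $\cJ$ records the bijection $\rho$ on complements — this coherence being exactly what the extra datum in the definition of $\cJ$ is designed to make work. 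Checking the symmetry hexagon (alongside the routine associativity pentagon and unit triangle) is where the genuine bookkeeping lies.

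Finally, a strong symmetric monoidal left adjoint carries commutative monoids to commutative monoids, so $\bS^{\cJ}$ restricts to a functor $\cC\cS^{\cJ}\to\cC\Spsym$; as is standard for the right adjoint of a strong symmetric monoidal functor, $\Omega^{\cJ}$ then inherits a lax symmetric monoidal structure — manifestly the one assembled from the multiplications $\Omega^{\cJ}(E)(\bld{m_1},\bld{m_2})\times\Omega^{\cJ}(E)(\bld{n_1},\bld{n_2})\to\Omega^{\cJ}(E)((\bld{m_1},\bld{m_2})\concat(\bld{n_1},\bld{n_2}))$ induced by the ring multiplication of a commutative symmetric ring spectrum $E$ — hence lifts to a functor $\cC\Spsym\to\cC\cS^{\cJ}$. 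Since the forgetful functors to $\cS^{\cJ}$ and $\Spsym$ create limits, the hom-set bijection of the second step restricts to these categories of commutative monoids, giving the second adjoint pair and completing the lemma.
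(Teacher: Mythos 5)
Your overall strategy (explicit right adjoint, coend formula for the left adjoint, Day convolution for the monoidal structure, formal passage to commutative monoids) is exactly the standard one; the paper itself offers no proof and delegates to~\cite{Sagave-S_diagram}, where this is how it is done. However, your construction has the two variables of $\cJ$ interchanged relative to the paper. The paper's formulas, displayed immediately after the lemma, are $\Omega^{\cJ}(E)(\bld{n_1},\bld{n_2})=\Omega^{n_2}E_{n_1}$ and $\bS^{\cJ}[X]_n=\bigvee_k X(\bld{n},\bld{k})_+\sm_{\Sigma_k}S^k$: the first variable indexes the spectrum level and the second the sphere coordinates. You take $\Omega^{\cJ}(E)(\bld{m_1},\bld{m_2})=\mathrm{Map}_*(S^{\bld{m_1}},E_{\bld{m_2}})$ and $\bS^{\cJ}[X]=\int^{(\bld{m_1},\bld{m_2})}F_{m_2}(S^{\bld{m_1}})\sm X(\bld{m_1},\bld{m_2})_+$, which is the transpose. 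Your version is internally consistent (it is the paper's functor precomposed with the flip automorphism of $\cJ$), so you do obtain \emph{an} adjoint pair with a strong symmetric monoidal left adjoint; but it is not the functor the rest of the paper uses. In particular, with your convention an element of $\Omega^{\cJ}(E)(\bld{n_1},\bld{n_2})$ represents a class in $\pi_{n_1-n_2}(E)$, so the $\cJ$-degree $n_2-n_1$ would correspond to stable degree $-(n_2-n_1)$, inverting the grading that Sections 5--6 (repetitiveness, periodicity, connective covers) depend on. You should swap the roles of $\bld{m_1}$ and $\bld{m_2}$ throughout.

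The second issue is that the one genuinely non-formal step is deferred rather than carried out. Everything up to and including the isomorphisms $\phi(\bld{m_1},\bld{m_2})\sm\phi(\bld{n_1},\bld{n_2})\iso\phi((\bld{m_1},\bld{m_2})\concat(\bld{n_1},\bld{n_2}))$ and the pentagon/triangle is formal; the reason the lemma requires the category $\cJ$ (with the bijection $\rho$ on complements as part of a morphism) rather than, say, $\cI\times\cI$ is precisely that the block-permutation twist on $F_a(K)\sm F_b(L)\iso F_{a+b}(K\sm L)$ must be absorbed by the symmetry of $\concat$ \emph{in both variables simultaneously}, acting compatibly on the sphere coordinates and the spectrum levels. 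You correctly identify this as the crux but then stop at ``this is where the genuine bookkeeping lies.'' For a complete proof one must actually exhibit the natural isomorphism $\cJ(a,-)\boxtimes\cJ(b,-)\iso\cJ(a\concat b,-)$ intertwining the twist on the left with the block permutations $(\chi_{m_1,n_1},\chi_{m_2,n_2})$ on the right, and check that under $\phi$ these go to the corresponding twist of free symmetric spectra; this is a finite but essential verification, not a formality, and omitting it leaves the strong \emph{symmetric} monoidality unproved.
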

For symmetric spectra $E$ and $\cJ$-spaces $X$, these functors are given by 
\begin{equation}\label{eq:OmegaJ-and-SJ-explicitly}   
  \Omega^{\cJ}(E)(\bld{n_1},\bld{n_2}) = \Omega^{n_2}E_{n_1}\qquad \text{ and } \qquad 
  \bS^{\cJ}[X]_n = \bigvee_{k\geq 0}   X(\bld{n},\bld{k})_+ \sm_{\Sigma_k} S^k.
\end{equation}
In particular, the lemma states that every commutative symmetric ring
spectrum~$A$ gives rise to a commutative $\cJ$-space monoid
$\Omega^{\cJ}(A)$. Below we indicate why $\Omega^{\cJ}(A)$ may be
viewed as the underlying graded multiplicative $E_{\infty}$ space of
$A$.

To each $\cJ$-space $X$ we can associate the space
\[
X_{h\cJ} = \hocolim_{\cJ} X = \mathrm{diag}\left( [s] \mapsto
  \displaystyle\coprod_{\bld{k_0} \ot \dots \ot \bld{k_s}}
  X(\bld{k_s})\right)
\]
given by its Bousfield--Kan homotopy colimit. A map $X \to Y$ of
$\cJ$-spaces is defined to be a \emph{$\cJ$-equivalence} if the
induced map $X_{h\cJ} \to Y_{h\cJ}$ is a weak homotopy equivalence.
The $\cJ$-equivalences are the weak equivalences in a cofibrantly
generated proper simplicial \emph{positive projective $\cJ$-model
  structure} on $\SJ$, where the fibrant objects are the $\cJ$-spaces
$X$ such that each morphism $(\bm_1, \bm_2) \to (\bn_1, \bn_2)$ in
$\cJ$ with $m_1>0$ induces a weak homotopy equivalence $X(\bm_1,
\bm_2) \to X(\bn_1, \bn_2)$ between (Kan) fibrant simplicial
sets, see~\cite[Proposition 4.8]{Sagave-S_diagram}.

A map $M \to N$ of commutative $\cJ$-space monoids is defined to be a
\emph{$\cJ$-equivalence} if the underlying map of $\cJ$-spaces is a
$\cJ$-equivalence.  These are the weak equivalences in a cofibrantly
generated proper simplicial \emph{positive projective $\cJ$-model
  structure} on $\CSJ$, where the fibrant objects are the commutative
$\cJ$-space monoids whose underlying $\cJ$-spaces are
fibrant~\cite[Proposition 4.10]{Sagave-S_diagram}. In the sequel, we
will refer to this model structure as the \emph{positive $\cJ$-model
structure}, and the notions of cofibrant or fibrant objects in $\CSJ$ or
of cofibrations or fibrations in $\CSJ$ will refer to this model
structure unless otherwise stated.  By construction of the generating
cofibrations for $\CSJ$ in~\cite[Proposition 9.3]{Sagave-S_diagram},
the free commutative $\cJ$-space monoids $\C{\bld{d_1},\bld{d_2}}$
with $d_1>0$ are examples of cofibrant objects in $\CSJ$.

\begin{lemma}[{\cite[Proposition 4.23]{Sagave-S_diagram}}]
\label{Qlem:adj-J-spaces-SpSym}
The adjunctions~\eqref{eq:adj-J-spaces-SpSym} are Quillen adjunctions
with respect to the positive  $\cJ$-model structures on
$\cS^{\cJ}$ and $\cC\cS^{\cJ}$ and the positive stable model
structures on $\Spsym$ and $\cC\Spsym$, respectively.\qed
\end{lemma}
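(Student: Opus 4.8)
The plan is to verify, for each of the two adjunctions in~\eqref{eq:adj-J-spaces-SpSym}, that the left adjoint $\bS^{\cJ}$ is left Quillen, i.e.\ preserves cofibrations and acyclic cofibrations. Since $\bS^{\cJ}$ is cocontinuous, it suffices to check that it sends generating cofibrations to cofibrations and generating acyclic cofibrations to acyclic cofibrations, the general case propagating along the transfinite cell presentations of (acyclic) cofibrations. I would first reduce the commutative case to the plain one. By Lemma~\ref{lem:adj-J-spaces-SpSym} the functor $\bS^{\cJ}\colon(\cS^{\cJ},\boxtimes,U^{\cJ})\to(\Spsym,\sm,\bS)$ is strong symmetric monoidal and cocontinuous, so it commutes with the free commutative monoid functors on the two sides, carrying $\coprod_{k\geq 0}X^{\boxtimes k}/\Sigma_k$ to $\coprod_{k\geq 0}\bS^{\cJ}[X]^{\sm k}/\Sigma_k$ compatibly with the monoid structures. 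The generating (acyclic) cofibrations of the positive $\cJ$-model structure on $\CSJ$ are obtained by applying the free commutative $\cJ$-space monoid functor to those of $\cS^{\cJ}$ (by the construction in~\cite[Proposition~9.3]{Sagave-S_diagram}), and the positive model structure that $\cC\Spsym$ inherits from $\Spsym$ is transferred along the free commutative symmetric ring spectrum functor, which is accordingly left Quillen. Hence, once $\bS^{\cJ}\colon\cS^{\cJ}\to\Spsym$ is known to be left Quillen, $\bS^{\cJ}$ of a generating (acyclic) cofibration of $\CSJ$ is the free commutative symmetric ring spectrum on a(n acyclic) cofibration of $\Spsym$, hence a(n acyclic) cofibration of $\cC\Spsym$.

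For the plain case, the computational heart is the natural isomorphism $\bS^{\cJ}[F^{\cJ}_{(\bld{d_1},\bld{d_2})}(K)]\iso F_{d_1}(S^{d_2}\sm K_+)$, with the right-hand side the free symmetric spectrum generated in spectrum level $d_1$ on the based simplicial set $S^{d_2}\sm K_+$. This follows by comparing the functors corepresented by the two sides: chaining the adjunction $\bS^{\cJ}\dashv\Omega^{\cJ}$, the free–evaluation adjunction $F^{\cJ}_{(\bld{d_1},\bld{d_2})}\dashv\mathrm{Ev}^{\cJ}_{(\bld{d_1},\bld{d_2})}$, the formula $\Omega^{\cJ}(E)(\bld{n_1},\bld{n_2})=\Omega^{n_2}E_{n_1}$ of~\eqref{eq:OmegaJ-and-SJ-explicitly}, the loop–suspension adjunction, and $F_{d_1}\dashv\mathrm{Ev}_{d_1}$ gives, naturally in $E$,
\[
\Spsym\bigl(\bS^{\cJ}[F^{\cJ}_{(\bld{d_1},\bld{d_2})}(K)],E\bigr)\iso\cS\bigl(K,\Omega^{d_2}E_{d_1}\bigr)\iso\Spsym\bigl(F_{d_1}(S^{d_2}\sm K_+),E\bigr).
\]
Now the level-$d_1$ evaluation $\mathrm{Ev}_{d_1}$ visibly preserves fibrations and acyclic fibrations of the positive stable model structure when $d_1>0$, so $F_{d_1}$ is then left Quillen from based simplicial sets to $\Spsym$; and $S^{d_2}\sm(-)$ preserves cofibrations and acyclic cofibrations of based simplicial sets. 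Applying naturality of the isomorphism to the maps $\partial\Delta^n\to\Delta^n$ and $\Lambda^n_k\to\Delta^n$ shows that $\bS^{\cJ}$ carries the generating cofibrations $F^{\cJ}_{(\bld{d_1},\bld{d_2})}(\partial\Delta^n)\to F^{\cJ}_{(\bld{d_1},\bld{d_2})}(\Delta^n)$ with $d_1>0$, and the horn-type generating acyclic cofibrations of the positive projective level $\cJ$-model structure, to cofibrations and acyclic cofibrations of $\Spsym$, respectively. By cocontinuity, $\bS^{\cJ}$ preserves all cofibrations.

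What remains are the generating acyclic cofibrations that localize the positive projective level $\cJ$-model structure to the $\cJ$-model structure. Up to the cofibrant replacements and pushout-products with $\partial\Delta^n\to\Delta^n$ in which they enter the generating set, these are the maps $F^{\cJ}_{(\bld{d_1},\bld{d_2})}(*)\to F^{\cJ}_{(\bld{c_1},\bld{c_2})}(*)$ induced by morphisms $(\bld{c_1},\bld{c_2})\to(\bld{d_1},\bld{d_2})$ of $\cJ$ with $c_1>0$; then $d_1>0$ as well, so source and target are cofibrant $\cJ$-spaces, and such a map is a $\cJ$-equivalence by construction of the $\cJ$-model structure. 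Applying $\bS^{\cJ}$ yields a map $f$ between \emph{cofibrant} symmetric spectra, and I would show $f$ is a stable equivalence by mapping out of it: for every fibrant symmetric spectrum $E$ in the positive stable model structure, the simplicial adjunction $\bS^{\cJ}\dashv\Omega^{\cJ}$ identifies $\mathrm{Map}_{\Spsym}(f,E)$ with $\mathrm{Map}_{\cS^{\cJ}}(-,\Omega^{\cJ}(E))$ applied to the localizing map, and the formula for $\Omega^{\cJ}$ shows $\Omega^{\cJ}(E)$ is $\cJ$-fibrant — fibrancy of $E$ forces $E$ to be a positive $\Omega$-spectrum with each $E_n$, $n\geq 1$, a Kan complex, so any morphism of $\cJ$ with positive first entry induces on $\Omega^{\cJ}(E)$ the map $\Omega^{n_2}$ applied to an iterated structure map $E_{m_1}\to\Omega^{n_1-m_1}E_{n_1}$, a weak equivalence since $m_1>0$. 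A $\cJ$-equivalence between cofibrant $\cJ$-spaces mapped into a $\cJ$-fibrant $\cJ$-space induces a weak equivalence of mapping spaces, and fibrant $E$ detect stable equivalences between cofibrant spectra, so $f$ is a stable equivalence; the same argument, using that $\bS^{\cJ}$ is simplicial, covers the pushout-product thickenings. Thus $\bS^{\cJ}$ sends each generating acyclic cofibration of the positive $\cJ$-model structure to a cofibration that is a stable equivalence, hence to an acyclic cofibration, and the transfinite cell argument completes the plain case, and with it, by the first paragraph, the commutative case.

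The step I expect to be the main obstacle is this last one. It rests on knowing the generating acyclic cofibrations of the localized $\cJ$-model structure explicitly enough to arrange the localizing maps in positive bidegree — so that their sources and targets are cofibrant and the principle ``detect stable equivalences by mapping into fibrant objects'' applies within $\Spsym$ — and on the observation that $\Omega^{\cJ}$ converts positive $\Omega$-spectra into $\cJ$-fibrant $\cJ$-spaces, which is precisely where the design of $\cJ$ (its isomorphism with $\Sigma^{-1}\Sigma$, so that the stabilization built into the stable model structure matches the localization defining the $\cJ$-model structure) is used. The monoidal reduction, the corepresentability identification, and the cell-complex bookkeeping are formal.
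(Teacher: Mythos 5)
Your proof is correct; note that the paper itself gives no argument here but simply cites \cite[Proposition 4.23]{Sagave-S_diagram}, and your reconstruction follows essentially the same route as that reference: identify $\bS^{\cJ}$ on free $\cJ$-spaces with free symmetric spectra to handle the level-model generating (acyclic) cofibrations, observe that $\Omega^{\cJ}$ carries positive $\Omega$-spectra to positive $\cJ$-fibrant $\cJ$-spaces so that the adjunction descends to the localized structures, and transfer to the commutative case via the free functors and strong monoidality. The only caveat is that your reduction of the commutative case presupposes the standard facts about the transferred positive model structures (generating cofibrations of $\CSJ$ and $\cC\Spsym$ being free on those of $\SJ$ and $\Spsym$, and the free commutative monoid functors being left Quillen), which is exactly the content of the cited constructions in \cite{Sagave-S_diagram} and \cite{MMSS}.
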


The functor $(-)_{h\cJ}\colon (\cS^{\cJ},\boxtimes, U^{\cJ}) \to (\cS,
\times, *)$ is lax monoidal (but not lax \emph{symmetric}
monoidal), with monoidal structure map $X_{h\cJ}\times Y_{h\cJ} \to
(X\boxtimes Y)_{h\cJ}$  induced by the natural transformation of
$\cJ\times\cJ$-diagrams  \[X(\bld{m_1},\bld{m_2})\times
Y(\bld{n_1},\bld{n_2}) \to (X\boxtimes
Y)((\bld{m_1},\bld{m_2})\concat(\bld{n_1},\bld{n_2}))\] and the functor
$ - \concat -\colon \cJ \times \cJ \to \cJ$. Therefore, the space
$M_{h\cJ}$ associated with a $\cJ$-space monoid $M$ is a simplicial
monoid. If $M$ is commutative, then one can use the fact that $\cJ$ is
a permutative category to show that $M_{h\cJ}$ is an $E_{\infty}$
space over the Barratt--Eccles operad.  (A closely related statement is
proven in~\cite[Proposition~6.5]{Schlichtkrull_Thom-symmetric}.)

This observation can be extended to an operadic description of
$\cC\cS^{\cJ}$: By~\cite[Theorem 1.7]{Sagave-S_diagram}, the category
$\cC\cS^{\cJ}$ is Quillen equivalent to the category of $E_{\infty}$
spaces over~$B\cJ$. So commutative $\cJ$-space monoids correspond 
to $E_{\infty}$ spaces over the underlying additive $E_{\infty}$ space
$QS^0 \simeq B\cJ$ of the sphere spectrum, just as $\bZ$-graded
monoids in algebra can be defined as monoids over the additive monoid
$(\bZ,+)$ of the integers. This is one reason why commutative
$\cJ$-space monoids may be viewed as \emph{$QS^0$-graded $E_{\infty}$
  spaces}. Consequently, we interpret the commutative $\cJ$-space
monoid $\Omega^{\cJ}(A)$ associated with a commutative symmetric ring
spectrum $A$ as the underlying graded $E_{\infty}$ space of $A$. This
point of view is supported by the fact that the underlying graded
multiplicative monoid of $\pi_*(A)$ can be recovered from
$\Omega^{\cJ}(A)$, cf.~\cite[Proposition 4.24]{Sagave-S_diagram}.

Since $\cS^{\cJ}$ is a monoidal model category with respect to the
$\boxtimes$-product, we know that $X \boxtimes Y$ is homotopically
well-behaved if both $X$ and $Y$ are cofibrant.  It is often useful
that this holds under a weaker cofibrancy condition. To state it, we
recall that for an object $(\bld{n_1},\bld{n_2})$ in $\cJ$, the
$(\bld{n_1},\bld{n_2})$-th \emph{latching space}
\[ L_{(\bld{n_1},\bld{n_2})}X = \colim_{(\bld{m_1},\bld{m_2}) \to
  (\bld{n_1},\bld{n_2})} X(\bld{m_1},\bld{m_2}) \] is the colimit over
the full subcategory of the comma category $(\cJ \downarrow
(\bld{n_1},\bld{n_2}))$ generated by the objects that are not
isomorphisms. A $\cJ$-space $X$ is \emph{flat} if the canonical map $
L_{(\bld{n_1},\bld{n_2})}X \to X(\bld{n_1},\bld{n_2})$ is a
cofibration of simplicial sets for each object
$(\bld{n_1},\bld{n_2})$. A commutative $\cJ$-space monoid is flat if
its underlying $\cJ$-space is.
\begin{lemma}\label{lem:properties-of-flat}\begin{enumerate}[(i)]
\item The functor $-\boxtimes Y$ preserves $\cJ$-equivalences if $Y$ is flat. 
\item A $\cJ$-space that is cofibrant in the positive  $\cJ$-model structure is flat. 
\item Cofibrant commutative $\cJ$-space monoids are flat. 
\end{enumerate}
\end{lemma}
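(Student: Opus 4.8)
The plan is to establish the three statements in increasing order of generality, with part (i) being the load-bearing claim and parts (ii) and (iii) following by identifying known cofibrant objects as flat ones.

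For part (i), I would first reduce to understanding what flatness buys us. The key point is that $-\boxtimes Y$ is computed as a left Kan extension along the concatenation functor $-\concat-\colon\cJ\times\cJ\to\cJ$, so $(X\boxtimes Y)(\bld{n_1},\bld{n_2})$ is a colimit over a category of decompositions $(\bld{n_1},\bld{n_2})\cong(\bld{k_1},\bld{k_2})\concat(\bld{l_1},\bld{l_2})$ of terms $X(\bld{k_1},\bld{k_2})\times Y(\bld{l_1},\bld{l_2})$. Since $\cS$ is a cartesian monoidal model category, the functor $X(\bld{k_1},\bld{k_2})\times-$ preserves weak equivalences of simplicial sets (every simplicial set is cofibrant), so the only obstruction to $-\boxtimes Y$ preserving $\cJ$-equivalences is that the colimit defining the Kan extension need not be a homotopy colimit. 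The flatness hypothesis on $Y$ — that each latching map $L_{(\bld{n_1},\bld{n_2})}Y\to Y(\bld{n_1},\bld{n_2})$ is a cofibration of simplicial sets — is precisely the condition that makes the relevant Bousfield--Kan-type colimit behave like a homotopy colimit. Concretely, I would argue that flatness lets one filter $X\boxtimes Y$ by skeleta of a suitable bisimplicial/latching filtration so that the associated graded pieces are built from levelwise products with cofibrations, hence a levelwise $\cJ$-equivalence $X\to X'$ induces a levelwise weak equivalence $X\boxtimes Y\to X'\boxtimes Y$ on each filtration stage; passing to the colimit (which is a homotopy colimit along the cofibrations coming from flatness) preserves the equivalence. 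In fact, the cleanest route is to invoke that the positive $\cJ$-model structure on $\cS^{\cJ}$ is monoidal, observe that flat objects are exactly those for which $-\boxtimes Y$ is a left Quillen functor after one checks the pushout-product axiom relative to them, and then note that left Quillen functors preserve weak equivalences between cofibrant objects — but to get \emph{all} $\cJ$-equivalences, not just those between cofibrant objects, one applies Ken Brown's lemma together with the fact that cofibrant replacement is functorial, reducing a general $\cJ$-equivalence $X\to X'$ to the cofibrant case via the naturality square and two-out-of-three, using that $X^{\mathrm{cof}}\boxtimes Y\to X\boxtimes Y$ is an equivalence (this last reduction is itself a levelwise-product-with-flat argument).

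For part (ii), I would show directly that a cofibrant object in the positive projective $\cJ$-model structure is flat: the generating cofibrations are maps of the form $F^{\cJ}_{(\bld{d_1},\bld{d_2})}(\partial\Delta^n)\to F^{\cJ}_{(\bld{d_1},\bld{d_2})}(\Delta^n)$ with $d_1>0$, and one checks that free $\cJ$-spaces $F^{\cJ}_{(\bld{d_1},\bld{d_2})}(K)=\cJ((\bld{d_1},\bld{d_2}),-)\times K$ are flat — their latching maps are computed from the combinatorics of $\cJ$ and the injectivity of morphisms there — and that flatness is preserved under the pushouts, transfinite compositions, and retracts used to build cofibrant objects from the generating cofibrations. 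This is a standard ``cellular induction'' on the cell structure.

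For part (iii), I would argue that a cofibrant commutative $\cJ$-space monoid is flat as a $\cJ$-space. By the construction of the generating cofibrations for $\CSJ$, a cofibrant commutative $\cJ$-space monoid is a retract of one built by attaching free cells $\C{\bld{d_1},\bld{d_2}}$ with $d_1>0$; since $\C{\bld{d_1},\bld{d_2}}=\coprod_{k\ge 0}F^{\cJ}_{(\bld{d_1},\bld{d_2})}(*)^{\boxtimes k}/\Sigma_k$, the crucial input is that iterated $\boxtimes$-powers of a flat $\cJ$-space, and their $\Sigma_k$-orbits, remain flat — for which one needs a ``$\Sigma_k$-free up to latching'' statement about the symmetric powers, i.e.\ that the $\Sigma_k$-action on the relevant configuration-type space in $\cJ$ is suitably free. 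Granting this, flatness propagates through the cell attachments exactly as in part (ii), and retracts of flat objects are flat. I expect the main obstacle to be part (i): making precise that flatness of $Y$ forces the Kan-extension colimit defining $X\boxtimes Y$ to compute the derived functor, which amounts to controlling the latching filtration of $X\boxtimes Y$ carefully enough that each layer is a levelwise product along a cofibration. Once that bookkeeping is in place, (ii) and (iii) are routine cell-induction arguments.
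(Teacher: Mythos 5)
First, note that the paper does not actually prove this lemma: it is quoted verbatim from \cite{Sagave-S_diagram} (Propositions 8.2, 6.20 and 4.28 there), so the relevant comparison is with those cited arguments. Your parts (ii) and (iii) are outlined correctly and follow the same route as the reference: free $\cJ$-spaces are flat because morphisms in $\cJ$ are injective in each variable, flatness is stable under the cell attachments, and for (iii) the essential input is that the symmetric powers $F^{\cJ}_{(\bld{d_1},\bld{d_2})}(*)^{\boxtimes k}/\Sigma_k$ with $d_1>0$ remain flat, which is exactly the equivariant freeness fed into the filtration of \cite[Proposition 10.1]{Sagave-S_diagram} (and is closely parallel to the induction in the paper's own Appendix, Lemma~\ref{lem:cof-CSJ-SJ-good}).

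Part (i), however, has a genuine gap. Your latching filtration of $X\boxtimes Y$ shows that $-\boxtimes Y$ preserves \emph{level} equivalences when $Y$ is flat, because each associated graded piece is a levelwise product of $X$ with a cofibration of simplicial sets. But the hypothesis and conclusion of (i) concern $\cJ$-equivalences, which are equivalences after applying $(-)_{h\cJ}$ and are in general very far from level equivalences; the filtration argument says nothing about how $(X\boxtimes Y)_{h\cJ}$ depends on $X_{h\cJ}$. Your proposed patch does not close this gap: the claim that flat objects are exactly those for which $-\boxtimes Y$ is left Quillen for the positive projective $\cJ$-model structure is false ($-\boxtimes Y$ need not even preserve cofibrant objects when $Y$ is flat but not projectively cofibrant, e.g.\ when $Y$ is the underlying $\cJ$-space of a cofibrant commutative $\cJ$-space monoid), and the reduction of a general $\cJ$-equivalence $X\to X'$ to the cofibrant case via ``$X^{\mathrm{cof}}\boxtimes Y\to X\boxtimes Y$ is an equivalence'' is circular, since that map is itself an instance of the statement being proved. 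The circularity can be broken by taking the cofibrant replacement in a \emph{level} model structure, so that $X^{\mathrm{cof}}\to X$ is a level equivalence handled by your filtration argument; but one is then still left with the genuinely substantive step of showing that $-\boxtimes Y$ takes $\cJ$-equivalences between cofibrant (or flat) objects to $\cJ$-equivalences. This requires identifying $(X\boxtimes Y)_{h\cJ}$ with a homotopy colimit construction that is manifestly invariant under $\cJ$-equivalences in $X$ (the content of \cite[Proposition 8.2]{Sagave-S_diagram} and its supporting lemmas), and that ingredient is absent from your sketch.
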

\begin{proof}
  This is proven in~\cite[Propositions 8.2, 6.20 and 4.28]{Sagave-S_diagram}.
\qed \end{proof}

\section{The cyclic and replete bar constructions}\label{sec:cy-and-rep}
In this section we introduce the cyclic and replete bar constructions
of commutative $\cJ$-space monoids and recall the definition of the
topological Hochschild homology of symmetric ring spectra. These are
building blocks of the logarithmic topological Hochschild homology to
be defined in Section~\ref{sec:log-thh}.

\subsection{The cyclic bar construction} As usual $\Delta$ denotes the
category with objects $[n] = \{ 0 < \dots < n\}$ for $n \geq 0$, and
order-preserving maps. The category $\Delta$ is a subcategory of
Connes' cyclic category $\Lambda$,
cf.~\cite[Definition~6.1.1]{Loday_cyclic-homology}. The latter has the
same objects as $\Delta$, and additional morphisms $\tau_n \colon [n]
\to [n]$ satisfying $\tau_n^{n+1} = \mathrm{id}$ as well as $\tau_n
\delta_i = \delta_{i-1} \tau_{n-1}$ and $\tau_n\sigma_i =
\sigma_{i-1}\tau_{n+1}$ for $1\leq i \leq n$. The induced
simplicial and cyclic operators are denoted $d_i = \delta_i^*$, $s_i =
\sigma_i^*$ and $t_n = \tau_n^*$, respectively.

\begin{definition}\label{def:cy-bar-constr}
  Let $(M, \mu, \eta)$ be a not necessarily commutative $\cJ$-space
  monoid, and let $X$ be an $M$-bimodule, i.e., a $\cJ$-space with
  commuting left and right $M$-actions.  The \emph{cyclic bar
    construction} $B_{\bullet}^{\cy}(M, X)$ is the simplicial
  $\cJ$-space
  \[ [n] \longmapsto X \boxtimes M^{\boxtimes n} = X \boxtimes M
  \boxtimes \dots \boxtimes M
  \]
  with $n$ copies of $M$.  The $0$-th face map $d_0$ uses the right
  action $X \boxtimes M \to X$, the $i$-th face map $d_i$ for $0<i<n$
  uses the multiplication $\mu \: M \boxtimes M \to M$ of the $i$-th and $(i+1)$-th factors, 
  and the $n$-th face map $d_n$ uses the symmetric structure
  \begin{equation}\label{eq:sym-str-in-Bcy}
    \tau \: (X \boxtimes M^{\boxtimes n-1}) \boxtimes M
    \xrightarrow{\iso} M \boxtimes (X \boxtimes M^{\boxtimes n-1})
  \end{equation}
  followed by the left action $M \boxtimes X \to X$.  The degeneracy
  map $s_i$ inserts the unit $\eta \: U^{\cJ} \to M$ after the $i$-th factor.  

  If $M$ is commutative, we say that an $M$-bimodule $X$ is
  \emph{symmetric} if the right action on $X$ equals the twist
  followed by the left action. In this case, there is an
  \emph{augmentation} $\epsilon \: B^{\cy}_{\bullet}(M, X) \to X$,
  where the codomain is viewed as a constant simplicial object. It is
  given in simplicial degree~$n$ by the $n$-fold (right) action $X
  \boxtimes M^{\boxtimes n} \to X$ and restricts to the identity on
  the $0$-simplices of $B^{\cy}_{\bullet}(M, X)$.

  In the special case when $X = M$, with left and right actions
  given by the multiplication, we write $B^{\cy}_{\bullet}(M) =
  B^{\cy}_{\bullet}(M, M)$.  This is a cyclic $\cJ$-space, with cyclic
  operator $t_n$ given by the symmetric structure as
  in~\eqref{eq:sym-str-in-Bcy}. When $M$ is commutative,
  $B^{\cy}_{\bullet}(M)$ is a cyclic commutative $\cJ$-space monoid
  and the augmentation $\epsilon \: B^{\cy}_{\bullet}(M) \to M$ is a
  cyclic map to the constant cyclic object $M$.
\end{definition}
Applying the diagonal functor from bisimplicial to simplicial sets
object-wise defines a realization functor $|-|$ from simplicial objects in
$\cC\cS^{\cJ}$ to $\cC\cS^{\cJ}$.
\begin{definition}
  The \emph{cyclic bar construction} $B^{\cy}(M,X)$
  (resp.~$B^{\cy}(M)$) is the realization of $B_{\bullet}^{\cy}(M,X)$
  (resp.~$B^{\cy}_{\bullet}(M)$).
\end{definition}
When $M$ is commutative and $X$ is an $M$-module, it follows from the
definition that $B^{\cy}(M,X)$ is a $B^{\cy}(M)$-module.

The realization functor from simplicial objects in $\cS^{\cJ}$ to
$\cS^{\cJ}$ sends degree-wise $\cJ$-equivalences to $\cJ$-equivalences
(this follows from~\cite[Corollary
18.7.5]{Hirschhorn_model}). By Lemma~\ref{lem:properties-of-flat},
$B^{\cy}(M,X)$ captures a well-defined homotopy type as soon as $M$ is
flat.

The cyclic bar construction admits a different description: The category
of commutative $\cJ$-space monoids is tensored over unbased simplicial sets
by setting 
\[ M \tensor K = \left| [n] \mapsto M^{\boxtimes K_{n}} \right|. \]
This uses that the $\boxtimes$-product is the coproduct in
$\cC\cS^{\cJ}$. The multiplication and unit of $M$ give the simplicial
structure maps. This tensor is part of the structure that makes
$\cC\cS^{\cJ}$ a simplicial model category (as defined for example 
in~\cite[Definition 9.1.6]{Hirschhorn_model}). The compatibility with
the model structure lifts from $\cS^{\cJ}$ because the cotensor is the
same for $\cS^{\cJ}$ and $\cC\cS^{\cJ}$~\cite[Proposition
9.9]{Sagave-S_diagram}.  Using $\Delta[1]/\partial\Delta[1]$ as a
model for $S^1$, we obtain:
\begin{lemma}\label{lem:Bcy-as-tensor}
  There is a natural isomorphism $B^{\cy}(M) \cong M \otimes S^1$ in
  $\cC\cS^{\cJ}$. The augmentation $\epsilon \: B^{\cy}(M) \to M$
  corresponds to the collapse map $S^1 \to {*}$.
\end{lemma}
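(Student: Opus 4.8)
The plan is to identify both sides as particular realizations of simplicial commutative $\cJ$-space monoids and match them levelwise. First I would recall that for a simplicial set $K$ the tensor $M \otimes K$ is by definition the realization of $[n] \mapsto M^{\boxtimes K_n}$, where $K_n$ is a finite set and $M^{\boxtimes K_n}$ means the $\boxtimes$-power indexed by that set, using that $\boxtimes$ is the coproduct in $\cC\cS^{\cJ}$. Taking $K = \Delta[1]/\partial\Delta[1]$ as the model for $S^1$, its set of $n$-simplices is $S^1_n = (\Delta[1]/\partial\Delta[1])_n$, which has one basepoint $0$-simplex together with the nondegenerate $n$-simplices of $\Delta[1]$ collapsed appropriately; a standard count gives $|S^1_n| = n+1$ after collapsing the two degenerate vertices of $\Delta[1]_n$ to one. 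Thus $M^{\boxtimes S^1_n}$ is, as a commutative $\cJ$-space monoid, naturally isomorphic to $M^{\boxtimes(n+1)}$. This is exactly the $\cJ$-space underlying $B^{\cy}_{\bullet}(M)$ in simplicial degree $n$.

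The key step is then to check that this levelwise isomorphism $M^{\boxtimes S^1_n} \cong M^{\boxtimes (n+1)}$ is simplicial, i.e.\ compatible with face and degeneracy maps. On the $M \otimes S^1$ side the simplicial structure in the external direction is induced by functoriality of $K \mapsto M^{\boxtimes K}$ applied to the simplicial operators of $S^1$, together with the multiplication and unit of $M$ filling in where coordinates collide (as recorded in the description of the tensor). On the $B^{\cy}_\bullet(M)$ side the faces $d_0$, $d_i$ ($0<i<n$), $d_n$ are, respectively, the right action, the internal multiplications, and the twist-then-left-action, while the degeneracies insert the unit; since $M$ is commutative, all actions are given by $\mu$ and the twist, so everything is built from $\mu$ and $\eta$. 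I would verify that under the standard combinatorial identification of $\Delta[1]/\partial\Delta[1]$ the simplicial operators of $S^1$ induce precisely these maps: the collapsed basepoint $0$-simplex accounts for the unit insertions in the degeneracies, the coface-induced identifications of adjacent coordinates give the interior multiplications, and the cyclic "wrap-around" coming from the two endpoints of $\Delta[1]$ being glued gives the last face $d_n$ via the twist. This is the standard identification $B^{\cy}(M) \cong M \otimes S^1$ familiar from the cyclic bar construction of a commutative monoid, transported to the $\boxtimes$-monoidal category $\cS^{\cJ}$; since $\boxtimes$ is the coproduct in $\cC\cS^{\cJ}$, the bookkeeping is formally identical to the classical case with $\otimes$ replaced by $\sqcup$.

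Finally, for the statement about the augmentation: the collapse map $S^1 \to *$ is the simplicial map sending every simplex to the point, which on $n$-simplices is the unique map $S^1_n \to *$; applying $M^{\boxtimes(-)}$ gives, in simplicial degree $n$, the fold map $M^{\boxtimes(n+1)} \to M$ induced by collapsing the indexing set, which is exactly the $n$-fold action $X \boxtimes M^{\boxtimes n} \to X$ with $X = M$, i.e.\ the augmentation $\epsilon$ from Definition \ref{def:cy-bar-constr}. Passing to realizations, naturality of the realization functor yields the claimed identification of $\epsilon$ with $M \otimes (S^1 \to *)$. The main obstacle I anticipate is purely organizational rather than conceptual: pinning down the chosen combinatorial model of $\Delta[1]/\partial\Delta[1]$ and checking the simplicial-operator bookkeeping carefully enough that the twist really produces $d_n$ and not some other face; but this is routine and the same computation underlies the well-known identity $HH_\bullet(R) = R \otimes S^1$ in the commutative algebra setting.
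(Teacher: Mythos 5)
Your proposal is correct and follows essentially the same route as the paper's proof: identify $S^1_n$ with an $(n+1)$-element set (the paper names the simplices $a_{n,k}\colon[n]\to[1]$ explicitly), transport this to an isomorphism $M^{\boxtimes S^1_n}\cong M^{\boxtimes(1+n)}$, and verify compatibility with the simplicial operators, with the augmentation induced by the collapse $S^1\to *$ becoming the iterated action map. The only caveat is a small imprecision in your description of where the unit insertions come from (they arise because the degeneracy operators on $S^1_\bullet$ are injective but not surjective on simplex sets, not from the basepoint per se), but this does not affect the argument.
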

\begin{proof}
  For $0 \leq k \leq n+1$, we let $a_{n,k}\colon [n] \to [1]$ be the
  $n$-simplex of $\Delta[1]$ with $a_{n,k}(i) = 0$ for $i < k$ and
  $a_{n,k}(i) = 1$ if $i \geq k$.  Passing to the quotient $S^1 =
  \Delta[1]/\partial\Delta[1]$ identifies the constant maps $a_{n,0}$
  and $a_{n,n+1}$ and gives an isomorphism $S^1_n \iso \{a_{n,0},
  \dots, a_{n,n}\}$.  The indicated ordering of $S^1_n$ induces an
  isomorphism $M^{\boxtimes S^1_n} \xrightarrow{\iso} M^{\boxtimes
    (1+n)} = B^{\cy}_{n}(M)$. One can check that this is an isomorphism
  of simplicial objects. For example, $\delta_2\colon [1] \to [2]$
  induces $d_2(a_{2,0}) = a_{1,0}$, $d_2(a_{2,1}) = a_{1,1}$,
  and $d_2(a_{2,2}) = a_{1,2}=a_{1,0}$.  Hence $d_2\colon
  M^{\boxtimes S^1_2} \to M^{\boxtimes S^1_1}$ coincides with
  $d_2\colon B^{\cy}_{2}(M)\to B^{\cy}_{1}(M)$ under the specified
  isomorphism.
\qed \end{proof}
\begin{remark}
  The previous description of $B^{\cy}(M)$ also reflects its cyclic
  structure: As explained for example
  in~\cite[7.1.2]{Loday_cyclic-homology},
  $S^1=\Delta[1]/\partial\Delta[1]$ extends to a cyclic set. Using this
  identification, it is easy to see that $[n]\mapsto M^{\boxtimes S^1_n}$ and
  $B^{\cy}_{\bullet}(M)$ are isomorphic as cyclic objects in
  $\cC\cS^{\cJ}$.
\end{remark}

\subsection{Topological Hochschild homology}
Let $A$ be a commutative symmetric ring spectrum. Implementing the
cyclic bar construction in the context of symmetric spectra provides a
cyclic commutative symmetric ring spectrum $B^{\cy}_{\bullet}(A) = \{[n] \mapsto
A^{\wedge(1+n)}\}$, with cyclic structure maps given as in
Definition~\ref{def:cy-bar-constr}. 

\begin{definition}\label{def:THH}
Let $A$ be a cofibrant commutative symmetric ring spectrum. Then we write 
$\THH_{\bullet}(A)=B^{\cy}_{\bullet}(A)$, and define the \emph{topological Hochschild homology} $\THH(A)$ to be the realization of this cyclic object.
\end{definition}
In this definition, the term ``realization'' can have two different meanings, both of which will be relevant for us. On the one hand, applying the diagonal functor from bisimplicial based sets to simplicial based sets in each spectrum degree of $\THH_{\bullet}(A)$ we get a realization internal to $\cC\Spsym$.  On the other hand, we may first form the geometric realization of the smash powers $A^{\wedge(1+n)}$ to get a cyclic object $[n]\mapsto |A^{\wedge(1+n)}|$ in the category of symmetric spectra of topological spaces. The geometric realization of this cyclic object is then a commutative symmetric ring spectrum of topological spaces that comes equipped with an action of the circle group. It will always be clear from the context (or not important) whether we view the realization $\THH(A)$ as a symmetric spectrum internal to simplicial sets or topological spaces.  

\begin{remark}
The reason for the cofibrancy condition in Definition~\ref{def:THH} is that we want $\THH$ to be a homotopy invariant construction. Since the coproduct of cofibrant objects in a general model category is homotopy invariant, and the realization of simplicial objects in symmetric spectra sends degreewise stable equivalences to stable equivalences, a stable equivalence $A\to B$ of cofibrant commutative symmetric ring spectra induces a stable equivalence $\THH(A)\to \THH(B)$. For a commutative symmetric ring spectrum $A$ that is not cofibrant, one should first choose a cofibrant replacement $A^{\mathrm{cof}}\xrightarrow{\simeq} A$, and then apply the cyclic bar construction to~$A^{\mathrm{cof}}$.
\end{remark}

Using the tensor structure of commutative symmetric ring spectra we
can identify $\THH(A)$ with $A \tensor S^1$, the tensor
of $A$ with the simplicial set $S^1 = \Delta[1]/\partial\Delta[1]$, in analogy with Lemma~\ref{lem:Bcy-as-tensor}.

We noted in Lemmas~\ref{lem:adj-J-spaces-SpSym} and  \ref{Qlem:adj-J-spaces-SpSym} 
that $\bS^{\cJ}\colon\cS^{\cJ}\to\Spsym$ is strong symmetric monoidal and that the induced functor of commutative monoids is a left Quillen functor. This immediately gives the next proposition.
\begin{proposition}
There is a natural isomorphism $\THH(\bS^{\cJ}[M])
\cong \bS^{\cJ}[B^{\cy}(M)]$ for each cofibrant commutative $\cJ$-space
monoid $M$.
\end{proposition}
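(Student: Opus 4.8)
The plan is to combine two facts established earlier in the excerpt: first, that $\bS^{\cJ}\colon \cC\cS^{\cJ}\to\cC\Spsym$ is strong symmetric monoidal (Lemma~\ref{lem:adj-J-spaces-SpSym}) and a left Quillen functor for the positive model structures (Lemma~\ref{Qlem:adj-J-spaces-SpSym}); and second, that both $\THH$ of a commutative symmetric ring spectrum and $B^{\cy}$ of a commutative $\cJ$-space monoid are realizations of cyclic bar constructions built entirely out of iterated monoidal products. The key observation is that a strong symmetric monoidal functor carries the cyclic bar construction in the source to the cyclic bar construction in the target, \emph{level-wise}: in simplicial degree $n$ we have $B^{\cy}_n(M) = M^{\boxtimes(1+n)}$, and since $\bS^{\cJ}$ is strong monoidal there is a natural isomorphism $\bS^{\cJ}[M^{\boxtimes(1+n)}]\iso \bS^{\cJ}[M]^{\sm(1+n)} = \THH_n(\bS^{\cJ}[M])$. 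One then checks that these isomorphisms are compatible with the simplicial (indeed cyclic) structure maps, which amounts to checking compatibility with the structural isomorphisms of a symmetric monoidal functor (associativity, unit, symmetry) — and this is exactly what it means for $\bS^{\cJ}$ to be strong \emph{symmetric} monoidal, so the face maps built from multiplications, the degeneracies built from units, and the last face map and cyclic operator built from the symmetry isomorphism all match up.

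Concretely, I would first record that $\bS^{\cJ}$, being strong symmetric monoidal, preserves the relevant algebraic structure: $\bS^{\cJ}[M]$ is a commutative symmetric ring spectrum, and for any $M$-bimodule the image is a module over $\bS^{\cJ}[M]$. Next I would exhibit the natural isomorphism of simplicial objects $\bS^{\cJ}[B^{\cy}_{\bullet}(M)] \iso B^{\cy}_{\bullet}(\bS^{\cJ}[M]) = \THH_{\bullet}(\bS^{\cJ}[M])$ by giving it level-wise as above and verifying compatibility with $d_i$, $s_i$, and $t_n$; here the cleanest route is to invoke the tensor description. Using Lemma~\ref{lem:Bcy-as-tensor} we have $B^{\cy}(M)\iso M\tensor S^1$ in $\cC\cS^{\cJ}$, and the remark after Definition~\ref{def:THH} identifies $\THH(A)\iso A\tensor S^1$ in $\cC\Spsym$; so it suffices to show that $\bS^{\cJ}$ commutes with the tensor over simplicial sets, i.e.\ that $\bS^{\cJ}[M\tensor K]\iso \bS^{\cJ}[M]\tensor K$ naturally in $M$ and $K$. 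This follows because the tensor is defined as a realization of $[n]\mapsto M^{\boxtimes K_n}$ in both categories, $\bS^{\cJ}$ is strong monoidal hence takes the coproduct-powers $M^{\boxtimes K_n}$ to $\bS^{\cJ}[M]^{\sm K_n}$, and $\bS^{\cJ}$, being a left adjoint, commutes with the colimits used to form the realization. Finally I would apply this with $K = S^1 = \Delta[1]/\partial\Delta[1]$ and chain together $\bS^{\cJ}[B^{\cy}(M)]\iso \bS^{\cJ}[M\tensor S^1]\iso \bS^{\cJ}[M]\tensor S^1\iso \THH(\bS^{\cJ}[M])$.

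The cofibrancy hypothesis on $M$ enters only to ensure that $\THH(\bS^{\cJ}[M])$ is formed according to Definition~\ref{def:THH}: since $\bS^{\cJ}$ is left Quillen for the positive model structures, $\bS^{\cJ}[M]$ is a cofibrant commutative symmetric ring spectrum when $M$ is a cofibrant commutative $\cJ$-space monoid, so the bare cyclic bar construction of $\bS^{\cJ}[M]$ is indeed $\THH(\bS^{\cJ}[M])$ with no further cofibrant replacement needed. I expect the main (though still routine) obstacle to be the bookkeeping in verifying that the level-wise isomorphisms are genuinely simplicial — specifically handling the last face map $d_n$ and the cyclic operator $t_n$, which are the maps where the symmetry isomorphism of the monoidal structure is used as in~\eqref{eq:sym-str-in-Bcy}; here one must use the full coherence of $\bS^{\cJ}$ as a \emph{symmetric} monoidal functor, not just a monoidal one. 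The tensor-over-simplicial-sets reformulation circumvents essentially all of this, reducing everything to the single clean statement that a strong symmetric monoidal left adjoint commutes with the simplicial tensor, so I would organize the proof around that reformulation and relegate the direct simplicial check to a remark.
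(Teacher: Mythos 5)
Your proposal is correct and follows the same route as the paper, which simply observes that the statement is immediate from $\bS^{\cJ}$ being strong symmetric monoidal (Lemma~\ref{lem:adj-J-spaces-SpSym}) and left Quillen on commutative monoids (Lemma~\ref{Qlem:adj-J-spaces-SpSym}); your write-up just makes the level-wise identification of cyclic bar constructions and the cofibrancy bookkeeping explicit. The tensor reformulation $B^{\cy}(M)\cong M\tensor S^1$ is a clean way to package the compatibility checks, but it is the same argument.
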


\subsection{The replete bar construction}\label{subsec:replete-bar}
We now discuss an extension of the cyclic bar construction of a
commutative $\cJ$-space monoid that will play a role in our
definition of logarithmic $\THH$ in Section~\ref{sec:log-thh}.
\begin{definition}
  A (commutative) $\cJ$-space monoid $M$ is \emph{grouplike} if the
  simplicial monoid $M_{h\cJ}$ is grouplike.
\end{definition}  
We recall from~\cite[\S 5]{Sagave_spectra-of-units} that the usual
group completion of homotopy commutative simplicial monoids lifts to
commutative $\cJ$-space monoids. To formulate this, we use that a
commutative $\cJ$-space monoid $M$ gives rise to an associative
simplicial monoid $M_{h\cJ}$, and write $B(M_{h\cJ}) =
B(*,M_{h\cJ},*)$ for the usual bar construction of $M_{h\cJ}$
with respect to the cartesian product.

\begin{proposition}[{\cite[Theorem 1.6]{Sagave_spectra-of-units}}]\label{prop:group-compl-model-str}
  The category $\cC\cS^{\cJ}$ admits a \emph{group completion model
    structure}. The cofibrations are the same as in the positive
   $\cJ$-model structure, and $M \to N$ is a weak
  equivalence if and only if the induced map $B(M_{h\cJ}) \to B(N_{h\cJ})$ is a
  weak equivalence. An object is fibrant if and only if it is
  grouplike and positive $\cJ$-fibrant.\qed
\end{proposition}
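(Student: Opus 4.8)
The plan is to realize the group completion model structure as a left Bousfield localization of the positive $\cJ$-model structure on $\CSJ$. That structure is cofibrantly generated, left proper and simplicial, and satisfies the additional hypotheses (cellularity, or combinatoriality coming from the local presentability of $\CSJ$) under which left Bousfield localization at a \emph{set} of maps exists; see e.g.~\cite[Chapters~3 and~4]{Hirschhorn_model}. Any such localization keeps the cofibrations of the positive $\cJ$-model structure and has, as its fibrant objects, exactly the positive $\cJ$-fibrant objects that are local with respect to the chosen set. So the whole problem reduces to producing a set $\mathcal{G}$ of maps in $\CSJ$ such that (i) a positive $\cJ$-fibrant commutative $\cJ$-space monoid is $\mathcal{G}$-local if and only if it is grouplike, and (ii) a map $f\colon M\to N$ is a $\mathcal{G}$-local equivalence if and only if $B(f_{h\cJ})\colon B(M_{h\cJ})\to B(N_{h\cJ})$ is a weak equivalence.

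For $\mathcal{G}$ I would take, for each object $(\bld{d_1},\bld{d_2})$ of $\cJ$ with $d_1>0$, a cofibrant approximation of a map $\C{\bld{d_1},\bld{d_2}}\to\C{\bld{d_1},\bld{d_2}}^{\,\mathrm{inv}}$ that formally adjoins a $\boxtimes$-inverse to the free generator, built as a pushout of free commutative $\cJ$-space monoids. The content of (i) is then the verification that, for a positive $\cJ$-fibrant $M$, the induced map $\mathrm{Map}(\C{\bld{d_1},\bld{d_2}}^{\,\mathrm{inv}},M)\to\mathrm{Map}(\C{\bld{d_1},\bld{d_2}},M)$ is a weak equivalence precisely when translation by the corresponding class is invertible on $M_{h\cJ}$; letting the bidegree run over all $(\bld{d_1},\bld{d_2})$ with $d_1>0$ then says exactly that $\pi_0(M_{h\cJ})$ is a group, i.e.\ that $M$ is grouplike. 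This is a homotopical computation with $\boxtimes$-powers of free commutative $\cJ$-space monoids and with the description of $M_{h\cJ}$ as an $E_\infty$ space over $B\cJ$ recalled in Section~\ref{sec:Spsym-SJ}, using Lemma~\ref{lem:properties-of-flat} to keep the relevant $\boxtimes$-products homotopy invariant.

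For (ii) I would proceed as follows. By Hirschhorn's characterization of local equivalences, $f$ is a $\mathcal{G}$-local equivalence if and only if $\mathrm{Map}(f,X)$ is a weak equivalence for every $\mathcal{G}$-local $X$, and by (i) these $X$ are exactly the grouplike positive $\cJ$-fibrant objects. One direction is comparatively easy: $M\mapsto B(M_{h\cJ})$ is a homotopy functor on cofibrant objects, since $(-)_{h\cJ}$ is homotopy invariant and lax monoidal and the bar construction of simplicial monoids preserves levelwise equivalences, and by the group completion theorem every map in $\mathcal{G}$ is inverted by $B((-)_{h\cJ})$; hence so is every $\mathcal{G}$-local equivalence. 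For the converse one uses that mapping a cofibrant $M$ into a grouplike $X$ depends only on the group completion of $M$, together with the group completion theorem in the form $B(M_{h\cJ})\simeq B((M^{\gp})_{h\cJ})$ and the fact that a grouplike object $N$ satisfies $N_{h\cJ}\simeq\Omega B(N_{h\cJ})$; combining these shows that $B(f_{h\cJ})$ being a weak equivalence forces $\mathrm{Map}(f,X)$ to be one for every grouplike fibrant $X$. Granting (i) and (ii), the localized model structure has the asserted weak equivalences and fibrant objects, and its cofibrations agree with those of the positive $\cJ$-model structure by construction.

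The main obstacle is (ii). Property (i) is a concrete, if slightly fiddly, manipulation of free commutative $\cJ$-space monoids and their $\boxtimes$-products; but the class of $B((-)_{h\cJ})$-equivalences is strictly larger than the class of $\cJ$-equivalences --- that is the very point of this model structure --- so the identification in (ii) cannot be extracted from the localization machinery alone. It rests on the group completion theorem for simplicial monoids and on a careful analysis of how the homotopy colimit functor $(-)_{h\cJ}$, the bar construction, and the localized fibrant replacement (which one must in addition show implements the group completion of~\cite{Sagave_spectra-of-units}) interact; controlling this interplay --- in particular, showing that mapping into a grouplike object factors through group completion at the homotopy level --- is the heart of the argument.
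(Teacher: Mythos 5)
The paper offers no proof of this statement: it is imported from \cite[Theorem 1.6]{Sagave_spectra-of-units}, whose argument (modelled on the $\cI$-space case in \cite{Sagave-S_group-compl}) is indeed a left Bousfield localization of the positive $\cJ$-model structure of the kind you describe. So your architecture is the right one, and the formal consequences you draw from localization theory (cofibrations unchanged, fibrant objects $=$ local positive fibrant objects), together with your step (i), are in order modulo checking that your pushout model of $\C{\bld{d_1},\bld{d_2}}^{\,\mathrm{inv}}$ is homotopically meaningful.

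The genuine gap is step (ii), and it infects both directions. For the ``easy'' direction, knowing that $B((-)_{h\cJ})$ inverts the generating set $\mathcal{G}$ does not imply that it inverts every $\mathcal{G}$-local equivalence: $B((-)_{h\cJ})$ is not corepresentable, and local equivalences are not built from $\mathcal{G}$ by homotopy colimits in a way that an arbitrary $\mathcal{G}$-inverting homotopy functor would automatically respect. For the converse, your appeal to ``mapping a cofibrant $M$ into a grouplike $X$ depends only on the group completion of $M$'' is essentially the universal property that the model structure is meant to encode, so it cannot be assumed here without circularity. Both directions in fact reduce to a single claim: the fibrant replacement $M\to LM$ in the localized structure induces a weak equivalence $B(M_{h\cJ})\to B((LM)_{h\cJ})$, i.e., it computes the classical group completion. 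That is the real content of the theorem; in \cite{Sagave_spectra-of-units} and \cite{Sagave-S_group-compl} it is established by constructing an explicit bar-construction/$\Gamma$-space model of the group completion, invoking the group completion theorem (cf.\ \cite[Proposition Q.1]{Friedlander-M_filtrations}), and comparing that model with the localized fibrant replacement. Without that comparison your outline produces \emph{a} localized model structure, but does not identify its weak equivalences with the $B((-)_{h\cJ})$-equivalences.
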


An important consequence of the group completion model structure is
that its fibrant replacement provides a functorial group completion
$\eta_M \colon M \cof M^{\gp}$ for commutative $\cJ$-space monoids:
The commutative $\cJ$-space monoid $M^{\gp}$ is grouplike, and
$\eta_M$ induces a group completion $M_{h\cJ} \to (M^{\gp})_{h\cJ}$ of
$E_{\infty}$ spaces in the usual sense. We emphasize that the map $\eta_M$  is assumed to be a cofibration, so that $M^{\gp}$ is automatically cofibrant if $M$ is. 

\begin{example}[{\cite[Example
  5.8]{Sagave_spectra-of-units}}]
  Let $\C{\bld{d_1},\bld{d_2}}$ be the free commutative $\cJ$-space
  monoid on a generator in bidegree $(\bld{d_1},\bld{d_2})$ with $d_1
  > 0$, as defined in Example~\ref{ex:free-J-space}.  The map
  $\C{\bld{d_1},\bld{d_2}}_{h\cJ} \to
  (\C{\bld{d_1},\bld{d_2}}^{\gp})_{h\cJ}$ is weakly equivalent to the
  usual group completion map of $E_{\infty}$ spaces $\coprod_{k\geq
    0}B\Sigma_k \to QS^0$.
\end{example}

\begin{construction}\label{constr:rep-bar}
Let $M$ be a commutative $\cJ$-space monoid and let 
\[ \xymatrix{M \ar@{ >->}[r]^{\sim} & M' \ar@{->>}[r] & M^{\gp}}\] be
a functorial factorization of the group completion map $\eta_M$
into an acyclic cofibration followed by a fibration, in the positive
$\cJ$-model structure. The natural augmentation from the cyclic bar
construction to the constant cyclic object functor induces a
commutative diagram of cyclic objects
\[\xymatrix@-1pc{
  B^{\cy}_{\bullet}(M) \ar[r] \ar[d] & B^{\rep}_{\bullet}(M)\ar[r]  \ar[d] &  B^{\cy}_{\bullet}(M^{\gp}) \ar[d] \\
  M \ar[r]& M' \ar[r]& M^{\gp},
}
\]
where $B^{\rep}_{\bullet}(M)$ is defined as the pullback of $  M' \to M^{\gp} \ot B^{\cy}_{\bullet}(M^{\gp})$
and the map $B^{\cy}_{\bullet}(M) \to  B^{\rep}_{\bullet}(M)$ is given by the universal property
of the pullback. 
\end{construction}
\begin{definition}
  Let $M$ be a commutative $\cJ$-space monoid.  The \emph{replete bar
    construction} $B^{\rep}(M)$ is the realization of the cyclic
  object $B^{\rep}_{\bullet}(M)$, and the induced map $\rho \colon
  B^{\cy}(M) \to B^{\rep}(M)$ is called the \emph{repletion map}.
\end{definition}
By definition, the replete bar construction $B^{\rep}(M)$ is a
functorial model for the homotopy pullback of $ M \to M^{\gp} \ot
B^{\cy}(M^{\gp})$. The $\cI$-space version of this definition was
considered in~\cite[Definition~8.10]{Rognes_TLS}.

The fact that $M\to M^{\gp}$ is a $\cJ$-equivalence if $M$ is
grouplike implies the following statement.
\begin{lemma}\label{lem:Bcy--Brep-equiv-for-gplike}
  The repletion map $\rho\colon B^{\cy}(M) \to B^{\rep}(M)$ is a
  $\cJ$-equivalence if $M$ is a grouplike cofibrant commutative
  $\cJ$-space monoid.\qed
\end{lemma}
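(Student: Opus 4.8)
The plan is to exploit the identification $B^{\cy}(M) \cong M \otimes S^1$ from Lemma~\ref{lem:Bcy-as-tensor} in order to recognise $\rho$ as the comparison map of a homotopy cartesian square whose two horizontal legs are visibly $\cJ$-equivalences.

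First I would record the commutative square
\[
\xymatrix@-1pc{
B^{\cy}(M) \ar[r] \ar[d]_{\epsilon} & B^{\cy}(M^{\gp}) \ar[d]^{\epsilon^{\gp}} \\
M \ar[r]^{\eta_M} & M^{\gp}
}
\]
of commutative $\cJ$-space monoids, obtained by applying $B^{\cy}(-)$ to the group completion $\eta_M$ and using the naturality of the augmentation $\epsilon$. By Lemma~\ref{lem:Bcy-as-tensor} this square is the image of the pair $(\eta_M, S^1 \to {*})$ under the tensor bifunctor $\cC\cS^{\cJ} \times \cS \to \cC\cS^{\cJ}$, with $\epsilon$ corresponding to the collapse map.

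Next, since $M$ is grouplike the group completion map $\eta_M \colon M \to M^{\gp}$ is a $\cJ$-equivalence, and since $M$ is cofibrant so is $M^{\gp}$ (recall $\eta_M$ is chosen to be a cofibration). The functor $- \otimes S^1$ is left Quillen on the simplicial model category $\cC\cS^{\cJ}$, being the tensor with the cofibrant simplicial set $S^1$, hence it preserves $\cJ$-equivalences between cofibrant objects; thus the top horizontal map $B^{\cy}(M) \to B^{\cy}(M^{\gp})$ is a $\cJ$-equivalence as well. A commutative square in which both horizontal maps are weak equivalences is homotopy cartesian (see e.g.~\cite[\S 13]{Hirschhorn_model}), so the square above is homotopy cartesian.

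Finally, by construction $B^{\rep}(M)$ is a functorial model for the homotopy pullback of the cospan $M \xrightarrow{\eta_M} M^{\gp} \xleftarrow{\epsilon^{\gp}} B^{\cy}(M^{\gp})$, and the repletion map $\rho \colon B^{\cy}(M) \to B^{\rep}(M)$ is precisely the comparison map induced by $\epsilon$ together with $B^{\cy}(M) \to B^{\cy}(M^{\gp})$. Since the square is homotopy cartesian, this comparison map is a $\cJ$-equivalence, which is the claim. The only point needing care is the compatibility between the levelwise strict pullback used in Construction~\ref{constr:rep-bar} (formed along the fibration $M' \fib M^{\gp}$ in a factorization $M \cof M' \fib M^{\gp}$) and the realized homotopy pullback; this uses right properness of the positive projective $\cJ$-model structure together with the fact that realization preserves degreewise $\cJ$-equivalences, and it is exactly the content of the remark following the definition of $B^{\rep}(M)$, so I do not expect any genuine obstacle there.
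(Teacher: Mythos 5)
Your proof is correct and is essentially the paper's (unwritten) argument made explicit: the paper only remarks that $M \to M^{\gp}$ is a $\cJ$-equivalence for grouplike $M$ and leaves the rest to the reader, and your homotopy-cartesian-square formulation is exactly the intended elaboration, with the top map handled by the left Quillen functor $-\otimes S^1$ and the comparison to $B^{\rep}(M)$ justified by the paper's own remark that it models the homotopy pullback of $M \to M^{\gp} \leftarrow B^{\cy}(M^{\gp})$. No gaps.
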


\subsection{General repletion}
We now introduce a more general notion of \emph{repletion}, which can be
viewed as a relative version of the group completion. Repleteness is a
topological adaption of the algebraic notion of an \emph{exact
  homomorphism} of integral
monoids~\cite[Definition~4.6]{Kato_logarithmic-structures},
compare~\cite[Definition~3.6]{Rognes_TLS}.

\begin{definition}\label{def:repletion}
  Let $\epsilon \: N \to M$ be a map of commutative $\cJ$-space
  monoids. The \emph{repletion} $N^{\rep}\to M$ of $N$ over $M$ is
  defined by factoring $\epsilon$ in the group completion model
  structure as an acyclic cofibration followed by a fibration:
  \[\xymatrix{N\, \ar@{ >->}^-{\sim}[r] & N^{\rep}
    \ar@{->>}[r]& M}. \]
  We write $\rho_N\colon N \to N^{\rep}$ for the \emph{repletion map}, defined by the factorization. 
\end{definition}

Since the group completion model structure is a left Bousfield
localization of the positive  $\cJ$-model structure, it
follows from~\cite[Proposition 3.3.5]{Hirschhorn_model} that
$N^{\rep}$ is well-defined up to $\cJ$-equivalence under $N$ and over
$M$. Repletion relative to the terminal object in $\cC\cS^{\cJ}$ is group completion.

The replete bar construction introduced above can be viewed as a
special case of the general repletion:
\begin{proposition}\label{prop:replete-bar-as-repletion}
There is a chain of $\cJ$-equivalences under $B^{\cy}(M)$ and over $M'$
connecting the replete bar construction $B^{\rep}(M)$ to the repletion $B^{\cy}(M)^{\rep}$
of the augmentation $B^{\cy}(M) \to M$. 
\end{proposition}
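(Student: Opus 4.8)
The plan is to recognize $B^{\rep}(M)$, together with the repletion map $\rho\colon B^{\cy}(M)\to B^{\rep}(M)$ and its canonical map to $M'$, as a model for a repletion of the augmentation $B^{\cy}(M)\to M$, and then to invoke the essential uniqueness of repletions up to $\cJ$-equivalence under the source and over the target (noted after Definition~\ref{def:repletion}) together with the fact that $M\rightarrowtail M'$ is a $\cJ$-equivalence. Recall that $B^{\rep}(M)$ is a functorial model for the homotopy pullback of $M\xrightarrow{\eta_M}M^{\gp}\ot B^{\cy}(M^{\gp})$; after replacing $B^{\cy}(M^{\gp})\to M^{\gp}$ by a positive $\cJ$-fibration out of a $\cJ$-equivalent object, which changes $B^{\rep}(M)$ only up to $\cJ$-equivalence, we may assume that $B^{\rep}(M)\to M'$ is a positive $\cJ$-fibration. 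It then suffices to verify (a) that $\rho$ is a weak equivalence in the group completion model structure and (b) that $B^{\rep}(M)\to M'$ is a fibration in the group completion model structure; factoring $\rho$ functorially into a cofibration followed by an acyclic fibration in the positive $\cJ$-model structure then exhibits $B^{\rep}(M)$ as $\cJ$-equivalent to an honest repletion, under $B^{\cy}(M)$ and over $M'$.

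For (b) I would argue that $M^{\gp}$ and $B^{\cy}(M^{\gp})$ are grouplike — the former by construction, the latter because the cyclic bar construction of a grouplike commutative $\cJ$-space monoid is again grouplike (one checks this on underlying spaces, using $(B^{\cy}(N))_{h\cJ}\simeq B^{\cy}(N_{h\cJ})$ and the fact that the cyclic bar construction does not change $\pi_0$) — hence both are fibrant in the group completion model structure by Proposition~\ref{prop:group-compl-model-str}. Therefore the positive $\cJ$-fibration $B^{\cy}(M^{\gp})\to M^{\gp}$ is a fibration between group-completion-fibrant objects, hence a fibration in the group completion model structure, and its base change $B^{\rep}(M)\to M'$ along $M'\to M^{\gp}$ is a group completion fibration since fibrations are stable under pullback.

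For (a) I would use that the group completion model structure, being a left Bousfield localization of the right proper positive $\cJ$-model structure, is itself right proper, and that $\eta_M$ and $M\rightarrowtail M'$ are weak equivalences in it, hence so is $M'\to M^{\gp}$. Right properness then shows that the base change $B^{\rep}(M)\to B^{\cy}(M^{\gp})$ of the group completion weak equivalence $M'\to M^{\gp}$ along the positive $\cJ$-fibration $B^{\cy}(M^{\gp})\to M^{\gp}$ is a group completion weak equivalence, while the composite $B^{\cy}(M)\xrightarrow{\rho}B^{\rep}(M)\to B^{\cy}(M^{\gp})$ equals $B^{\cy}(\eta_M)=\eta_M\otimes S^1$, which is a group completion acyclic cofibration because $(-)\otimes S^1$ is a left Quillen endofunctor of the (simplicial) group completion model structure. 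Two-out-of-three then gives (a).

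The main obstacle, as I see it, is the bookkeeping across the two model structures, and in particular ensuring that the concluding uniqueness argument produces genuine $\cJ$-equivalences rather than only group completion equivalences. For this I would appeal to the standard description of fibrations in a left Bousfield localization of a right proper model category: a map is a group completion fibration precisely when it is a positive $\cJ$-fibration whose square to the group completions of source and target is homotopy cartesian. Applied to $B^{\rep}(M)\to M'$ and to any repletion $B^{\cy}(M)^{\rep}\to M$, this exhibits each as the homotopy pullback of its group completion over $M^{\gp}$; since the group completions $B^{\rep}(M)^{\gp}$ and $(B^{\cy}(M)^{\rep})^{\gp}$ are both identified with $B^{\cy}(M^{\gp})$, and a group completion equivalence between grouplike objects is a $\cJ$-equivalence, the comparison map between these homotopy pullbacks is a $\cJ$-equivalence, which is what converts uniqueness up to group completion equivalence into uniqueness up to $\cJ$-equivalence. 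Subsidiary points to confirm along the way are the right properness of the group completion model structure and the fact that $(-)\otimes S^1$ stays left Quillen after this localization, both of which follow from general properties of left Bousfield localizations of simplicial proper model categories.
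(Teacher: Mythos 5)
Your overall strategy --- exhibit $B^{\rep}(M)$ directly as a group-completion acyclic cofibration followed by a group-completion fibration and then invoke essential uniqueness of such factorizations --- is reasonable, and several of your intermediate points are correct and also appear in the paper's Lemma~\ref{lem:BcyMgp-vs-BcyM-gp}: that $B^{\cy}(M^{\gp})\cong M^{\gp}\otimes S^1$ is grouplike, that a positive $\cJ$-fibration between group-completion fibrant objects is a group-completion fibration, and that $\eta_M\otimes S^1$ is a group-completion acyclic cofibration because the localized model structure is still simplicial. The gap is in your step (a) and again in your closing paragraph: you assert that the group completion model structure is right proper because it is a left Bousfield localization of a right proper model structure. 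Left Bousfield localization preserves \emph{left} properness but in general destroys \emph{right} properness, and right properness of the group completion model structure is neither established in the references nor to be expected. Consequently neither your base-change argument showing that $B^{\rep}(M)\to B^{\cy}(M^{\gp})$ is a group-completion weak equivalence, nor the characterization of group-completion fibrations as positive $\cJ$-fibrations whose square over the group completions is homotopy cartesian (which also presupposes right properness of the localization), is justified as stated.

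This is exactly the point the paper handles with extra care: Lemma~\ref{lem:repletion-as-pullback} and Corollary~\ref{cor:repletion-as-hty-pb} establish the homotopy-pullback description of the repletion only for \emph{virtually surjective} maps, substituting for right properness an application of the Bousfield--Friedlander theorem to the bisimplicial bar constructions, where virtual surjectivity is what guarantees the required Kan fibration on vertical path components. The paper's proof of the proposition is then short: the augmentation $B^{\cy}(M)\to M$ is virtually surjective because it has a multiplicative section, so Corollary~\ref{cor:repletion-as-hty-pb} applies, and Lemma~\ref{lem:BcyMgp-vs-BcyM-gp} identifies $B^{\cy}(M^{\gp})$ with $B^{\cy}(M)^{\gp}$. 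To repair your argument you would replace the appeal to right properness by this Bousfield--Friedlander argument, which requires you to observe --- as your proof never does --- that the augmentation $B^{\cy}(M)\to M$ is virtually surjective.
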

We prove the proposition at the end of this section. The reason why we
do not simply define the replete bar construction in terms of the
general repletion is that Construction~\ref{constr:rep-bar} provides a
cyclic object $B^{\rep}_{\bullet}(M)$ with realization $B^{\rep}(M)$.
This extra structure on $B^{\rep}(M)$ is not visible on
$B^{\cy}(M)^{\rep}$. The general notion of repletion is nonetheless
useful, for example for the proofs in
Section~\ref{subsec:logTHH-log}.

In general, fibrations and acyclic cofibrations in a left Bousfield
localization such as the group completion model structure are
difficult to understand. However, we can give a simpler description of
the repletion of maps that are \emph{virtually surjective}, in the
sense of the following $\cJ$-space variant of~\cite[Definition 8.1]{Rognes_TLS}.  
\begin{definition}
  A map $\epsilon \colon N \to M$ of commutative $\cJ$-space monoids
  is \emph{virtually surjective} if it induces a surjective homomorphism of
  abelian groups $\pi_0(N^{\gp})_{h\cJ}\to \pi_0(M^{\gp})_{h\cJ}$.
\end{definition}

\begin{lemma}\label{lem:repletion-as-pullback}
  Let $\epsilon \: N \to M$ be a virtually surjective map of
  commutative $\cJ$-space monoids, and consider the diagram of solid
  arrows
\begin{equation}
\xymatrix{
N \ar[rr]^{\rho_N} \ar[d]_{\eta_N} && N^{\rep} \ar@{-->}[d] \ar[rr] && M \ar[d]^{\eta_M} \\
N^{\gp} \ar@{ >->}[rr]^-{\sim} && (N^{\gp})' \ar@{->>}[rr] && M^{\gp},
}
\end{equation}
where the bottom row is a factorization in the positive $\cJ$-model
structure. Then there exists a map $N^{\rep} \to (N^{\gp})'$ such that
the diagram commutes, and for any such map the right hand square is
homotopy cartesian with respect to the positive $\cJ$-model structure.
\end{lemma}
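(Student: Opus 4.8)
The plan is to produce the dotted arrow by a lifting argument in the group completion model structure, and then to recognize the right-hand square as a homotopy pullback by comparing homotopy fibers over $M$, with the hypothesis of virtual surjectivity entering at the level of path components. For the existence of the lift I would first observe that $(N^{\gp})'$ is fibrant in the group completion model structure: it is grouplike, being $\cJ$-equivalent to the grouplike object $N^{\gp}$, and it is positive $\cJ$-fibrant, being the source of a positive $\cJ$-fibration onto the positive $\cJ$-fibrant object $M^{\gp}$. Since a map between fibrant objects in a left Bousfield localization is a fibration there whenever it is a fibration in the ambient model structure, the map $(N^{\gp})'\fib M^{\gp}$ is a fibration in the group completion model structure. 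As $\rho_N\colon N\cof N^{\rep}$ is an acyclic cofibration there, and the square with top edge $N\xrightarrow{\eta_N}N^{\gp}\to(N^{\gp})'$ and bottom edge $N^{\rep}\to M\xrightarrow{\eta_M}M^{\gp}$ commutes (because $N\to N^{\rep}\to M$ equals $\epsilon$ and $\eta$ is natural), there is a diagonal $\ell\colon N^{\rep}\to(N^{\gp})'$; by construction any such $\ell$ makes both squares in the statement commute.

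Next I would record that any such $\ell$ is a group completion of $N^{\rep}$: the composite $\ell\circ\rho_N$ equals the group completion $N\xrightarrow{\eta_N}N^{\gp}\to(N^{\gp})'$, so $\ell$ is a weak equivalence in the group completion model structure since $\rho_N$ is, and $(N^{\gp})'$ is grouplike. To prove that the right-hand square is homotopy cartesian, by right properness of the positive $\cJ$-model structure and since $(N^{\gp})'\fib M^{\gp}$ is a fibration, it suffices to show the canonical map $g\colon N^{\rep}\to P:=M\times_{M^{\gp}}(N^{\gp})'$ is a $\cJ$-equivalence, i.e.\ that $g_{h\cJ}$ is a weak equivalence. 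As $N^{\rep}\to M$ and $P\to M$ come from positive $\cJ$-fibrations, this can be checked on homotopy fibers over each point $m\in M_{h\cJ}$.

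The homotopy fiber of $P_{h\cJ}\to M_{h\cJ}$ over $m$ is the homotopy fiber of the map of grouplike $E_\infty$ spaces $(N^{\gp})'_{h\cJ}\to M^{\gp}_{h\cJ}$ over the image of $m$ under the group completion $M_{h\cJ}\to M^{\gp}_{h\cJ}$; virtual surjectivity says exactly that this image lies in the image of $\pi_0(N^{\gp})'_{h\cJ}\to\pi_0 M^{\gp}_{h\cJ}$, so the homotopy fiber is nonempty and, the base and total space being grouplike, equivalent after a translation to the homotopy fiber $F$ over the unit component, independently of $m$. (Without virtual surjectivity this step fails, since $P\to M$ may be empty over components of $M$ not in the image.)

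The hard part will be to identify the homotopy fiber of $N^{\rep}_{h\cJ}\to M_{h\cJ}$ over $m$ with $F$, compatibly with $g$. For this one uses that $N^{\rep}\to M$ is a fibration in the group completion model structure: unwinding the right lifting property against the generating acyclic cofibrations of that structure --- pushout--products of boundary inclusions $\partial\Delta[n]\to\Delta[n]$ with the group completion maps of free commutative $\cJ$-space monoids --- shows that the homotopy fibers of $N^{\rep}_{h\cJ}\to M_{h\cJ}$ are already grouplike, and then $\rho_N$ together with $\ell$ identifies them with the homotopy fibers of $(N^{\gp})'_{h\cJ}\to M^{\gp}_{h\cJ}$, that is, with $F$. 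Equivalently, one invokes the characterization of fibrations in a left Bousfield localization to see that the square with rows $N^{\rep}\to(N^{\rep})^{\gp}$ and $M\to M^{\gp}$ is homotopy cartesian in the positive $\cJ$-model structure, and transports this along the $\cJ$-equivalence $(N^{\rep})^{\gp}\simeq(N^{\gp})'$ over $M^{\gp}$ given by $\ell$. Throughout, one passes to flat and fibrant replacements where needed so that the pullbacks, realizations, and homotopy colimits over $\cJ$ that occur compute the intended homotopy types, using Lemma~\ref{lem:properties-of-flat}.
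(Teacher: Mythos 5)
Your setup is sound and coincides with the paper's up to the key reduction: the lift exists because $(N^{\gp})'\fib M^{\gp}$ is a fibration in the group completion model structure (a positive fibration between fibrant objects of that structure) while $\rho_N$ is an acyclic cofibration there, and by right properness it suffices to show that $g\colon N^{\rep}\to P=M\times_{M^{\gp}}(N^{\gp})'$ is a $\cJ$-equivalence. The gap is in how you justify this last statement. Your approach (b) appeals to ``the characterization of fibrations in a left Bousfield localization'' to assert that the square with rows $N^{\rep}\to(N^{\rep})^{\gp}$ and $M\to M^{\gp}$ is homotopy cartesian; no such characterization is available for fibrations between non-fibrant (here: non-grouplike) objects of a general left Bousfield localization --- that Bousfield--Friedlander-style description of the fibrations requires extra hypotheses and is, in the present situation, essentially the content of the lemma being proved. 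Tellingly, virtual surjectivity plays no role in your version of this step, whereas the lemma fails without it. Your approach (a) is also not available as stated: the generating acyclic cofibrations of the group completion model structure are produced by the localization machinery and are \emph{not} the pushout-products of boundary inclusions with the group completion maps of free objects, so one cannot ``unwind'' the lifting property fiberwise; and even granting that the homotopy fibers of $N^{\rep}_{h\cJ}\to M_{h\cJ}$ are grouplike, identifying them with $F$ ``via $\rho_N$ and $\ell$'' is precisely the assertion to be proved, not a consequence of having the two maps in hand.

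The paper closes this gap by a different comparison: both $N^{\rep}\to M$ and $P\to M$ are fibrations in the group completion model structure, so the induced map $g$ over $M$ is a $\cJ$-equivalence as soon as it is a weak equivalence in that structure; by two-out-of-three this reduces to showing that $P\to (N^{\gp})'$ is a group-completion weak equivalence, i.e.\ that $B(P_{h\cJ})\to B((N^{\gp})'_{h\cJ})$ is a weak equivalence. That is proved by applying the Bousfield--Friedlander realization theorem to the levelwise homotopy cartesian square of bisimplicial sets with rows $B_{\bullet}(P_{h\cJ})\to B_{\bullet}(M_{h\cJ})$ and $B_{\bullet}((N^{\gp})'_{h\cJ})\to B_{\bullet}((M^{\gp})_{h\cJ})$: grouplikeness of the bottom row supplies the $\pi_*$-Kan condition, and virtual surjectivity supplies the Kan fibration on vertical path components. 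If you want to keep your fiberwise outline, this bar-construction argument is the missing ingredient that legitimizes the comparison of homotopy fibers.
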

\begin{proof}
  The map $(N^{\gp})' \to M^{\gp}$ is a fibration in the group
  completion model structure by~\cite[Proposition
  3.3.16]{Hirschhorn_model}. Hence the lifting axioms in the group
  completion model structure provide the desired map $N^{\rep} \to
  (N^{\gp})'$.  The base change of $(N^{\gp})' \to M^{\gp}$ along
  $\eta_M$ provides a map $N' \to M$ that is also a fibration in the
  group completion model structure. Since $N^{\rep} \to M$ has this
  property by construction, it follows from~\cite[Proposition
  3.3.5]{Hirschhorn_model} that the induced map $N^{\rep}\to N'$ is a
  $\cJ$-equivalence as soon as it is a weak equivalence in the group
  completion model structure. The two out of three axiom for weak
  equivalences reduces to showing that $N' \to (N^{\gp})'$ is a weak
  equivalence in the group completion model structure. We claim that
  an application of the Bousfield--Fried\-lander Theorem~\cite[Theorem
  B.4]{Bousfield-F_Gamma-bisimplicial}, similar to the proof
  of~\cite[Lemma 5.3]{Bousfield-F_Gamma-bisimplicial}, shows that the
  induced square
\[\xymatrix@-1pc{
  B(N'_{h\cJ}) \ar[r] \ar[d] & B(M_{h\cJ}) \ar[d]^{\sim}\\
  B((N^{\gp})'_{h\cJ}) \ar[r] & B((M^{\gp})_{h\cJ}) }\] is homotopy
cartesian. For this we note that the square in question results from a
pointwise homotopy cartesian square of bisimplicial sets, that the
bisimplicial sets $B_{\bullet}((N^{\gp})'_{h\cJ})$ and
$B_{\bullet}((M^{\gp})_{h\cJ})$ satisfy the $\pi_*$-Kan condition
because the simplicial monoids $(N^{\gp})'_{h\cJ} $ and $
(M^{\gp})_{h\cJ}$ are grouplike, and that the virtual surjectivity of
$\epsilon$ implies that $B_{\bullet}((N^{\gp})'_{h\cJ})\to
B_{\bullet}((M^{\gp})_{h\cJ})$ induces a Kan fibration on vertical
path components.  Hence~\cite[Theorem
B.4]{Bousfield-F_Gamma-bisimplicial} applies, and the claim of the
lemma follows.
\qed \end{proof}
The next corollary relates the repletion defined here to the
$\cJ$-space version of the notion used in~\cite[\S 8]{Rognes_TLS},
compare also the discussion in~\cite[\S 5.10]{Sagave-S_group-compl}.
\begin{corollary}\label{cor:repletion-as-hty-pb}
 Let $\epsilon \: N \to M$ be a virtually surjective map in $\cC\cS^{\cJ}$. Then the repletion $N^{\rep}$ is 
$\cJ$-equivalent to the homotopy pullback of $N^{\gp} \to M^{\gp} \ot M$ with respect to the positive $\cJ$-model structure. \qed
\end{corollary}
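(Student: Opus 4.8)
The plan is to deduce the corollary directly from Lemma~\ref{lem:repletion-as-pullback}. Recall that in that lemma we fixed a factorization $N^{\gp} \xrightarrow{\ \sim\ } (N^{\gp})' \fib M^{\gp}$ in the positive $\cJ$-model structure and produced a commuting diagram whose right hand square, with upper left corner $N^{\rep}$, lower left corner $(N^{\gp})'$, upper right corner $M$, and lower right corner $M^{\gp}$, is homotopy cartesian with respect to the positive $\cJ$-model structure. The corollary is essentially a repackaging of this assertion.

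First I would observe that, since the positive $\cJ$-model structure is right proper and the map $(N^{\gp})' \fib M^{\gp}$ is a fibration, the strict pullback $P = (N^{\gp})' \times_{M^{\gp}} M$ is a model for the homotopy pullback of $(N^{\gp})' \to M^{\gp} \ot M$. Because $N^{\gp} \xrightarrow{\ \sim\ } (N^{\gp})'$ is a $\cJ$-equivalence over $M^{\gp}$, the object $P$ is equally a model for the homotopy pullback of $N^{\gp} \to M^{\gp} \ot M$: replacing one object in a cospan by a $\cJ$-equivalent one, compatibly with the structure maps, does not change the homotopy pullback up to $\cJ$-equivalence.

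Next, the universal property of $P$ together with the commutativity of the diagram in Lemma~\ref{lem:repletion-as-pullback} yields a canonical map $N^{\rep} \to P$ under $N$, and the statement that the right hand square there is homotopy cartesian is precisely the statement that this comparison map is a $\cJ$-equivalence. Precomposing with the repletion map $\rho_N \colon N \to N^{\rep}$ then exhibits $N^{\rep}$ as $\cJ$-equivalent to the homotopy pullback of $N^{\gp} \to M^{\gp} \ot M$, which is the claim.

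There is no serious obstacle in this argument; the only point that warrants a remark is the compatibility used in the second paragraph, namely that one may pass freely between $(N^{\gp})'$ and $N^{\gp}$ when describing the homotopy pullback. This is immediate from right properness of the positive $\cJ$-model structure, or equivalently from the invariance of homotopy pullbacks under objectwise $\cJ$-equivalences of cospans.
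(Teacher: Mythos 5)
Your argument is correct and is exactly the unwinding the paper intends: the corollary is stated without proof as an immediate consequence of Lemma~\ref{lem:repletion-as-pullback}, and you have correctly supplied the two routine points (that homotopy cartesianness of the right-hand square, given that $(N^{\gp})'\fib M^{\gp}$ is a fibration in a proper model structure, is the statement that $N^{\rep}\to (N^{\gp})'\times_{M^{\gp}}M$ is a $\cJ$-equivalence, and that the acyclic cofibration $N^{\gp}\to (N^{\gp})'$ over $M^{\gp}$ does not change the homotopy pullback). No gaps.
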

We now return to the cyclic bar construction and prepare for the proof
of Proposition~\ref{prop:replete-bar-as-repletion}.
\begin{lemma}\label{lem:BcyMgp-vs-BcyM-gp} The commutative $\cJ$-space monoids 
  $B^{\cy}(M^{\gp})$ and $B^{\cy}(M)^{\gp}$ are $\cJ$-equivalent as
  commutative $\cJ$-space monoids under $B^{\cy}(M)$ and over $M^\gp$.
\end{lemma}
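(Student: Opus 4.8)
The plan is to identify $B^{\cy}(M^{\gp})$ as a model for the group completion of $B^{\cy}(M)$, after which the statement follows from the essential uniqueness of group completions. We may assume that $M$ is cofibrant, since replacing $M$ by a cofibrant replacement changes neither $B^{\cy}(M)$ nor $M^{\gp}$ up to $\cJ$-equivalence. The three ingredients I would use are: (a) the realization $B^{\cy}(M) = \bigl|\,[n]\mapsto M^{\boxtimes(n+1)}\,\bigr|$ computes the homotopy colimit over $\Delta^{\op}$ of this simplicial object, which is homotopically meaningful because $M$ is flat (Lemma~\ref{lem:properties-of-flat}); (b) the group completion $M\mapsto M^{\gp}$ is the fibrant replacement in the left Bousfield localization of Proposition~\ref{prop:group-compl-model-str}, hence it models the reflection of $\mathrm{Ho}(\CSJ)$ onto its full subcategory of grouplike objects, and as a left adjoint it preserves homotopy colimits and coproducts; (c) the coproduct in $\CSJ$ is $\boxtimes$, so that on cofibrant objects $\boxtimes$ is the homotopy coproduct.

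Combining (b) and (c) first yields natural $\cJ$-equivalences $(M^{\boxtimes(n+1)})^{\gp}\simeq (M^{\gp})^{\boxtimes(n+1)}$, compatible with the face, degeneracy, and cyclic structure maps. Feeding this into (a), and using that the reflector sends a homotopy colimit to the group completion of the homotopy colimit of the levelwise reflections, I obtain a chain of natural $\cJ$-equivalences
\[
B^{\cy}(M)^{\gp}\;\simeq\;\bigl|\,[n]\mapsto (M^{\boxtimes(n+1)})^{\gp}\,\bigr|^{\gp}\;\simeq\;\bigl|\,[n]\mapsto (M^{\gp})^{\boxtimes(n+1)}\,\bigr|^{\gp}\;=\;B^{\cy}(M^{\gp})^{\gp},
\]
all maps being induced by $\eta_M$, so the chain is one of equivalences under $B^{\cy}(M)$; it is moreover over $M^{\gp}$, since all the augmentations $\epsilon$ to the relevant group completions agree after precomposition with $B^{\cy}(M)$ by naturality of $\epsilon$, and an extension of such a map over a group completion is unique up to homotopy.

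It then remains to see that $B^{\cy}(M^{\gp})$ is already grouplike, so that $B^{\cy}(M^{\gp})\to B^{\cy}(M^{\gp})^{\gp}$ is a $\cJ$-equivalence. For this I would compute $\pi_0$ of $B^{\cy}(M^{\gp})_{h\cJ}$ as the coequalizer of the two face maps $\pi_0\bigl(B^{\cy}_1(M^{\gp})_{h\cJ}\bigr)\rightrightarrows\pi_0\bigl(M^{\gp}_{h\cJ}\bigr)$; since $M^{\gp}$ is commutative, both face maps out of $B^{\cy}_1(M^{\gp}) = M^{\gp}\boxtimes M^{\gp}$ are induced by the multiplication, so this coequalizer is $\pi_0\bigl(M^{\gp}_{h\cJ}\bigr)$ itself, which is a group because $M^{\gp}$ is grouplike. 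After a positive $\cJ$-fibrant replacement (a $\cJ$-equivalence), $B^{\cy}(M^{\gp})$ becomes fibrant in the group completion model structure, so prolonging the displayed chain by $B^{\cy}(M^{\gp})^{\gp}\simeq B^{\cy}(M^{\gp})$ exhibits $B^{\cy}(M^{\gp})$ as a group completion of $B^{\cy}(M)$; uniqueness of such (cf.~\cite[Proposition~3.3.5]{Hirschhorn_model}) gives the asserted chain of $\cJ$-equivalences under $B^{\cy}(M)$ and over $M^{\gp}$.

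I expect the main obstacle to be making ingredient (b) precise at the level of homotopy colimits, namely that the reflector onto the grouplike objects commutes with the homotopy colimit presenting $B^{\cy}$; this rests on the group completion model structure being a left Bousfield localization of the positive $\cJ$-model structure with the \emph{same} cofibrations, so that homotopy colimits are computed by the same cofibrant replacements of diagrams. A secondary point needing care is tracking the compatibility of all identifications with the cyclic structure and with the augmentations, which is what upgrades an abstract $\cJ$-equivalence to one under $B^{\cy}(M)$ and over $M^{\gp}$.
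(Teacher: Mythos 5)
Your strategy -- exhibit $B^{\cy}(M^{\gp})$ as a model for the group completion of $B^{\cy}(M)$ and invoke uniqueness -- is exactly the right one and is what the lemma amounts to, but your route to it differs from the paper's. The paper does not group-complete levelwise and commute the reflector past the realization; instead it uses the isomorphism $B^{\cy}(N)\cong N\otimes S^1$ of Lemma~\ref{lem:Bcy-as-tensor} together with the fact that the group completion model structure, being a left Bousfield localization of a simplicial model structure with the same cofibrations, is again simplicial. The pushout-product axiom then says in one line that $\eta_M\otimes S^1\colon M\otimes S^1\to M^{\gp}\otimes S^1$ is an acyclic cofibration in the group completion model structure, and a lift against the factorization $M\otimes S^1\rightarrowtail (M\otimes S^1)'\twoheadrightarrow M^{\gp}$ produces the comparison map strictly under $B^{\cy}(M)$ and over $M^{\gp}$; the only thing left to check is that $M^{\gp}\otimes S^1$ is grouplike, which is your $\pi_0$-coequalizer observation (the paper phrases it as surjectivity of $\pi_0((M^{\gp})_{h\cJ})\to\pi_0((M^{\gp}\otimes S^1)_{h\cJ})$). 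What the paper's route buys is precisely that it dissolves the step you correctly single out as the main obstacle: that the functorial fibrant replacement $(-)^{\gp}$ commutes, up to gp-equivalence, with the realization of simplicial objects in $\CSJ$. That step is true but not free -- realization here is an objectwise diagonal, not a Reedy-model-categorical realization, so one has to argue via the commutation of $B((-)_{h\cJ})$ with diagonals that realization sends levelwise gp-equivalences to gp-equivalences; your appeal to ``same cofibrations, same homotopy colimits'' gestures at this but does not discharge it. Two smaller points you should make explicit if you pursue your version: the left-adjoint argument only gives $(M^{\boxtimes(n+1)})^{\gp}\simeq\bigl((M^{\gp})^{\boxtimes(n+1)}\bigr)^{\gp}$, so you additionally need that $\boxtimes$-powers of grouplike flat objects are grouplike (true, since $(X\boxtimes Y)_{h\cJ}\simeq X_{h\cJ}\times Y_{h\cJ}$ for flat $X,Y$), and this zig-zag through $\bigl((M^{\gp})^{\boxtimes(n+1)}\bigr)^{\gp}$ is also what makes the comparison compatible with the simplicial and cyclic operators. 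Your treatment of ``over $M^{\gp}$'' by uniqueness of extensions over a group completion is acceptable for the way the lemma is used, though the paper's lifting argument gives strict commutativity.
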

\begin{proof}
  As observed in Lemma~\ref{lem:Bcy-as-tensor}, there is an
  isomorphism $B^{\cy}(N) \iso N \tensor S^1$.  The group completion
  and the collapse map $S^1 \to *$ induce the outer square in the
  commutative diagram
\[\xymatrix@-1pc{
  M \tensor S^1 \ar[dd] \ar[r] & (M \tensor S^1)^{\gp} \ar@{ >->}[d]^{\sim} \\
  & (M\tensor S^1)' \ar@{->>}[d]\\
  M^{\gp} \tensor S^1 \ar[r] \ar@{-->}[ur] & M^{\gp}.  }\] Since the
positive  $\cJ$-model structure is simplicial, it follows
from~\cite[Theorem 4.1.1 (4)]{Hirschhorn_model} that the group
completion model structure is also simplicial. So the left hand
vertical map is an acyclic cofibration in the group completion model
structure.  The object $(M\tensor S^1)'$ is defined by forming the indicated
factorization in the group completion model structure. Then $(M\tensor
S^1)'$ is also grouplike.  The model category axioms in
the group completion model structure provide the lift $M^{\gp} \tensor
S^1 \to (M \tensor S^1)'$. The two out of three axiom implies that the
lift is a weak equivalence in the group completion model structure. To
see that it is a $\cJ$-equivalence, it is enough to show that $
M^{\gp} \tensor S^1$ is also grouplike. For this we note that the
monoids of zero-simplices of $(M^{\gp})_{h\cJ}$ and $(M^{\gp} \tensor
S^1)_{h\cJ} $ coincide, since they are both given by disjoint union of the sets
of zero-simplices of $M^{\gp}(\bld{m_1},\bld{m_2})$ over all objects
$(\bld{m_1},\bld{m_2})$ of $\cJ$. If two $0$-simplices of $(M^{\gp})_{h\cJ}$
become equivalent in $\pi_0((M^{\gp})_{h\cJ})$, they also become equivalent in
$\pi_0((M^{\gp} \tensor S^1)_{h\cJ})$. Hence the latter monoid is a group if the former
one is. 
\qed \end{proof}
\begin{proof}[ 
of Proposition~\ref{prop:replete-bar-as-repletion}]
  By the previous lemma, $B^{\rep}(M)$ is $\cJ$-equivalent to the
  homotopy pullback of $M \to M^{\gp} \ot B^{\cy}(M)^{\gp}$. Since
  $B^{\cy}(M)\to M$ has a multiplicative section, it is virtually
  surjective, and so it follows from
  Corollary~\ref{cor:repletion-as-hty-pb} that the homotopy pullback
  of $M \to M^{\gp} \ot B^{\cy}(M)^{\gp}$ is $\cJ$-equivalent to the
  repletion of the map $B^{\cy}(M)\to M$.
\qed \end{proof}

\section{Logarithmic \texorpdfstring{$\THH$}{THH}}\label{sec:log-thh}
In this section we define pre-log and log (symmetric) ring spectra and introduce
their topological Hochschild homology. 

\begin{definition}\label{def:pre-log-ring-spectrum}
  A \emph{pre-log structure} $(M, \alpha)$ on a commutative symmetric
  ring spectrum $A$ is a commutative $\cJ$-space monoid $M$ and a
  commutative $\cJ$-space monoid map $\alpha \: M \to
  \Omega^{\cJ}(A)$.  A \emph{pre-log ring spectrum} $(A, M,
  \alpha)$ is a commutative symmetric ring spectrum $A$ with a choice
  of pre-log structure $(M, \alpha)$. A morphism $(f,f^{\flat})\colon
  (A,M,\alpha)\to (B,N,\beta)$ is a pair of morphisms $f\colon A \to
  B$ in $\cC\Spsym$ and $f^{\flat}\colon M \to N$ in $\cC\cS^{\cJ}$
  such that $\Omega^{\cJ}(f)\alpha = \beta f^{\flat}$.

  Specifying $\alpha$ is equivalent to specifying its adjoint,
  the commutative symmetric ring spectrum map $\bar\alpha \:
  \bS^{\cJ}[M] \to A$.  We often omit $\alpha$ from the notation.
\end{definition}
As suggested by the terminology, there is also the notion of a log
ring spectrum. It will be defined in Section~\ref{subsec:logification}
below.
\begin{remark}\label{rem:pre-log-terminology}
  Throughout, \emph{log} is short for \emph{logarithmic}.  Our pre-log
  ring spectra were called \emph{graded pre-log symmetric ring
  spectra} in~\cite[\S 4.30]{Sagave-S_diagram}
  and~\cite{Sagave_log-on-k-theory}, to distinguish them from the
  earlier notion of pre-log symmetric ring spectra introduced
  in~\cite{Rognes_TLS}.  When the latter reference was written the
  theory of $\cJ$-spaces was not yet properly developed, so only the
  ``ungraded'' version of $E_\infty$ spaces known as $\cI$-spaces was
  considered.  That restricted theory suffers from a lack of really
  interesting examples for log structures on ring spectra that are not
  Eilenberg--Mac\,Lane spectra, which is alleviated by the passage to
  the more general context of $\cJ$-spaces.  It now seems
  sensible to shift the terminology, so that the most interesting
  objects (commutative symmetric ring spectra with pre-log structures
  given by commutative $\cJ$-space monoids) have the simplest name.
\end{remark}

\begin{example}\label{ex:pre-log-str}\begin{enumerate}[(i)]
  \item Let $M$ be a commutative $\cJ$-space monoid.  The adjunction
    unit $\zeta \: M \to \Omega^\cJ(\bS^{\cJ}[M])$ defines the
    \emph{canonical pre-log structure} $(M, \zeta)$ on $\bS^{\cJ}[M]$,
    with adjoint the identity map of $\bS^{\cJ}[M]$.
  \item Let $A$ be a commutative symmetric ring spectrum. A map
    $x\colon S^{d_2} \to A_{d_1}$ defines a $0$-simplex $x \in
    \Omega^{\cJ}(A)(\bld{d_1},\bld{d_2})_0$. As explained in
    Example~\ref{ex:free-J-space}, the map $x$ induces a map $\C{x}
    \to \Omega^{\cJ}(A)$ from the free commutative $\cJ$-space monoid
    on a point in bidegree $(\bld{d_1},\bld{d_2})$ to
    $\Omega^{\cJ}(A)$. This defines the \emph{free pre-log structure}
    on~$A$ generated by~$x$.
  \item Pre-log rings in the algebraic sense give rise to pre-log ring
    spectra.  We study this in detail in
    Section~\ref{sec:discrete-pre-log-rings}.
\end{enumerate}
\end{example}
The following definition is an important source of interesting pre-log
structures:
\begin{definition}\label{def:direct-image-pre-log} 
  Let $j\colon A \to B$ be a map of commutative symmetric ring spectra
  and let $N \to \Omega^{\cJ}(B)$ be a pre-log structure. The pullback
  of $N \to \Omega^{\cJ}(B) \ot \Omega^{\cJ}(A)$ defines a pre-log
  structure $j_*N = N\times_{ \Omega^{\cJ}(B)}\Omega^{\cJ}(A)$ on $A$
  that we refer to as the \emph{direct image pre-log structure}.
\end{definition}
In order to ensure that the pullback in the definition captures a well-defined homotopy type, we will only consider direct image pre-log
structures when  $j\colon A \to B$ or $N \to \Omega^{\cJ}(B)$ are positive fibrations, and $B$ is positive fibrant. 

Now we turn to the definition of logarithmic topological Hochschild
homology. Our strategy will be to first define it on pre-log ring
spectra satisfying a suitable cofibrancy condition, and then extend the
definition to all pre-log ring spectra by precomposing with a cofibrant
replacement functor.

\begin{definition}\label{def:cof-pre-log-ring}
  A pre-log ring spectrum $(A,M,\alpha)$ is \emph{cofibrant} if $M$ is a
  cofibrant commutative $\cJ$-space monoid and the adjoint structure
  map $\bar\alpha\colon \bS^{\cJ}[M]\to A$ is a cofibration of
  commutative symmetric ring spectra.
\end{definition}
We note that if $(A,M,\alpha)$ is cofibrant, then $A$ is cofibrant as
a commutative symmetric ring spectrum. It follows from standard model
category arguments that the cofibrant pre-log ring spectra are the
cofibrant objects in a cofibrantly generated projective model
structure where a map $(f,f^{\flat})$ is a fibration or a weak
equivalence if and only if both $f$ and $f^{\flat}$ have this property. This
implies that we may choose a cofibrant replacement functor
$(A^{\mathrm{cof}},M^{\mathrm{cof}},\alpha^{\mathrm{cof}}) \to
(A,M,\alpha)$ for pre-log ring spectra. Thus, once we define 
log $\THH$ for cofibrant pre-log ring spectra below, the definition can easily be extended to all pre-log ring spectra by precomposing with this cofibrant replacement functor. Such cofibrant replacements were used implicitly in the formulation of Theorems~\ref{thm:THH-cofiber-sequence-with-repetitive}
and~\ref{thm:THH-cofiber-sequence-with-direct-image} from the
introduction. 
\begin{definition}\label{def:log-thh}
  Let $(A, M, \alpha)$ be a cofibrant  pre-log ring spectrum. Its
  \emph{logarithmic topological Hochschild homology} is the
  commutative symmetric ring spectrum
  \[\THH(A,M) = \THH(A)\wedge_{\bS^{\cJ}[B^{\cy}(M)]}
  \bS^{\cJ}[B^{\rep}(M)]\] given by the pushout in the following diagram 
  \begin{equation}\label{eq:def-log-thh}
    \xymatrix@-1pc{
      \bS^{\cJ}[B^{\cy}(M)] \ar[r]^-{\rho} \ar[d] & 
      \bS^{\cJ}[B^{\rep}(M)] \ar[d] \\
      \THH(A) \ar[r]^-{\rho} & \THH(A,M)
    }
  \end{equation}
  of commutative symmetric ring spectra. The upper horizontal arrow is
  given by applying $\bS^{\cJ}$ to the repletion map $\rho \:
  B^{\cy}(M) \to B^{\rep}(M)$ and the
  left hand vertical map is obtained by applying the functor $\THH$ to the
  adjoint pre-log structure map $\bar\alpha \:
  \bS^{\cJ}[M] \to A$, under the
  identification $\THH(\bS^{\cJ}[M]) \cong
  \bS^{\cJ}[B^{\cy}(M)]$.
\end{definition}
It is clear from the construction that $\THH(A,M)$ is isomorphic to the realization of the cyclic commutative symmetric ring spectrum $\THH_{\bullet}(A,M)$ defined by the pushout of the diagram
  \[
  \THH_{\bullet}(A) \ot \bS^{\cJ}[B^{\cy}_{\bullet}(M)]
  \to \bS^{\cJ}[B^{\rep}_{\bullet}(M)]
  \]
  in cyclic commutative symmetric ring spectra. Hence the geometric realization of $\THH_{\bullet}(A,M)$ becomes a commutative symmetric ring spectrum with circle action, which we shall also denote by $\THH(A,M)$. It will always be clear from the context (or not important) whether we think of $\THH(A,M)$ as a symmetric spectrum of simplicial sets or topological spaces.  

\begin{remark}
The point of the cofibrancy condition on $(A, M, \alpha)$ in 
Definition~\ref{def:log-thh} is that the adjoint structure map $\bar\alpha\colon \bS^{\cJ}[M] \to A$ being a cofibration implies that
$\bS^{\cJ}[B_{\bullet}^{\cy}(M)] \to \THH_{\bullet}(A)$
is a cofibration in every simplicial degree and that the realization 
$\bS^{\cJ}[B^{\cy}(M)] \to \THH(A)$ is a cofibration of
commutative symmetric ring spectra. This ensures that the
pushout squares defining $\THH(A,M)$ and $\THH_{\bullet}(A,M)$ are
homotopy pushout squares. In fact, $\THH(A,M)$ also represents the left derived balanced smash product of $\THH(A)$ and $\bS^{\cJ}[B^{\rep}(M)]$ thought of as $\bS^{\cJ}[B^{\cy}(M)]$-module spectra. This follows by applying the next lemma to the cofibrant pre-log structure on $\THH(A)$ defined by $\THH(\bar\alpha)$.   
\end{remark}

\begin{lemma}\label{lem:ext-along-cof-pre-st-eq}
  Let $(A,M)$ be a cofibrant pre-log ring spectrum. Extension of
  scalars along the adjoint structure map $A\sm_{\bS^{\cJ}[M]}(-)
  \colon \Mod_{\bS^{\cJ}[M]}\to\Mod_{A}$ preserves stable equivalences
  between not necessarily cofibrant objects.
\end{lemma}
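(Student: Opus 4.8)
The plan is to show that $A$ is \emph{flat} as a module over $R:=\bS^{\cJ}[M]$, meaning that the functor $A\wedge_R(-)\colon\Mod_R\to\Mod_A$ carries stable equivalences to stable equivalences. Since extension of scalars along the adjoint structure map $\bar\alpha\colon R\to A$ is exactly this functor, and stable equivalences of $A$-modules are detected on underlying symmetric spectra, this flatness is precisely the content of the lemma. The hypothesis to be used is that $\bar\alpha$ is a cofibration of commutative symmetric ring spectra (Definition~\ref{def:cof-pre-log-ring}).

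First I would record two facts about $R$. Since $M$ is cofibrant in $\CSJ$ and $\bS^{\cJ}\colon\CSJ\to\cC\Spsym$ is left Quillen (Lemma~\ref{Qlem:adj-J-spaces-SpSym}), $R$ is a cofibrant commutative symmetric ring spectrum, so $\Mod_R$ inherits a well-behaved positive stable model structure. Since $M$ is moreover flat as a $\cJ$-space (Lemma~\ref{lem:properties-of-flat}(iii)), the homotopy invariance properties of $\bS^{\cJ}$ collected in the appendix show that $R$ is flat as a symmetric spectrum. Together these guarantee that $R$-cofibrant $R$-modules are flat over $R$ and, because the model structure on $\Mod_R$ is \emph{positive}, that relative symmetric powers of cofibrations of $R$-modules are flat over $R$ (the relevant $\Sigma_n$-actions being free off the basepoint).

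The main step would be the standard cell-induction analysis of a cofibration of commutative ring spectra. Writing $\mathbb{P}_R(Y)=\bigvee_{n\geq 0}Y^{\wedge_R n}/\Sigma_n$ for the free commutative $R$-algebra functor, the cofibration $\bar\alpha\colon R\to A$ exhibits $A$ as a retract over $R$ of an object built from $R$ by a transfinite sequence of pushouts along maps $\mathbb{P}_R(\partial)\to\mathbb{P}_R(Z)$, where $\partial\to Z$ runs over the generating cofibrations of $\Mod_R$ (of the form $R\wedge i$ for $i$ a generating positive cofibration of $\Spsym$). Each such pushout $R_\beta\to R_{\beta+1}=R_\beta\wedge_{\mathbb{P}_R(\partial)}\mathbb{P}_R(Z)$ carries the usual filtration whose $n$-th subquotient is $R_\beta$ smashed over $R$ with the $n$-th symmetric power of $Z$ relative to $\partial$. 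By the previous paragraph these relative symmetric powers are flat over $R$, so a transfinite induction shows that each $R_\beta$ is flat over $R$: the successor step combines this filtration with the gluing lemma and the fact that smashing over $R$ with a flat $R$-module preserves stable equivalences; limit stages use that a filtered colimit of flat $R$-modules along cofibrations is again flat, since $(-)\wedge_R(-)$ commutes with colimits and stable equivalences are closed under such colimits; and flatness passes to retracts. Hence $A$ is flat over $R$, as desired.

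I expect the main obstacle to be the middle portion of the last paragraph: pinning down the correct pushout filtration for free commutative $R$-algebras and verifying that the relative symmetric power terms $Z^{\wedge_R n}/\Sigma_n$ are flat over $R$. This is exactly the point where positivity of the model structure is essential, and where one either reproduces the symmetric-powers-of-a-cofibration computation underlying the existence of the positive stable model structure on commutative ring spectra, or invokes it from the literature. The remaining ingredients — the reduction to flatness, the identification of $R$ as a cofibrant and spectrum-level flat commutative ring spectrum, and the transfinite and retract bookkeeping — are routine.
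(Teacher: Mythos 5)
Your proposal is correct and follows the same strategy as the paper: reduce the lemma to showing that $A$ is flat over $R=\bS^{\cJ}[M]$ in the sense that $A\wedge_R(-)$ preserves stable equivalences, and establish this by cell induction along the cofibration $R\to A$ together with the gluing lemma. The only difference is where the hard input comes from: you re-derive the symmetric-power filtration of a pushout of free commutative $R$-algebras and check that the relative symmetric powers are flat (using positivity), whereas the paper short-circuits this by citing Shipley's Proposition 4.1, which says that for any cofibration $E\to F$ of commutative symmetric ring spectra the underlying $E$-module of $F$ is a cofibrant \emph{flat} ($E$-cofibrant) $E$-module; the induction then runs over module-level cell attachments of generating flat cofibrations $(K\to L)\wedge E$, where $K$ and $L$ are flat symmetric spectra, smashing with a flat spectrum preserves stable equivalences, flat cofibrations smash to level cofibrations, and the gluing lemma finishes the argument. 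Both routes rest on the same underlying symmetric-powers computation; you reproduce it, the paper invokes it from the literature.
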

\begin{proof}
  We consider more generally a cofibration $E\to F$ of commutative
  symmetric ring spectra. By~\cite[Proposition
  4.1]{Shipley_convenient}, $F$ is cofibrant as a flat $E$-module,
  where our use of the term ``flat'' is synonymous with the term
  ``$E$-cofibrant'' used in~\cite{Shipley_convenient}.  A cell
  induction argument reduces the claim to the following statement: If
  $Y$ is an $E$-module that is obtained from an $E$-module $X$ by
  attaching a generating cofibration of flat $E$-modules $(K \to L)\sm
  E$, then $Y\sm_E(-)$ preserves stable equivalences if $X\sm_E(-)$
  does. Since the smash products with the flat symmetric spectra $K$
  and $L$ preserve stable equivalences and the smash product with any
  symmetric spectrum sends flat cofibrations to level cofibrations,
  the gluing lemma for stable equivalences and level cofibrations of
  symmetric spectra shows the claim.
\qed \end{proof}

\begin{proposition}\label{prop:log-THH-hty-inv}
  If $(f,f^{\flat})\colon (A,M) \to (B,N)$ is a map of cofibrant pre-log ring
  spectra such that $f \colon A \to B$ is a stable equivalence and
  $f^\flat\colon M \to N$ is a $\cJ$-equivalence, then the induced map
  $\THH(A,M) \to \THH(B,N)$ is a stable equivalence.
\end{proposition}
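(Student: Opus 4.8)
The plan is to reduce the statement to homotopy invariance of each of the three commutative symmetric ring spectra making up the pushout square~\eqref{eq:def-log-thh}, and then to invoke left properness of $\cC\Spsym$. By the Remark following Definition~\ref{def:log-thh}, the cofibrancy of $(A,M)$ (and of $(B,N)$) ensures that~\eqref{eq:def-log-thh} is a homotopy pushout square of commutative symmetric ring spectra whose left vertical map $\bS^{\cJ}[B^{\cy}(M)] \to \THH(A)$ is a cofibration, and the morphism $(f,f^{\flat})$ induces a natural map from the square for $(A,M)$ to the square for $(B,N)$. Hence, by left properness of the positive stable model structure on $\cC\Spsym$ together with the gluing lemma for homotopy pushout squares (see e.g.~\cite[\S 13]{Hirschhorn_model}), it suffices to show that $(f,f^{\flat})$ induces stable equivalences $\THH(A)\to\THH(B)$, $\bS^{\cJ}[B^{\cy}(M)]\to\bS^{\cJ}[B^{\cy}(N)]$, and $\bS^{\cJ}[B^{\rep}(M)]\to\bS^{\cJ}[B^{\rep}(N)]$. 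Equivalently, one checks that $\THH(A,M)$, viewed as the left derived balanced smash product described in that Remark, depends only on the stable equivalence type of its three inputs.

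The first two corners are straightforward. Since $f$ is a stable equivalence of cofibrant commutative symmetric ring spectra, $\THH(A)\to\THH(B)$ is a stable equivalence: each smash power $A^{\sm(1+n)}$ is homotopy invariant because coproducts of cofibrant objects of $\cC\Spsym$ are, and realization of simplicial symmetric spectra preserves degreewise stable equivalences, as recalled after Definition~\ref{def:THH}. For the cyclic bar construction, $M$ and $N$ are cofibrant, hence flat by Lemma~\ref{lem:properties-of-flat}(iii), so $-\boxtimes-$ preserves $\cJ$-equivalences by Lemma~\ref{lem:properties-of-flat}(i); thus $f^{\flat}$ induces degreewise $\cJ$-equivalences $M^{\boxtimes(1+n)}\to N^{\boxtimes(1+n)}$, and passing to realizations shows that $B^{\cy}(M)\to B^{\cy}(N)$ is a $\cJ$-equivalence. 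By the homotopy invariance properties of $\bS^{\cJ}$ collected in the appendix, $\bS^{\cJ}[B^{\cy}(M)]\to\bS^{\cJ}[B^{\cy}(N)]$ is then a stable equivalence.

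The remaining corner is the replete bar construction, and this is where the real work lies. We claim that $B^{\rep}(M)\to B^{\rep}(N)$ is a $\cJ$-equivalence, after which the appendix again converts this into a stable equivalence of $\bS^{\cJ}[-]$'s. Since $\eta_M$ is a cofibration and $M$ is cofibrant, $M^{\gp}$ is cofibrant, and by functoriality of the group completion fibrant replacement $f^{\flat}$ induces a weak equivalence $M^{\gp}\to N^{\gp}$ between fibrant objects of the group completion model structure; such a map is a $\cJ$-equivalence. By the two-out-of-three property the map $M'\to N'$ relating the factorizations of $\eta_M$ and $\eta_N$ into an acyclic cofibration followed by a fibration is a $\cJ$-equivalence as well, and $B^{\cy}(M^{\gp})\to B^{\cy}(N^{\gp})$ is a $\cJ$-equivalence as in the previous paragraph. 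In each simplicial degree, $B^{\rep}_{\bullet}(M)$ is by Construction~\ref{constr:rep-bar} the pullback of $M'\to M^{\gp}\ot B^{\cy}_{\bullet}(M^{\gp})$ along the positive $\cJ$-fibration $M'\to M^{\gp}$, hence a homotopy pullback, and likewise for $N$. Thus we have a map of degreewise homotopy pullback squares that is a $\cJ$-equivalence on the other three corners, and right properness of the positive $\cJ$-model structure gives that $B^{\rep}_{\bullet}(M)\to B^{\rep}_{\bullet}(N)$ is a degreewise $\cJ$-equivalence; realization preserves degreewise $\cJ$-equivalences, which proves the claim.

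Combining the three corners with the reduction of the first paragraph completes the argument. The main obstacle is the homotopy invariance of $B^{\rep}$: the replete bar construction is assembled from the group completion, which is a fibrant replacement in a left Bousfield localization whose weak equivalences are not detected levelwise, together with homotopy pullbacks of $\cJ$-spaces that need not be cofibrant; and one must know that $\bS^{\cJ}$ still carries the resulting $\cJ$-equivalences to stable equivalences even on the potentially non-cofibrant objects $B^{\rep}(M)$, which is precisely what the appendix is designed to supply.
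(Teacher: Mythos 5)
Your proposal is correct and follows essentially the same route as the paper's proof: reduce to the three corners of the defining pushout square, observe that the induced maps on $\THH$, $\bS^{\cJ}[B^{\cy}(-)]$ and $\bS^{\cJ}[B^{\rep}(-)]$ are stable equivalences (with Corollary~\ref{cor:bSJ-preserves-we} handling the possibly non-cofibrant replete bar constructions), and conclude by left properness of the positive stable model structure on $\cC\Spsym$. The only difference is that you spell out the verification that $B^{\rep}(M)\to B^{\rep}(N)$ is a $\cJ$-equivalence (via the group completion model structure, two-out-of-three, and right properness applied to the degreewise pullbacks), which the paper asserts without detail; your argument there is sound.
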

\begin{proof}
  The cofibrancy conditions imply that $f$ gives rise to a stable
  equivalence $\THH(A)\to \THH(B)$ and that $f^{\flat}$ gives rise to
  $\cJ$-equivalences $B^{\cy}(M)\to B^{\cy}(N)$ and $B^{\rep}(M)\to
  B^{\rep}(N)$. Although $B^{\rep}(M)$ and $B^{\rep}(N)$ are not
  necessarily cofibrant, it follows from
  Corollary~\ref{cor:bSJ-preserves-we} that the induced maps
  \[
  \bS^{\cJ}[B^{\cy}(M)]\to \bS^{\cJ}[B^{\cy}(N)],\qquad
  \bS^{\cJ}[B^{\rep}(M)]\to \bS^{\cJ}[B^{\rep}(N)]
  \]
  are stable equivalences. Hence the result follows from left
  properness of the positive stable model structure on $\cC\Spsym$.
\qed \end{proof}
This result implies in particular that we obtain a homotopy invariant
functor if we precompose our log $\THH$ functor with a cofibrant
replacement functor.

\begin{proposition}
For a cofibrant commutative $\cJ$-space monoid $M$, the natural map 
\[
\bS^{\cJ}[B^{\rep}(M)] \xrightarrow{\cong} \THH(\bS^{\cJ}[M], M)
\]
is an isomorphism, and the
canonical map $(\bS^{\cJ}[M],M)\to (A,M)$ induces a natural
pushout square 
\[
\xymatrix@-1pc{
\THH(\bS^{\cJ}[M]) \ar[r]^-{\rho} \ar[d]_-{\bar\alpha} &
\THH(\bS^{\cJ}[M], M) \ar[d] \\
\THH(A) \ar[r]^-{\rho} & \THH(A, M)
}
\]
of commutative symmetric ring spectra.\qed
\end{proposition}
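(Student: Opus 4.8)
The plan is to unwind Definition~\ref{def:log-thh} and reduce both assertions to the observation that the canonical pre-log structure on $\bS^{\cJ}[M]$ has the identity as its adjoint structure map. Throughout I assume $(A,M,\alpha)$ to be a cofibrant pre-log ring spectrum, which is what makes $\THH(A,M)$ the genuine pushout of \eqref{eq:def-log-thh}; in particular $A$ and $\bS^{\cJ}[M]$ are cofibrant commutative symmetric ring spectra, the latter because $\bS^{\cJ}$ is left Quillen by Lemma~\ref{Qlem:adj-J-spaces-SpSym}, so the natural isomorphism $\THH(\bS^{\cJ}[M])\cong\bS^{\cJ}[B^{\cy}(M)]$ of the preceding proposition is at our disposal. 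I also use that $(\bS^{\cJ}[M],M,\zeta)$ is itself a cofibrant pre-log ring spectrum: its underlying $\cJ$-space monoid $M$ is cofibrant, and by Example~\ref{ex:pre-log-str}(i) its adjoint structure map is $\id_{\bS^{\cJ}[M]}$, which is a cofibration, so Definition~\ref{def:cof-pre-log-ring} applies.

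For the first assertion I would write out the pushout square \eqref{eq:def-log-thh} for the pre-log ring spectrum $(\bS^{\cJ}[M],M,\zeta)$. By Definition~\ref{def:log-thh} its left-hand vertical map is $\THH$ applied to the adjoint structure map, read through the identification $\THH(\bS^{\cJ}[M])\cong\bS^{\cJ}[B^{\cy}(M)]$; since that adjoint map is $\id_{\bS^{\cJ}[M]}$, the left-hand vertical map is simply the isomorphism $\bS^{\cJ}[B^{\cy}(M)]\xrightarrow{\cong}\THH(\bS^{\cJ}[M])$. A pushout of an isomorphism along any map is an isomorphism, so the right-hand vertical map $\bS^{\cJ}[B^{\rep}(M)]\to\THH(\bS^{\cJ}[M],M)$ is an isomorphism, and this is the natural map in the statement. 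Chasing the square then also shows that, under the identifications $\THH(\bS^{\cJ}[M])\cong\bS^{\cJ}[B^{\cy}(M)]$ and $\THH(\bS^{\cJ}[M],M)\cong\bS^{\cJ}[B^{\rep}(M)]$, the lower horizontal map $\rho\colon\THH(\bS^{\cJ}[M])\to\THH(\bS^{\cJ}[M],M)$ corresponds to $\bS^{\cJ}[\rho]\colon\bS^{\cJ}[B^{\cy}(M)]\to\bS^{\cJ}[B^{\rep}(M)]$.

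For the second assertion, note first that the canonical map $(\bS^{\cJ}[M],M,\zeta)\to(A,M,\alpha)$ is a morphism of pre-log ring spectra with ring-spectrum component $\bar\alpha$ and $\cJ$-space component $\id_M$ — the required identity $\Omega^{\cJ}(\bar\alpha)\circ\zeta=\alpha$ is just the adjunction relation between $\alpha$ and $\bar\alpha$. Functoriality of Definition~\ref{def:log-thh} presents the induced map $\THH(\bS^{\cJ}[M],M)\to\THH(A,M)$ as the map on pushouts coming from a morphism of the two squares \eqref{eq:def-log-thh}, in which the maps on $\bS^{\cJ}[B^{\cy}(M)]$ and $\bS^{\cJ}[B^{\rep}(M)]$ are identities (the $\cJ$-space component being $\id_M$, and $B^{\cy}$, $B^{\rep}$, $\rho$ being functorial via Construction~\ref{constr:rep-bar}) and the map $\THH(\bS^{\cJ}[M])\to\THH(A)$ is $\THH(\bar\alpha)$. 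Feeding in the identifications from the first part, the square in the statement is seen to be precisely the pushout square \eqref{eq:def-log-thh} defining $\THH(A,M)$, with its two top corners renamed via the natural isomorphisms $\THH(\bS^{\cJ}[M])\cong\bS^{\cJ}[B^{\cy}(M)]$ and $\THH(\bS^{\cJ}[M],M)\cong\bS^{\cJ}[B^{\rep}(M)]$ and its left vertical map rewritten — exactly as it is defined — as the composite of the first of these isomorphisms with $\THH(\bar\alpha)$. Renaming corners by isomorphisms preserves pushouts, so the square is cocartesian in $\cC\Spsym$.

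I do not expect a genuine obstacle here; the statement is essentially a matter of identifications. The one point that needs care is coherence: one must confirm that the natural isomorphism $\THH(\bS^{\cJ}[-])\cong\bS^{\cJ}[B^{\cy}(-)]$ and the isomorphism produced in the first paragraph are compatible with all the structure maps of the squares \eqref{eq:def-log-thh}, so that the relabelling in the last paragraph is legitimate. This is routine, since $\THH(-,-)$ on morphisms of pre-log ring spectra is by construction obtained from the functoriality of the pushout in Definition~\ref{def:log-thh}, and requires no homotopical input beyond the strong symmetric monoidality of $\bS^{\cJ}$ (Lemma~\ref{lem:adj-J-spaces-SpSym}) and the naturality of $\THH(\bS^{\cJ}[-])\cong\bS^{\cJ}[B^{\cy}(-)]$.
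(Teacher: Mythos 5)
Your proof is correct and is exactly the argument the paper intends: the proposition carries no separate proof in the paper precisely because, as you observe, the adjoint structure map of the canonical pre-log structure on $\bS^{\cJ}[M]$ is the identity, so the defining pushout square \eqref{eq:def-log-thh} degenerates to give the first isomorphism, and the second square is that same defining square with its top corners relabelled. No gaps.
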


\begin{remark}
If $(f,f^{\flat})\colon (B,N) \to (A,M)$ is a map of pre-log ring spectra, then the
repletion $N^\rep \to M$ extends to a map of pre-log ring spectra
\[ (B \sm_{\bS^{\cJ}[N]}\bS^{\cJ}[N^{\rep}], N^\rep) \to (A,M). \]
We call this map the repletion of $(f,f^{\flat})$. 

The adjoints of the vertical maps in~\eqref{eq:def-log-thh}
define pre-log ring spectra 
%$(\THH(A),B^{\cy}(M))$ and $(\THH(A, M),B^{\rep}(M))$. 
\[(\THH(A),B^{\cy}(M)) \qquad \text{and}\qquad  (\THH(A, M),B^{\rep}(M)).\] 
The augmentation of the
cyclic bar construction induces an augmentation
%$
\[
(\THH(A),B^{\cy}(M)) \to (A,M),
\]
%$ 
and it follows from Proposition~\ref{prop:replete-bar-as-repletion} that the repletion of this map is stably equivalent to $(\THH(A,M),B^{\rep}(M))$. 
\end{remark}

\subsection{Log \texorpdfstring{$\THH$}{THH} and localization}
We now explain how the logarithmic $\THH$ of $(A,M)$ lies between
$\THH$ of $A$ and $\THH$ of the localization of $A$ away from $M$.
\begin{definition}[{\cite[Definition 7.15]{Rognes_TLS}}]
Let $(A,M)$ be a pre-log ring spectrum. The commutative symmetric ring
spectrum given by the pushout 
\[ A[M^{-1}] = A \sm_{\bS^{\cJ}[M]} \bS^{\cJ}[M^{\gp}]\] is the
\emph{localization} of $(A,M)$, and the pre-log ring spectrum
$(A[M^{-1}],M^\gp)$ is the \emph{trivial locus} of $(A,M)$. 
\end{definition}
We note that since $\eta_M\colon M \to M^{\gp}$ is a cofibration, the pre-log ring spectrum $(A[M^{-1}],M^\gp)$ is
cofibrant if $(A,M)$ is. 
\begin{example} The trivial locus of the pre-log ring spectrum
  $(\bS^{\cJ}[M], M)$ from Example~\ref{ex:pre-log-str}(i) is $(\bS^{\cJ}[M^{\gp}],
  M^{\gp})$.
\end{example}
\begin{example}[{\cite[Proposition 3.19]{Sagave_log-on-k-theory}}]
  If the map $x \colon S^{d_2} \to A_{d_1}$ represents a homotopy class
  $[x]\in \pi_{d_2-d_1}(A)$ of a positive fibrant commutative
  symmetric ring spectrum~$A$, then the map $A \to A[\C{x}^{-1}]$ induces the localization
  homomorphism $\pi_*(A) \to (\pi_*(A))[1/[x]]$ at the level of homotopy groups.
\end{example}
\begin{proposition}
  Let $(A, M)$ be a cofibrant pre-log ring spectrum.  Then there is a
  natural factorization of the localization map $\THH(A) \to
  \THH(A[M^{-1}])$ through the
  repletion map $\THH(A) \to \THH(A, M)$.
\end{proposition}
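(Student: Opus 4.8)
I would construct a natural map $\THH(A,M)\to\THH(A[M^{-1}])$ of commutative symmetric ring spectra lying under $\THH(A)$, and check that its composite with the repletion map is the localization map; the key is the universal property of the pushout square defining $\THH(A,M)$ in Definition~\ref{def:log-thh}. First I would record a pushout presentation of $\THH(A[M^{-1}])$: since $\THH(A)\iso A\tensor S^1$ (the identification recalled after Definition~\ref{def:THH}) and tensoring with the fixed simplicial set $S^1$ is a left adjoint on $\cC\Spsym$, the functor $\THH$ preserves pushouts of commutative symmetric ring spectra. Applying $\THH$ to $A[M^{-1}]=A\sm_{\bS^{\cJ}[M]}\bS^{\cJ}[M^{\gp}]$ and using the natural isomorphism $\THH(\bS^{\cJ}[N])\iso\bS^{\cJ}[B^{\cy}(N)]$ for cofibrant commutative $\cJ$-space monoids $N$ — applicable both to $N=M$ (cofibrant by hypothesis) and to $N=M^{\gp}$ (cofibrant since $\eta_M$ is a cofibration) — gives a natural isomorphism
\[
  \THH(A[M^{-1}])\ \iso\ \THH(A)\sm_{\bS^{\cJ}[B^{\cy}(M)]}\bS^{\cJ}[B^{\cy}(M^{\gp})],
\]
where the pushout is formed along $\THH(\bar\alpha)\colon\bS^{\cJ}[B^{\cy}(M)]\to\THH(A)$ and along $\bS^{\cJ}[B^{\cy}(\eta_M)]$, and under which the localization map $\THH(A)\to\THH(A[M^{-1}])$ becomes the canonical map into this pushout.

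Next I would compare the two $M$-legs. By Construction~\ref{constr:rep-bar}, the cyclic commutative $\cJ$-space monoid $B^{\rep}_{\bullet}(M)$ is a pullback that comes with a projection $B^{\rep}_{\bullet}(M)\to B^{\cy}_{\bullet}(M^{\gp})$, and the commutative diagram displayed there identifies the composite $B^{\cy}_{\bullet}(M)\xrightarrow{\rho}B^{\rep}_{\bullet}(M)\to B^{\cy}_{\bullet}(M^{\gp})$ with $B^{\cy}_{\bullet}(\eta_M)$. Passing to realizations and applying the strong symmetric monoidal, colimit-preserving functor $\bS^{\cJ}$ (Lemma~\ref{lem:adj-J-spaces-SpSym}) produces a commutative triangle of commutative symmetric ring spectra
\[
  \bS^{\cJ}[B^{\cy}(M)]\ \xrightarrow{\ \rho\ }\ \bS^{\cJ}[B^{\rep}(M)]\ \longto\ \bS^{\cJ}[B^{\cy}(M^{\gp})]
\]
whose composite is $\bS^{\cJ}[B^{\cy}(\eta_M)]$.

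Finally I would invoke the universal property of the pushout. By Definition~\ref{def:log-thh} the spectrum $\THH(A,M)$ is the pushout of $\THH(A)\xleftarrow{\THH(\bar\alpha)}\bS^{\cJ}[B^{\cy}(M)]\xrightarrow{\rho}\bS^{\cJ}[B^{\rep}(M)]$, while the first step presents $\THH(A[M^{-1}])$ as the pushout of $\THH(A)\xleftarrow{\THH(\bar\alpha)}\bS^{\cJ}[B^{\cy}(M)]\xrightarrow{\bS^{\cJ}[B^{\cy}(\eta_M)]}\bS^{\cJ}[B^{\cy}(M^{\gp})]$. Since the second step factors $\bS^{\cJ}[B^{\cy}(\eta_M)]$ through $\rho$ compatibly over $\THH(A)$, the universal property of the first pushout supplies a natural map $\THH(A,M)\to\THH(A[M^{-1}])$; by construction its composite with the repletion map $\THH(A)\xrightarrow{\rho}\THH(A,M)$ is the canonical map of $\THH(A)$ into the pushout $\THH(A[M^{-1}])$, that is, the localization map, so this is the asserted factorization. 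Naturality in $(A,M)$ follows from the naturality of all the ingredients ($B^{\cy}$, $B^{\rep}$, the group completion $\eta$, $\bS^{\cJ}$, $\THH$, and pushouts). I do not expect a genuine obstacle; the only points requiring care are that $\THH$ commutes with the relevant pushouts and that the triangle in the second step really reproduces the group-completion-induced map on cyclic bar constructions, both of which are already built into the setup recalled above.
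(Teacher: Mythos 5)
Your argument is correct and is essentially the paper's own proof, just written out in more detail: the paper simply asserts that $B^{\cy}(M)\to B^{\rep}(M)$ and $B^{\rep}(M)\to B^{\cy}(M^{\gp})$ induce two adjacent pushout squares over $\THH(A)\to\THH(A,M)\to\THH(A[M^{-1}])$, which is exactly the factorization you extract from the universal property. Your supporting observations — that $\THH$ preserves pushouts of commutative ring spectra, that $\THH(\bS^{\cJ}[N])\iso\bS^{\cJ}[B^{\cy}(N)]$, and that the composite through $B^{\rep}(M)$ recovers $B^{\cy}(\eta_M)$ by Construction~\ref{constr:rep-bar} — are precisely the facts the paper leaves implicit.
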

\begin{proof}
  The claim follows by observing that the maps
  $B^{\cy}(M) \to B^{\rep}(M)$ and  $B^{\rep}(M)\to
  B^{\cy}(M^{\gp})$ induce pushout squares
\[
\xymatrix@-1pc{
\bS^{\cJ}[B^{\cy}(M)] \ar[rr] \ar[d]
	&& \bS^{\cJ}[B^{\rep}(M)] \ar[rr] \ar[d]
	&& \bS^{\cJ}[B^{\cy}(M^{\gp})] \ar[d] \\
\THH(A) \ar[rr] && \THH(A, M) \ar[rr]
	&& \THH(A[M^{-1}])
}
\]
of commutative symmetric ring spectra.
\qed \end{proof}

\subsection{Homotopy cofiber sequences for log \texorpdfstring{$\THH$}{THH}}
We shall use the following proposition as a general principle for setting up homotopy cofiber sequences involving log~$\THH$. This will be used to construct localization sequences for discrete rings in 
Section~\ref{sec:discrete-pre-log-rings} and for periodic ring spectra in 
Section~\ref{sec:hty-cof-log-thh}.

\begin{proposition}\label{prop:general-logTHH-homotopy-cofiber}
Let $(A,M)$ be a cofibrant pre-log ring spectrum and suppose that $P$ is a cofibrant commutative $\cJ$-space monoid such that there is a map of commutative symmetric ring spectra $\bS^{\cJ}[M]\to \bS^{\cJ}[P]$ and a homotopy cofiber sequence
\[
\bS^{\cJ}[B^{\cy}(M)]\overset{\rho}{\longto} \bS^{\cJ}[B^{\rep}(M)]\overset{\partial}{\longto} \Sigma\bS^{\cJ}[B^{\cy}(P)] 
\]
of $\bS^{\cJ}[B^{\cy}(M)]$-modules with circle action. Then there is a homotopy cofiber sequence
\[
\THH(A)\overset{\rho}{\longto} \THH(A,M)\overset{\partial}{\longto} \Sigma\THH(A\wedge_{\bS^{\cJ}[M]}\bS^{\cJ}[P])
\] 
of $\THH(A)$-modules with circle action.
\end{proposition}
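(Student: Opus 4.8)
The plan is to obtain the asserted homotopy cofiber sequence of $\THH(A)$-modules by applying the derived functor $\THH(A) \sm_{\bS^{\cJ}[B^{\cy}(M)]} (-)$ to the given homotopy cofiber sequence of $\bS^{\cJ}[B^{\cy}(M)]$-modules. First I would record the identifications supplied by Definition~\ref{def:log-thh}: since $(A,M)$ is cofibrant, the map $\bS^{\cJ}[B^{\cy}(M)] \to \THH(A)$ is a cofibration of commutative symmetric ring spectra, $\THH(A)$ is a cofibrant $\bS^{\cJ}[B^{\cy}(M)]$-algebra, and $\THH(A,M) = \THH(A) \sm_{\bS^{\cJ}[B^{\cy}(M)]} \bS^{\cJ}[B^{\rep}(M)]$ represents the left derived balanced smash product. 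Moreover the canonical map of commutative symmetric ring spectra $\bS^{\cJ}[M] \to \bS^{\cJ}[B^{\cy}(M)]$ together with the adjoint structure map $\bS^{\cJ}[M]\to A$ exhibits $\THH(A\sm_{\bS^{\cJ}[M]}\bS^{\cJ}[P])$ as $\THH(A)\sm_{\bS^{\cJ}[B^{\cy}(M)]}\bS^{\cJ}[B^{\cy}(P)]$; here one uses that $B^{\cy}(-) \iso (-)\tensor S^1$ from Lemma~\ref{lem:Bcy-as-tensor}, that $\bS^{\cJ}$ is strong symmetric monoidal and a left Quillen functor, and that tensoring with $S^1$ commutes with pushouts of commutative symmetric ring spectra, so that $B^{\cy}$ applied to the pushout $A\sm_{\bS^{\cJ}[M]}\bS^{\cJ}[P]$ — or rather $\THH$ of it — is the analogous pushout $\THH(A)\sm_{\bS^{\cJ}[B^{\cy}(M)]}\bS^{\cJ}[B^{\cy}(P)]$.

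Next I would show that $\THH(A)\sm_{\bS^{\cJ}[B^{\cy}(M)]}(-)$ preserves the relevant homotopy cofiber sequence. The key point is that this functor, on $\bS^{\cJ}[B^{\cy}(M)]$-module spectra, preserves stable equivalences between arbitrary (not necessarily cofibrant) modules: this is exactly Lemma~\ref{lem:ext-along-cof-pre-st-eq} applied to the cofibrant pre-log ring spectrum $(\THH(A), B^{\cy}(M))$ whose adjoint structure map is the cofibration $\bS^{\cJ}[B^{\cy}(M)]\to\THH(A)$ (obtained as $\THH(\bar\alpha)$, cf.\ the remark following Definition~\ref{def:log-thh}). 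Being a symmetric monoidal left adjoint, $\THH(A)\sm_{\bS^{\cJ}[B^{\cy}(M)]}(-)$ also commutes with the suspension $\Sigma = \bS^1\sm(-)$ and with the formation of mapping cones on the nose. Therefore, applying it to the given cofiber sequence and using that it preserves the implicit chain of stable equivalences produces a diagram
\[
\THH(A)\xrightarrow{\rho}\THH(A,M)\xrightarrow{\partial}\Sigma\bigl(\THH(A)\sm_{\bS^{\cJ}[B^{\cy}(M)]}\bS^{\cJ}[B^{\cy}(P)]\bigr)
\]
of $\THH(A)$-module spectra, in which the cofiber of $\rho$ is stably equivalent, as a $\THH(A)$-module, to the third term. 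Combining this with the identification of the third term as $\Sigma\THH(A\sm_{\bS^{\cJ}[M]}\bS^{\cJ}[P])$ from the previous paragraph yields the claimed homotopy cofiber sequence of $\THH(A)$-modules.

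Finally I would promote this to a statement of modules \emph{with circle action}. The cyclic structure on $\THH_\bullet(A)$, on $\bS^{\cJ}[B^{\cy}_\bullet(M)]$, on $\bS^{\cJ}[B^{\rep}_\bullet(M)]$ and on $\bS^{\cJ}[B^{\cy}_\bullet(P)]$ realizes to circle actions, and the pushout defining $\THH_\bullet(A,M)$ is a pushout of cyclic commutative symmetric ring spectra, so the maps $\rho$ and $\partial$ are $S^1$-equivariant after realization. One therefore works throughout in the category of (cyclic, hence $S^1$-equivariant after realization) modules, noting that $\bS^{\cJ}$, the cyclic bar construction, smash products over $\bS^{\cJ}[B^{\cy}(M)]$, mapping cones and suspension by the $S^1$ with trivial action all carry along the circle action; the stated hypothesis already gives the first cofiber sequence equivariantly, so no new input is needed. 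The main obstacle I anticipate is bookkeeping rather than conceptual: carefully checking that the chain of stable equivalences implicit in ``homotopy cofiber sequence'' can be chosen compatibly with the $\bS^{\cJ}[B^{\cy}(M)]$-module structure and the circle action, and that $\THH(A)\sm_{\bS^{\cJ}[B^{\cy}(M)]}(-)$ genuinely preserves mapping cones on the point-set level (so that it takes the distinguished triangle to a distinguished triangle), which is where Lemma~\ref{lem:ext-along-cof-pre-st-eq} and left properness are doing the real work.
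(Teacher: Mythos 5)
Your proposal is correct and follows essentially the same route as the paper: base change along the cofibration $\bS^{\cJ}[B^{\cy}(M)]\to\THH(A)$, justified by Lemma~\ref{lem:ext-along-cof-pre-st-eq} applied to the cofibrant pre-log ring spectrum $(\THH(A),B^{\cy}(M))$, followed by the identification of the cofiber term with $\Sigma\THH(A\wedge_{\bS^{\cJ}[M]}\bS^{\cJ}[P])$. The only small point the paper makes explicit that you leave implicit is that cofibrancy of $P$ guarantees $A\wedge_{\bS^{\cJ}[M]}\bS^{\cJ}[P]$ is a cofibrant commutative symmetric ring spectrum, so that writing $\THH$ of it needs no further replacement.
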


\begin{proof}
Applying base change along $\bS^{\cJ}[B^{\cy}(M)] \to \THH(A)$ to the
homotopy cofiber sequence of $\bS^{\cJ}[B^{\cy}(M)]$-modules in the
proposition, we get a homotopy cofiber sequence
\[
\THH(A)\to \THH(A,M) \to \Sigma\THH(A)\wedge_{\bS^{\cJ}[B^{\cy}(M)]}\bS^{\cJ}[B^{\cy}(P)]
\]
of $\THH(A)$-modules with circle action. This uses that the functor in question takes stable equivalences of  
$\bS^{\cJ}[B^{\cy}(M)]$-modules to stable equivalences of $\THH(A)$-modules, as follows from  
Lemma~\ref{lem:ext-along-cof-pre-st-eq} applied to the cofibrant pre-log ring spectrum $(\THH(A),\bS^{\cJ}[B^{\cy}(M)])$. To get the statement in the proposition we identify the last term via the sequence of isomorphisms
\[
\begin{gathered}
\THH(A)\wedge_{\bS^{\cJ}[B^{\cy}(M)]}\bS^{\cJ}[B^{\cy}(P)] 
\cong|B^{\cy}_{\bullet}(A)\wedge_{B^{\cy}_{\bullet}(\bS^{\cJ}[M])}B^{\cy}_{\bullet}(\bS^{\cJ}[P])|\\
\cong |B^{\cy}_{\bullet}(A\wedge_{\bS^{\cJ}[M]}\bS^{\cJ}[P])|=\THH(A\wedge_{\bS^{\cJ}[M]}\bS^{\cJ}[P])
\end{gathered}
\]
of $\THH(A)$-modules with circle action. Notice that $P$ being cofibrant ensures that $A\wedge_{\bS^{\cJ}[M]}\bS^{\cJ}[P]$ is a cofibrant commutative symmetric ring spectrum, which justifies the notation $\THH(A\wedge_{\bS^{\cJ}[M]}\bS^{\cJ}[P])$.
\qed \end{proof}

\begin{remark}
Our use of the terminology ``$\THH(A)$-module with circle action'' in Proposition~\ref{prop:general-logTHH-homotopy-cofiber} refers to a module for 
$\THH(A)$ thought of as a commutative monoid in the symmetric monoidal category of symmetric spectra with circle action. Thus, if $X$ denotes a 
$\THH(A)$-module with circle action, it is understood that the action map $\THH(A)\wedge X\to X$ is circle equivariant.
\end{remark}

\subsection{Logification}\label{subsec:logification}
In order to introduce log ring spectra, we recall the definition of units:
\begin{definition}\label{def:graded-units}
  The (graded) units $\GLoneJof(A)$ of a positive fibrant commutative
  symmetric ring spectrum $A$ is the sub commutative $\cJ$-space monoid of
  $\Omega^{\cJ}(A)$ consisting of the
path components that map to units in $\pi_0(
  \Omega^{\cJ}(A)_{h\cJ})$.
\end{definition}
This notion of units is extensively studied in~\cite{Sagave-S_diagram}
and \cite{Sagave_spectra-of-units}. The use of $\cJ$-spaces ensures
that the canonical map $\GLoneJof(A) \to \Omega^{\cJ}(A)$ corresponds
to the inclusion $\pi_*(A)^{\times} \to (\pi_*(A),\cdot)$ of the
units in the underlying signed \emph{graded} multiplicative monoid of the
graded commutative ring of homotopy groups $\pi_*(A)$.

\begin{definition}
A pre-log ring spectrum $(A,M,\alpha)$ is a \emph{log ring spectrum}
if the top horizontal map $\widetilde{\alpha}$ in the following pullback
square is a $\cJ$-equivalence:
\[\xymatrix@-1pc{
\alpha^{-1}\GLoneJof(A) \ar[d] \ar[r]^-{\widetilde{\alpha}} & \GLoneJof(A)\ar[d]\\
M \ar[r]^-{\alpha} & \Omega^{\cJ}(A)
}\]
\end{definition}
The map $\GLoneJof(A) \to \Omega^{\cJ}(A)$ is a fibration because
it is an inclusion of path components. Hence the log condition is a homotopy invariant
notion as soon as $A$ is positive fibrant. 
\begin{example}\label{ex:log-str}\begin{enumerate}[(i)]
  \item The inclusion $\GLoneJof(A) \to \Omega^{\cJ}(A)$ of
    commutative $\cJ$-space monoids defines a log structure, known as
    the \emph{trivial log structure} on~$A$.
  \item If $(B,N)$ is a log ring spectrum, then the direct image
    pre-log structure associated with $A \to B$, introduced in
    Definition~\ref{def:direct-image-pre-log}, is a log structure if $N
    \to \Omega^{\cJ}(B)$ is a fibration in $\cC\cS^{\cJ}$ or $A\to B$
    is a positive fibration in $\cC\Spsym$.
\end{enumerate}
\end{example}
We can combine these two examples in order to produce log structures
from maps in $\cC\Spsym$:
\begin{definition}\label{def:direct-image-log-str}
  Let $j\colon A \to B$ be a map of commutative symmetric ring spectra
  with $B$ positive fibrant. The \emph{direct image log structure}
  $j_*\!\GLoneJof(B) \to \Omega^{\cJ}(A)$ is obtained by forming the
  direct image log structure associated with $j\colon A \to B$ and the
  trivial log structure $\GLoneJof(B) \to \Omega^{\cJ}(B)$.
\end{definition}
As illustrated in
Theorem~\ref{thm:THH-cofiber-sequence-with-direct-image}
and~\cite[Theorem 1.4]{Sagave_log-on-k-theory}, the direct image log
structure is an interesting log structure when $A \to B$ is the
connective cover map of a periodic ring spectrum $B$.
\begin{construction}\label{cons:logification}
  If $(A,M,\alpha)$ is a pre-log ring spectrum, then the
  \emph{associated log ring spectrum} $(A^a,M^a,\alpha^a)$ is defined as
  follows: We choose a factorization
\[\xymatrix{\alpha^{-1}\GLoneJof(A)\, \ar@{ >->}[r] & G \ar@{->>}[r]^-{\sim} & \GLoneJof(A) }\]
of $\widetilde{\alpha}$ in the positive $\cJ$-model structure, and
define $M^a$ by the following pushout square in $\cC\cS^{\cJ}$
\begin{equation}\label{eq:logification-pushout}\xymatrix@-1pc{
    \alpha^{-1}\GLoneJof(A) \ar@{ >->}[r] \ar[d]& G \ar[d]\\
    M \ar[r] & M^a.}
\end{equation}
The universal property of the pushout determines a map $M^a \to
\Omega^{\cJ}(A)$. As proven in~\cite[Lemma 7.7]{Rognes_TLS}
or~\cite[Lemma 3.12]{Sagave_log-on-k-theory}, the induced map $M \to
M^a$ is a $\cJ$-equivalence if $(A,M)$ is already a log ring spectrum,
and $(A,M^a)$ is always a log ring spectrum. However, in connection
with log $\THH$ we want a logification that preserves cofibrancy, so
we carry on and define $A^a$ by the following functorial factorization
\[
\xymatrix@-1pc{
A\wedge_{\bS^{\cJ}[M]}\bS^{\cJ}[M^a] \ar@{>->}[r] \ar[d]& A^a\ar@{->>}[d]^{\simeq}\\
A\wedge_{\bS^{\cJ}[M]}A \ar[r] & A
}
\]
in the positive stable model structure on $\cC\Spsym$. We let $\alpha^a$ be the adjoint of the map $\bS^{\cJ}[M^a]\to A^a$ given by the factorization. 
In this way we have defined a functor $(A,M,\alpha)\mapsto  (A^a,M^a,\alpha^a)$ from the category of pre-log ring spectra to the full subcategory of log ring spectra that comes with a natural morphism $(A,M,\alpha)\to  (A^a,M^a,\alpha^a)$ of pre-log ring spectra such that $A\to A^a$ is a stable equivalence. If $(A,M,\alpha)$ is cofibrant, then also $(A^a,M^a,\alpha^a)$ is cofibrant.
\end{construction}

\begin{example}\label{ex:Dx}
  Let $E$ be a $d$-periodic commutative symmetric ring spectrum with
  connective cover $j\colon e \to E$, where $d>0$. As explained
  in~\cite[Construction 4.2]{Sagave_log-on-k-theory}, the choice of a
  representative $x$ of a periodicity element gives rise to a pre-log
  structure $D(x) \to \Omega^{\cJ}(e)$. It has the property $D(x)_{h\cJ}
  \simeq Q_{\geq
    0}S^0$~\cite[Lemma~4.6]{Sagave_log-on-k-theory}. There is a
  canonical map $(e,D(x))\to (e,j_*\GLoneJof(E))$ that induces a weak
  equivalence $(e^a,D(x)^a)\to (e,j_*\GLoneJof(E))$. We will return to
  these pre-log structures in the sequel~\cite{RSS_LogTHH-II}.
\end{example}

\subsection{Log \texorpdfstring{$\THH$}{THH} and logification}\label{subsec:logTHH-log}
Log $\THH$ is invariant under logification:
\begin{theorem}\label{thm:logification-inv-of-THH}
  Let $(A,M,\alpha)$ be a cofibrant pre-log ring
  spectrum. 
  Then the logification map $(A,M) \to (A^a,M^a)$ induces a stable
  equivalence $\THH(A,M) \to \THH(A^a,M^a)$.
\end{theorem}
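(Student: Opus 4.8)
The plan is to reduce the statement to a comparison of the relevant replete bar constructions and topological Hochschild homologies, and to exploit the pushout description of $\THH(A,M)$ in~\eqref{eq:def-log-thh} together with the pushout~\eqref{eq:logification-pushout} defining $M^a$. First I would record that since $(A,M,\alpha)$ is cofibrant, so is $(A^a,M^a,\alpha^a)$, and hence both $\THH(A,M)$ and $\THH(A^a,M^a)$ are computed by genuine (homotopy) pushouts. By Proposition~\ref{prop:log-THH-hty-inv} and the fact that $A\to A^a$ is a stable equivalence, it then suffices to produce a $\cJ$-equivalence on the relevant bar-construction inputs; more precisely, I would show that the logification map induces stable equivalences
\[
\bS^{\cJ}[B^{\cy}(M)]\to \bS^{\cJ}[B^{\cy}(M^a)],\qquad
\bS^{\cJ}[B^{\rep}(M)]\to \bS^{\cJ}[B^{\rep}(M^a)],
\]
compatibly with the maps to $\THH(A)\to\THH(A^a)$, and then conclude by left properness of the positive stable model structure on $\cC\Spsym$, exactly as in the proof of Proposition~\ref{prop:log-THH-hty-inv}.

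The cyclic bar construction is straightforward: $B^{\cy}(-)\cong(-)\otimes S^1$ by Lemma~\ref{lem:Bcy-as-tensor}, and tensoring with $S^1$ is a left Quillen functor on $\cC\cS^{\cJ}$, so it preserves $\cJ$-equivalences between cofibrant objects; since $M\to M^a$ is a $\cJ$-equivalence of cofibrant commutative $\cJ$-space monoids (the cofibrancy of $M^a$ follows from the pushout~\eqref{eq:logification-pushout} along a cofibration), we get a $\cJ$-equivalence $B^{\cy}(M)\to B^{\cy}(M^a)$, and then a stable equivalence after applying $\bS^{\cJ}$ by Corollary~\ref{cor:bSJ-preserves-we}. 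The harder input is the replete bar construction. Here I would argue that $B^{\rep}(-)$, being a functorial model for the homotopy pullback of $M\to M^{\gp}\ot B^{\cy}(M^{\gp})$, only depends on $M$ through its $\cJ$-equivalence type together with the group completion: the group completion functor sends the $\cJ$-equivalence $M\to M^a$ to a $\cJ$-equivalence $M^{\gp}\to (M^a)^{\gp}$ (group completion is a left Quillen functor for the group completion model structure, Proposition~\ref{prop:group-compl-model-str}), hence induces a $\cJ$-equivalence $B^{\cy}(M^{\gp})\to B^{\cy}((M^a)^{\gp})$ by the previous paragraph, and therefore a $\cJ$-equivalence on homotopy pullbacks $B^{\rep}(M)\to B^{\rep}(M^a)$. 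Applying $\bS^{\cJ}$ and Corollary~\ref{cor:bSJ-preserves-we} again turns this into a stable equivalence.

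Finally I would assemble the two pushout squares~\eqref{eq:def-log-thh} for $(A,M)$ and for $(A^a,M^a)$ into a commutative cube. The back face is the $\THH(A,M)$-defining pushout, the front face the $\THH(A^a,M^a)$-defining pushout, and the four connecting maps are the stable equivalences $\THH(A)\to\THH(A^a)$ (from $A\to A^a$ being a stable equivalence of cofibrant commutative symmetric ring spectra), $\bS^{\cJ}[B^{\cy}(M)]\to\bS^{\cJ}[B^{\cy}(M^a)]$, $\bS^{\cJ}[B^{\rep}(M)]\to\bS^{\cJ}[B^{\rep}(M^a)]$, and the induced map $\THH(A,M)\to\THH(A^a,M^a)$. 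Since the cofibrancy hypotheses make both squares homotopy pushouts and three of the four comparison maps are stable equivalences, the cube lemma (gluing lemma in the left proper model category $\cC\Spsym$) forces the fourth map $\THH(A,M)\to\THH(A^a,M^a)$ to be a stable equivalence. The main obstacle is the middle step on the replete bar construction: one must check that the homotopy pullback defining $B^{\rep}$ is genuinely homotopy invariant in $M$ through the combined data of the $\cJ$-equivalence and the group completion, i.e.\ that the point-set factorizations chosen in Construction~\ref{constr:rep-bar} for $M$ and for $M^a$ can be compared by a $\cJ$-equivalence of pullback diagrams; this is where one uses that $M\to M^a$, $M^{\gp}\to (M^a)^{\gp}$, and the induced map on $B^{\cy}$ of the group completions are all $\cJ$-equivalences, together with the properness of the positive $\cJ$-model structure on $\cC\cS^{\cJ}$.
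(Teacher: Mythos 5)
Your argument breaks down at its very first substantive step: the claim that the logification map $M\to M^a$ is a $\cJ$-equivalence is false for a general pre-log ring spectrum. Construction~\ref{cons:logification} only guarantees this when $(A,M)$ is \emph{already} a log ring spectrum; for a genuine pre-log structure the pushout~\eqref{eq:logification-pushout} glues the units $G\simeq\GLoneJof(A)$ onto $M$ along $\alpha^{-1}\GLoneJof(A)$ and changes the homotopy type of $M$ drastically. For instance, for $(e,D(x))$ from Example~\ref{ex:Dx} one has $D(x)_{h\cJ}\simeq Q_{\geq0}S^0$, whereas $D(x)^a\simeq j_*\!\GLoneJof(E)$ contains all of $\GLoneJof(e)$ in $\cJ$-degree zero. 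If $M\to M^a$ were a $\cJ$-equivalence, the theorem would be an immediate corollary of Proposition~\ref{prop:log-THH-hty-inv}; the entire content of the statement is that $\THH(-,-)$ is invariant under a map of pre-log ring spectra that is \emph{not} an equivalence. Consequently your cube argument cannot be run as stated: the comparison maps $\bS^{\cJ}[B^{\cy}(M)]\to\bS^{\cJ}[B^{\cy}(M^a)]$ and $\bS^{\cJ}[B^{\rep}(M)]\to\bS^{\cJ}[B^{\rep}(M^a)]$ are not stable equivalences, so the gluing lemma does not apply.

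The missing idea is that the discrepancy between $M$ and $M^a$ is killed only \emph{after} base change along $\bS^{\cJ}[B^{\cy}(M)]\to\THH(A)$. Concretely: let $P$ be a cofibrant replacement of $\alpha^{-1}\GLoneJof(A)$. Since $P\to\Omega^{\cJ}(A)$ factors through $\GLoneJof(A)$, the map $\bS^{\cJ}[B^{\cy}(P)]\to\THH(A)$ extends over $\bS^{\cJ}[B^{\cy}(P^{\gp})]$, and because the repletion map of a grouplike monoid is a $\cJ$-equivalence (Lemma~\ref{lem:Bcy--Brep-equiv-for-gplike}), the cobase change of $\bS^{\cJ}[B^{\cy}(P)]\to\bS^{\cJ}[B^{\cy}(P)^{\rep}]$ along this map is a stable equivalence. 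One then compares the two faces of a cube built from $P$, $G$, $M$ and $M^a$, using that repletion takes the homotopy cocartesian square $M^a\simeq M\boxtimes_P G$ to a homotopy cocartesian square of repletions (this requires a separate lemma, proved via the Bousfield--Friedlander theorem and virtual surjectivity), and concludes by commuting homotopy pushouts. None of this is present in your proposal, so the argument as written has a genuine gap rather than a repairable inaccuracy.
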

\begin{example}\label{ex:Dx-THH-logification}
  The theorem implies that for the pre-log structure $(e,D(x))$ of
  Example~\ref{ex:Dx}, there is a stable equivalence $\THH(e,D(x)) \to
  \THH(e,j_*\!\GLoneJof(E))$, where we suppress a cofibrant replacement of the respective pre-log ring spectra from the notation. This equivalence is useful for
  calculations, since the homotopy type of $D(x)$ only depends on the
  degree of the periodicity element of $E$ and the homology of
  $D(x)_{h\cJ}$ is well understood.
\end{example}
The homotopy cartesian and cocartesian squares appearing in the next
proof and the subsequent lemma refer to the positive 
$\cJ$-model structure on $\cC\cS^{\cJ}$.

\begin{proof}[ 
of Theorem~\ref{thm:logification-inv-of-THH}]
  By Proposition~\ref{prop:replete-bar-as-repletion}, there is a chain
  of $\cJ$-equivalences relating $B^{\rep}(M)$ and
  $B^{\cy}(M)^{\rep}$. This chain of $\cJ$-equivalences can be
  chosen naturally with respect to the map $M \to M^a$ by forming the
  factorizations and lifts in Lemmas~\ref{lem:repletion-as-pullback}
  and~\ref{lem:BcyMgp-vs-BcyM-gp} in a model category of arrows. Using
  that the $\cJ$-equivalences in the chain are augmented over a
  cofibrant object, it follows from
  Corollary~\ref{cor:bSJ-preserves-we} that the map
  $\THH(A,M)\to\THH(A^a,M^{a})$ is stably equivalent to the map
 \[
    \THH(A)\!\sm_{\bS^{\cJ}[B^{\cy}\!(M)]}\bS^{\cJ}[B^{\cy}(M)^{\rep}] \to \THH(A^a)\!\sm_{\bS^{\cJ}[B^{\cy}(M^a)]}\bS^{\cJ}[B^{\cy}\!(M^a)^{\rep}] \, .
 \]
  Let $P$ be a cofibrant replacement of $\alpha^{-1}\GLoneJof(A)$ in
  the positive  $\cJ$-model structure, let $\rho \colon B^{\cy}
  (P) \to B^{\cy} (P)^{\rep}$ be the repletion map, and let $\gamma \colon
  B^{\cy} (P)\to B^{\cy} (P^{\gp})$ be the map induced by the group
  completion. We consider the pushout square
  \[\xymatrix@-1pc{
    B^{\cy} (P) \ar[d]_{\rho} \ar[r]^{\gamma} & \ar[d]^{\tau} B^{\cy} (P^{\gp})\\
    B^{\cy}(P)^{\rep} \ar[r] & B^{\cy}(P)^{\rep}\boxtimes_{B^{\cy} (P)}B^{\cy}
    (P^{\gp}).  }\] Since $\rho$ is an acyclic cofibration in the group
  completion model structure, so is $\tau$. The domain of $\tau$ is
  grouplike. Since $\rho$ is surjective on $\pi_0(-)_{h\cJ}$, the
  codomain of $\tau$ is also grouplike and it follows that $\tau$ is a
  $\cJ$-equivalence.

  Next we notice that the evident map $f\colon \bS^{\cJ}[B^{\cy} (P)] \to
  \THH(A)$ factors as a composite $\bS^{\cJ}[B^{\cy} (P)] \to
  \bS^{\cJ}[B^{\cy} (P^{\gp})] \to \THH(A)$ because $P \to
  \Omega^{\cJ}(A)$ factors through $ \GLoneJof(A)$ and therefore extends
  over $P \to P^{\gp}$. It follows that the cobase change of
  $\bS^{\cJ}[\rho]$ along $f$ is a stable equivalence, because it can
  be computed as the cobase change of the acyclic cofibration 
  $\bS^{\cJ}[\tau]$ along $\bS^{\cJ}[B^{\cy} (P^{\gp})] \to \THH(A) $.

  Let $\xymatrix@1{P \ar@{ >->}[r] & G \ar@{->>}[r]^-{\sim} &
    \GLoneJof(A) }$ be a factorization in the positive $\cJ$-model
  structure. Using the gluing lemma in $\cC\cS^{\cJ}$, the pushout of
  $M \ot P \to G$ is $\cJ$-equivalent to the logification $M^a$ from
  Construction~\ref{cons:logification}.  Applying the cyclic and
  repleted bar constructions and the functor $\bS^{\cJ}$ provides the
  following commutative cube in $\cC\Spsym$:
\[\xymatrix@-1.5pc{
  & \bS^{\cJ}[B^{\cy}(G)]\ar[rr] \ar'[d][dd]& & \bS^{\cJ}[B^{\cy}(M^a)]\ar[dd]\\
  \bS^{\cJ}[B^{\cy}(P)]  \ar[rr] \ar[ur] \ar[dd]_{\bS^{\cJ}[\rho]}& & \bS^{\cJ}[B^{\cy}(M)] \ar[ur] \ar[dd] & \\
  & \bS^{\cJ}[B^{\cy}(G)^{\rep}]  \ar'[r][rr]& & \bS^{\cJ}[B^{\cy}(M^a)^{\rep}] \, .\\
  \bS^{\cJ}[B^{\cy}(P)^{\rep}] \ar[rr] \ar[ur]& &
  \bS^{\cJ}[B^{\cy}(M)^{\rep}]\ar[ur] }\] The map
$\bS^{\cJ}[B^{\cy}(G)] \to \bS^{\cJ}[B^{\cy}(G)^{\rep}]$ is a stable
equivalence by Lemma~\ref{lem:Bcy--Brep-equiv-for-gplike}.
Combining this with the above statement about $\bS^{\cJ}[\rho]$, we
conclude that the left hand face becomes homotopy cocartesian after
cobase change along the canonical map $\bS^{\cJ}[B^{\cy}(G)] \to
\THH(A)$, in the sense that the cobase change of
\[
\bS^{\cJ}[B^{\cy}(G)]\sm_{\bS^{\cJ}[B^{\cy}(P)]} \bS^{\cJ}[B^{\cy}(P)^{\rep}] \to \bS^{\cJ}[B^{\cy}(G)^{\rep}]
\]
along $\bS^{\cJ}[B^{\cy}(G)] \to \THH(A)$ is a stable equivalence.  The top face is
homotopy cocartesian because $\bS^{\cJ}$ and $-\tensor S^1$ are left
Quillen. The bottom face is seen to be homotopy cocartesian by combining
Lemma~\ref{lem:repletion-hty-cocartesian} below with the fact that
$\bS^{\cJ}$ is left Quillen. By commuting homotopy pushouts, the statements about
the top and bottom face in the above cube imply that the diagram
\[
\xymatrix@-1pc{
\THH(A)\wedge_{\bS^{\cJ}[B^{\cy}(P)]}\bS^{\cJ}[B^{\cy}(P)^{\rep}]
\ar[r]\ar[d] &\THH(A)\wedge_{\bS^{\cJ}[B^{\cy}(M)]}\bS^{\cJ}[B^{\cy}(M)^{\rep}] \ar[d]\\
\THH(A)\wedge_{\bS^{\cJ}[B^{\cy}(G)]}\bS^{\cJ}[B^{\cy}(G)^{\rep}]
\ar[r] & \THH(A^a)\wedge_{\bS^{\cJ}[B^{\cy}(M^a)]}\bS^{\cJ}[B^{\cy}(M^a)^{\rep}]
}
\]
induced by the logification construction is homotopy cocartesian. This gives the result since the vertical map on the left is a stable equivalence.
\qed \end{proof}
The next lemma was used in the previous proof.
\begin{lemma}\label{lem:repletion-hty-cocartesian}
  Consider the following commutative diagram of cofibrant objects in $\cC\cS^{\cJ}$ in
  which the $N_i^{\rep}$ are the repletions of the horizontal
  composites $\epsilon_i \colon N_i \to M_i$:
  \[\xymatrix@-1.5pc{
    & N_2 \ar'[d][dd] \ar[rr]& & N_2^{\rep} \ar'[d][dd] \ar[rr]& & M_2\ar[dd]\\
    N_1 \ar@{ >->}[ur] \ar[dd] \ar[rr]& & N_1^{\rep} \ar[ur] \ar[dd] \ar[rr]& & M_1\ar@{ >->}[ur] \ar[dd]& \\
    & N_4 \ar'[r][rr] & & N_4^{\rep} \ar'[r][rr] & & M_4.\\
    N_3 \ar@{ >->}[ur] \ar[rr]& & N_3^{\rep}\ar[ur] \ar[rr] & & M_3 \ar@{ >->}[ur]
  }\] If the left and right hand faces are homotopy cocartesian squares and
  all $\epsilon_i$ are virtually surjective, then the middle square of
  repletions is homotopy cocartesian.
\end{lemma}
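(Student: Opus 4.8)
The plan is to rewrite the repletions as homotopy pullbacks via Corollary~\ref{cor:repletion-as-hty-pb} and then run a Bousfield--Friedlander argument parallel to the proof of Lemma~\ref{lem:repletion-as-pullback}. To begin, I would record that $\epsilon_4\colon N_4\to M_4$ is again virtually surjective, so that Corollary~\ref{cor:repletion-as-hty-pb} and Lemma~\ref{lem:repletion-as-pullback} apply to all four $\epsilon_i$. Indeed, group completion and $\pi_0\big((-)_{h\cJ}\big)$ both carry homotopy pushouts in $\CSJ$ to pushouts of abelian groups, respectively commutative monoids, so $\pi_0\big((N_4)^{\gp}_{h\cJ}\big)$ is the pushout of the $\pi_0\big((N_i)^{\gp}_{h\cJ}\big)$ for $i=1,2,3$, and likewise with $M$ in place of $N$; a pushout of surjections of abelian groups is surjective. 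Performing the factorizations and lifts of Lemma~\ref{lem:repletion-as-pullback} in a diagram model category, I would then obtain, naturally in $i\in\{1,2,3,4\}$, homotopy cartesian squares $S_i$ of cofibrant commutative $\cJ$-space monoids
\[\xymatrix@-1pc{
N_i^{\rep} \ar[r] \ar[d] & M_i \ar[d]\\
(N_i^{\gp})' \ar[r] & M_i^{\gp}
}\]
with $(N_i^{\gp})'\xleftarrow{\ \sim\ }N_i^{\gp}$ and $(N_i^{\gp})'\to M_i^{\gp}$ a fibration. It then suffices to show that the canonical map from the homotopy pushout of $N_2^{\rep}\ot N_1^{\rep}\to N_3^{\rep}$ to $N_4^{\rep}$ is a $\cJ$-equivalence.

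Next I would identify the corners that appear after forming homotopy pushouts. Put $Q^N=N_2^{\gp}\boxtimes^h_{N_1^{\gp}}N_3^{\gp}$ and $Q^M=M_2^{\gp}\boxtimes^h_{M_1^{\gp}}M_3^{\gp}$. The same $\pi_0$-computation shows that $Q^N$ and $Q^M$ are grouplike---the point being that a pushout of a diagram of abelian groups, formed in commutative monoids, is again an abelian group. Since group completion, being fibrant replacement in a left Bousfield localization, preserves homotopy pushouts, and the left and right hand faces are homotopy cocartesian, $N_4^{\gp}$ and $M_4^{\gp}$ are the group completions of $Q^N$ and $Q^M$; as these are already grouplike, the canonical maps $Q^N\to N_4^{\gp}$ and $Q^M\to M_4^{\gp}$ are $\cJ$-equivalences. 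Likewise the homotopy pushout of $M_2\ot M_1\to M_3$ is $M_4$. Modelling homotopy pushouts by two-sided bar constructions, the cornerwise bar construction applied to the squares $S_i$ gives a simplicial square whose square of $n$-simplices is the box product $S_2\boxtimes S_1^{\boxtimes n}\boxtimes S_3$; this is levelwise homotopy cartesian, since box products of homotopy cartesian squares of cofibrant commutative $\cJ$-space monoids are homotopy cartesian, and its realized corners are $N_2^{\rep}\boxtimes^h_{N_1^{\rep}}N_3^{\rep}$, $M_4$, $Q^N$ and $Q^M$.

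Finally I would invoke the Bousfield--Friedlander theorem~\cite[Theorem B.4]{Bousfield-F_Gamma-bisimplicial}, exactly as in the proof of Lemma~\ref{lem:repletion-as-pullback}, to see that after applying $(-)_{h\cJ}$ the realized square is homotopy cartesian. Its hypotheses hold because box powers of the grouplike monoids $N_i^{\gp}$ and $M_i^{\gp}$ are again grouplike, so the simplicial spaces $[n]\mapsto\big(N_2^{\gp}\boxtimes(N_1^{\gp})^{\boxtimes n}\boxtimes N_3^{\gp}\big)_{h\cJ}$ and $[n]\mapsto\big(M_2^{\gp}\boxtimes(M_1^{\gp})^{\boxtimes n}\boxtimes M_3^{\gp}\big)_{h\cJ}$ are levelwise grouplike and hence satisfy the $\pi_*$-Kan condition, while virtual surjectivity of the $\epsilon_i$ makes the induced map on vertical path components a surjection of simplicial abelian groups, hence a Kan fibration. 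Together with $Q^N\simeq N_4^{\gp}$, $Q^M\simeq M_4^{\gp}$, the fact that $(-)_{h\cJ}$ commutes with homotopy pullbacks, and Corollary~\ref{cor:repletion-as-hty-pb} applied to $\epsilon_4$, this identifies the resulting homotopy pullback with $(N_4^{\rep})_{h\cJ}$, so the comparison map from the homotopy pushout of the $N_i^{\rep}$ to $N_4^{\rep}$ is a $\cJ$-equivalence, as required. The main obstacle will be the careful verification of the Bousfield--Friedlander hypotheses for the bar-construction corners; the remaining steps are bookkeeping about when group completion and $(-)_{h\cJ}$ commute with homotopy pushouts.
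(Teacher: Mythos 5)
Your proposal is correct and follows essentially the same route as the paper's proof: both replace the repletions by homotopy pullbacks via Corollary~\ref{cor:repletion-as-hty-pb}, model the homotopy pushouts by two-sided $\boxtimes$-bar constructions, use grouplikeness of homotopy pushouts of grouplike objects to identify the group-completed corners, and verify the Bousfield--Friedlander hypotheses exactly as you do ($\pi_*$-Kan from grouplikeness, Kan fibration on vertical path components from virtual surjectivity). The only differences are cosmetic — you pass to $(-)_{h\cJ}$ after forming the $\boxtimes$-bar construction rather than before, and you re-derive the virtual surjectivity of $\epsilon_4$, which is already part of the hypothesis.
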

\begin{proof}
  We use the characterization of the repletion from
  Corollary~\ref{cor:repletion-as-hty-pb}.  Group completion preserves
  homotopy cocartesian squares because the homotopy pushout of
  grouplike objects in $\cC\cS^{\cJ}$ is grouplike. Since the
  two-sided bar construction over $\boxtimes$ can be used to compute
  homotopy pushouts, this reduces the statement to showing that
\[\xymatrix@-1pc{
  B^{\boxtimes}(N_3^{\rep},N_1^{\rep},N_2^{\rep}) \ar[r] \ar[d] & B^{\boxtimes}(N_3^{\gp},N_1^{\gp},N_2^{\gp}) \ar[d]\\
  B^{\boxtimes}(M_3,M_1,M_2) \ar[r] &
  B^{\boxtimes}(M_3^{\gp},M_1^{\gp},M_2^{\gp}) }\] is homotopy
cartesian. By~\cite[Corollary 11.4]{Sagave-S_diagram}, a square of
$\cJ$-spaces is homotopy cartesian if and only if it becomes  a homotopy
cartesian diagram of simplicial sets after applying $(-)_{h\cJ}$.  Using that
the monoidal structure map of $(-)_{h\cJ}$ is a $\cJ$-equivalence for
cofibrant objects (see e.g.~\cite[Lemma
2.11]{Sagave_log-on-k-theory}), it remains to show that the following
square of simplicial sets is homotopy cartesian:
\[\xymatrix@-1pc{
  B((N_3^{\rep})_{h\cJ},(N_1^{\rep})_{h\cJ},(N_2^{\rep})_{h\cJ})
  \ar[r] \ar[d] &
  B((N_3^{\gp})_{h\cJ},(N_1^{\gp})_{h\cJ},(N_2^{\gp})_{h\cJ}) \ar[d]\\
  B((M_3)_{h\cJ},(M_1)_{h\cJ},(M_2)_{h\cJ}) \ar[r] &
  B((M_3^{\gp})_{h\cJ},(M_1^{\gp})_{h\cJ},(M_2^{\gp})_{h\cJ}).
}\] Using the hypothesis of virtual surjectivity, this follows from
the Bousfield--Fried\-lander Theorem~\cite[Theorem
B.4]{Bousfield-F_Gamma-bisimplicial} about the realization of
pointwise homotopy cartesian squares of bisimplicial sets: The square
of bisimplicial sets obtained by applying $B_{\bullet}$ in
the situation above is pointwise homotopy cartesian,
\[
B_{\bullet}((N_3^{\gp})_{h\cJ},(N_1^{\gp})_{h\cJ},(N_2^{\gp})_{h\cJ})
\quad\text{and}\quad
B_{\bullet}((M_3^{\gp})_{h\cJ},(M_1^{\gp})_{h\cJ},(M_2^{\gp})_{h\cJ})\]
satisfy the $\pi_*$-Kan condition because all simplicial monoids
involved are grouplike, and the assumed virtual surjectivity implies that
the map between these two bisimplicial sets induces a Kan fibration on
vertical path components.
\qed \end{proof}

\section{The case of discrete pre-log rings}\label{sec:discrete-pre-log-rings}
A \emph{discrete pre-log ring} $(B,N,\beta)$ is a commutative ring $B$
together with a commutative monoid $N$ and a monoid homomorphism
$\beta \: N \to (B, \cdot)$. These objects are usually just called
\emph{pre-log rings}, but we use the additional term \emph{discrete}
to distinguish them from pre-log ring spectra. In this section we
explain how discrete pre-log rings can be viewed as
pre-log ring spectra, construct a homotopy cofiber sequence for the
log $\THH$ of $(B,N,\beta)$ if $N$ is a free commutative monoid
$\langle x\rangle$ on a single generator $x$, and compute the mod $p$
homotopy of $\THH(\bZ,\langle p \rangle)$. This is interesting in its
own right and serves as an example of the general principle in
Proposition~\ref{prop:general-logTHH-homotopy-cofiber} for setting up
homotopy cofiber sequences.

Let $(B,N,\beta)$ be a discrete pre-log ring. The standard model for
the Eilenberg--Mac\,Lane spectrum associated with $B$ is the
commutative symmetric ring spectrum $HB$ with $(HB)_n = \widetilde
B[S^n]$, the reduced $B$-linearization of the $n$-sphere. Since the
underlying multiplicative monoid of $0$-simplices of $(HB)_0$ is the
multiplicative monoid of $B$, the structure map $\beta$ provides a
morphism of discrete simplicial monoids $N \to (HB)_0 =
(\Omega^{\cJ}(HB))(\bld{0},\bld{0})$.  In the following we write
$\Fzero = F^{\cJ}_{(\bld{0},\bld{0})}$ for the strong symmetric
monoidal functor $\cS \to \cS^{\cJ}$ that is left adjoint to the
evaluation functor at $(\bld{0},\bld{0})$. With this notation, we obtain
a map $\beta\colon \Fzero{}N\to \Omega^{\cJ}(HB)$ in $\cC\cS^{\cJ}$
that defines the pre-log ring spectrum $(HB, \Fzero{}N)$ associated
with $(B,N)$. Here the monoid structure of $FN$ is provided by the
paring
\[ \Fzero{}N\boxtimes \Fzero{}N = F^\cJ_{(\bzero,\bzero)}(N) \boxtimes F^\cJ_{(\bzero,\bzero)}(N)
\cong F^\cJ_{(\bzero,\bzero)}(N \times N) \to
F^\cJ_{(\bzero,\bzero)}(N)=\Fzero{}N,\] and $\bS^{\cJ}[FN] \cong \bS[N]$, where $\bS[-] =
\Sigma^\infty(-)_+$ denotes the unreduced symmetric suspension
spectrum of an ordinary simplicial set.

\subsection{The cyclic and the replete bar construction for discrete monoids}\label{subsec:cy-rep-discrete}
If~$N$ is a commutative monoid, we can view it as a discrete
simplicial commutative monoid and form the cyclic bar construction
$B^{\cy}(N)$ in simplicial commutative monoids. Using the algebraic
group completion $N \to N^{\gp}$ of $N$, we define the replete bar
construction $B^{\rep}(N)$ as the pullback of $N \to N^{\gp} \ot
B^{\cy}(N^{\gp})$ in simplicial commutative monoids. To relate this to
the replete bar construction in commutative $\cJ$-space monoids, we
first note that the strong symmetric monoidal functor $F$ induces a natural isomorphism $F(B^{\cy}(N))\cong B^{\cy}(FN)$. Secondly, the
map $\Fzero{}N\to \Fzero{}(N^{\gp})$ has grouplike
codomain and induces a weak equivalence when applying
$B((-)_{h\cJ})$ by \cite[Proposition Q.1]{Friedlander-M_filtrations}. So after
composing with a fibrant replacement $(-)^{\mathrm{fib}}$ in the positive $\cJ$-model structure, it provides a group completion $\Fzero{}N \to
(\Fzero{}(N^{\gp}))^{\mathrm{fib}}$ of commutative $\cJ$-space
monoids. We also note that even though the underlying $\cJ$-space is cofibrant, $FN$ is usually not cofibrant in the positive  $\cJ$-model structure on $\cC\cS^{\cJ}$. Thus, for the purpose of defining log $\THH$, we should first pass to a cofibrant replacement $(FN)^{\mathrm{cof}}$.
\begin{lemma}\label{lem:cof-rep-of-discrete-on-Bcy-Brep}
  Let $N$ be a commutative monoid, and let $(\Fzero{}N)^{\mathrm{cof}}
  \to \Fzero{}N$ be a cofibrant replacement in the positive 
  $\cJ$-model structure.
  \begin{enumerate}[(i)]
  \item The induced map $B^{\cy}((FN)^{\mathrm{cof}}) \to
    B^{\cy}(FN)$ is a $\cJ$-equivalence and gives rise to a stable equivalence
\[
\bS^{\cJ}[ B^{\cy}((FN)^{\mathrm{cof}})] \xrightarrow{\simeq}
\bS^{\cJ}[B^{\cy}(FN)]\cong \bS[B^{\cy}(N)]
\] 
of  commutative $\bS^{\cJ}[ B^{\cy}((FN)^{\mathrm{cof}})]$-algebras with circle action.  
  \item There is a chain of $\cJ$-equivalences of commutative
  $B^{\cy}((\Fzero{}N)^{\mathrm{cof}})$-algebras, connecting
  $B^{\rep}((\Fzero{}N)^{\mathrm{cof}})$ to $\Fzero{}(B^{\rep}(N))$,
  that induces a chain of stable equivalences
\[
\bS^{\cJ}[B^{\rep}((\Fzero{}N)^{\mathrm{cof}})]\simeq 
\bS^{\cJ}[\Fzero{}(B^{\rep}(N))]\cong \bS[B^{\rep}(N)] 
\]    
of commutative $\bS^{\cJ}[ B^{\cy}((FN)^{\mathrm{cof}})]$-algebras with circle action. 
  \end{enumerate}
\end{lemma}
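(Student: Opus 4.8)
The plan is to deduce both parts from four inputs: the homotopy invariance of the cyclic bar construction on flat commutative $\cJ$-space monoids (Lemmas~\ref{lem:Bcy-as-tensor} and~\ref{lem:properties-of-flat}); the compatibility of $F=F^{\cJ}_{(\bzero,\bzero)}$ with $B^{\cy}$ and with $\bS^{\cJ}$, i.e.\ the natural isomorphisms $F(B^{\cy}(N))\cong B^{\cy}(FN)$ and $\bS^{\cJ}[F(-)]\cong\bS[-]$; the description of homotopy cartesian squares of $\cJ$-spaces through $(-)_{h\cJ}$ (\cite[Corollary 11.4]{Sagave-S_diagram}); and the homotopy invariance of $\bS^{\cJ}$ recorded in Corollary~\ref{cor:bSJ-preserves-we}.

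For part~(i), I would first note that $FN$ and $(FN)^{\mathrm{cof}}$ are flat --- the former by the discussion preceding the lemma, the latter by Lemma~\ref{lem:properties-of-flat}(iii) --- and that $\boxtimes$-powers of flat objects are flat, so that Lemma~\ref{lem:properties-of-flat}(i) makes $(FN)^{\mathrm{cof}}\to FN$ into a degreewise $\cJ$-equivalence of cyclic $\cJ$-spaces $B^{\cy}_{\bullet}((FN)^{\mathrm{cof}})\to B^{\cy}_{\bullet}(FN)$, hence a $\cJ$-equivalence after realization. Since tensoring with $S^1$ is left Quillen and $B^{\cy}(M)\cong M\otimes S^1$ (Lemma~\ref{lem:Bcy-as-tensor}), the source $B^{\cy}((FN)^{\mathrm{cof}})$ is cofibrant, hence flat, and Corollary~\ref{cor:bSJ-preserves-we} shows that $\bS^{\cJ}$ carries this $\cJ$-equivalence to a stable equivalence; the target is then rewritten using $B^{\cy}(FN)\cong F(B^{\cy}(N))$ and $\bS^{\cJ}[F(-)]\cong\bS[-]$. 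As $B^{\cy}_{\bullet}$, realization, $F$ and $\bS^{\cJ}$ all preserve cyclic objects and are (lax) symmetric monoidal, the resulting stable equivalence is a map of commutative symmetric ring spectra with circle action; being the unit map of the evident algebra structure on the target, it is automatically a map of $\bS^{\cJ}[B^{\cy}((FN)^{\mathrm{cof}})]$-algebras.

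Turning to part~(ii), I would use that $B^{\rep}((FN)^{\mathrm{cof}})$ is by definition a functorial model for the homotopy pullback of $(FN)^{\mathrm{cof}}\to((FN)^{\mathrm{cof}})^{\gp}\ot B^{\cy}(((FN)^{\mathrm{cof}})^{\gp})$, equipped with the repletion map $\rho$ from $B^{\cy}((FN)^{\mathrm{cof}})$ that makes it a commutative $B^{\cy}((FN)^{\mathrm{cof}})$-algebra. Next I would replace the corners of this homotopy pullback square by $\cJ$-equivalent ones: $(FN)^{\mathrm{cof}}$ by $FN$, and $((FN)^{\mathrm{cof}})^{\gp}$ by $F(N^{\gp})$, which is flat, grouplike, and a group completion of $FN$ by \cite[Proposition Q.1]{Friedlander-M_filtrations}, using uniqueness of group completions in the group completion model structure together with homotopy invariance of $B^{\cy}$ on flat objects. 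Through $B^{\cy}(F(N^{\gp}))\cong F(B^{\cy}(N^{\gp}))$ this becomes the homotopy pullback of $F$ applied to the cospan $N\to N^{\gp}\ot B^{\cy}(N^{\gp})$ of simplicial commutative monoids. Here the augmentation $B^{\cy}(N^{\gp})\to N^{\gp}$ is a surjection of simplicial abelian groups, hence a Kan fibration, so the strict pullback $B^{\rep}(N)=N\times_{N^{\gp}}B^{\cy}(N^{\gp})$ is already a homotopy pullback; since $F$ preserves pullbacks of cospans and $(FK)_{h\cJ}\simeq K$ naturally in $K$, applying $F$ produces a homotopy cartesian square of $\cJ$-spaces by \cite[Corollary 11.4]{Sagave-S_diagram}. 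Assembling these natural $\cJ$-equivalences --- which can be taken to lie under the relevant cyclic bar constructions and to respect the cyclic structures --- gives the required chain of $\cJ$-equivalences of commutative $B^{\cy}((FN)^{\mathrm{cof}})$-algebras with circle action between $B^{\rep}((FN)^{\mathrm{cof}})$ and $F(B^{\rep}(N))$. Applying $\bS^{\cJ}$, invoking Corollary~\ref{cor:bSJ-preserves-we} (the whole chain lies under the cofibrant, hence flat, object $B^{\cy}((FN)^{\mathrm{cof}})$), and using $\bS^{\cJ}[F(-)]\cong\bS[-]$ then produces the stated chain of stable equivalences.

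The hard part will be the coherent bookkeeping in part~(ii) rather than any single deep step: one must carry the circle (cyclic) structure and the $B^{\cy}((FN)^{\mathrm{cof}})$-algebra structure through every homotopy-pullback replacement, and one must check that the hypotheses of Corollary~\ref{cor:bSJ-preserves-we} really hold for the non-cofibrant objects $B^{\rep}((FN)^{\mathrm{cof}})$ and $F(B^{\rep}(N))$ --- which is exactly why it matters that the chain can be arranged to lie under the cofibrant object $B^{\cy}((FN)^{\mathrm{cof}})$. The single genuinely structural point, namely passing from the discrete homotopy pullback defining $B^{\rep}(N)$ to a homotopy cartesian square of $\cJ$-spaces, is handled cleanly by $(FK)_{h\cJ}\simeq K$ together with \cite[Corollary 11.4]{Sagave-S_diagram}.
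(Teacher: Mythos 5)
Your argument follows essentially the same route as the paper's: flatness of $FN$ and of its cofibrant replacement handles part~(i), and part~(ii) proceeds by identifying the group completion of $(FN)^{\mathrm{cof}}$ with $F(N^{\gp})$ via \cite[Proposition Q.1]{Friedlander-M_filtrations}, replacing the corners of the homotopy pullback defining the replete bar construction by $\cJ$-equivalent ones, and using that $F$ carries the (already homotopy cartesian) discrete pullback defining $B^{\rep}(N)$ to a homotopy cartesian square of $\cJ$-spaces by \cite[Corollary 11.4]{Sagave-S_diagram}. One correction: in both parts you invoke Corollary~\ref{cor:bSJ-preserves-we} with the cofibrancy hypothesis pointing the wrong way --- that corollary requires the $\cJ$-equivalence $X\to Y$ to sit \emph{over} a cofibrant commutative $\cJ$-space monoid (a sequence $X\to Y\to M$ with $M$ cofibrant), so cofibrancy of the source in (i), or the chain lying \emph{under} the cofibrant object $B^{\cy}((FN)^{\mathrm{cof}})$ in (ii), is not what is needed. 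The repair is immediate: in (i) use the augmentation $B^{\cy}(FN)\to B^{\cy}(U^{\cJ})\cong U^{\cJ}$ (as the paper does), and in (ii) observe that each homotopy pullback in the chain projects to the cyclic bar construction of a cofibrant group completion (or, for the term over $F(N^{\gp})$, to $F(B^{\cy}(N^{\gp}))$, which is $\Sigma$-free in the second variable and hence $\bS^{\cJ}$-good).
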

\begin{proof}
  The commutative $\cJ$-space monoid $\Fzero{}N$ is flat by the definition of the generating cofibrations for the flat model structure on $\cJ$-spaces~\cite[Corollary 5.10 and Proposition 6.16]{Sagave-S_diagram}. Hence it follows from Lemma~\ref{lem:properties-of-flat} that the $\cJ$-equivalence $(\Fzero{}N)^{\mathrm{cof}}\to \Fzero{}N$ induces a $\cJ$-equivalence of cyclic bar constructions. Since $B^{\cy}(FN)$ is augmented over the monoidal unit $U^{\cJ}$, it follows from Corollary~\ref{cor:bSJ-preserves-we} that $\bS^{\cJ}$ takes the latter $\cJ$-equivalence to a stable equivalence. This implies (i).
 
  For part (ii) we observe that there is a commutative diagram 
  \[
  \xymatrix@-1pc{
      \Fzero{}N \ar[d] & (\Fzero{}N)^{\mathrm{cof}} \ar[l] \ar[r]^{=} \ar[d] & (\Fzero{}N)^{\mathrm{cof}} \ar[d] & (\Fzero{}N)^{\mathrm{cof}} \ar[d] \ar[l]_{=} \\
      \Fzero{}(N^{\gp})  & (\Fzero{}(N^{\gp}))^{\mathrm{cof}} \ar[l] \ar[r]&  (\Fzero{}(N^{\gp})^{\mathrm{cof}})^{\mathrm{fib}}  & ((\Fzero{}N)^{\mathrm{cof}})^{\gp}. \ar[l]
    }  
  \]
  Here the left hand square and the middle square are induced by the
  cofibrant and fibrant replacements in the positive $\cJ$-model
  structure, respectively, and the lower horizontal map in the right
  hand square results from the lifting property in the group
  completion model structure by the remark before the lemma. Each
  vertical map in the diagram induces a weak equivalence when applying
  $B((-)_{h\cJ})$, and all terms in the lower row are grouplike.
  Since $(\Fzero{}N)^{\mathrm{cof}} \to \Fzero{}N$ is a
  $\cJ$-equivalence, it follows that all horizontal maps in the
  diagram are $\cJ$-equivalences.

  Next we apply the cyclic bar construction to the terms in the lower
  row of the diagram and form the homotopy pullbacks of the vertical
  maps and the respective augmentations of the cyclic bar
  constructions to obtain a chain of $\cJ$-equivalences between the
  homotopy pullbacks. Since the map of discrete simplicial sets $N \to
  N^{\gp}$ is a Kan fibration, the pullback defining
  $B^{\rep}(N)$ is already a homotopy pullback. By~\cite[Corollary
  11.4]{Sagave-S_diagram}, the functor $\Fzero{}$ maps it to a
  homotopy cartesian square in $\cC\cS^{\cJ}$. So we have constructed
  the desired chain of $\cJ$-equivalences, and arguing as in part (i), we see that $\bS^{\cJ}$ takes this to a chain of stable equivalences.
    \qed \end{proof}

Let $(A,M,\alpha)$ be a pre-log ring spectrum with $M = \Fzero{}N$ for
a discrete commutative monoid $N$.  Choosing a factorization
of the adjoint structure map $\bS[N] \to A$ into a cofibration $\bar \alpha^c\colon \bS[N]
\to A^{c}$ followed by an acyclic fibration $A^{c} \to A$ in
commutative symmetric ring spectra, we can give the following simpler
description of $\THH(A,M)$ that does not involve a cofibrant replacement
of~$(A,M)$. This shows in particular that for a discrete pre-log ring $(B,N)$, the definition of log 
$\THH$ via the pre-log ring spectrum $(HB,FN)$ is equivalent to the definition given by the first author in \cite{Rognes_TLS}. 

\begin{proposition}\label{prop:discrete-logTHH-description}
Let $(A,M)$ be a pre-log ring spectrum with $M=FN$ for a discrete commutative monoid $N$, and let $(A^{\mathrm{cof}},M^{\mathrm{cof}})$ be a cofibrant replacement. Then there is a chain of stable equivalences 
\[
\THH(A^{\mathrm{cof}},M^{\mathrm{cof}})\simeq
 B^{\cy}(A^c)\sm_{\bS[B^{\cy}(N)]}\bS[B^{\rep}(N)]
  \]
of $\THH(A^{\mathrm{cof}})$-modules with circle action.
\end{proposition}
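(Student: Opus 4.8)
The plan is to reduce everything to the algebraic bar constructions $B^{\cy}(N)$ and $B^{\rep}(N)$ and to the ring $B^{\cy}(A^c)$, using Lemma~\ref{lem:cof-rep-of-discrete-on-Bcy-Brep} to bridge between the $\cJ$-space picture and the discrete one. First I would choose a convenient cofibrant model for $(A,M) = (A,\Fzero N)$. Let $(\Fzero N)^{\mathrm{cof}}\to \Fzero N$ be a cofibrant replacement in $\cC\cS^{\cJ}$, put $M'=(\Fzero N)^{\mathrm{cof}}$, and let $\bS^{\cJ}[M']\cof A'\xrightarrow{\sim}A^c$ be a functorial factorization in $\cC\Spsym$ of the composite $\bS^{\cJ}[M']\to \bS^{\cJ}[\Fzero N]\cong\bS[N]\xrightarrow{\bar\alpha^c}A^c$. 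Then $(A',M')$ is a cofibrant pre-log ring spectrum, and the evident map $(A',M')\to (A,M)$ is a stable equivalence on spectra and a $\cJ$-equivalence on monoids. Lifting it through the acyclic fibration $(A^{\mathrm{cof}},M^{\mathrm{cof}})\to (A,M)$ and applying Proposition~\ref{prop:log-THH-hty-inv} identifies $\THH(A^{\mathrm{cof}},M^{\mathrm{cof}})$ with $\THH(A',M')$ up to a stable equivalence compatible with the circle actions and with the module structures over the stably equivalent rings $\THH(A^{\mathrm{cof}})$ and $\THH(A')$.

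It then remains to compare $\THH(A',M')$ with $B^{\cy}(A^c)\wedge_{\bS[B^{\cy}(N)]}\bS[B^{\rep}(N)]$. By Definition~\ref{def:log-thh}, the former is the pushout of the span $\bS^{\cJ}[B^{\rep}(M')]\ot \bS^{\cJ}[B^{\cy}(M')]\to \THH(A')$ whose right-hand leg, being $-\otimes S^1$ applied to the cofibration $\bS^{\cJ}[M']\to A'$ (under the identification $\bS^{\cJ}[B^{\cy}(M')]\cong\THH(\bS^{\cJ}[M'])$), is a cofibration of commutative symmetric ring spectra; hence by left properness of $\cC\Spsym$ this pushout is a homotopy pushout. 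Likewise $B^{\cy}(A^c)\wedge_{\bS[B^{\cy}(N)]}\bS[B^{\rep}(N)]$ is the pushout of the span $\bS[B^{\rep}(N)]\ot \bS[B^{\cy}(N)]\to B^{\cy}(A^c)$, whose right-hand leg is again a cofibration, obtained by applying $-\otimes S^1$ to $\bar\alpha^c$ together with the natural isomorphisms $\bS[N]\otimes S^1\cong\bS[B^{\cy}(N)]$ and $A^c\otimes S^1=B^{\cy}(A^c)$. I would then produce a zig-zag of maps of spans, each of which is a stable equivalence at every vertex, connecting these two diagrams: on the right-hand legs one uses the stable equivalence $\THH(A')\to B^{\cy}(A^c)$ obtained by applying $-\otimes S^1$ to the acyclic fibration $A'\xrightarrow{\sim}A^c$; on the apices one uses the stable equivalence $\bS^{\cJ}[B^{\cy}(M')]\to\bS[B^{\cy}(N)]$ of Lemma~\ref{lem:cof-rep-of-discrete-on-Bcy-Brep}(i); and on the left-hand legs one uses the chain $\bS^{\cJ}[B^{\rep}(M')]\simeq\bS[B^{\rep}(N)]$ of Lemma~\ref{lem:cof-rep-of-discrete-on-Bcy-Brep}(ii), inserting one intermediate span per stage of the chain. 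The diagrams fit together because $A'$ was built as a factorization of $\bS^{\cJ}[M']\to\bS[N]\to A^c$, and because the equivalences in Lemma~\ref{lem:cof-rep-of-discrete-on-Bcy-Brep} are equivalences of $\bS^{\cJ}[B^{\cy}(M')]$-algebras with circle action. Invariance of homotopy pushouts under such zig-zags then yields the asserted chain of stable equivalences of $\THH(A^{\mathrm{cof}})$-modules with circle action.

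The step I expect to be the main obstacle is the bookkeeping in the previous paragraph: keeping every comparison map simultaneously equivariant for the circle action and linear over the relevant, pairwise stably equivalent, base rings $\THH(A^{\mathrm{cof}})$, $\THH(A')$, and $B^{\cy}(A^c)$ throughout the zig-zag, and checking that the $\bS^{\cJ}[B^{\cy}(M')]$-algebra structures on the terms of the chain in Lemma~\ref{lem:cof-rep-of-discrete-on-Bcy-Brep}(ii) are indeed those pulled back along the equivalence of part~(i), together with the minor flatness verifications that ensure the various bar constructions are homotopy invariant. None of this is conceptually difficult, but it is what forces the conclusion to be phrased as a chain of equivalences rather than a single map.
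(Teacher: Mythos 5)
Your proof is correct and takes essentially the same route as the paper: the substance in both cases is Lemma~\ref{lem:cof-rep-of-discrete-on-Bcy-Brep} combined with left properness of the positive stable model structure, applied to replace the apex and left leg of the pushout span defining log $\THH$. The only (harmless) difference is that the paper produces the comparison $\THH(A^{\mathrm{cof}})\xrightarrow{\simeq}B^{\cy}(A^c)$ of commutative $\bS^{\cJ}[B^{\cy}(M^{\mathrm{cof}})]$-algebras directly, by lifting the cofibration $\bS^{\cJ}[M^{\mathrm{cof}}]\to A^{\mathrm{cof}}$ against the acyclic fibration $A^c\to A$, instead of constructing the auxiliary cofibrant model $(A',M')$ and invoking Proposition~\ref{prop:log-THH-hty-inv}.
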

\begin{proof}
We first use the lifting axiom for the positive stable model structure on $\cC\Spsym$ to get a stable equivalence $A^{\mathrm{cof}}\to A^c$ of commutative $\bS^{\cJ}[M^{\mathrm{cof}}]$-algebras, which in turn induces a stable equivalence 
$B^{\cy}(A^{\mathrm{cof}})\to B^{\cy}(A^c)$ of commutative
$\bS^{\cJ}[B^{\cy}(M^{\mathrm{cof}})]$-algebras. It now follows from left properness of the positive stable model structure that the stable equivalences in Lemma~\ref{lem:cof-rep-of-discrete-on-Bcy-Brep} give rise to a chain of stable equivalences as stated in the proposition. 
\qed \end{proof}

\subsection{Free monoids and log \texorpdfstring{$\THH$}{THH} localization sequences}\label{subsec:discrete-localization}
In order to prepare for subsequent computations of log $\THH$, we
describe the homotopy type and the homology of $B^{\cy}(N)$ and
$B^{\rep}(N)$ (or rather of their geometric realizations) for a free commutative monoid $N$ on one generator. We write $H_* X = H_*(X; \bF_p)$ for (mod~$p$) homology, and
denote polynomial, height $h$ truncated polynomial, exterior and
divided power algebras by $P(x)$, $P_h(x)$, $E(x)$ and
$\Gamma(x)=\bF_p\{\gamma_k x \, | \, k\geq 0 \}$, respectively.

Let $N = \langle x\rangle = \{x^k \mid k\ge0\}$ be the free
commutative monoid on the generator~$x$.  Its cyclic bar construction
decomposes as a disjoint union
\begin{equation}\label{eq:Bcy-of-free-com-mon}
B^{\cy}(N) = \coprod_{k\ge0} B^{\cy}(N; k)
	\simeq {*} \sqcup \coprod_{k\ge1} S^1(k) \,,
\end{equation}
where $B^{\cy}(N; k)$ denotes the geometric realization of the cyclic subset consisting of the
simplices $(x^{i_0}, \dots, x^{i_q})$ with $i_0 + \dots + i_q = k$.
Here $B^{\cy}(N; 0)$ is a point, while $B^{\cy}(N; k)$ for $k\ge1$
is equivariantly homotopy equivalent to $S^1$ with the degree~$k$ circle action, which we write as $S^1(k)$. Hence
\begin{equation}\label{eq:homology-of-Bcy-of-free-com-mon}
H_* B^{\cy}(N) \cong P(x) \otimes E(dx) \,,
\end{equation}
with $x \in H_0 S^1(1)$ and $dx \in H_1 S^1(1)$ represented by the
cycles $(x)$ and $(1, x)$, respectively.
See~\cite[Proposition~3.20]{Rognes_TLS}
or \cite[Lemma 2.2.3]{Hesselholt_p-typical}
for further details.

The group completion $N^{\gp} = \langle x^{\pm1}\rangle = \{x^k
\mid k\in\bZ\}$ has cyclic bar construction $B^{\cy}(N^{\gp}) =
\coprod_{k\in\bZ} B^{\cy}(N^{\gp}; k)$, containing the replete bar
construction
\begin{equation}\label{eq:Brep-of-free-com-mon}
  B^{\rep}(N) = \coprod_{k\ge0} B^{\cy}(N^{\gp}; k)
  \simeq \coprod_{k\ge0} S^1(k)
\end{equation}
as the non-negatively indexed summands.  The repletion map restricts
to equivalences $B^{\cy}(N; k) \simeq B^{\cy}(N^{\gp}; k)$ for
$k\ge1$, while $B^{\cy}(N^{\gp}; 0) \cong B(N^{\gp})$ is related to $S^1(0)$ (that is, $S^1$ equipped with the trivial circle action) by a chain of weak equivalences that are circle equivariant. Hence
\[
H_* B^{\rep}(N) \cong P(x) \otimes E(d\log x) \,,
\]
with $x \in H_0 S^1(1)$ and $d\log x \in H_1 S^1(0)$ represented
by the cycles $(x)$ and $(x^{-1}, x)$, respectively.
In homology, $\rho_*(x) = x$ and $\rho_*(dx)= x \cdot d\log x$.
All this can be deduced from \cite[Proposition~3.21]{Rognes_TLS}.
The next lemma is an immediate consequence of the equivalences 
in \eqref{eq:Bcy-of-free-com-mon} and \eqref{eq:Brep-of-free-com-mon}.
\begin{lemma} \label{lem:bcyncof}
For $N = \langle x\rangle$ there is a homotopy cofiber sequence
\[
\bS[B^{\cy}(N)]
	\overset{\rho}\longto \bS[B^{\rep}(N)]
 \overset{\partial}\longto        \Sigma \bS
\]
of $\bS[B^{\cy}(N)]$-modules with circle action, in which 
the action on $\Sigma \bS$ is trivial. \qed
\end{lemma}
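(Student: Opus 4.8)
The plan is to exploit the coproduct decompositions \eqref{eq:Bcy-of-free-com-mon} and \eqref{eq:Brep-of-free-com-mon} together with the fact that $\bS[-]=\Sigma^\infty(-)_+$ carries disjoint unions to wedge sums, so that $\rho$ becomes a grading-preserving map of $\bN$-graded commutative ring spectra with circle action whose mapping cone can be analyzed one grading at a time.

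First I would record the graded structure. The monoid multiplications $B^{\cy}(N;j)\times B^{\cy}(N;k)\to B^{\cy}(N;j+k)$ and $B^{\cy}(N^{\gp};j)\times B^{\cy}(N^{\gp};k)\to B^{\cy}(N^{\gp};j+k)$ exhibit $\bS[B^{\cy}(N)]=\bigvee_{k\ge0}\bS[B^{\cy}(N;k)]$ and $\bS[B^{\rep}(N)]=\bigvee_{k\ge0}\bS[B^{\cy}(N^{\gp};k)]$ as $\bN$-graded commutative symmetric ring spectra with circle action, and the repletion map $\rho$ preserves these gradings by construction. Since $\rho$ is a map of commutative ring spectra, $\bS[B^{\rep}(N)]$ is a $\bS[B^{\cy}(N)]$-algebra, so its mapping cone $C=C(\rho)$ is a $\bS[B^{\cy}(N)]$-module with circle action and inherits the grading: $C=\bigvee_{k\ge0}C_k$ with $C_k=C\bigl(\bS[B^{\cy}(N;k)]\to\bS[B^{\cy}(N^{\gp};k)]\bigr)$.

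Next I would invoke the equivalences recorded after \eqref{eq:Bcy-of-free-com-mon} and \eqref{eq:Brep-of-free-com-mon}: for $k\ge1$ the repletion map restricts to a circle-equivariant weak equivalence $B^{\cy}(N;k)\to B^{\cy}(N^{\gp};k)$, so $C_k$ is weakly contractible for $k\ge1$. In grading $0$ we have $B^{\cy}(N;0)={*}$, while $B^{\cy}(N^{\gp};0)=B(N^{\gp})$ is related to $S^1(0)$ --- that is, $S^1$ with trivial circle action --- by a chain of circle-equivariant weak equivalences; hence $C_0$ is weakly equivalent to the mapping cone of $\bS\to\Sigma^\infty_+S^1(0)$. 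Under the standard stable splitting $\Sigma^\infty_+S^1\simeq\bS\vee\Sigma\bS$, which we may take circle-equivariant since $S^1(0)$ carries the trivial action, this map is the inclusion of the wedge summand $\bS$, so $C_0\simeq\Sigma\bS$ with trivial circle action, and therefore $C\simeq\Sigma\bS$ with trivial circle action.

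It remains to pin down the $\bS[B^{\cy}(N)]$-module structure on $C$. Because $C$ is concentrated in grading $0$, the action map $\bS[B^{\cy}(N;j)]\wedge C_0\to C_j$ is nullhomotopic for $j\ge1$ (its target being contractible) and is the unit action for $j=0$, where $\bS[B^{\cy}(N;0)]=\bS$; hence the module structure is restricted along the ring map $\bS[B^{\cy}(N)]\to\bS[B^{\cy}(N;0)]=\bS$ that collapses the positive-degree summands. Thus $C$ is stably equivalent, as a $\bS[B^{\cy}(N)]$-module with circle action, to $\Sigma\bS$ carrying the trivial circle action and this augmentation module structure, which yields the asserted homotopy cofiber sequence. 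All the geometric content already sits in the cited equivalences; the one point that requires care --- and it is routine --- is to carry the circle action and the graded module structure along the coproduct decomposition, which is immediate from the multiplicativity and circle-equivariance built into \eqref{eq:Bcy-of-free-com-mon} and \eqref{eq:Brep-of-free-com-mon}.
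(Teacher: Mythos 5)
Your proof is correct and takes essentially the same route the paper intends: the lemma is stated there as an immediate consequence of the decompositions \eqref{eq:Bcy-of-free-com-mon} and \eqref{eq:Brep-of-free-com-mon}, and your argument is exactly the careful elaboration of that observation --- $\rho$ is an equivalence on each positively graded wedge summand, the degree-zero cofiber is $\Sigma\bS$ via the stable splitting of $\Sigma^\infty_+ S^1(0)$, and the circle action and module structure are carried along the wedge decomposition as you describe.
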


Notice that the commutative $\cJ$-space monoid $F\langle x\rangle$, obtained by applying the strong symmetric monoidal functor $F$ to $\langle x\rangle$, can be identified with the free commutative $\cJ$-space monoid $\bC\langle x\rangle$ on a generator in bidegree $(\bld 0,\bld 0)$, cf.\ Example~\ref{ex:free-J-space}. Given a commutative symmetric ring spectrum $A$, every $0$-simplex $x\in A_0$ thus gives rise to a pre-log ring spectrum $(A,\bC\langle x\rangle)$. This will usually not be cofibrant, so in order to define log $\THH$ we should first pass to a cofibrant replacement $(A^{\mathrm{cof}},\bC\langle x\rangle^{\mathrm{cof}})$. In order to form the balanced smash product $\modcofelement{A}{x}$, 
we view $\bS$ as a commutative $\bS^{\cJ}[\bC\langle x\rangle^{\mathrm{cof}}]$-algebra via the composition $\bC\langle x\rangle^{\mathrm{cof}}\to \bC\langle x\rangle\to U^{\cJ}$, using the isomorphism $\bS^{\cJ}[U^{\cJ}]\iso \bS$. The next lemma shows that we may view $\modcofelement{A}{x}$ as a homotopy invariant model for the quotient of $A$ by the ideal generated by $x$.
\begin{lemma}
For a cofibrant replacement $(A^{\mathrm{cof}},\bC\langle x\rangle^{\mathrm{cof}})\to (A,\bC\langle x\rangle)$
there is a homotopy cofiber sequence 
$
A\xrightarrow{x} A\to \modcofelement{A}{x} 
$
of $A^{\mathrm{cof}}$-modules.
\end{lemma}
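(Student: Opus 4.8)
The strategy is to identify $\modcofelement{A}{x}$ with the mapping cone of multiplication by $x$ on $A$, by producing a two-term free resolution of $\bS$ over $\bS^{\cJ}[\bC\langle x\rangle^{\mathrm{cof}}]$ and then applying extension of scalars along the adjoint structure map. First I would work with the flat (though not cofibrant) model $F\langle x\rangle\cong\bC\langle x\rangle$ in place of $\bC\langle x\rangle^{\mathrm{cof}}$. Both are augmented over the monoidal unit $U^{\cJ}$, so Corollary~\ref{cor:bSJ-preserves-we} turns the $\cJ$-equivalence $\bC\langle x\rangle^{\mathrm{cof}}\to F\langle x\rangle$ into a stable equivalence of commutative symmetric ring spectra
\[
\bS^{\cJ}[\bC\langle x\rangle^{\mathrm{cof}}]\xrightarrow{\simeq}\bS^{\cJ}[F\langle x\rangle]\cong\bS[\langle x\rangle]
\]
that is compatible with the augmentations to $\bS$ and with the structure maps to $A^{\mathrm{cof}}$, hence to $A$. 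Under this identification the structure map $\bS[\langle x\rangle]\to A$ sends the monoid generator to $x\in A_0$.

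The crucial input is the cofiber sequence of $\bS[\langle x\rangle]$-modules
\[
\bS[\langle x\rangle]\xrightarrow{x}\bS[\langle x\rangle]\longto\bS,
\]
whose first map is multiplication by the generator. That map is $\Sigma^{\infty}$ of the injective based map $\langle x\rangle_+\to\langle x\rangle_+$ sending $x^k$ to $x^{k+1}$, and the cofiber of the latter is $S^0$, obtained by collapsing $\{x^k\mid k\ge 1\}$ to the basepoint. Hence multiplication by $x$ is a cofibration of symmetric spectra with cofiber $\Sigma^{\infty}S^0=\bS$, the quotient map being the projection onto the wedge summand indexed by $x^0$. I would then verify that, through the equivalences of the previous paragraph, this quotient map is the augmentation $\bS^{\cJ}[\bC\langle x\rangle^{\mathrm{cof}}]\to\bS^{\cJ}[U^{\cJ}]\cong\bS$ used to form $\modcofelement{A}{x}$. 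Since multiplication by $x$ is $\bS[\langle x\rangle]$-linear, its cofiber inherits the quotient module structure and the displayed sequence is a homotopy cofiber sequence of $\bS[\langle x\rangle]$-modules; restriction of scalars along the ring equivalence above transports it to a homotopy cofiber sequence $\bS^{\cJ}[\bC\langle x\rangle^{\mathrm{cof}}]\xrightarrow{x}\bS^{\cJ}[\bC\langle x\rangle^{\mathrm{cof}}]\to\bS$ of $\bS^{\cJ}[\bC\langle x\rangle^{\mathrm{cof}}]$-modules, again with first map multiplication by the generator.

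Finally I would apply the functor $A^{\mathrm{cof}}\wedge_{\bS^{\cJ}[\bC\langle x\rangle^{\mathrm{cof}}]}(-)$ to this sequence. By Lemma~\ref{lem:ext-along-cof-pre-st-eq}, applied to the cofibrant pre-log ring spectrum $(A^{\mathrm{cof}},\bC\langle x\rangle^{\mathrm{cof}})$, this functor preserves stable equivalences; being a left adjoint that preserves stable equivalences, it sends homotopy cofiber sequences of $\bS^{\cJ}[\bC\langle x\rangle^{\mathrm{cof}}]$-modules to homotopy cofiber sequences of $A^{\mathrm{cof}}$-modules. As base change sends multiplication by the generator to multiplication by $x$ and sends the last term $\bS$ to $\modcofelement{A}{x}$ by definition, the output is a homotopy cofiber sequence $A^{\mathrm{cof}}\xrightarrow{x}A^{\mathrm{cof}}\to\modcofelement{A}{x}$ of $A^{\mathrm{cof}}$-modules; composing with the stable equivalence $A^{\mathrm{cof}}\xrightarrow{\simeq}A$ then gives the asserted homotopy cofiber sequence $A\xrightarrow{x}A\to\modcofelement{A}{x}$. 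The main obstacle is the bookkeeping in the middle step: constructing the $\bS[\langle x\rangle]$-module cofiber sequence at the point-set level and checking that its quotient map is identified with the augmentation defining $\modcofelement{A}{x}$ under the chain of stable equivalences, which is precisely what ensures the base change computes the cone on multiplication by $x$.
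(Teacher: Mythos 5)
Your proposal is correct and follows essentially the same route as the paper: both base-change the evident homotopy cofiber sequence $\bS[\langle x\rangle]\xrightarrow{x}\bS[\langle x\rangle]\to\bS$ along $A^{\mathrm{cof}}\wedge_{\bS^{\cJ}[\bC\langle x\rangle^{\mathrm{cof}}]}(-)$, using Lemma~\ref{lem:ext-along-cof-pre-st-eq} for homotopy invariance and the stable equivalence $A^{\mathrm{cof}}\wedge_{\bS^{\cJ}[\bC\langle x\rangle^{\mathrm{cof}}]}\bS[\langle x\rangle]\to A$ to identify the first two terms. The only cosmetic difference is that the paper keeps the sequence as one of $\bS[\langle x\rangle]$-modules restricted along $\bS^{\cJ}[\bC\langle x\rangle^{\mathrm{cof}}]\to\bS[\langle x\rangle]$ rather than rewriting it with underlying spectra $\bS^{\cJ}[\bC\langle x\rangle^{\mathrm{cof}}]$, which sidesteps having to lift the generator $x$ to a $0$-simplex of $\bC\langle x\rangle^{\mathrm{cof}}$.
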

\begin{proof}
The conditions for $(A^{\mathrm{cof}},\bC\langle x\rangle^{\mathrm{cof}})$ to be a cofibrant replacement imply that the induced map 
$A^{\mathrm{cof}}\wedge_{\bS^{\cJ}[\bC\langle x\rangle^{\mathrm{cof}}]}
\bS[\langle x\rangle]\to A$ is a stable equivalence. Using this observation and Lemma~\ref{lem:ext-along-cof-pre-st-eq}, the homotopy cofiber sequence in the lemma is obtained by applying the functor  
$A^{\mathrm{cof}}\wedge_{\bS^{\cJ}[\bC\langle x\rangle^{\mathrm{cof}}]}(-)$ to the obvious homotopy cofiber sequence $\bS[\langle x\rangle]\xrightarrow{x}
\bS[\langle x\rangle]\to \bS$ of $\bS[\langle x\rangle]$-modules.
\qed \end{proof}

Specializing the general principle in Proposition~\ref{prop:general-logTHH-homotopy-cofiber} to the case of the commutative $\cJ$-space monoid $\bC\langle x\rangle$, we get the localization sequence in the next theorem.

\begin{theorem}\label{thm:discrete-log-THH-localization-sequence}
Let $A$ be a commutative symmetric ring spectrum, and let $(A,\bC\langle x\rangle)$ be the pre-log ring spectrum determined by a $0$-simplex $x\in A_0$. Then there is a homotopy cofiber sequence
\[
\THH(A^{\mathrm{cof}})\overset{\rho}{\longto}\THH(A^{\mathrm{cof}},\bC\langle x\rangle^{\mathrm{cof}})\overset{\partial}{\longto}\Sigma\THH(\modcofelement{A}{x})
\]
of $\THH(A^{\mathrm{cof}})$-modules with circle action.
\end{theorem}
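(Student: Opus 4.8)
The plan is to obtain the theorem as the special case of Proposition~\ref{prop:general-logTHH-homotopy-cofiber} in which $(A,M)=(A^{\mathrm{cof}},\bC\langle x\rangle^{\mathrm{cof}})$ and $P$ is a cofibrant replacement of the unit $U^{\cJ}$ of $\cC\cS^{\cJ}$ (or $P=U^{\cJ}$ itself, absorbing its non-cofibrancy into the cofibrant replacement needed to form $\THH$ of the final term). The required map of ring spectra $\bS^{\cJ}[\bC\langle x\rangle^{\mathrm{cof}}]\to\bS^{\cJ}[P]$ is the one induced by the collapse map $\bC\langle x\rangle^{\mathrm{cof}}\to\bC\langle x\rangle\to U^{\cJ}$, lifted through $P\xrightarrow{\sim}U^{\cJ}$ since $\bC\langle x\rangle^{\mathrm{cof}}$ is cofibrant. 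Because $B^{\cy}(U^{\cJ})\iso U^{\cJ}$ and $\bS^{\cJ}[U^{\cJ}]\iso\bS$, and because $B^{\cy}(P)\to B^{\cy}(U^{\cJ})$ is a $\cJ$-equivalence of flat $\cJ$-spaces, the homotopy invariance properties of $\bS^{\cJ}$ (Corollary~\ref{cor:bSJ-preserves-we}) give a stable equivalence $\bS^{\cJ}[B^{\cy}(P)]\xrightarrow{\simeq}\bS$; this is what will make the cofiber term match the claim.

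The remaining input for Proposition~\ref{prop:general-logTHH-homotopy-cofiber} is a homotopy cofiber sequence
\[
\bS^{\cJ}[B^{\cy}(\bC\langle x\rangle^{\mathrm{cof}})]\xrightarrow{\rho}\bS^{\cJ}[B^{\rep}(\bC\langle x\rangle^{\mathrm{cof}})]\xrightarrow{\partial}\Sigma\bS
\]
of $\bS^{\cJ}[B^{\cy}(\bC\langle x\rangle^{\mathrm{cof}})]$-modules with circle action. I would assemble this from Lemma~\ref{lem:bcyncof} and Lemma~\ref{lem:cof-rep-of-discrete-on-Bcy-Brep}, taking $N=\langle x\rangle$ and using the identification $F\langle x\rangle\iso\bC\langle x\rangle$ from Example~\ref{ex:free-J-space}. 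Lemma~\ref{lem:cof-rep-of-discrete-on-Bcy-Brep} provides a stable equivalence $\bS^{\cJ}[B^{\cy}(\bC\langle x\rangle^{\mathrm{cof}})]\xrightarrow{\simeq}\bS[B^{\cy}(\langle x\rangle)]$ and a chain of stable equivalences $\bS^{\cJ}[B^{\rep}(\bC\langle x\rangle^{\mathrm{cof}})]\simeq\bS[B^{\rep}(\langle x\rangle)]$, both of commutative $\bS^{\cJ}[B^{\cy}(\bC\langle x\rangle^{\mathrm{cof}})]$-algebras with circle action and compatible with the repletion maps $\rho$. Restricting the homotopy cofiber sequence of $\bS[B^{\cy}(\langle x\rangle)]$-modules with circle action from Lemma~\ref{lem:bcyncof} along the first of these, and rewriting the first two terms by means of the chains, yields the required sequence, with the circle acting trivially on $\Sigma\bS$.

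Proposition~\ref{prop:general-logTHH-homotopy-cofiber} then delivers a homotopy cofiber sequence
\[
\THH(A^{\mathrm{cof}})\xrightarrow{\rho}\THH(A^{\mathrm{cof}},\bC\langle x\rangle^{\mathrm{cof}})\xrightarrow{\partial}\Sigma\THH\bigl(A^{\mathrm{cof}}\wedge_{\bS^{\cJ}[\bC\langle x\rangle^{\mathrm{cof}}]}\bS^{\cJ}[P]\bigr)
\]
of $\THH(A^{\mathrm{cof}})$-modules with circle action, and it only remains to identify the last term with $\Sigma\THH(\modcofelement{A}{x})$. For a cofibrant replacement $P\xrightarrow{\sim}U^{\cJ}$, Lemma~\ref{lem:ext-along-cof-pre-st-eq} applied to the cofibration $\bS^{\cJ}[\bC\langle x\rangle^{\mathrm{cof}}]\to A^{\mathrm{cof}}$ shows that $A^{\mathrm{cof}}\wedge_{\bS^{\cJ}[\bC\langle x\rangle^{\mathrm{cof}}]}\bS^{\cJ}[P]\to\modcofelement{A}{x}$ is a stable equivalence, and its source is cofibrant as a cobase change of the cofibrant object $\bS^{\cJ}[P]$; this finishes the argument. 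I expect the main obstacle to be the bookkeeping in the middle paragraph: the cofiber sequence of Lemma~\ref{lem:bcyncof} is a priori only a sequence of $\bS[B^{\cy}(\langle x\rangle)]$-modules, and one has to verify that carrying it through the chains of stable equivalences of Lemma~\ref{lem:cof-rep-of-discrete-on-Bcy-Brep} respects both the $\bS^{\cJ}[B^{\cy}(\bC\langle x\rangle^{\mathrm{cof}})]$-module structures and the circle actions on all three terms. The remaining steps, including the non-cofibrancy of $U^{\cJ}$, are routine.
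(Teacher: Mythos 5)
Your proposal is correct and follows essentially the same route as the paper: take $P$ to be (a model for) $U^{\cJ}$ in Proposition~\ref{prop:general-logTHH-homotopy-cofiber} and feed in the homotopy cofiber sequence obtained by translating Lemma~\ref{lem:bcyncof} through the stable equivalences of Lemma~\ref{lem:cof-rep-of-discrete-on-Bcy-Brep}, which are already recorded there as equivalences of $\bS^{\cJ}[B^{\cy}(\bC\langle x\rangle^{\mathrm{cof}})]$-algebras with circle action, so the module-structure bookkeeping you flag goes through. The only simplification you miss is that $U^{\cJ}$, being the initial object of $\cC\cS^{\cJ}$, is already cofibrant as a commutative $\cJ$-space monoid, so the paper takes $P=U^{\cJ}$ on the nose and the identification of the cofiber term with $\modcofelement{A}{x}$ is definitional.
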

\begin{proof}
With the notation from Proposition~\ref{prop:general-logTHH-homotopy-cofiber}, let $P=U^{\cJ}$, and consider the map $\bC\langle x\rangle^{\mathrm{cof}}\to 
\bC\langle x\rangle \to U^{\cJ}$. Using the stable equivalences in 
Lemma~\ref{lem:cof-rep-of-discrete-on-Bcy-Brep}, the  
homotopy cofiber sequence from Lemma~\ref{lem:bcyncof} translates into a homotopy cofiber sequence 
\[
\bS^{\cJ}[B^{\cy}(\bC\langle x\rangle^{\mathrm{cof}})]\overset{\rho}{\longto} 
\bS^{\cJ}[B^{\rep}(\bC\langle x\rangle^{\mathrm{cof}})]\overset{\partial}{\longto} 
\Sigma\bS
\]
of $\bS^{\cJ}[B^{\cy}(\bC\langle x\rangle^{\mathrm{cof}})]$-modules with circle action. Because of the canonical isomorphism $\bS \cong \bS^{\cJ}[B^{\cy}(U^{\cJ})]$, this provides the necessary input for Proposition~\ref{prop:general-logTHH-homotopy-cofiber}.
\qed \end{proof}

\begin{remark}
Using the description in Proposition~\ref{prop:discrete-logTHH-description}, one can argue as in the above theorem to get a homotopy cofiber sequence
\[
B^{\cy}(A^c)\overset{\rho}{\longto} B^{\cy}(A^c)\wedge_{\bS[B^{\cy}(\langle x\rangle)]}\bS[B^{\rep}(\langle x\rangle)]\overset{\partial}{\longto} \Sigma
B^{\cy}(A^c\wedge_{\bS[\langle x\rangle]}\bS),
\]
which is stably equivalent to the one in the theorem. 
\end{remark}

If $(B,N)$ is a discrete pre-log ring, we shall simplify the notation
by writing $\THH(B,N)$ for
$\THH((HB)^{\mathrm{cof}},(FN)^{\mathrm{cof}})$ and $\THH(B)$ for
$\THH((HB)^{\mathrm{cof}})$. We remark that since the underlying
symmetric spectrum of $HB$ is flat ~\cite[Proposition~I.7.14(ii) and
Example~I.7.33]{Schwede_SymSp}, we could equally well use $B^{\cy}(HB)$
as a model for~$\THH(B)$.

\begin{example}\label{ex:EM-localization}
Let $B$ be a commutative ring and $x \in B$ an element that does not
divide zero.  Then $\modcofelement{(HB)}{x}\simeq H(B/(x))$, and there is a homotopy
cofiber sequence
\[
\THH(B) \overset{\rho}\longto \THH(B, \langle x\rangle)
	\longto \Sigma \THH(B/(x))
\]
of $\THH(B)$-modules with circle action.  In particular, when $B$ is a
discrete valuation ring with uniformizer $\pi$, residue field $\ell =
B/(\pi)$ and fraction field $L = B[\pi^{-1}]$, there is a homotopy
cofiber sequence
\[
\THH(B) \overset{\rho}\longto \THH(B, \langle\pi\rangle)
	\longto \Sigma \THH(\ell)
\]
of $\THH(B)$-modules with circle action. We expect that it agrees with
the homotopy cofiber sequence
\[
\THH(B) \overset{j^*}\longto \THH(B|L)
	\overset{\partial}\longto \Sigma \THH(\ell)
\]
of Hesselholt and Madsen \cite[\S1.5]{Hesselholt-M_local_fields},
which is only defined in this more restricted setting. 
\end{example}

\begin{remark}
  A commutative symmetric ring spectrum $A$ that does not have the
  homotopy type of an Eilenberg--Mac\,Lane spectrum will often not
  admit an interesting pre-log structure with $M = \bC\langle
  x\rangle$. One reason is that for this to exist, the higher
  multiplicative Dyer--Lashof operations on the image of $x$ in
  $H_0(\Omega^{\infty} A;\bF_p)$ must vanish. For example, this is not
  the case for the element $[p]$ in $H_0(\Omega^{\infty}ku;\bF_p)$;
  see~\cite[Lemma~9.6]{Rognes_TLS}
and the discussion in \cite[Remark 9.17]{Rognes_TLS}.
\end{remark}
\begin{example}
  In the case of the canonical pre-log ring spectrum structure on $A =
  \bS^{\cJ}[\bC\langle x\rangle] \cong \bS[\langle x\rangle]$, with $M
  = \bC\langle x\rangle$, we have stable equivalences
\[
\THH(\bS[\langle x\rangle]^{\mathrm{cof}})
\simeq \bS[B^{\cy} \langle x\rangle]
\simeq \bS[{*} \sqcup \coprod_{k\ge1} S^1(k)]
\]
and
\[
\THH(\bS[\langle x\rangle]^{\mathrm{cof}}, \bC\langle x\rangle^{\mathrm{cof}})
\simeq \bS[B^{\rep} \langle x\rangle]
\simeq \bS[\coprod_{k\ge0} S^1(k)]
\simeq \bS[\Lambda_{\ge0} S^1] \,,
\]
of commutative symmetric ring spectra with circle action, by
Proposition~\ref{prop:discrete-logTHH-description} and
equations~\eqref{eq:Bcy-of-free-com-mon}
and~\eqref{eq:Brep-of-free-com-mon}.  Here $\Lambda_{\ge0} S^1$ is the
subspace of the free loop space $\Lambda S^1$ that consists of maps
$S^1 \to S^1$ of non-negative degree.  By the discussion before
Lemma~\ref{lem:cof-rep-of-discrete-on-Bcy-Brep}, the commutative
$\cJ$-space monoid $F\langle x^{\pm1}\rangle$ is $\cJ$-equivalent to $\C{x}^{\gp}$.
Hence $\bS^{\cJ}[\bC\langle x\rangle^{\gp}]$ is stably equivalent to $\bS[\langle
x^{\pm1}\rangle]$, and the natural map to the localization $A[M^{-1}] =
\bS^{\cJ}[\bC\langle x\rangle^{\gp}]\simeq \bS[\langle
x^{\pm1}\rangle]$ induces the evident inclusions to
\[
\THH(\bS[\langle x^{\pm1}\rangle]^{\mathrm{cof}})
\simeq \bS[B^{\cy} \langle x^{\pm1}\rangle]
\simeq \bS[\coprod_{k\in\bZ} S^1(k)]
\simeq \bS[\Lambda S^1] \,.
\]
\end{example}

\subsection{Logarithmic \texorpdfstring{$\THH$}{THH} of \texorpdfstring{$(\bZ,\langle p\rangle)$}{(Z,<p>)}}
Let us write $\bar\pi_* X = \pi_*(X; \bZ/p)$ for mod~$p$ homotopy,
where $p$ is a prime, and let~$\doteq$ denote equality up to a
unit in $\bF_p$. In this section all balanced smash products of symmetric spectra should be understood in the homotopy invariant left derived sense. 
We begin by determining the structure of a K{\"u}nneth
spectral sequence obtained by identifying $\THH(\bZ/p)$ with the left derived balanced smash 
product $\THH(\bZ)\wedge_{\bS[B^{\cy}\langle p\rangle]} \bS$, where we use $B^{\cy}(H\bZ)$ as a model for $\THH(\bZ)$, cf.\ the remarks preceding Example~\ref{ex:EM-localization}.

\begin{proposition}
Consider the case $B = \bZ$ and $N = \langle p\rangle$.  There is an
algebra spectral sequence
\begin{align*}
E^2_{**} &= \Tor^{H_* B^{\cy}\langle p\rangle}_{**}(\bar\pi_* \THH(\bZ), \bF_p) \\
	& \Longrightarrow \bar\pi_* \THH(\bZ/p) \,,
\end{align*}
where $H_* B^{\cy}\langle p\rangle = P(p) \otimes E(dp)$
by~\eqref{eq:homology-of-Bcy-of-free-com-mon}, $\bar\pi_* \THH(\bZ) =
E(\lambda_1) \otimes P(\mu_1)$ and $\bar\pi_* \THH(\bZ/p) =
E(\epsilon_0) \otimes P(\mu_0)$, with $|p| = 0$, $|dp| = 1$,
$|\lambda_1| = 2p-1$, $|\mu_1| = 2p$, $|\epsilon_0| = 1$ and $|\mu_0|
= 2$.  Here
\[
E^2_{**} = E(\lambda_1) \otimes P(\mu_1) \otimes E([p])
	\otimes \Gamma([dp]) \,,
\]
where $[p]$ has bidegree $(1,0)$ and $[dp]$ has bidegree $(1,1)$.
There are non-trivial $d^p$-differentials
\[
d^p(\gamma_k [dp]) \doteq \lambda_1 \cdot \gamma_{k-p} [dp]
\]
for all $k\ge p$, leaving
\[
E^\infty_{**} = P(\mu_1) \otimes E([p]) \otimes P_p([dp]) \,.
\]
Hence $[p]$ represents $\epsilon_0$, $[dp]$ represents $\mu_0$, and $\mu_1$
represents $\mu_0^p$ (up to units in $\bF_p$) in the abutment, and
there is a multiplicative extension $[dp]^p \doteq \mu_1$.
\end{proposition}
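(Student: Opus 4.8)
The plan is to identify the spectral sequence in question with the K{\"u}nneth spectral sequence of a derived balanced smash product of $H\bF_p$-module spectra, to read off its $E^2$-term from B{\"o}kstedt's computation of $\bar\pi_*\THH(\bZ)$, and then to force the $d^p$-differentials and the multiplicative extension by comparison with the known graded ring $\bar\pi_*\THH(\bZ/p)$.

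\emph{Constructing the spectral sequence.} Starting from the stable equivalence $\THH(\bZ/p)\simeq\THH(\bZ)\sm_{\bS[B^{\cy}\langle p\rangle]}\bS$ of Example~\ref{ex:EM-localization} (with $B^{\cy}(H\bZ)$ as model for $\THH(\bZ)$), I would smash with the mod $p$ Moore spectrum $\bS/p$. Since $\THH(\bZ)=B^{\cy}(H\bZ)$ is a commutative $H\bZ$-algebra and $H\bZ\sm\bS/p\simeq H\bF_p$, base change along $\bS[B^{\cy}\langle p\rangle]\to H\bF_p\sm\bS[B^{\cy}\langle p\rangle]$ rewrites the result as
\[
\THH(\bZ/p)\sm\bS/p\;\simeq\;\bigl(\THH(\bZ)\sm_{H\bZ}H\bF_p\bigr)\sm_{H\bF_p\sm\bS[B^{\cy}\langle p\rangle]}H\bF_p,
\]
a derived balanced smash product of $H\bF_p$-modules over the commutative $H\bF_p$-algebra $H\bF_p[B^{\cy}\langle p\rangle]=H\bF_p\sm\bS[B^{\cy}\langle p\rangle]$, whose homotopy is $H_*B^{\cy}\langle p\rangle=P(p)\otimes E(dp)$ by~\eqref{eq:homology-of-Bcy-of-free-com-mon}. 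Filtering the two-sided bar resolution by skeleta then produces the multiplicative spectral sequence with $E^2_{**}=\Tor^{H_*B^{\cy}\langle p\rangle}_{**}(\bar\pi_*\THH(\bZ),\bF_p)$ converging to $\bar\pi_*\THH(\bZ/p)$, the algebra structure being automatic since all three ring spectra are commutative $H\bF_p$-algebras.

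\emph{The $E^2$-term and the differentials.} One checks from the definitions that both $H_*B^{\cy}\langle p\rangle$-module structures are restricted along the augmentation $P(p)\otimes E(dp)\to\bF_p$ sending $p$ and $dp$ to $0$: the polynomial class $p$ maps to the residue of $p\in\bZ=\pi_0\THH(\bZ)$, hence to $0$ in $\bar\pi_*\THH(\bZ)$ and, by construction of the right-hand factor $\bS$ (cf.\ Example~\ref{ex:EM-localization}), also to $0$ in $\bF_p$; and $dp$ acts trivially for degree reasons, as $\bar\pi_1\THH(\bZ)=0$. As $\bF_p$ is a field it follows that $E^2_{**}=\bar\pi_*\THH(\bZ)\otimes_{\bF_p}\Tor^{P(p)\otimes E(dp)}_{**}(\bF_p,\bF_p)$, and the Koszul resolution of $\bF_p$ over $P(p)$ together with the exterior resolution over $E(dp)$ give the $E^2$-page $E(\lambda_1)\otimes P(\mu_1)\otimes E([p])\otimes\Gamma([dp])$ stated in the proposition (using B{\"o}kstedt's $\bar\pi_*\THH(\bZ)=E(\lambda_1)\otimes P(\mu_1)$). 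The differentials have the form $d^r\colon E^r_{s,t}\to E^r_{s-r,t+r-1}$; a bidegree count shows that $\lambda_1$ (bidegree $(0,2p-1)$) supports no differential, that $\gamma_p[dp]$ is the unique class of bidegree $(p,p)$, and that no $d^r$ with $r<p$ can emanate from $\gamma_p[dp]$ or land on $\lambda_1$. Comparing the $\bF_p$-dimension in total degree $2p-1$ of $E^2$ with that of the abutment $\bar\pi_*\THH(\bZ/p)=E(\epsilon_0)\otimes P(\mu_0)$, which is one-dimensional there, forces $\lambda_1$ to be a boundary, so $d^p(\gamma_p[dp])\doteq\lambda_1$. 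Since $d^p$ is a $P(\mu_1)\otimes E([p])$-linear derivation annihilating $\gamma_1[dp]$ (again by bidegree reasons), the divided-power relations $\gamma_j[dp]\cdot\gamma_k[dp]=\binom{j+k}{j}\gamma_{j+k}[dp]$ propagate this to $d^p(\gamma_k[dp])\doteq\lambda_1\gamma_{k-p}[dp]$ for all $k\ge p$, the relevant binomial coefficients being units mod $p$.

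\emph{Conclusion and main obstacle.} Taking homology leaves $E^{p+1}_{**}=P(\mu_1)\otimes E([p])\otimes P_p([dp])$, whose degreewise Poincar{\'e} series is that of $\bar\pi_*\THH(\bZ/p)$, namely one-dimensional in each non-negative degree; hence there is no room for further differentials and $E^{p+1}=E^\infty$. Because the abutment is one-dimensional in each degree, the survivors $[p]$, $[dp]$, $\mu_1$ represent $\epsilon_0$, $\mu_0$, $\mu_0^p$ up to units in $\bF_p$; in particular $[dp]^p$, which vanishes on $E^\infty$, must represent the non-zero class $\mu_0^p$ in the abutment, giving the hidden multiplicative extension $[dp]^p\doteq\mu_1$. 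The main obstacle is the differential analysis: establishing $d^p(\gamma_p[dp])\doteq\lambda_1$ requires having both B{\"o}kstedt computations — for $\bar\pi_*\THH(\bZ)$ and for $\bar\pi_*\THH(\bF_p)$ — available, and then propagating the differential correctly through the divided-power algebra in characteristic $p$, together with excluding all later differentials, must be done with care.
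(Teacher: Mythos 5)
Your construction of the spectral sequence, the identification of the $E^2$-term via change of rings and the Koszul and exterior resolutions, and the counting argument in total degree $2p-1$ that forces $d^p(\gamma_p[dp])\doteq\lambda_1$ all agree with the paper's proof. The gap is in the propagation step. In characteristic $p$ the divided power algebra $\Gamma([dp])$ is not generated by $\gamma_1[dp]$ and $\gamma_p[dp]$: as an $\bF_p$-algebra it is $\bigotimes_{i\ge0}P_p(\gamma_{p^i}[dp])$, and each $\gamma_{p^i}[dp]$ with $i\ge2$ is indecomposable, since $\gamma_j[dp]\cdot\gamma_{p^i-j}[dp]=\binom{p^i}{j}\gamma_{p^i}[dp]=0$ for $0<j<p^i$ by Lucas' theorem. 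Consequently the Leibniz rule, starting from $d^p(\gamma_p[dp])\doteq\lambda_1$, determines $d^p(\gamma_k[dp])$ only for $k<p^2$; it says nothing about $d^p(\gamma_{p^2}[dp])$, and the naive manipulation with ``$\gamma_p[dp]^p$'' would even suggest the wrong answer, since $d^p(\gamma_p[dp]^p)=p\,\lambda_1\gamma_p[dp]^{p-1}=0$. So the assertion that ``the relevant binomial coefficients are units mod $p$'' fails exactly where it is needed, and without the differentials on $\gamma_{p^i}[dp]$ for $i\ge2$ you cannot conclude $E^{p+1}=P(\mu_1)\otimes E([p])\otimes P_p([dp])$.

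The paper closes this gap by a further comparison with the one-dimensional abutment: in total degree $2p^2-1$ the only generators surviving the differentials you have already established are $\mu_1^{p-1}\cdot[p]\cdot\gamma_{p-1}[dp]$ and $\lambda_1\cdot\gamma_{p^2-p}[dp]$. The first must survive to $E^\infty$ because it detects the nonzero product $(\mu_0^p)^{p-1}\cdot\epsilon_0\cdot\mu_0^{p-1}$ in $E(\epsilon_0)\otimes P(\mu_0)$, so the second must be a boundary, and for filtration reasons the only possible source is $\gamma_{p^2}[dp]$, giving $d^p(\gamma_{p^2}[dp])\doteq\lambda_1\cdot\gamma_{p^2-p}[dp]$; the generators $\gamma_{p^i}[dp]$ for $i\ge3$ are handled in the same way. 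You need some such argument for each $i\ge2$ before the $E^{p+1}$-term is as stated; your subsequent identification of $E^\infty$ and of the multiplicative extension $[dp]^p\doteq\mu_1$ is then fine and matches the paper.
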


\begin{proof}
Applying cobase change along $\bS \to H\bZ \to H = H\bF_p$ to the left derived balanced smash product mentioned above, we get a homotopy cocartesian square
\begin{equation} \label{xy:zmodp}
\xymatrix@-1pc{
H \wedge B^{\cy}\langle p\rangle_+ \ar[rr] \ar[d] && H \ar[d] \\
H \wedge_{H\bZ} \THH(\bZ) \ar[rr] && H \wedge_{H\bZ} \THH(\bZ/p)
}
\end{equation}
of commutative $H$-algebras.  The spectral sequence in question is the
associated $\Tor$ spectral sequence \cite[Theorem~IV.4.1]{EKMM}, in view of
the identification $\pi_*(H \wedge_{H\bZ} X) \cong \bar\pi_* X$
for $H\bZ$-modules.

B{\"o}kstedt computed that $\pi_* \THH(\bZ/p) = P(\mu_0)$ with
$|\mu_0|=2$, so $\bar\pi_* \THH(\bZ/p) = E(\epsilon_0) \otimes
P(\mu_0)$ with $|\epsilon_0| = 1$ (the mod~$p$ Bockstein element).
B{\"o}kstedt also computed that $\bar\pi_* \THH(\bZ) = E(\lambda_1)
\otimes P(\mu_1)$.  For published proofs, see e.g.~the cases $m=0$ and
$m=1$ of \cite[Theorem~5.12]{Angeltveit-R_Hopf-algebra}, or \cite[\S3, \S4]{Ausoni-R_K-Morava}.
This leads to the stated $E^2$-term and abutment.

The algebra generators $\lambda_1$, $\mu_1$, $[p]$ and $[dp]$ must be
infinite cycles for filtration reasons.  To determine the differentials on
the remaining algebra generators, i.e., the divided powers $\gamma_{p^i}[dp]$
for $i\ge1$, we note that the abutment $E(\epsilon_0) \otimes P(\mu_0)$
has exactly one generator in each non-negative degree.  In total
degree~$2p-1$ the $E^2$-term is generated by $\lambda_1$ and $[p] \cdot
\gamma_{p-1}[dp]$.  Hence one of these must be hit by a differential,
and for filtration reasons the only possibility is $d^p(\gamma_p[dp])
\doteq \lambda_1$.

Taking this into account, it follows that $[p]$, $[dp]$ and $\mu_1$
survive to $E^\infty$, where they must represent $\epsilon_0$, $\mu_0$
and $\mu_0^p$, respectively, up to  units in $\bF_p$.
Furthermore, in degree~$2p^2-1$ the only remaining generators are
\[
\mu_1^{p-1} \cdot [p] \cdot \gamma_{p-1}[dp]
\qquad\text{and}\qquad
\lambda_1 \cdot \gamma_{p^2-p}[dp] \,.
\]
The first of these must represent a unit in $\bF_p$ times
$(\mu_0^p)^{p-1} \cdot \epsilon_0 \cdot \mu_0^{p-1}$ in the abutment.
Since this product is nonzero, the first generator cannot be a boundary,
and must survive to $E^\infty$.
Hence the second generator must be hit by a differential, which must
come from $\gamma_{p^2}[dp]$.  This explains the second differential,
$d^p(\gamma_{p^2}[dp]) \doteq \lambda_1 \cdot \gamma_{p^2-p}[dp]$.
The cases $i\ge3$ are very similar.
\qed \end{proof}

Having determined the differentials and multiplicative extensions
above, we can now analyze the K{\"u}nneth spectral sequence
computing $\bar\pi_* \THH(\bZ, \langle p\rangle)$.

\begin{proposition}
Consider the case $B = \bZ$ and $N = \langle p\rangle$.  There is an
algebra spectral sequence
\begin{align*}
E^2_{**} &= \Tor^{H_* B^{\cy}\langle p\rangle}_{**}
	(\bar\pi_* \THH(\bZ), H_* B^{\rep}\langle p\rangle) \\
	& \Longrightarrow \bar\pi_* \THH(\bZ, \langle p\rangle) \,,
\end{align*}
where $H_* B^{\cy}\langle p\rangle = P(p) \otimes E(dp)$
by~\eqref{eq:homology-of-Bcy-of-free-com-mon}, $\bar\pi_* \THH(\bZ) =
E(\lambda_1) \otimes P(\mu_1)$ and $H_* B^{\rep}\langle p\rangle = P(p) \otimes
E(d\log p)$, with $|d\log p| = 1$ and the remaining degrees as above.
Here
\[
E^2_{**} = E(\lambda_1) \otimes P(\mu_1)
	\otimes E(d\log p) \otimes \Gamma([dp])
\]
where $[dp]$ has bidegree~$(1,1)$.  There are non-trivial differentials
\[
d^p(\gamma_k [dp]) \doteq \lambda_1 \cdot \gamma_{k-p} [dp]
\]
for all $k\ge p$, leaving
\[
E^\infty_{**} = P(\mu_1) \otimes E(d\log p) \otimes P_p([dp]) \,.
\]
There is a multiplicative extension $[dp]^p \doteq \mu_1$, so
the abutment is
\[
\bar\pi_* \THH(\bZ, \langle p\rangle) = E(d\log p) \otimes P(\kappa_0)
\]
where $\kappa_0$ is represented by $[dp]$, with $|\kappa_0| = 2$.
\end{proposition}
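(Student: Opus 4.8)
The plan is to run the same K\"unneth/$\Tor$ spectral sequence machine as in the preceding proposition, now with the right--hand factor $\bS[B^{\rep}\langle p\rangle]$ in place of $\bS$.

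First I would set up the spectral sequence. Using $\THH(\bZ) = B^{\cy}(H\bZ)$ (cf.\ the remarks preceding Example~\ref{ex:EM-localization}) and the description $\THH(\bZ, \langle p\rangle) \simeq B^{\cy}(H\bZ) \wedge_{\bS[B^{\cy}\langle p\rangle]} \bS[B^{\rep}\langle p\rangle]$ from Proposition~\ref{prop:discrete-logTHH-description}, I would apply cobase change along $\bS \to H\bZ \to H = H\bF_p$ to obtain a homotopy cocartesian square of commutative $H$-algebras
\[
\xymatrix@-1pc{
H \wedge B^{\cy}\langle p\rangle_+ \ar[rr]^{\rho} \ar[d] && H \wedge B^{\rep}\langle p\rangle_+ \ar[d] \\
H \wedge_{H\bZ} \THH(\bZ) \ar[rr] && H \wedge_{H\bZ} \THH(\bZ, \langle p\rangle)
}
\]
in analogy with \eqref{xy:zmodp}. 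The associated $\Tor$ spectral sequence \cite[Theorem~IV.4.1]{EKMM}, together with the identifications $\pi_*(H \wedge Y_+) \cong H_* Y$ and $\pi_*(H \wedge_{H\bZ} X) \cong \bar\pi_* X$ for $H\bZ$-modules $X$, yields the asserted algebra spectral sequence with $E^2$-term $\Tor^{H_* B^{\cy}\langle p\rangle}_{**}(\bar\pi_* \THH(\bZ), H_* B^{\rep}\langle p\rangle)$.

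Next I would identify the $E^2$-term. The three homology inputs are recalled: $H_* B^{\cy}\langle p\rangle = P(p) \otimes E(dp)$ and $H_* B^{\rep}\langle p\rangle = P(p) \otimes E(d\log p)$ from \eqref{eq:homology-of-Bcy-of-free-com-mon} and the discussion following \eqref{eq:Brep-of-free-com-mon}, and $\bar\pi_* \THH(\bZ) = E(\lambda_1) \otimes P(\mu_1)$ from B\"okstedt's computation as in the preceding proof. The $H_* B^{\cy}\langle p\rangle$-module structure on $H_* B^{\rep}\langle p\rangle$ is governed by $\rho_*(p) = p$ and $\rho_*(dp) = p \cdot d\log p$, while $p$ and $dp$ act trivially on $\bar\pi_* \THH(\bZ)$ for degree reasons (the class $p$ reduces to $0$ in $\bar\pi_0 \THH(\bZ) = \bF_p$, and $\bar\pi_1 \THH(\bZ) = 0$). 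Hence the $E^2$-term splits off $\bar\pi_*\THH(\bZ)$ as a tensor factor, and it remains to compute $\Tor^{P(p)\otimes E(dp)}_{**}(\bF_p, P(p)\otimes E(d\log p))$; a short change--of--rings argument for the quotient map $P(p)\otimes E(dp) \to P(p)$ (or a direct Koszul computation, filtering by powers of the regular element $p$) shows this equals $E(d\log p)\otimes\Gamma([dp])$, with $[dp]$ in bidegree $(1,1)$. This gives $E^2_{**} = E(\lambda_1)\otimes P(\mu_1)\otimes E(d\log p)\otimes\Gamma([dp])$.

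Then I would determine the differentials and the multiplicative extension by comparison with the spectral sequence of the preceding proposition. The algebra generators $\lambda_1$, $\mu_1$, $d\log p$ and $[dp]$ are infinite cycles for bidegree reasons, and the repletion map together with the localization cofiber sequence $\THH(\bZ)\xrightarrow{\rho}\THH(\bZ,\langle p\rangle)\xrightarrow{\partial}\Sigma\THH(\bZ/p)$ of Example~\ref{ex:EM-localization} produces a comparison of K\"unneth spectral sequences relating the present one to the one abutting to $\bar\pi_*\THH(\bZ/p)$; under this comparison the classes $\lambda_1$, $\mu_1$ and $\gamma_k[dp]$ correspond, so the differentials $d^p(\gamma_k[dp]) \doteq \lambda_1\cdot\gamma_{k-p}[dp]$ for $k\ge p$ and the hidden extension $[dp]^p \doteq \mu_1$ are inherited, the class $d\log p$ not interfering again by bidegree bookkeeping. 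Passing to $E^\infty$ leaves $P(\mu_1)\otimes E(d\log p)\otimes P_p([dp])$, and resolving the extension $[dp]^p \doteq \mu_1$ identifies the abutment as $E(d\log p)\otimes P(\kappa_0)$ with $\kappa_0$ represented by $[dp]$ and $|\kappa_0| = 2$. The main obstacle is exactly this differential step: one must pin down the comparison of spectral sequences precisely and check that it is nonzero on the divided-power classes $\gamma_k[dp]$ (and on $[dp]^p$), so that the $d^p$-differentials and the $[dp]^p = \mu_1$ extension genuinely transport from the $\THH(\bZ/p)$ computation; tracking the interaction with the localization cofiber sequence, in particular the new class $d\log p$, is the delicate part, whereas the $E^2$ identification is routine homological algebra.
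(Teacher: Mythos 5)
Your strategy is the paper's: the same cobase-change square of commutative $H$-algebras, the same change-of-rings identification of the $E^2$-term, and the same plan of importing the $d^p$-differentials and the extension $[dp]^p\doteq\mu_1$ by comparison with the spectral sequence computing $\bar\pi_*\THH(\bZ/p)$. The one place where your mechanism is off, and where it would not work as stated, is the source of that comparison: the boundary map $\partial\colon\THH(\bZ,\langle p\rangle)\to\Sigma\THH(\bZ/p)$ from the localization cofiber sequence is only a map of $\THH(\bZ)$-modules, with a suspension, so it does not induce a multiplicative map of K\"unneth spectral sequences and cannot by itself transport differentials on the divided-power generators or the hidden extension. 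The comparison the paper uses is instead induced by the augmentation $\bS[B^{\rep}\langle p\rangle]\to\bS$, viewed as a map of $\bS[B^{\cy}\langle p\rangle]$-algebras; this yields a map of homotopy cocartesian squares from~\eqref{xy:zn} to~\eqref{xy:zmodp} and hence a multiplicative map of $\Tor$ spectral sequences that kills $d\log p$ and preserves $\lambda_1$, $\mu_1$ and the $\gamma_k[dp]$. With that map in place, your bidegree bookkeeping (no class involving $d\log p$ occupies the bidegree of $\lambda_1\cdot\gamma_{k-p}[dp]$, so each $d^p$ lifts uniquely, and likewise for the extension) finishes the argument exactly as in the paper.
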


\begin{proof}
Applying cobase change along $\bS \to H\bZ \to H = H\bF_p$ to the
balanced smash product defining $\THH(\bZ, \langle p\rangle)$, we get a homotopy
cocartesian square
\begin{equation}\label{xy:zn}\begin{split} 
\xymatrix@-1pc{
H \wedge B^{\cy}\langle p\rangle_+ \ar[rr]^-{\rho} \ar[d]
	&& H \wedge B^{\rep}\langle p\rangle_+ \ar[d] \\
H \wedge_{H\bZ} \THH(\bZ) \ar[rr]^-{\rho}
	&& H \wedge_{H\bZ} \THH(\bZ, \langle p\rangle)
}
\end{split}\end{equation}
of commutative $H$-algebras, and an associated $\Tor$ spectral sequence,
as asserted.  Here
\[\begin{split}
E^2_{**} &= \Tor^{P(p) \otimes E(dp)}_{**}
	(E(\lambda_1) \otimes P(\mu_1), P(p) \otimes E(d\log p)) \\
& \cong \Tor^{E(dp)}_{**}(E(\lambda_1) \otimes P(\mu_1), E(d\log p)) \\
& \cong E(\lambda_1) \otimes P(\mu_1) \otimes E(d\log p) \otimes \Gamma([dp])
\end{split}\]
by change-of-rings and the observation that $E(dp)$ acts trivially
on $E(\lambda_1)\otimes P(\mu_1)$ and $E(d\log p)$.

To determine the differentials and multiplicative extensions in this
spectral sequence, we use naturality of the K{\"u}nneth spectral sequence
with respect to the map of homotopy cocartesian squares from~\eqref{xy:zn}
to~\eqref{xy:zmodp}, induced by the augmentation
\[
\bS[B^{\rep}\langle p\rangle] \longto \bS
\]
viewed as a map of $\bS[B^{\cy}\langle p\rangle]$-algebras.
The induced homomorphism of $E^2$-terms
\[
E(\lambda_1) \otimes P(\mu_1) \otimes E(d\log p) \otimes \Gamma([dp])
\longto
E(\lambda_1) \otimes P(\mu_1) \otimes E([p]) \otimes \Gamma([dp])
\]
takes $d\log p$ to $0$ and preserves the other generators.  The class
$d\log p$ is an infinite cycle, for filtration reasons, so the
differentials $d^p(\gamma_k[dp]) \doteq \lambda_1 \cdot \gamma_{k-p}[dp]$
for $k\ge p$ on the right hand side lift to the left hand side.
This leaves the $E^{p+1}$-term
\[
E^{p+1}_{**} = P(\mu_1) \otimes E(d\log p) \otimes P_p([dp]) \,,
\]
which must equal the $E^\infty$-term for filtration reasons.
The multiplicative extension $[dp]^p \doteq \mu_1$ on the right hand
side must also lift to the left hand side, completing the proof.
\qed \end{proof}

\begin{remark}
The completion map $\bZ \to \bZ_p$ induces an isomorphism from this
spectral sequence to the one discussed in \cite[Example~8.13]{Rognes_TLS},
converging to $\bar\pi_* \THH(\bZ_p, \langle p\rangle)$.  The argument
just given justifies the assertions about differentials and multiplicative
extensions that were made without proof in the cited example.
\end{remark}

\begin{corollary}
There is an isomorphism
\[
\bar\pi_* \THH(\bZ_p, \langle p\rangle) \cong \bar\pi_* \THH(\bZ_p|\bQ_p)
\]
of $\bar\pi_* \THH(\bZ_p)$-algebras, where the right hand side is as defined by 
Hesselholt-Madsen~\cite{Hesselholt-M_local_fields}. \qed
\end{corollary}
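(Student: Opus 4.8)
The plan is to obtain $\bar\pi_*\THH(\bZ_p,\langle p\rangle)$ from the preceding proposition and to match it with Hesselholt and Madsen's computation of $\bar\pi_*\THH(\bZ_p|\bQ_p)$. By the remark preceding the corollary, the completion map $\bZ\to\bZ_p$ induces an isomorphism of K{\"u}nneth spectral sequences, and in particular an isomorphism $\bar\pi_*\THH(\bZ_p)\cong\bar\pi_*\THH(\bZ)=E(\lambda_1)\otimes P(\mu_1)$ and $\bar\pi_*\THH(\bZ_p,\langle p\rangle)\cong\bar\pi_*\THH(\bZ,\langle p\rangle)=E(d\log p)\otimes P(\kappa_0)$, with $|d\log p|=1$ and $|\kappa_0|=2$. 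Reading the $\bar\pi_*\THH(\bZ_p)$-algebra structure off the $\Tor_0$-line of the spectral sequence together with the surviving multiplicative extension $[dp]^p\doteq\mu_1$, the structure map $\bar\pi_*\THH(\bZ_p)\to\bar\pi_*\THH(\bZ_p,\langle p\rangle)$ sends $\lambda_1$ to $0$ (it supports the differential $d^p$ in the proposition above) and $\mu_1$ to a unit multiple of $\kappa_0^p$.

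It then remains to recall Hesselholt and Madsen's computation of $\bar\pi_*\THH(\bZ_p|\bQ_p)$ from~\cite{Hesselholt-M_local_fields} and to check that it produces the same $\bar\pi_*\THH(\bZ_p)$-algebra. Their relative term sits in a localization cofiber sequence $\THH(\bZ_p)\to\THH(\bZ_p|\bQ_p)\to\Sigma\THH(\bF_p)$, the analogue in their framework of the cofiber sequence of Example~\ref{ex:EM-localization}; the long exact sequence in mod~$p$ homotopy shows that $\bar\pi_*\THH(\bZ_p|\bQ_p)$ is one-dimensional in each non-negative degree, and their analysis exhibits the degree~$1$ generator $d\log p$ together with a degree~$2$ generator whose $p$-th power is a unit multiple of $\mu_1$. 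Matching these generators with $d\log p$ and $\kappa_0$ above yields the asserted isomorphism of $\bar\pi_*\THH(\bZ_p)$-algebras.

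The main obstacle is that there is no evident map of spectra between our $\THH(\bZ_p,\langle p\rangle)$, defined via the pre-log ring spectrum $(H\bZ_p,F\langle p\rangle)$, and Hesselholt and Madsen's $\THH(\bZ_p|\bQ_p)$, which is built from a linear Waldhausen category; thus the isomorphism of the corollary is not induced by a map and must be established by comparing the two explicit presentations. What makes this comparison unambiguous is that on both sides the mod~$p$ homotopy is one-dimensional in every non-negative degree and is generated as a $\bar\pi_*\THH(\bZ_p)$-algebra by the unique degree~$1$ class and a degree~$2$ class whose $p$-th power recovers $\mu_1$; this pins down the algebra structure, and hence the isomorphism, completely.
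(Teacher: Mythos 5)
Your proposal is correct and follows essentially the same route as the paper, which states the corollary with no further proof: transport the computation $\bar\pi_*\THH(\bZ,\langle p\rangle)=E(d\log p)\otimes P(\kappa_0)$ to $\bZ_p$ via the completion map (as in the preceding remark) and compare the resulting explicit $\bar\pi_*\THH(\bZ_p)$-algebra with Hesselholt--Madsen's computation of $\bar\pi_*\THH(\bZ_p|\bQ_p)$, there being no map of spectra to induce the isomorphism. One small slip: $\lambda_1$ does not \emph{support} the differential $d^p$; it is the \emph{target} of $d^p(\gamma_p[dp])\doteq\lambda_1$, which is why it dies in the abutment and maps to zero under $\rho$.
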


\section{Localization sequences for log \texorpdfstring{$\THH$}{THH} of periodic ring spectra}\label{sec:hty-cof-log-thh}
In this section we construct homotopy cofiber sequences relating
$\THH(A,M)$ and $\THH(A)$ for certain pre-log ring spectra $(A,M)$.
We begin by describing the commutative $\cJ$-space monoids $M$
that participate in these pre-log ring spectra.

\subsection{Repetitive commutative \texorpdfstring{$\cJ$}{J}-space
  monoids}\label{subsec:periodic-J-monoids}
The category $\cJ$ has a $\bZ$-grading defined by letting the degree
of an object $(\bld{n_1},\bld{n_2})$ be the difference $n=n_2-n_1\in
\bZ$. Morphisms in $\cJ$ preserve the degree. We let $\cJ_n$ be the
full subcategory of $\cJ$ whose objects have degree $n$. If $X$ is a
$\cJ$-space, then the restriction along the inclusion
$\iota_n\colon\cJ_n \to \cJ$ defines a $\cJ_n$-space $\iota_n^*X$,
which we call the \emph{$\cJ$-degree $n$ part} of~$X$. The
decomposition of $X_{h\cJ}$ into the components
$(\iota_n^*X)_{h\cJ_n}$ defines a $\bZ$-grading on $X_{h\cJ}$. We can
view this grading as a map $ X_{h\cJ} \to \bZ$. If $M$ is a
$\cJ$-space monoid, then $M_{h\cJ}$ is a simplicial monoid because
$(-)_{h\cJ}$ is monoidal, and $M_{h\cJ} \to \bZ$ is a monoid
homomorphism to the (discrete simplicial) monoid~$(\bZ,+)$.

\begin{definition}\label{def:J-space-components}
For each $\cJ$-space $X$ and each subset $S \subseteq \bZ$
let $X_S \subseteq X$ be the sub $\cJ$-space with
\[
X_S(\bn_1, \bn_2) =
\begin{cases}
X(\bn_1, \bn_2) & \text{if $n_2 - n_1 \in S$,} \\
\emptyset & \text{otherwise.}
\end{cases}
\]
In particular we have the part $X_{\{0\}}$ in $\cJ$-degree zero, the
part $X_{\neq 0}=X_{\bZ\setminus \{0\}}$ in non-zero $\cJ$-degrees,
the part $X_{>0} = X_{\bN}$ in positive $\cJ$-degrees, and the part
$X_{\ge0} = X_{\bN_0}$ in non-negative $\cJ$-degrees. If $X$ is the
cyclic or replete bar construction on a commutative $\cJ$-space monoid
$M$, we often denote $X_S$ by $B^{\cy}_S(M)$ or $B^{\rep}_S(M)$,
respectively.
\end{definition}
Let $T$ be the terminal $\cJ$-space, i.e., the constant functor $T\colon \cJ\to\cS$ with value a chosen one-point space. We can obtain the inclusions
$X_{\ge0} \to X$ and $ X_{\{0\}} \to X$ as the base changes along $X \to T$
of $T_{\ge0} \to T$ and $T_{\{0\}} \to T$, respectively. The last two maps
are maps of commutative $\cJ$-space monoids. This verifies:
\begin{lemma}
  If $M$ is a (commutative) $\cJ$-space monoid, then so are $M_{\{0\}}$ and
  $M_{\ge0}$, and the inclusions $M_{\{0\}} \to M_{\ge0} \to M$ are monoid
  maps.\qed
\end{lemma}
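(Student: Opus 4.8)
The plan is to realize $M_{\{0\}}$ and $M_{\ge0}$ as pullbacks in $\CSJ$ (or, in the non-commutative case, in the category of $\cJ$-space monoids) of the commutative $\cJ$-space monoids $T_{\{0\}}$ and $T_{\ge0}$ along the canonical map $M \to T$ to the terminal object, exactly as indicated in the paragraph preceding the lemma. Since limits in $\CSJ$ are created on underlying $\cJ$-spaces, this produces the monoid structures on $M_{\{0\}}$ and $M_{\ge0}$ and simultaneously shows that the inclusions into $M$ are monoid maps, with no further work.

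First I would record that the terminal $\cJ$-space $T$ carries a unique $\cJ$-space monoid structure, necessarily commutative, and that with this structure $T$ is terminal in $\CSJ$; in particular every commutative $\cJ$-space monoid $M$ has a canonical map $M \to T$ in $\CSJ$. Next, for any subset $S \subseteq \bZ$ that is a submonoid of $(\bZ,+)$, i.e.\ with $0 \in S$ and $S + S \subseteq S$, I would check that the sub-$\cJ$-space $T_S$ of Definition~\ref{def:J-space-components} is a sub-commutative-$\cJ$-space monoid of $T$. Since morphisms in $\cJ$ preserve $\cJ$-degree and $(\bld{0},\bld{0})$ has degree $0$, the unit $U^{\cJ} = \cJ((\bld{0},\bld{0}),-)$ is concentrated in $\cJ$-degree $0$, so the hypothesis $0 \in S$ ensures that the unit $U^{\cJ} \to T$ factors through $T_S$. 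For the multiplication, the only point that needs an argument rather than inspection is that $T_S \boxtimes T_S$ is again supported in $\cJ$-degrees lying in $S$: because $\boxtimes$ is the left Kan extension of the objectwise product along $- \concat -\colon \cJ \times \cJ \to \cJ$, and concatenation of objects of $\cJ$-degrees $a$ and $b$ has $\cJ$-degree $a+b$, the functor $T_S \boxtimes T_S$ vanishes outside degrees of the form $a+b$ with $a,b \in S$, hence outside $S$ since $S + S \subseteq S$. Therefore the multiplication $T_S \boxtimes T_S \to T$ factors through the inclusion $T_S \to T$, and associativity, unitality and commutativity for $T_S$ follow from the corresponding identities for $T$.

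Finally, for a commutative $\cJ$-space monoid $M$ I would form the pullback $M \times_T T_S$ in $\CSJ$; as the forgetful functor $\CSJ \to \cS^{\cJ}$ creates limits, its underlying $\cJ$-space is the objectwise pullback, whose value at $(\bn_1,\bn_2)$ is $M(\bn_1,\bn_2) \times_{T(\bn_1,\bn_2)} T_S(\bn_1,\bn_2)$, which equals $M(\bn_1,\bn_2)$ when $n_2 - n_1 \in S$ and is empty otherwise, i.e.\ it is exactly $M_S$. Taking $S = \{0\}$ and $S = \bN_0$, both submonoids of $(\bZ,+)$ with $\{0\} \subseteq \bN_0 \subseteq \bZ$, shows that $M_{\{0\}}$ and $M_{\ge0}$ are commutative $\cJ$-space monoids, and since the pullback is functorial in the target of $M \to T$, the inclusions $T_{\{0\}} \to T_{\ge0} \to T$ induce the asserted monoid maps $M_{\{0\}} \to M_{\ge0} \to M$. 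The non-commutative case is verbatim the same, carried out in the category of $\cJ$-space monoids in place of $\CSJ$. I do not expect a genuine obstacle here; the single step requiring more than a direct check is the stability of the $\cJ$-degree support under $\boxtimes$, which is immediate from the additivity of the $\cJ$-grading under concatenation.
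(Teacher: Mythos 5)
Your proposal is correct and follows exactly the paper's argument: the lemma is verified in the paragraph preceding it by exhibiting $M_{\{0\}}\to M$ and $M_{\ge0}\to M$ as base changes along $M\to T$ of the commutative $\cJ$-space monoid maps $T_{\{0\}}\to T$ and $T_{\ge0}\to T$. Your additional verification that $T_S$ is a submonoid of $T$ for any submonoid $S\subseteq(\bZ,+)$, via additivity of $\cJ$-degree under $-\concat-$, is precisely the detail the paper leaves implicit.
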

\begin{lemma}\label{lem:Mcof-M0cof}
If $M$ is a cofibrant commutative $\cJ$-space monoid such that $M_{<0} = \emptyset$, 
then $M_{\{0\}}$ is also cofibrant. 
\end{lemma}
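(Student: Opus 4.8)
The plan is to reduce to the cellular structure of a cofibrant commutative $\cJ$-space monoid, presented in a form adapted to the hypothesis $M_{<0}=\emptyset$, and then to follow the functor $(-)_{\{0\}}$ through the cell attachments one at a time. To begin, since $M$ is cofibrant it is a retract in $\CSJ$ of a cell complex $\widehat{M}=\colim_{\lambda}\widehat{M}_{\lambda}$ built from the initial object $U^{\cJ}$ by Quillen's small object argument, using the generating cofibrations of $\CSJ$. Each of these has the form $\bC(i)$, where $\bC(X)=\coprod_{k\ge0}X^{\boxtimes k}/\Sigma_k$ is the free commutative $\cJ$-space monoid functor and $i\colon A\to B$ is a cofibration of $\cJ$-spaces built freely on $\partial\Delta^m\hookrightarrow\Delta^m$ in a bidegree $(\bld{d_1},\bld{d_2})$ with $d_1>0$; in particular $A$ and $B$ are concentrated in the single $\cJ$-degree $d=d_2-d_1$. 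A cell of this type is attached only when there is a map $\bC(B)\to M$, equivalently when the simplicial set $M(\bld{d_1},\bld{d_2})$ is nonempty, and by $M_{<0}=\emptyset$ this forces $d_2\ge d_1$. Hence $\widehat{M}$ may be built using only cells of non-negative $\cJ$-degree, so that $(\widehat{M}_{\lambda})_{<0}=\emptyset$ for every $\lambda$. Since $M$ is cofibrant, the identity of $M$ lifts along the acyclic fibration $\widehat{M}\to M$ to a section $M\to\widehat{M}$, exhibiting $M$ as a retract of $\widehat{M}$; as $(-)_{\{0\}}$ is a functor and cofibrant objects are closed under retracts, it is enough to show that $\widehat{M}_{\{0\}}$ is cofibrant.

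The key point is to understand $(-)_{\{0\}}$ on one cell attachment $\widehat{M}_{\lambda+1}=\widehat{M}_{\lambda}\boxtimes_{\bC(A)}\bC(B)$, with $A\to B$ concentrated in $\cJ$-degree $d\ge 0$. Two inputs are used. First, the functor $X\mapsto X_{\{0\}}$ on $\cS^{\cJ}$ is computed levelwise and therefore preserves all colimits of $\cJ$-spaces. Second, there is the standard filtration that presents the underlying $\cJ$-space of the pushout $\widehat{M}_{\lambda}\boxtimes_{\bC(A)}\bC(B)$ as an iterated pushout in $\cS^{\cJ}$, whose $j$-th stage is obtained by gluing in $\widehat{M}_{\lambda}\boxtimes_{\Sigma_j}(-)$ of the $j$-fold pushout-power of $A\to B$, with associated layer $\widehat{M}_{\lambda}\boxtimes_{\Sigma_j}(B/A)^{\boxtimes j}$ concentrated in $\cJ$-degree $jd$ (this is the filtration underlying the construction of the positive $\cJ$-model structure on $\CSJ$ in \cite{Sagave-S_diagram}). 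Now one distinguishes two cases. If $d>0$, then each layer with $j\ge 1$ has the form $\widehat{M}_{\lambda}\boxtimes(\text{something concentrated in $\cJ$-degree }jd)$; since morphisms in $\cJ$ preserve the $\cJ$-degree and $(\widehat{M}_{\lambda})_{<0}=\emptyset$, the $\cJ$-degree $0$ part of such a $\boxtimes$-product is empty, so $(-)_{\{0\}}$ leaves the attachment unchanged and $(\widehat{M}_{\lambda+1})_{\{0\}}=(\widehat{M}_{\lambda})_{\{0\}}$. If $d=0$, then $A$, $B$, $\bC(A)$ and $\bC(B)$ are concentrated in $\cJ$-degree $0$, the attaching map $\bC(A)\to\widehat{M}_{\lambda}$ factors through $(\widehat{M}_{\lambda})_{\{0\}}$, and $(\widehat{M}_{\lambda}\boxtimes W)_{\{0\}}\cong(\widehat{M}_{\lambda})_{\{0\}}\boxtimes W$ for every $\cJ$-space $W$ concentrated in $\cJ$-degree $0$ (again because morphisms in $\cJ$ preserve the $\cJ$-degree); applying $(-)_{\{0\}}$ to the filtration then identifies $(\widehat{M}_{\lambda+1})_{\{0\}}$ with the pushout $(\widehat{M}_{\lambda})_{\{0\}}\boxtimes_{\bC(A)}\bC(B)$ in $\CSJ$ formed along the restricted attaching map. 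In either case the map $(\widehat{M}_{\lambda})_{\{0\}}\to(\widehat{M}_{\lambda+1})_{\{0\}}$ is a cofibration, since $\bC(A)\to\bC(B)$ is a generating cofibration.

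Finally one assembles the induction: the initial stage satisfies $(\widehat{M}_{0})_{\{0\}}=(U^{\cJ})_{\{0\}}=U^{\cJ}$, which is cofibrant, and cofibrant objects are closed under (possibly transfinite) composition along cofibrations, so $\widehat{M}_{\{0\}}=\colim_{\lambda}(\widehat{M}_{\lambda})_{\{0\}}$ is cofibrant, and hence so is its retract $M_{\{0\}}$. The step I expect to be the main obstacle is the cell-by-cell analysis: the pushout $\widehat{M}_{\lambda}\boxtimes_{\bC(A)}\bC(B)$ is formed in $\CSJ$ rather than in $\cS^{\cJ}$, so one really has to pass through the pushout-power filtration to reduce it to colimits of $\cJ$-spaces, and then do the bookkeeping with $\cJ$-degrees carefully — this is exactly where the hypothesis $M_{<0}=\emptyset$ enters, making the positive-degree layers of the attachment invisible to $(-)_{\{0\}}$.
\qed
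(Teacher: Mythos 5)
Your proof is correct and follows essentially the same route as the paper's: reduce to a cell complex built from the generating cofibrations of the positive $\cJ$-model structure, and use the identity $(X\boxtimes Y)_{\{0\}}\iso X_{\{0\}}\boxtimes Y_{\{0\}}$ for $\cJ$-spaces with empty negative-degree parts together with the cell-attachment filtration of \cite[Proposition 10.1]{Sagave-S_diagram} to see that $(-)_{\{0\}}$ carries the cell structure of $\widehat{M}$ to a cell structure on $\widehat{M}_{\{0\}}$. The paper compresses this into two sentences, and your case division $d>0$ versus $d=0$ is exactly the content of its appeal to that filtration.
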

\begin{proof}
  We may assume that $M$ is a cell complex obtained by attaching
  generating cofibrations of the positive $\cJ$-model structure.
  Since $\cJ$-spaces $X$ and $Y$ with $X_{<0} = \emptyset$ and $Y_{<0}
  = \emptyset$ satisfy $(X\boxtimes Y)_{\{0\}} \iso X_{\{0\}} \boxtimes Y_{\{0\}}$, it
  follows from the analysis of cell attachments
  in~\cite[Proposition 10.1]{Sagave-S_diagram} that $M_{\{0\}}$ is cofibrant. 
\qed \end{proof}
We are now ready to introduce the condition that will ensure the
existence of the homotopy cofiber sequence for logarithmic $\THH$ that we
are after.
\begin{definition}\label{def:repetitive}
  A commutative $\cJ$-space monoid $M$ is \emph{repetitive} if it is
  not concentrated in $\cJ$-degree zero and if the group completion
  map $M \to M^{\gp}$ induces a $\cJ$-equivalence $M \to (M^{\gp})_{\ge0}$. 
\end{definition}
It follows that a repetitive commutative $\cJ$-space monoid $M$
satisfies $M_{<0} = \emptyset$ and $M_{>0} \neq \emptyset$. 
If $M$ is a grouplike commutative $\cJ$-space monoid that is not
concentrated in $\cJ$-degree zero, then we say that $M$ is
\emph{$d$-periodic} if $d$ is the minimal positive element in the
image of the grading map $M_{h\cJ}\to\bZ$. In this case, $M_{\{n\}} = \emptyset$
if and only if~$d\nmid n$. 
\begin{definition}\label{def:repetitive-of-period}
A repetitive commutative $\cJ$-space monoid $M$ is
  \emph{repetitive of period~$d$} if its group completion $M^{\gp}$ is
  $d$-periodic.
\end{definition}

Thus, if $M$ is repetitive, then $M_{\{n\}}$ is non-empty if and only if $d \mid n$ and $n \geq 0$.
The next lemma shows that every grouplike commutative $\cJ$-space monoid
that is not concentrated in $\cJ$-degree zero gives rise to a repetitive
commutative $\cJ$-space monoid.
\begin{lemma}\label{lem:Mgeq0-repetetive}
  If $M$ is a grouplike commutative $\cJ$-space monoid of period $d>0$, then $M_{\geq 0}$ is repetitive of period $d$. 
\end{lemma}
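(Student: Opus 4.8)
The plan is to show that the inclusion $\iota\colon M_{\geq 0}\to M$ of commutative $\cJ$-space monoids exhibits $M$ as a group completion of $M_{\geq 0}$, i.e.\ that $\iota$ is a weak equivalence in the group completion model structure of Proposition~\ref{prop:group-compl-model-str}; once this is established the rest of the statement follows formally. The positivity clause in Definition~\ref{def:repetitive} is immediate: since $d\mid d$ and $d>0$, the part $M_{\{d\}}$ is non-empty (by the characterization $M_{\{n\}}=\emptyset$ iff $d\nmid n$ recalled after Definition~\ref{def:repetitive-of-period}), so $(M_{\geq 0})_{>0}\supseteq M_{\{d\}}\neq\emptyset$. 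For the remaining clause I would choose a positive $\cJ$-fibrant replacement $r\colon M\xrightarrow{\sim}\widehat M$; since grouplikeness is a $\cJ$-homotopy invariant, $\widehat M$ is grouplike and hence fibrant in the group completion model structure, and $r$ is also a weak equivalence there. Let $\eta\colon M_{\geq 0}\cof (M_{\geq 0})^{\gp}$ be the functorial fibrant replacement in that structure. Granting that $\iota$, hence $r\iota$, is a weak equivalence in the group completion model structure, lifting $r\iota$ along the acyclic cofibration $\eta$ produces $\phi\colon (M_{\geq 0})^{\gp}\to\widehat M$ with $\phi\eta=r\iota$; by two-out-of-three $\phi$ is a weak equivalence between objects that are grouplike and positive $\cJ$-fibrant, hence a $\cJ$-equivalence (weak equivalences between fibrant objects of a left Bousfield localization are weak equivalences in the underlying model structure). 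All maps here preserve the $\bZ$-grading, and since $\cJ=\coprod_{n\in\bZ}\cJ_n$ the functor $(-)_{h\cJ}$ is the disjoint union of its graded pieces; hence restriction to $\cJ$-degrees $\geq 0$ sends $\cJ$-equivalences to $\cJ$-equivalences. Restricting $\phi\eta=r\iota$ and using that $\iota$ restricted to non-negative $\cJ$-degrees is the identity of $M_{\geq 0}$, we get $\phi_{\geq 0}\circ\eta_{\geq 0}=r_{\geq 0}$ with $\phi_{\geq 0}$ and $r_{\geq 0}$ both $\cJ$-equivalences; thus the group completion map $\eta_{\geq 0}\colon M_{\geq 0}\to ((M_{\geq 0})^{\gp})_{\geq 0}$ is a $\cJ$-equivalence, which is exactly the second condition in Definition~\ref{def:repetitive}. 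Finally $(M_{\geq 0})^{\gp}\simeq\widehat M\simeq M$ is $d$-periodic, since $d$-periodicity depends only on the grading map on $(-)_{h\cJ}$ and is therefore $\cJ$-equivalence invariant, so $M_{\geq 0}$ is repetitive of period $d$ as in Definition~\ref{def:repetitive-of-period}.

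It remains to prove the key claim that $\iota$ is a weak equivalence in the group completion model structure, i.e.\ that $B((M_{\geq 0})_{h\cJ})\to B(M_{h\cJ})$ is a weak equivalence of classifying spaces of simplicial monoids. The simplicial monoid $G:=M_{h\cJ}$ is grouplike and homotopy-commutative; along the grading it decomposes as $G=\coprod_{k\in\bZ}(M_{\{kd\}})_{h\cJ}$, and $G_{\geq 0}:=(M_{\geq 0})_{h\cJ}=\coprod_{k\geq 0}(M_{\{kd\}})_{h\cJ}$ is the submonoid of non-negatively graded components. Choosing $0$-simplices $g\in (M_{\{d\}})_{h\cJ}$ and $g'\in (M_{\{-d\}})_{h\cJ}$ with $[g][g']$ the unit class in $\pi_0 G$ (these exist because $G$ is grouplike of period $d$), multiplication by $g$ restricts to homotopy equivalences $(M_{\{n\}})_{h\cJ}\to (M_{\{n+d\}})_{h\cJ}$ with homotopy inverses given by multiplication by $g'$. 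Consequently, inverting $[g]$ in the Pontryagin ring $H_*G_{\geq 0}$ already inverts all of $\pi_0 G_{\geq 0}$, the complement $\pi_0((M_{\{0\}})_{h\cJ})$ being a group that acts invertibly, and the natural map $H_*(G_{\geq 0})[\pi_0 G_{\geq 0}^{-1}]\to H_*(G)$ is an isomorphism, because inverting the degree-$d$ translation turns $\bigoplus_{k\geq 0}H_*((M_{\{kd\}})_{h\cJ})$ into $\bigoplus_{k\in\bZ}H_*((M_{\{kd\}})_{h\cJ})$. By the group completion theorem the left-hand side computes $H_*(\Omega B G_{\geq 0})$ compatibly with the natural map to $\Omega B G\simeq G$, and this map is an isomorphism on $\pi_0$, both sides being the group completion of the monoid $\pi_0 G_{\geq 0}$. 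Since $\Omega B G_{\geq 0}$ and $G$ are grouplike $E_\infty$ spaces, hence disjoint unions of copies of a simple space, a map between them inducing isomorphisms on $\pi_0$ and on homology is a weak equivalence; therefore $B(G_{\geq 0})\to B(G)$ is a weak equivalence of connected spaces, as claimed.

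I expect this last step to be the main obstacle. The subtle point is upgrading the homological identification coming from the group completion theorem to an honest weak equivalence of classifying spaces; the two facts that make this work are that every graded component of $M_{h\cJ}$ is the translate of $(M_{\{0\}})_{h\cJ}$ by the invertible class $[g]$, so that only a single element must be inverted homologically, and that grouplike $E_\infty$ spaces are assembled from simple spaces, so that homology isomorphisms between them are weak equivalences. Once the claim about $\iota$ is in place, the remaining bookkeeping in the first paragraph — the lift $\phi$, compatibility of the group-completion comparison with $\iota$, and invariance of $\cJ$-equivalences and of $d$-periodicity under passage to non-negative $\cJ$-degrees — is routine, provided one keeps careful track of fibrant replacements as indicated.
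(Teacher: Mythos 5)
Your proposal is correct, and the overall reduction is the same as the paper's: both arguments show that the inclusion $M_{\geq 0}\to M$ induces a weak equivalence $B((M_{\geq 0})_{h\cJ})\to B(M_{h\cJ})$, so that $M$ is a model for $(M_{\geq 0})^{\gp}$, and then restrict to non-negative $\cJ$-degrees. But you prove the key claim by a genuinely different method. The paper applies the Bousfield--Friedlander theorem to the square of bisimplicial sets obtained by mapping $B_{\bullet}((M_{\geq 0})_{h\cJ})\to B_{\bullet}(M_{h\cJ})$ down to $B_{\bullet}(d\bN_0)\to B_{\bullet}(d\bZ)$ via the grading: the square is levelwise a pullback over discrete sets, hence levelwise homotopy cartesian, the targets satisfy the $\pi_*$-Kan condition because they are grouplike, and the conclusion follows from the elementary equivalence $B(d\bN_0)\simeq B(d\bZ)\simeq S^1$. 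This is the same pattern the paper uses in Lemma~\ref{lem:repletion-as-pullback}, Lemma~\ref{lem:repletion-hty-cocartesian} and Proposition~\ref{prop:Bcy-Brep-positive-equivalence}, and it needs no group completion theorem. You instead invoke the McDuff--Segal group completion theorem together with the translation argument (multiplication by a degree-$d$ point is a homotopy equivalence on a grouplike $H$-space, so inverting $[g]$ in the Pontryagin ring turns $\bigoplus_{k\geq 0}$ into $\bigoplus_{k\in\bZ}$) and the homology Whitehead theorem for simple spaces. Your route is heavier machinery but more self-contained at the level of $H$-spaces and yields an explicit identification of $H_*(\Omega B G_{\geq 0})$; the paper's route is lighter and uniform with its other proofs. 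Your first-paragraph bookkeeping (lifting along the acyclic cofibration $\eta$, two-out-of-three between group-completion fibrant objects, and compatibility of $\cJ$-equivalences with the degree decomposition coming from $\pi_0(B\cJ)\iso\bZ$) is sound and merely makes explicit what the paper leaves implicit in the phrase ``consequently $M_{\geq 0}\to M$ is $\cJ$-equivalent to the group completion.''
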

\begin{proof}
  Since $M$ is grouplike, it follows that the grading map $M_{h\cJ}\to
  \bZ$ induces surjective monoid maps $(M_{\geq 0})_{h\cJ} \to d\bN_0$
  and $M_{h\cJ} \to d\bZ$.  Applying the simplicial bar construction,
  we get the following square of bisimplicial sets:
   \[\xymatrix@-1pc{
    B_{\bullet}((M_{\geq 0})_{h\cJ}) \ar[d] \ar[r] & B_{\bullet}(M_{h\cJ})\ar[d] \\
    B_{\bullet}(d\bN_0)\ar[r] & B_{\bullet}(d\bZ) \, . }\] By the
  Bousfield--Friedlander Theorem~\cite[Theorem
  B.4]{Bousfield-F_Gamma-bisimplicial}, the square induces a homotopy
  cartesian square after realization. Hence $B((M_{\geq 0})_{h\cJ}) \to
  B(M_{h\cJ})$ is a weak equivalence because $B(d\bN_0)\to B(d\bZ)$ is one,
  and consequently $M_{\geq 0} \to M$ is $\cJ$-equivalent to the group completion 
  $M_{\geq 0} \to (M_{\geq 0})^{\gp}$. 
\qed \end{proof}

Let $E$ be a commutative symmetric ring spectrum with $0\neq 1$ in
$\pi_0(E)$.  We say that $E$ is $d$-periodic if $\pi_*(E)$ has a unit
of non-zero degree and $d$ is the minimal positive degree of such a
unit.

\begin{corollary}\label{cor:istarGLoneJofE-repetitive}
  Let $E$ be a positive fibrant commutative symmetric ring spectrum
  that is $d$-periodic for some $d>0$, and let $j\colon e \to E$ be a
  model for the connective cover of $E$ with $e$ positive fibrant.
  Then the commutative $\cJ$-space monoid $j_*\!\GLoneJof(E)$ is
  repetitive of period~$d$.
\end{corollary}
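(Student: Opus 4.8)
The plan is to deduce this from Lemma~\ref{lem:Mgeq0-repetetive} by identifying $j_*\!\GLoneJof(E)$, up to $\cJ$-equivalence, with the non-negative part $(\GLoneJof(E))_{\ge 0}$ of the grouplike commutative $\cJ$-space monoid $\GLoneJof(E)$. The first step is to check that $\GLoneJof(E)$ is grouplike---which is clear, since it is built from units---and that it has period~$d$ in the sense of the discussion preceding Definition~\ref{def:repetitive-of-period}. For the latter I would use that $\pi_0(\GLoneJof(E)_{h\cJ})$ is the graded group of units $\pi_*(E)^{\times}$ of the graded ring $\pi_*(E)$, compatibly with the $\bZ$-grading (see \cite[Proposition~4.24]{Sagave-S_diagram} and the discussion after Definition~\ref{def:graded-units}), so that the image of the grading map $\GLoneJof(E)_{h\cJ}\to\bZ$ is the subgroup of $\bZ$ generated by the degrees of units in $\pi_*(E)$; since $E$ is $d$-periodic this subgroup is $d\bZ$, with minimal positive element~$d$. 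As $1\in\pi_0(E)^{\times}$, the monoid $\GLoneJof(E)$ is not concentrated in $\cJ$-degree zero, so Lemma~\ref{lem:Mgeq0-repetetive} shows that $(\GLoneJof(E))_{\ge 0}$ is repetitive of period~$d$.

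The main step is then the comparison between $j_*\!\GLoneJof(E)$ and $(\GLoneJof(E))_{\ge 0}$. By Definition~\ref{def:direct-image-log-str} the former is a model for the homotopy pullback of $\GLoneJof(E)\to\Omega^{\cJ}(E)\leftarrow\Omega^{\cJ}(e)$, and since $\cJ$ is the disjoint union of the subcategories $\cJ_n$, a map of $\cJ$-spaces is a $\cJ$-equivalence as soon as its restriction to each $\cJ$-degree~$n$ induces a weak equivalence after applying $(-)_{h\cJ_n}$; moreover this restriction preserves homotopy pullbacks. I would therefore argue one $\cJ$-degree at a time. For $n\ge 0$ the map $\iota_n^*\Omega^{\cJ}(j)$ is a $\cJ_n$-equivalence: using that $e$ and $E$ are positive fibrant, the space $(\iota_n^*\Omega^{\cJ}(A))_{h\cJ_n}$ is identified with $\Omega^{n_2}A_{n_1}$ for $n_1$ large and $n_2=n_1+n$, hence has $\pi_k\cong\pi_{k+n}(A)$ for $k\ge 0$, and since $j$ is a connective cover the maps $\pi_{k+n}(e)\to\pi_{k+n}(E)$ are isomorphisms whenever $k+n\ge 0$, in particular for all $k\ge 0$. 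Thus the projection from $\iota_n^*(j_*\!\GLoneJof(E))$ to $\iota_n^*\GLoneJof(E)$ is a $\cJ_n$-equivalence for $n\ge 0$. For $n<0$ the homotopy pullback is empty: the map from $\Omega^{\cJ}(e)$ meets only the component of $0\in\pi_n(E)$, whereas the map from $\GLoneJof(E)$ meets only components of units of $\pi_n(E)$, and these families are disjoint since $0$ is not a unit (as $0\ne 1$ in $\pi_0(E)$). Consequently the projection $j_*\!\GLoneJof(E)\to\GLoneJof(E)$ factors through the submonoid $(\GLoneJof(E))_{\ge 0}$ and is a $\cJ$-equivalence onto it. Since being repetitive of period~$d$ (Definitions~\ref{def:repetitive} and~\ref{def:repetitive-of-period}) is invariant under $\cJ$-equivalence---group completion and passage to the non-negative part both preserve $\cJ$-equivalences, and the defining condition is stable under two-out-of-three---this yields the corollary.

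The hard part is the homotopy-theoretic input about $\iota_n^*\Omega^{\cJ}(j)$: identifying $(\iota_n^*\Omega^{\cJ}(A))_{h\cJ_n}$ with the appropriate space of an $\Omega$-spectrum model of $A$, and checking that $\Omega^{\cJ}$ (being right Quillen by Lemma~\ref{Qlem:adj-J-spaces-SpSym}) is compatible with the fiber sequence $\mathrm{hofib}(j)\to e\to E$ and with passage to homotopy colimits over $\cJ_n$ in non-negative $\cJ$-degrees, where $\mathrm{hofib}(j)$ has homotopy concentrated in negative degrees. Once this is settled, the connective cover hypothesis forces $\iota_n^*\Omega^{\cJ}(j)$ to be a $\cJ_n$-equivalence for $n\ge 0$, and the remaining points---the emptiness of the homotopy pullback in negative $\cJ$-degrees and the transfer of the repetitiveness condition along a $\cJ$-equivalence---are routine.
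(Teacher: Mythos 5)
Your proof is correct and follows essentially the same route as the paper: the paper's own (very terse) argument likewise observes that $\GLoneJof(E)$ is grouplike of period $d$, that $\Omega^{\cJ}(e\to E)_{\ge 0}$ is a $\cJ$-equivalence, concludes that $j_*\!\GLoneJof(E)\to\GLoneJof(E)$ is $\cJ$-equivalent to the inclusion of $\GLoneJof(E)_{\ge 0}$, and invokes Lemma~\ref{lem:Mgeq0-repetetive}. You have merely filled in the details (degreewise analysis of the homotopy pullback, emptiness in negative $\cJ$-degrees via $0\ne 1$) that the paper leaves implicit.
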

\begin{proof}
  In this situation $\GLoneJof(E)$ is grouplike of period $d$ and the map $\Omega^{\cJ}(e \to E)_{\geq 0}$ is a
  $\cJ$-equivalence. Hence the map $j_*\!\GLoneJof(E) \to \GLoneJof(E)$ is
  $\cJ$-equivalent to the inclusion of $\GLoneJof(E)_{\geq 0}$.
\qed \end{proof}

\begin{example}\label{ex:Dx-repetitive}
  The commutative $\cJ$-space monoid $D(x)$ described in
  Example~\ref{ex:Dx} is repetitive of period $d$. 
\end{example}

\subsection{Homotopy cofiber sequences}
Let $M$ be a commutative $\cJ$-space monoid that is concentrated in non-negative $\cJ$-degrees, i.e., $M=M_{\geq 0}$. We then have a collapse map of commutative symmetric ring spectra 
\[
\pi\colon \bS^{\cJ}[M]\cong\bigvee_{n\geq 0} \bS^{\cJ}[M_{\{n\}}]\to \bS^{\cJ}[M_{\{0\}}]
\]
which is the identity on $\bS^{\cJ}[M_{\{0\}}]$ and takes each wedge summand $\bS^{\cJ}[M_{\{n\}}]$ for $n>0$ to the base point.

\begin{definition}\label{def:A-mod-M}
  Let $(A,M)$ be a pre-log ring spectrum with $M$ concentrated in
  non-negative $\cJ$-degrees. The \emph{quotient of $A$
    along the map $\pi\colon \bS^{\cJ}[M] \to \bS^{\cJ}[M_{\{0\}}]$} is the commutative
  $A$-algebra spectrum
  \[
  \modmod{A}{M} = A \wedge_{\bS^{\cJ}[M]}
  \bS^{\cJ}[M_{\{0\}}]
  \] 
  given by the pushout of $A \ot \bS^{\cJ}[M] \to
  \bS^{\cJ}[M_{\{0\}}]$ in $\cC\Spsym$. 
\end{definition}
We observe that if $(A,M)$ is a cofibrant pre-log ring spectrum, then
the adjoint structure map $\bar \alpha \colon \bS^{\cJ}[M] \to A$ is a
cofibration, and the pushout square defining $\modmod{A}{M}$ is
homotopy cocartesian since $\cC\Spsym$ is left proper. Furthermore,
$A/(M_{>0})$ is then cofibrant by Lemma~\ref{lem:Mcof-M0cof}.
  
The statement of the next theorem is as in
Theorem~\ref{thm:THH-cofiber-sequence-with-repetitive} from the
introduction, except that we make the cofibrancy condition on $(A,M)$
explicit.

\begin{theorem}\label{thm:repetitive-log-THH-homotopy-cofiber}
Let $(A,M)$ be a cofibrant pre-log ring spectrum with $M$ repetitive. Then there is a natural homotopy cofiber sequence
\[
\THH(A) \overset{\rho}{\longto}
\THH(A, M) \overset{\partial}{\longto}
\Sigma \THH(\modmod{A}{M})
\]
of $\THH(A)$-module spectra with circle action.
\end{theorem}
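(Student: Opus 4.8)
The plan is to obtain Theorem~\ref{thm:repetitive-log-THH-homotopy-cofiber} by specializing the general principle in Proposition~\ref{prop:general-logTHH-homotopy-cofiber} to $P = M_{\{0\}}$. Since $M$ is repetitive it is concentrated in non-negative $\cJ$-degrees, so $M_{<0}=\emptyset$ and Lemma~\ref{lem:Mcof-M0cof} guarantees that $M_{\{0\}}$ is a cofibrant commutative $\cJ$-space monoid; moreover $\modmod{A}{M}=A\wedge_{\bS^{\cJ}[M]}\bS^{\cJ}[M_{\{0\}}]$ is cofibrant, so $\THH$ of it is defined. The ring spectrum map required in Proposition~\ref{prop:general-logTHH-homotopy-cofiber} is $\bS^{\cJ}[\pi]\colon \bS^{\cJ}[M]\to\bS^{\cJ}[M_{\{0\}}]$, where $\pi$ is the collapse map from the discussion before Definition~\ref{def:A-mod-M}, and with this choice the target term produced by the proposition is $\Sigma\THH(A\wedge_{\bS^{\cJ}[M]}\bS^{\cJ}[M_{\{0\}}])=\Sigma\THH(\modmod{A}{M})$. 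Thus the whole theorem, including naturality and the circle actions, reduces to verifying the hypothesis of Proposition~\ref{prop:general-logTHH-homotopy-cofiber}: the existence of a homotopy cofiber sequence
\[
\bS^{\cJ}[B^{\cy}(M)]\xrightarrow{\rho}\bS^{\cJ}[B^{\rep}(M)]\xrightarrow{\partial}\Sigma\bS^{\cJ}[B^{\cy}(M_{\{0\}})]
\]
of $\bS^{\cJ}[B^{\cy}(M)]$-modules with circle action. This $\cJ$-space level statement is the real work and is what Section~\ref{sec:proposition-proof} establishes.

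To prove the displayed cofiber sequence I would argue on the side of $\cJ$-spaces. First, combining Proposition~\ref{prop:replete-bar-as-repletion}, Corollary~\ref{cor:repletion-as-hty-pb} and Lemma~\ref{lem:BcyMgp-vs-BcyM-gp}, the replete bar construction $B^{\rep}(M)$ is $\cJ$-equivalent to the homotopy pullback of $M\to M^{\gp}\ot B^{\cy}(M^{\gp})$; since $M$ is repetitive the map $M\to (M^{\gp})_{\geq0}$ is a $\cJ$-equivalence, and because the augmentation $B^{\cy}(M^{\gp})\to M^{\gp}$ is a map of commutative $\cJ$-space monoids it preserves the $\bZ$-grading, so this homotopy pullback is the non-negatively $\cJ$-graded part $B^{\cy}_{\geq0}(M^{\gp})$. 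Next, as in the construction of the collapse map preceding Definition~\ref{def:A-mod-M}, the functor $\bS^{\cJ}$ turns the $\bZ$-grading of a $\cJ$-space into a wedge decomposition, so $\bS^{\cJ}[B^{\cy}(M)]=\bigvee_{n\geq0}\bS^{\cJ}[B^{\cy}_{\{n\}}(M)]$ and $\bS^{\cJ}[B^{\rep}(M)]=\bigvee_{n\geq0}\bS^{\cJ}[B^{\cy}_{\{n\}}(M^{\gp})]$ both split, with $\rho$ restricting to a map of degree-$n$ summands for each $n$. The key computational input is that for $n>0$ the repletion map induces a $\cJ$-equivalence $B^{\cy}_{\{n\}}(M)\to B^{\cy}_{\{n\}}(M^{\gp})$, a group-completion phenomenon generalizing the identifications $B^{\cy}(\langle x\rangle;k)\simeq S^1(k)\simeq B^{\cy}(\langle x^{\pm1}\rangle;k)$ for $k\geq1$ from~\eqref{eq:Bcy-of-free-com-mon} and~\eqref{eq:Brep-of-free-com-mon}. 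Granting this, the homotopy cofiber of $\rho$ is concentrated in $\cJ$-degree zero.

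In $\cJ$-degree zero, $B^{\cy}_{\{0\}}(M)\cong B^{\cy}(M_{\{0\}})$, because $M=M_{\geq0}$ forces the degree-zero part of each $\boxtimes$-power of $M$ to be the corresponding power of $M_{\{0\}}$, while the degree-zero summand of $B^{\rep}(M)$ is $B^{\cy}_{\{0\}}(M^{\gp})$. Since $M^{\gp}$ is grouplike, $B^{\cy}(M^{\gp})_{h\cJ}\simeq M^{\gp}_{h\cJ}\times B(M^{\gp}_{h\cJ})$; passing to $\cJ$-degree zero and using that the degree-$d$ unit acts trivially up to homotopy on the relevant classifying space, one gets $B^{\cy}_{\{0\}}(M^{\gp})\simeq B^{\cy}(M_{\{0\}})\times S^1$ compatibly with the repletion map $B^{\cy}(M_{\{0\}})=B^{\cy}_{\{0\}}(M)\to B^{\cy}_{\{0\}}(M^{\gp})$, the latter corresponding to the inclusion of $B^{\cy}(M_{\{0\}})\times\{*\}$. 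As $\bS^{\cJ}[X\times S^1]\cong S^1_+\wedge\bS^{\cJ}[X]=\bS^{\cJ}[X]\vee\Sigma\bS^{\cJ}[X]$, the map $\rho$ on the degree-zero summand becomes the inclusion of the first wedge summand, so its homotopy cofiber is $\Sigma\bS^{\cJ}[B^{\cy}(M_{\{0\}})]$, with the extra $S^1$ carrying the trivial circle action exactly as in Lemma~\ref{lem:bcyncof}. Assembling the degreewise analysis yields the required homotopy cofiber sequence.

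The main obstacle is this $\cJ$-space level statement, and within it two points deserve care. One is the group-completion claim that $\rho$ is a $\cJ$-equivalence in every positive $\cJ$-degree, together with the honest bookkeeping of the $QS^0$-gradings of the cyclic and replete bar constructions and of the degree-zero splitting off of an $S^1$-factor --- all delicate because $(-)_{h\cJ}$ is only lax monoidal. The other is structural: the wedge splitting by $\cJ$-degree is not a splitting of $\bS^{\cJ}[B^{\cy}(M)]$-modules, so after running the degreewise argument one must still check that the resulting homotopy cofiber sequence can be realized as a sequence of $\bS^{\cJ}[B^{\cy}(M)]$-modules with compatible circle actions, so that Proposition~\ref{prop:general-logTHH-homotopy-cofiber} applies verbatim.
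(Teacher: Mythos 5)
Your reduction of the theorem to Proposition~\ref{prop:general-logTHH-homotopy-cofiber} with $P=M_{\{0\}}$ and the collapse map $\pi$ is exactly the paper's proof, and the $\cJ$-space level cofiber sequence you identify as the remaining work is precisely Proposition~\ref{prop:SJ-repetitive-homotopy-cofiber}, whose proof in Section~\ref{sec:proposition-proof} follows the same two steps you outline (the repletion map is a $\cJ$-equivalence in positive $\cJ$-degrees via the augmentation to $B^{\cy}(d\bZ)$ and Bousfield--Friedlander; the degree-zero cofiber is a suspension). The only difference is that the paper realizes your degree-zero splitting $B^{\cy}_{\{0\}}(M^{\gp})\simeq B^{\cy}(M_{\{0\}})\times S^1$ by an explicit cyclic-set model (the cyclic nerve of $\cE(\bZ/2)\to\cB(d\bZ)$ pulled back along the augmentation), precisely in order to control the circle action and the $\bS^{\cJ}[B^{\cy}(M)]$-module structure that you correctly flag as the delicate points.
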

We proceed to explain how to derive this theorem from the general principle in Proposition~\ref{prop:general-logTHH-homotopy-cofiber}.  Evaluating the cyclic bar construction on $\pi\colon \bS^{\cJ}[M] \to \bS^{\cJ}[M_{\{0\}}]$ we get a map of commutative symmetric ring spectra  
that can be identified with the projection $\pi\colon \bS^{\cJ}[B^{\cy}(M)]\to \bS^{\cJ}[B^{\cy}(M_{\{0\}})]$, collapsing the positive part of $B^{\cy}(M)$ and mapping $\bS^{\cJ}[B^{\cy}_{\{0\}}(M)]$ isomorphically onto $\bS^{\cJ}[B^{\cy}(M_{\{0\}})]$. This applies in particular if $M$ is repetitive, and we shall view $\bS^{\cJ}[B^{\cy}(M_{\{0\}})]$ as an $\bS^{\cJ}[B^{\cy}(M)]$-module via $\pi$.

\begin{proposition}\label{prop:SJ-repetitive-homotopy-cofiber}
Let $M$ be a repetitive and cofibrant commutative $\cJ$-space monoid. Then there is a homotopy cofiber sequence 
\[
\bS^{\cJ}[B^{\cy}(M)]\overset{\rho}{\longto} \bS^{\cJ}[B^{\rep}(M)] \overset{\partial}{\longto} \Sigma\bS^{\cJ}[B^{\cy}(M_{\{0\}})]
\]
of $\bS^{\cJ}[B^{\cy}(M)]$-modules with circle action.
\end{proposition}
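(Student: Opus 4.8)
The plan is to decompose every spectrum in sight along the $\bZ$-grading of $\cJ$-spaces and reduce the question to $\cJ$-degree zero, where the class responsible for the suspension appears. Since $M$ is repetitive we have $M_{<0}=\emptyset$, so $B^{\cy}(M)$ and $B^{\rep}(M)$ are concentrated in non-negative $\cJ$-degrees. The $\boxtimes$-product, realization, and the homotopy pullback defining $B^{\rep}$ all preserve the $\cJ$-grading, so we obtain coproduct decompositions $B^{\cy}(M)=\coprod_{n\ge 0}B^{\cy}_{\{n\}}(M)$ and $B^{\rep}(M)=\coprod_{n\ge 0}B^{\rep}_{\{n\}}(M)$ that are respected by the repletion map $\rho$ and by the circle action. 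Applying $\bS^{\cJ}$, which preserves coproducts, turns these into wedge decompositions of symmetric spectra with circle action, with $\bS^{\cJ}[\rho]=\bigvee_{n\ge 0}\bS^{\cJ}[\rho_{\{n\}}]$. It therefore suffices to identify, for each $n$, the cofiber of $\bS^{\cJ}[\rho_{\{n\}}]\colon \bS^{\cJ}[B^{\cy}_{\{n\}}(M)]\to\bS^{\cJ}[B^{\rep}_{\{n\}}(M)]$, and afterwards to read off the resulting module structure.

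For $n>0$ I would show $\rho_{\{n\}}$ is a $\cJ$-equivalence. Taking the $\cJ$-degree $n$ part of the homotopy pullback square defining $B^{\rep}(M)$, the object $B^{\rep}_{\{n\}}(M)$ is the homotopy pullback of $M_{\{n\}}\to M^{\gp}_{\{n\}}\ot B^{\cy}_{\{n\}}(M^{\gp})$; since $M\to(M^{\gp})_{\ge 0}$ is a $\cJ$-equivalence, $M_{\{n\}}\to M^{\gp}_{\{n\}}$ is one for every $n\ge 0$, hence $B^{\rep}_{\{n\}}(M)\to B^{\cy}_{\{n\}}(M^{\gp})$ is a $\cJ$-equivalence. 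As the composite $B^{\cy}(M)\overset{\rho}{\to}B^{\rep}(M)\to B^{\cy}(M^{\gp})$ is induced by $\eta_M\colon M\to M^{\gp}$, it remains to see that $B^{\cy}_{\{n\}}(M)\to B^{\cy}_{\{n\}}(M^{\gp})$ is a $\cJ$-equivalence for $n>0$. This is the $\cJ$-space refinement of the statement used in Section~\ref{subsec:discrete-localization} that the repletion map restricts to equivalences $B^{\cy}(N;k)\simeq B^{\cy}(N^{\gp};k)$ for $k\ge 1$ in the discrete case; one proves it either by an extra-degeneracy argument on the cyclic nerve in positive total degree, as in \cite[Proposition 3.21]{Rognes_TLS}, or by realizing $M^{\gp}$ as the localization of $M$ at a $\cJ$-degree $d$ class that becomes a unit and noting that multiplication by that class is a $\cJ$-equivalence on $B^{\cy}_{\{m\}}(M)$ for $m\ge 1$. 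Either way, Corollary~\ref{cor:bSJ-preserves-we} (applied using the augmentations over the cofibrant $M^{\gp}$) shows $\bS^{\cJ}[\rho_{\{n\}}]$ is a stable equivalence, so its cofiber is contractible.

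The substance is in $\cJ$-degree zero. Here $B^{\cy}_{\{0\}}(M)=B^{\cy}(M_{\{0\}})$ because $M_{<0}=\emptyset$ forces every $\boxtimes$-factor to be concentrated in degree zero, and $M_{\{0\}}$ is cofibrant by Lemma~\ref{lem:Mcof-M0cof} and grouplike, being $\cJ$-equivalent to the $\cJ$-degree $0$ part $M^{\gp}_{\{0\}}$ of the grouplike monoid $M^{\gp}$. By the argument of the previous paragraph, $B^{\rep}_{\{0\}}(M)\to B^{\cy}_{\{0\}}(M^{\gp})$ is a $\cJ$-equivalence. Now the augmentation $\epsilon\colon B^{\cy}(M^{\gp})\to M^{\gp}$ has a multiplicative section, and $M^{\gp}$ is grouplike, so up to a chain of circle-equivariant $\cJ$-equivalences $B^{\cy}(M^{\gp})$ splits as $M^{\gp}$ times a bar construction $B(M^{\gp})$ carrying the trivial circle action. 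Restricting to $\cJ$-degree $0$, where the grading map $M^{\gp}_{h\cJ}\to d\bZ$ is a map of grouplike $E_\infty$ spaces, a Bousfield--Friedlander argument of the same kind as in Lemma~\ref{lem:repletion-hty-cocartesian} (using \cite[Theorem B.4]{Bousfield-F_Gamma-bisimplicial} and \cite[Corollary 11.4]{Sagave-S_diagram} to descend from $(-)_{h\cJ}$) identifies $B^{\cy}_{\{0\}}(M^{\gp})$, circle-equivariantly and under $B^{\cy}(M_{\{0\}})$, with $B^{\cy}(M_{\{0\}})\times S^1(0)$, where $S^1(0)$ has the trivial circle action and $B^{\cy}(M_{\{0\}})$ is included at a basepoint of $S^1$. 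Using the natural isomorphism $\bS^{\cJ}[X\times K]\cong\bS^{\cJ}[X]\wedge K_+$ for a simplicial set $K$, the map $\bS^{\cJ}[\rho_{\{0\}}]$ becomes the standard inclusion $\bS^{\cJ}[B^{\cy}(M_{\{0\}})]\to\bS^{\cJ}[B^{\cy}(M_{\{0\}})]\wedge S^1_+$, whose cofiber is $\Sigma\bS^{\cJ}[B^{\cy}(M_{\{0\}})]=\Sigma\bS^{\cJ}[B^{\cy}_{\{0\}}(M)]$ with trivial circle action on the suspension coordinate.

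Assembling, $\bS^{\cJ}[\rho]=\bigvee_{n\ge 0}\bS^{\cJ}[\rho_{\{n\}}]$ has cofiber the wedge of the cofibers, which is $\Sigma\bS^{\cJ}[B^{\cy}_{\{0\}}(M)]=\Sigma\bS^{\cJ}[B^{\cy}(M_{\{0\}})]$ since all positive-degree summands are contractible. Moreover, because the cofiber is concentrated in $\cJ$-degree zero and the positive-degree part of $\bS^{\cJ}[B^{\cy}(M)]$ raises $\cJ$-degree, that part acts as zero on the cofiber, so the $\bS^{\cJ}[B^{\cy}(M)]$-module structure is exactly the one obtained by restriction along the projection $\pi$, as in the statement; all identifications above are compatible with the circle action. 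The main obstacle is the circle-equivariant splitting of $B^{\cy}_{\{0\}}(M^{\gp})$ in the third paragraph — isolating the $S^1(0)$-factor and controlling its naturality — together with the positive-degree equivalence $B^{\cy}_{\{n\}}(M)\simeq B^{\cy}_{\{n\}}(M^{\gp})$ of the second paragraph; both are $\cJ$-space avatars of standard facts about the cyclic bar construction of a $\bZ$-graded commutative monoid versus its periodic group completion, established by Bousfield--Friedlander-type comparisons after passing to $(-)_{h\cJ}$.
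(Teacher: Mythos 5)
Your overall architecture is the same as the paper's: split $\rho$ along the $\cJ$-grading, show that $\rho_{>0}$ (equivalently $B^{\cy}_{>0}(M)\to B^{\cy}_{>0}(M^{\gp})$) is a $\cJ$-equivalence by a Bousfield--Friedlander comparison with $B^{\cy}_{\bullet}(d\bN_0)\to B^{\cy}_{\bullet}(d\bZ)$, and then locate the entire cofiber in $\cJ$-degree zero, where the passage from $B^{\cy}(M_{\{0\}})$ to $B^{\cy}_{\{0\}}(M^{\gp})$ contributes an $S^1(0)$-factor whose suspension is the third term. The first two of these steps are carried out correctly (your second paragraph is essentially the paper's Proposition~\ref{prop:Bcy-Brep-positive-equivalence}, and the reduction of $\Cone(\rho)$ to the degree-zero piece is the paper's wedge/cobase-change argument).

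The gap is in your third paragraph, and it is exactly the point you flag as ``the main obstacle.'' The intermediate claim that $B^{\cy}(M^{\gp})$ splits, through a chain of \emph{circle-equivariant} $\cJ$-equivalences, as $M^{\gp}$ times a bar construction with \emph{trivial} circle action is false: already for the discrete group $d\bZ$ one has $B^{\cy}(d\bZ)\simeq\coprod_{k}S^1(k)$, and $S^1(k)$ for $k\neq 0$ (no fixed points, finite isotropy) is not equivariantly equivalent to $S^1(0)$ (all points fixed); the components of the cyclic bar construction of a periodic grouplike monoid carry pairwise inequivalent actions. The degree-zero conclusion you want, namely a circle-equivariant identification of $B^{\cy}_{\{0\}}(M^{\gp})$ with $B^{\cy}(M_{\{0\}})\times S^1(0)$ under $B^{\cy}(M_{\{0\}})$, happens to be correct, but it does not follow from the route you give: the Bousfield--Friedlander theorem produces homotopy cartesian squares after realization, not equivariant product decompositions, and the non-equivariant splitting $\Lambda BG\simeq BG\times G$ of H-space theory lives on homotopy colimits where the cyclic structure has been destroyed. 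To get equivariance one has to work at the level of cyclic objects. This is what the paper does: it exhibits an explicit cyclic subset $C\subseteq B^{\cy}_{\{0\}}(d\bZ)$ (the image of the cyclic nerve of $\cE(\bZ/2)\to\cB(d\bZ)$), shows $C$ is a weakly equivalent cyclic model for the circle via the pushout~\eqref{eq:EZ2-Bcy0dZ-decomposition}, and pulls $B^{\cy}_{\{0\}}(M^{\gp})$ back over it with the construction $K\odot N$. The resulting pushout of cyclic $\cJ$-spaces replaces your product splitting by a cofiber-sequence-level statement: after applying $\bS^{\cJ}$ the cone of $\sigma\colon\bS^{\cJ}[B^{\cy}(M^{\gp}_{\{0\}})]\to\bS^{\cJ}[B^{\cy}_{\{0\}}(M^{\gp})]$ is identified with the cone of the fold map $\bS^{\cJ}[\bZ/2\times B^{\cy}(M^{\gp}_{\{0\}})]\to\bS^{\cJ}[B^{\cy}(M^{\gp}_{\{0\}})]$, i.e.\ with $\Sigma\bS^{\cJ}[B^{\cy}(M^{\gp}_{\{0\}})]$, with the circle-equivariance automatic because everything is a map of realizations of cyclic objects. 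You would need to supply a construction of this kind to close the gap.
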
 

The proof of this proposition requires a thorough analysis of the repletion map~$\rho$ and is postponed until Section~\ref{sec:proposition-proof}.

\begin{proof}[ 
of Theorem~\ref{thm:repetitive-log-THH-homotopy-cofiber}]
With the notation from Proposition~\ref{prop:general-logTHH-homotopy-cofiber}, we consider the cofibrant commutative $\cJ$-space monoid $P=M_{\{0\}}$ and the map of commutative symmetric ring spectra $\pi\colon \bS^{\cJ}[M] \to \bS^{\cJ}[M_{\{0\}}]$. 
The homotopy cofiber sequence in Proposition~\ref{prop:SJ-repetitive-homotopy-cofiber} now gives the necessary input for Proposition~\ref{prop:general-logTHH-homotopy-cofiber}, and the result follows. 
\qed \end{proof}

\subsection{Identification of the homotopy cofiber} 

Let $M$ be a commutative $\cJ$-space monoid with multiplication $\mu
\colon M \boxtimes M \to M$. We noticed in
Example~\ref{ex:free-J-space} that a $0$-simplex $x \in M(\bld{d_1},
\bld{d_2})_0$ corresponds to a map $\bar x \:
F^{\cJ}_{(\bld{d_1},\bld{d_2})}(*) \to M$ from the free $\cJ$-space on
a point in bidegree $(\bld{d_1},\bld{d_2})$. In the following, we
will refer to the composite map of $\cJ$-spaces 
\begin{equation}\label{eq:mult-with-x} F^{\cJ}_{(\bld{d_1},\bld{d_2})}(*) \boxtimes M \xrightarrow{\bar x
    \boxtimes \mathrm{id}} M \boxtimes M \xrightarrow{\mu} M
\end{equation}
as the \emph{multiplication with $x$}. 
\begin{lemma}\label{lem:grouplike-periodic}
  If $M$ is a grouplike commutative $\cJ$-space monoid, then the
  multiplication with any $0$-simplex  $x$ is a $\cJ$-equivalence. \end{lemma}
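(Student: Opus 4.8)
The plan is to reduce the statement to the known fact that group completion inverts the relevant self-maps. First I would note that multiplication with a $0$-simplex $x \in M(\bld{d_1},\bld{d_2})_0$ is, up to $\cJ$-equivalence, detected on homotopy colimits: the map \eqref{eq:mult-with-x} induces on $(-)_{h\cJ}$ the self-map of $M_{h\cJ}$ given by multiplication with the point class $[x] \in \pi_0(M_{h\cJ})$, using that $(-)_{h\cJ}$ is lax monoidal and that $F^{\cJ}_{(\bld{d_1},\bld{d_2})}(*)_{h\cJ}$ is a contractible space (a connected component of $B\cJ$ mapped to the degree $d_2 - d_1$ component). Since $M$ is grouplike, $\pi_0(M_{h\cJ})$ is a group, so $[x]$ is invertible there, and hence multiplication by $[x]$ is a bijection on $\pi_0(M_{h\cJ})$ and a homotopy equivalence on each path component of $M_{h\cJ}$ (translation by an invertible element in a grouplike $E_\infty$ space). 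Therefore the induced map on $M_{h\cJ}$ is a weak homotopy equivalence, which is exactly the assertion that \eqref{eq:mult-with-x} is a $\cJ$-equivalence.

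In slightly more detail, the key steps in order are: (1) identify the effect of \eqref{eq:mult-with-x} on homotopy colimits via the monoidal structure map $F^{\cJ}_{(\bld{d_1},\bld{d_2})}(*)_{h\cJ} \times M_{h\cJ} \to (F^{\cJ}_{(\bld{d_1},\bld{d_2})}(*) \boxtimes M)_{h\cJ}$ together with $\mu_{h\cJ}$, reducing to the translation map $\mu(\bar x_{h\cJ}(\ast), -) \colon M_{h\cJ} \to M_{h\cJ}$ after a choice of point in the contractible space $F^{\cJ}_{(\bld{d_1},\bld{d_2})}(*)_{h\cJ}$; (2) observe that this translation is left multiplication by the class of $x$ in the grouplike simplicial monoid $M_{h\cJ}$; (3) conclude that left translation by an element of a grouplike topological (simplicial) monoid is a homotopy equivalence — this is the standard argument that in a grouplike $E_\infty$ (or even just homotopy-associative with $\pi_0$ a group) monoid, left translation permutes and homotopy-equivalences the path components. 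Since by definition a map of $\cJ$-spaces is a $\cJ$-equivalence exactly when it induces a weak equivalence on $(-)_{h\cJ}$, this finishes the proof.

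The main obstacle, such as it is, lies in step (1): being careful that the composite \eqref{eq:mult-with-x} really does correspond, after applying $(-)_{h\cJ}$ and choosing a basepoint of $F^{\cJ}_{(\bld{d_1},\bld{d_2})}(*)_{h\cJ}$, to multiplication by $[x]$ in the monoid $M_{h\cJ}$. This requires using that $(-)_{h\cJ}$ is lax monoidal with structure map induced by $-\concat-$, so that $\bar x \boxtimes \id$ followed by $\mu$ becomes, on homotopy colimits, the composite $F^{\cJ}_{(\bld{d_1},\bld{d_2})}(*)_{h\cJ} \times M_{h\cJ} \to (M \boxtimes M)_{h\cJ} \xrightarrow{\mu_{h\cJ}} M_{h\cJ}$, i.e.\ the monoid multiplication on $M_{h\cJ}$ restricted along $\bar x_{h\cJ}$ in the first variable. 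Since any two points of the connected space $F^{\cJ}_{(\bld{d_1},\bld{d_2})}(*)_{h\cJ}$ are joined by a path, the resulting map $M_{h\cJ} \to M_{h\cJ}$ is, up to homotopy, independent of the choice and equals left translation by $[x] \in \pi_0(M_{h\cJ})$; the grouplike hypothesis then makes this an equivalence. Everything else is formal.
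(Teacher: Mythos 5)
Your argument is correct and is essentially the paper's own proof: both factor the multiplication-by-$x$ map through the chain $M_{h\cJ} \to (F^{\cJ}_{(\bld{d_1},\bld{d_2})}(*))_{h\cJ} \times M_{h\cJ} \to (F^{\cJ}_{(\bld{d_1},\bld{d_2})}(*) \boxtimes M)_{h\cJ} \to M_{h\cJ}$, use contractibility of $(F^{\cJ}_{(\bld{d_1},\bld{d_2})}(*))_{h\cJ}$ (the classifying space of a comma category with initial object), and identify the composite with translation by $[x]$ in the grouplike simplicial monoid $M_{h\cJ}$. The one step you treat as automatic — that the monoidal structure map $(F^{\cJ}_{(\bld{d_1},\bld{d_2})}(*))_{h\cJ} \times M_{h\cJ} \to (F^{\cJ}_{(\bld{d_1},\bld{d_2})}(*) \boxtimes M)_{h\cJ}$ is a weak equivalence rather than just a map — is genuinely needed to transfer the conclusion back to the $\boxtimes$-product, and is exactly what the paper invokes \cite[Lemma 2.11]{Sagave_log-on-k-theory} for.
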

\begin{proof}
  The $0$-simplex in $(F^{\cJ}_{(\bld{d_1},\bld{d_2})}(*))_{h\cJ}$ specified
  by $(\bld{d_1},\bld{d_2})$, the monoidal structure
  map of $(-)_{h\cJ}$ and the multiplication with $x$ induce a
  sequence of maps
  \begin{equation}\label{eq:mult-by-x-on-MhJ}
    M_{h\cJ} \to \left(F^{\cJ}_{(\bld{d_1},\bld{d_2})}(*)\right)_{h\cJ} \times M_{h\cJ}
    \to \left(F^{\cJ}_{(\bld{d_1},\bld{d_2})}(*) \boxtimes M\right)_{h\cJ} \to M_{h\cJ}. 
  \end{equation}
  The space $(F^{\cJ}_{(\bld{d_1},\bld{d_2})}(*))_{h\cJ}$ is
  contractible because it is isomorphic to the classifying space of
  the comma category ($(\bld{d_1},\bld{d_2})\! \downarrow \!\cJ)$. So the
  first map in~\eqref{eq:mult-by-x-on-MhJ} is a weak equivalence. The
  second map is a weak equivalence by~\cite[Lemma
  2.11]{Sagave_log-on-k-theory}. Hence the multiplication with $x$ is
  a weak equivalence if and only if the composite map
  in~\eqref{eq:mult-by-x-on-MhJ} is one. The latter condition holds since
  $M$ is grouplike.
\qed \end{proof}

\begin{corollary}\label{cor:components-of-repetitive-repete}
  If $M$ is repetitive of period $d$, then the multiplication with $x$ induces a
  $\cJ$-equivalence
  \[ F^{\cJ}_{(\bld{d_1},\bld{d_2})}(*) \boxtimes M \to M_{> 0}
  \]
  for any $0$-simplex $x \in  M(\bld{d_1},\bld{d_2})$ of $\cJ$-degree $d$.
\end{corollary}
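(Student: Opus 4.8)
The plan is to reduce the statement to Lemma~\ref{lem:grouplike-periodic} by passing to the group completion, where multiplication with~$x$ is already known to be a $\cJ$-equivalence, and then to cut down to the correct band of $\cJ$-degrees. Write $F = F^{\cJ}_{(\bld{d_1},\bld{d_2})}(*)$. The first step is pure degree bookkeeping: since every morphism in $\cJ$ preserves the degree, the representable $\cJ$-space $F = \cJ((\bld{d_1},\bld{d_2}),-)$ is concentrated in $\cJ$-degree $d = d_2-d_1$, and hence for every $\cJ$-space $Y$ the part $(F\boxtimes Y)_{\{n\}}$ depends only on $Y_{\{n-d\}}$. In particular $F\boxtimes M$ is concentrated in $\cJ$-degrees $\ge d$ (using $M_{<0}=\emptyset$, which holds because $M$ is repetitive), so the multiplication with~$x$ of~\eqref{eq:mult-with-x} factors through $M_{\ge d} = M_{>0}$; this factorization is the map in the statement. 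The same observation yields a natural isomorphism $F\boxtimes(M^{\gp})_{\ge0}\iso (F\boxtimes M^{\gp})_{\ge d}$, and, since $M^{\gp}$ is $d$-periodic, $(M^{\gp})_{\ge d}=(M^{\gp})_{>0}$.

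Next I would consider the commutative square of $\cJ$-spaces
\[
\xymatrix@-1pc{
F\boxtimes M \ar[r] \ar[d] & M_{>0} \ar[d] \\
F\boxtimes (M^{\gp})_{\ge0} \ar[r] & (M^{\gp})_{>0}
}
\]
whose vertical maps are induced by the group completion $\eta_M\colon M\to M^{\gp}$ (which lands in $(M^{\gp})_{\ge0}$ by degree preservation), whose top map is the multiplication with~$x$, and whose bottom map is — after the identification above — the restriction to $\cJ$-degrees $\ge d$ of the multiplication with $\eta_M(x)$ on $M^{\gp}$; commutativity is the naturality of~\eqref{eq:mult-with-x} in the monoid, restricted to the relevant degrees. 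The left vertical map is $F\boxtimes(-)$ applied to $M\to (M^{\gp})_{\ge0}$, which is a $\cJ$-equivalence because $M$ is repetitive, and $F\boxtimes(-)$ preserves $\cJ$-equivalences by Lemma~\ref{lem:properties-of-flat}(i) since $F$ is flat, being a free $\cJ$-space. The right vertical map is the restriction of that same $\cJ$-equivalence to positive $\cJ$-degrees, and restriction to any subset of $\cJ$-degrees preserves $\cJ$-equivalences because $(-)_{h\cJ}$ splits as a coproduct over the degrees. Finally the bottom map is a $\cJ$-equivalence: the multiplication with $\eta_M(x)$ on the grouplike monoid $M^{\gp}$ is a $\cJ$-equivalence by Lemma~\ref{lem:grouplike-periodic}, and its restriction to $\cJ$-degrees $\ge d$ then is as well. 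A two out of three argument now shows that the top map $F\boxtimes M\to M_{>0}$ is a $\cJ$-equivalence.

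The only genuinely delicate points are in the first paragraph: one must check carefully that $F\boxtimes(-)$ interacts with the $\cJ$-degree decomposition as claimed (this is exactly where the concentration of $F$ in a single degree enters) and that restricting a $\cJ$-equivalence to a union of $\cJ$-degree components is again a $\cJ$-equivalence. Once this bookkeeping is set up, the corollary follows formally from Lemma~\ref{lem:grouplike-periodic} and the definition of a repetitive commutative $\cJ$-space monoid, and I expect no further obstacle.
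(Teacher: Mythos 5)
Your proof is correct and follows exactly the route the paper intends: the paper's entire proof is the one line ``This is an immediate consequence of Lemma~\ref{lem:grouplike-periodic}'', and your argument is a careful expansion of that reduction (degree bookkeeping for $F^{\cJ}_{(\bld{d_1},\bld{d_2})}(*)\boxtimes(-)$, the square comparing $M$ with $(M^{\gp})_{\ge 0}$ via the repetitive hypothesis, and two-out-of-three). No gaps.
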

\begin{proof}
This is an immediate consequence of Lemma~\ref{lem:grouplike-periodic}.
\qed \end{proof}

We now explain how to identify $\modmod{A}{M}$ in examples. A first step is:
\begin{lemma}\label{lem:SJ-repetitive}
  Let $M$ be a cofibrant commutative $\cJ$-space monoid that is repetitive of
  period~$d$, and let $x \in M(\bld{d_1},\bld{d_2})$ be a $0$-simplex of
  $\cJ$-degree $d$. Then there is a homotopy cocartesian square of $\bS^{\cJ}[M]$-modules
 \[
\xymatrix@-1pc{
\bS^{\cJ}[F^{\cJ}_{(\bld{d_1},\bld{d_2})}(*) \boxtimes M] \ar[r] \ar[d] & \bS^{\cJ}[M] \ar[d]_{\pi} \\
{*} \ar[r] & \bS^{\cJ}[M_{\{0\}}],
}
\]
where the top horizontal map is induced by the multiplication with $x$.
\end{lemma}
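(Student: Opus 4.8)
The plan is to reduce the statement to a single application of the homotopy invariance results for $\bS^{\cJ}$ collected in the appendix, and then to assemble the square by pasting two elementary homotopy cocartesian squares. First, since the forgetful functor $\Mod_{\bS^{\cJ}[M]}\to\Spsym$ detects stable equivalences and, being exact between stable model categories, preserves homotopy cocartesian squares, it suffices to prove that the square is homotopy cocartesian in $\Spsym$. Next I record the relevant structure. Because $M$ is repetitive we have $M_{<0}=\emptyset$, so $M\iso M_{\{0\}}\sqcup M_{>0}$ as $\cJ$-spaces; applying the coproduct-preserving functor $\bS^{\cJ}$ gives $\bS^{\cJ}[M]\iso\bS^{\cJ}[M_{\{0\}}]\vee\bS^{\cJ}[M_{>0}]$, under which $\pi$ becomes the projection collapsing the summand $\bS^{\cJ}[M_{>0}]$. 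Moreover $F^{\cJ}_{(\bld{d_1},\bld{d_2})}(*)$ is concentrated in $\cJ$-degree $d>0$ and $M=M_{\ge 0}$, so $F^{\cJ}_{(\bld{d_1},\bld{d_2})}(*)\boxtimes M$ is concentrated in $\cJ$-degrees $\ge d$, and the multiplication $m_x$ with $x$ of~\eqref{eq:mult-with-x} factors as $F^{\cJ}_{(\bld{d_1},\bld{d_2})}(*)\boxtimes M\xrightarrow{\bar m_x}M_{>0}\hookrightarrow M$, where the second map is the coproduct inclusion. By Corollary~\ref{cor:components-of-repetitive-repete} the map $\bar m_x$ is a $\cJ$-equivalence.

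The one point that is not formal is that $\bS^{\cJ}[\bar m_x]$ is a stable equivalence, and this is where the appendix enters; the subtlety is that neither $F^{\cJ}_{(\bld{d_1},\bld{d_2})}(*)\boxtimes M$ nor $M_{>0}$ need be cofibrant. Both are flat: $F^{\cJ}_{(\bld{d_1},\bld{d_2})}(*)$ is a free $\cJ$-space and $M$ is flat by Lemma~\ref{lem:properties-of-flat}, so their $\boxtimes$-product is flat, while the latching maps of $M_{>0}$ in positive $\cJ$-degrees coincide with those of $M$ and $M_{>0}$ is empty in $\cJ$-degree zero. In addition $\bar m_x$ is compatible with the augmentations of its source and target to the cofibrant $\cJ$-space $M$ (via $m_x$ and via the inclusion). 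Hence Corollary~\ref{cor:bSJ-preserves-we} applies and $\bS^{\cJ}[\bar m_x]$ is a stable equivalence. I expect this step --- checking that the homotopy invariance properties of $\bS^{\cJ}$ from the appendix indeed cover these non-cofibrant but flat $\cJ$-spaces --- to be the main obstacle; everything else is formal.

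Finally I would conclude by pasting squares. Consider the commutative diagram in $\Spsym$
\[
\xymatrix@-1pc{
\bS^{\cJ}[F^{\cJ}_{(\bld{d_1},\bld{d_2})}(*)\boxtimes M] \ar[r]^-{\bS^{\cJ}[\bar m_x]} \ar[d] & \bS^{\cJ}[M_{>0}] \ar[r] \ar[d] & \bS^{\cJ}[M] \ar[d]^{\pi}\\
{*} \ar[r] & {*} \ar[r] & \bS^{\cJ}[M_{\{0\}}]
}
\]
in which the middle vertical map is zero and the bottom maps are the canonical ones. Using the factorization above, the top composite is $\bS^{\cJ}$ of the multiplication with $x$, and $\pi$ composed with it is zero because $\bS^{\cJ}[\bar m_x]$ lands in the summand $\bS^{\cJ}[M_{>0}]$; thus the outer rectangle is precisely the square of the lemma. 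The left square is homotopy cocartesian because both of its horizontal maps are stable equivalences, the top one by the previous paragraph. The right square is the (split) cofiber square of the wedge inclusion $\bS^{\cJ}[M_{>0}]\hookrightarrow\bS^{\cJ}[M_{\{0\}}]\vee\bS^{\cJ}[M_{>0}]$, hence homotopy cocartesian, with the induced comparison map on cofibers equal to the identity of $\bS^{\cJ}[M_{\{0\}}]$. By the pasting lemma for homotopy cocartesian squares, the outer rectangle is homotopy cocartesian, which is the claim.
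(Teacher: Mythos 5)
Your proof is correct and follows the same route as the paper's: reduce to symmetric spectra, factor the top horizontal map through the wedge summand $\bS^{\cJ}[M_{>0}]$ of $\bS^{\cJ}[M]$ using Corollary~\ref{cor:components-of-repetitive-repete}, and apply Corollary~\ref{cor:bSJ-preserves-we} to see that the first factor is a stable equivalence. The paper leaves the concluding pasting argument implicit (``the result easily follows''), which you spell out correctly; your flatness digression is harmless but unnecessary, since Corollary~\ref{cor:bSJ-preserves-we} only requires the augmentations of source and target to the cofibrant commutative $\cJ$-space monoid $M$.
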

\begin{proof}
  It suffices to show that the square is homotopy cocartesian as a
  diagram of symmetric spectra, and as such the top horizontal map
  factors as the composition
  \[
  \bS^{\cJ}[F^{\cJ}_{(\bld{d_1},\bld{d_2})}(*) \boxtimes M] \xrightarrow{\simeq} 
  \bS^{\cJ}[M_{> 0}]\to \bS^{\cJ}[M_{\{0\}}] \vee
  \bS^{\cJ}[M_{> 0}]\cong \bS^{\cJ}[M] \, .
  \]
  The result easily follows from the fact that the first map is a
  stable equivalence, by
  Corollaries~\ref{cor:components-of-repetitive-repete} and
  \ref{cor:bSJ-preserves-we}.
\qed \end{proof}

Let $(A,M)$ be a pre-log ring spectrum. The choice of a $0$-simplex $x \in
M(\bld{d_1},\bld{d_2})_0$ and the structure map $\alpha\colon M \to
\Omega^{\cJ}(A)$ determine a $0$-simplex in
\[\Omega^{\cJ}(A)(\bld{d_1},\bld{d_2}) =\Omega^{d_2}(A_{d_1})\cong
\mathrm{Map}_{\Spsym}(F_{d_1}S^{d_2},A),\] i.e., a map $F_{d_1}S^{d_2}
\to A$. Here $ F_{d_1}S^{d_2} \simeq \Sigma^{d_2-d_1}(\bS)$ is the
free symmetric spectrum on the space $S^{d_2}$ in degree
$d_1$. Extension of scalars provides an $A$-module map
$\widetilde{x}\colon F_{d_1}S^{d_2}\sm A \to A$ from the free
$A$-module spectrum on $S^{d_2}$ in degree $d_1$, which depends on $x$
and $\alpha$.

\begin{lemma}\label{lem:A-mod-M-as-hty-cofiber}
  Let $(A,M)$ be a cofibrant pre-log ring spectrum with $M$ repetitive of
  period~$d$. Then there is a homotopy cocartesian square of $A$-module spectra
\[\xymatrix@-1pc{ F_{d_1}S^{d_2} \sm A \ar[d] \ar[r]^-{\widetilde{x}} & A \ar[d] \\ {*} \ar[r]&  \modmod{A}{M},} \]
where the right hand vertical map is the canonical map  to the quotient
of $A$ along the collapse map $\pi\colon \bS^{\cJ}[M]\to \bS^{\cJ}[M_{\{0\}}]$.
\end{lemma}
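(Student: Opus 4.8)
The plan is to obtain the desired square by applying the extension-of-scalars functor $A\sm_{\bS^{\cJ}[M]}(-)$ along the adjoint structure map $\bar\alpha\colon\bS^{\cJ}[M]\to A$ to the homotopy cocartesian square of $\bS^{\cJ}[M]$-modules provided by Lemma~\ref{lem:SJ-repetitive}, and then to identify the four corners and the top horizontal map with the data in the statement. Since $(A,M)$ is a cofibrant pre-log ring spectrum, $M$ is cofibrant and $\bar\alpha$ is a cofibration of commutative symmetric ring spectra, so Lemma~\ref{lem:SJ-repetitive} applies to our chosen $0$-simplex $x$ of $\cJ$-degree $d$.

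First I would rephrase the input. Because the lower left corner of the square in Lemma~\ref{lem:SJ-repetitive} is a point, that square is the same data as a homotopy cofiber sequence
\[
\bS^{\cJ}[F^{\cJ}_{(\bld{d_1},\bld{d_2})}(*)\boxtimes M]\xrightarrow{\ g\ }\bS^{\cJ}[M]\longrightarrow\bS^{\cJ}[M_{\{0\}}]
\]
of $\bS^{\cJ}[M]$-modules, where $g$ is $\bS^{\cJ}$ applied to the multiplication with $x$ of \eqref{eq:mult-with-x}. Now $A\sm_{\bS^{\cJ}[M]}(-)$ is a left adjoint, so it preserves pushouts and commutes with tensoring by simplicial sets, and hence sends strict mapping cones to strict mapping cones; and by Lemma~\ref{lem:ext-along-cof-pre-st-eq}, which uses that $\bar\alpha$ is a cofibration, it preserves stable equivalences between arbitrary $\bS^{\cJ}[M]$-modules. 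Applying it to the displayed cofiber sequence therefore yields a homotopy cofiber sequence of $A$-modules, i.e.\ a homotopy cocartesian square of $A$-module spectra whose lower left corner is a point.

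It then remains to identify the resulting square with the one asserted. On the right one has $A\sm_{\bS^{\cJ}[M]}\bS^{\cJ}[M]\iso A$ and $A\sm_{\bS^{\cJ}[M]}\bS^{\cJ}[M_{\{0\}}]=\modmod{A}{M}$ by Definition~\ref{def:A-mod-M}, the right vertical map being the canonical map $A\to\modmod{A}{M}$ induced by $\pi$. For the upper left corner, strong symmetric monoidality of $\bS^{\cJ}$ from Lemma~\ref{lem:adj-J-spaces-SpSym} gives $\bS^{\cJ}[F^{\cJ}_{(\bld{d_1},\bld{d_2})}(*)\boxtimes M]\iso\bS^{\cJ}[F^{\cJ}_{(\bld{d_1},\bld{d_2})}(*)]\sm\bS^{\cJ}[M]$, so that corner becomes $\bS^{\cJ}[F^{\cJ}_{(\bld{d_1},\bld{d_2})}(*)]\sm A$; and by the two adjunctions, maps $\bS^{\cJ}[F^{\cJ}_{(\bld{d_1},\bld{d_2})}(*)]\to E$ correspond to $0$-simplices of $\Omega^{\cJ}(E)(\bld{d_1},\bld{d_2})$, hence to maps $F_{d_1}S^{d_2}\to E$ under the identification recalled just before the lemma, so that $\bS^{\cJ}[F^{\cJ}_{(\bld{d_1},\bld{d_2})}(*)]\iso F_{d_1}S^{d_2}$ and the corner is $F_{d_1}S^{d_2}\sm A$. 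Finally I would trace through the top map: writing the multiplication with $x$ as $\mu\circ(\bar x\boxtimes\id)$ and noting that the module structure sits on the $M$-factor throughout, after smashing with $A$ over $\bS^{\cJ}[M]$ the map $g$ becomes the composite $F_{d_1}S^{d_2}\sm A\xrightarrow{\bS^{\cJ}[\bar x]\sm\id}\bS^{\cJ}[M]\sm A\xrightarrow{\bar\alpha\sm\id}A\sm A\xrightarrow{\mu_A}A$; since $\bar\alpha\circ\bS^{\cJ}[\bar x]\colon F_{d_1}S^{d_2}\to A$ is by adjunction the map determined by the $0$-simplex $\alpha(x)\in\Omega^{\cJ}(A)(\bld{d_1},\bld{d_2})_0$, this composite is exactly the extension of scalars $\widetilde x$ defined in the paragraph preceding the statement.

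The part I expect to need the most care is this last identification of the top map, where one must keep straight which $\boxtimes$- and $\sm$-factor carries the module action as one passes through the canonical isomorphisms; a minor secondary point is the assertion that $A\sm_{\bS^{\cJ}[M]}(-)$ preserves strict mapping cones, which is why I would run the argument through a cofiber sequence with a point corner rather than through a general homotopy pushout square, where one would otherwise have to check that cofibrations are sent to suitably well-behaved maps.
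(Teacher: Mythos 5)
Your proposal is correct and follows the same route as the paper: apply $A\sm_{\bS^{\cJ}[M]}(-)$ to the square of Lemma~\ref{lem:SJ-repetitive}, use Lemma~\ref{lem:ext-along-cof-pre-st-eq} to see that the resulting square is still homotopy cocartesian, and identify the corners and the top map with $\widetilde{x}$. The paper compresses the identification step into the single assertion that the two diagrams are isomorphic, whereas you spell out the Yoneda/adjunction identification of $\bS^{\cJ}[F^{\cJ}_{(\bld{d_1},\bld{d_2})}(*)]$ with $F_{d_1}S^{d_2}$ and the tracing of the top map; this is exactly the content left implicit there.
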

\begin{proof}
Notice first that the diagram in the lemma is isomorphic to that obtained by applying the functor $A\wedge_{\bS^{\cJ}[M]}(-)$ to the diagram in Lemma~\ref{lem:SJ-repetitive}. Writing $C(\tilde x)$ for the mapping cone of $\tilde x$, it follows from Lemmas~\ref{lem:SJ-repetitive} and \ref{lem:ext-along-cof-pre-st-eq} that the canonical map
\[
C(\tilde x)\cong A\wedge _{\bS^{\cJ}[M]}C\left(\bS^{\cJ}[F^{\cJ}_{(\bld{d_1},\bld{d_2})}(*) \boxtimes M]\to \bS^{\cJ}[M]\right)\to A\wedge_{\bS^{\cJ}[M]}{\bS^{\cJ}[M_{\{0\}}]}
\]
is a stable equivalence. This is equivalent to the statement in the lemma.
\qed \end{proof}

\begin{lemma}\label{lem:A-mod-repetetive-Postn-section}
  Let $(A,M)$ be a cofibrant pre-log ring spectrum with $A$ connective
  and with $M$ repetitive of period $d$. Suppose that there is a $0$-simplex
  $x \in M(\bld{d_1},\bld{d_2})_0$ of $\cJ$-degree $d$ such that the
  homotopy class $[x] \in \pi_d(A)$ represented by the
  image of~$x$ in $\Omega^{\cJ}(A)$ has the property that
  multiplication with $[x]$ induces an isomorphism
  $\pi_i(A) \to \pi_{i+d}(A)$ for each $i \geq 0$.

  Then the map $\pi_i(A) \to \pi_i(\modmod{A}{M})$ is an isomorphism
  for $0\leq i < d$, and $\pi_i(\modmod{A}{M})$ is trivial for
  all other $i$.
\end{lemma}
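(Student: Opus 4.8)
The plan is to present $\modmod{A}{M}$ as a mapping cone by means of Lemma~\ref{lem:A-mod-M-as-hty-cofiber} and then read off its homotopy groups from the resulting long exact sequence. First I would record that, since $x$ has $\cJ$-degree $d$, the bidegree $(\bld{d_1},\bld{d_2})$ satisfies $d_2-d_1=d$; hence $F_{d_1}S^{d_2}$ is stably equivalent to $\Sigma^{d}\bS$, so $F_{d_1}S^{d_2}\sm A$ is a free $A$-module stably equivalent to $\Sigma^{d}A$, namely the free $A$-module on one generator in degree $d$. By construction the map $\widetilde{x}$ of Lemma~\ref{lem:A-mod-M-as-hty-cofiber} carries this generator to the homotopy class $[x]\in\pi_d(A)$, so under the identification $\pi_n(F_{d_1}S^{d_2}\sm A)\cong\pi_{n-d}(A)$ it induces on homotopy groups the multiplication homomorphism $\pi_{n-d}(A)\to\pi_n(A)$, $a\mapsto[x]\cdot a$ (up to a sign from the suspension isomorphism, which is irrelevant below).

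Lemma~\ref{lem:A-mod-M-as-hty-cofiber} identifies $\modmod{A}{M}$ with the homotopy cofiber of $\widetilde{x}$, with the canonical quotient map $A\to\modmod{A}{M}$ as the inclusion of $A$ into the cone. I would then use the associated long exact sequence
\[
\cdots\to\pi_{n-d}(A)\xrightarrow{\cdot[x]}\pi_n(A)\to\pi_n(\modmod{A}{M})\to\pi_{n-1-d}(A)\xrightarrow{\cdot[x]}\pi_{n-1}(A)\to\cdots
\]
together with the connectivity of $A$, which gives $\pi_i(A)=0$ for $i<0$. For $n<0$ the groups $\pi_{n-d}(A)$, $\pi_n(A)$ and $\pi_{n-1-d}(A)$ all vanish, so $\pi_n(\modmod{A}{M})=0$. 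For $0\le n\le d-1$ both $n-d$ and $n-1-d$ are negative, so $\pi_{n-d}(A)=\pi_{n-1-d}(A)=0$ and exactness forces $\pi_n(A)\to\pi_n(\modmod{A}{M})$ to be an isomorphism. For $n\ge d$ the hypothesis says $\cdot[x]\colon\pi_{n-d}(A)\to\pi_n(A)$ is an isomorphism and in particular surjective, so $\pi_n(A)\to\pi_n(\modmod{A}{M})$ is zero; and $\cdot[x]\colon\pi_{n-1-d}(A)\to\pi_{n-1}(A)$ is injective, either because it is an isomorphism (if $n-1-d\ge0$) or because its source is trivial (if $n=d$), so the connecting map out of $\pi_n(\modmod{A}{M})$ vanishes as well; hence $\pi_n(\modmod{A}{M})=0$. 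This is exactly the assertion of the lemma.

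The argument is essentially bookkeeping: the only steps that require care are the identification of $\widetilde{x}$ on homotopy with multiplication by $[x]$ in the correct degree $d=d_2-d_1$, and the observation that the map $A\to\modmod{A}{M}$ appearing in Lemma~\ref{lem:A-mod-M-as-hty-cofiber} is the canonical quotient map named in the statement, so that the isomorphism in the range $0\le n\le d-1$ is the one asserted. Both are immediate from the definitions of $\widetilde{x}$ and of $\modmod{A}{M}$, so I expect no genuine obstacle here.
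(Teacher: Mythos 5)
Your proposal is correct and follows exactly the paper's argument: the paper likewise invokes Lemma~\ref{lem:A-mod-M-as-hty-cofiber}, notes that $\widetilde{x}$ induces multiplication by $[x]$, and concludes "by inspection of the long exact sequence," which is precisely the bookkeeping you carry out in detail.
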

\begin{proof}
  In the long exact sequence of homotopy groups associated with the
  homotopy cofiber sequence arising from
  Lemma~\ref{lem:A-mod-M-as-hty-cofiber}, the map $\widetilde{x}$
  induces multiplication with~$[x]$. The claim follows
  by inspection of the long exact sequence.
\qed \end{proof}

Now let $E$ be a $d$-periodic and positive fibrant commutative symmetric ring spectrum with connective cover $e\to E$, where $d>0$. In the following we shall consider the corresponding direct image log ring spectrum $(e,j_*\!\GLoneJof(E))$ from Definition~\ref{def:direct-image-log-str}. Notice that if $(e^{\mathrm{cof}},j_*\!\GLoneJof(E)^{\mathrm{cof}})$ denotes a cofibrant replacement of the latter, then the composition $e^{\mathrm{cof}}\to E$ is again a connective cover and $j_*\!\GLoneJof(E)^{\mathrm{cof}}\to j_*\!\GLoneJof(E)$ is a $\cJ$-equivalence. It will be convenient to simplify notation by writing $(e,j_*\!\GLoneJof(E))$ also for the cofibrant replacement when the meaning is clear from the context. 

\begin{corollary}
  Let $E$ be a $d$-periodic positive fibrant commutative symmetric
  ring spectrum with connective cover $j\colon e \to E$, and let $(e,j_*\!\GLoneJof(E))$ denote a cofibrant replacement of the corresponding direct image log ring spectrum. Then the commutative symmetric ring spectrum
  $\modmod{e}{j_*\!\GLoneJof(E)}$ associated with 
  $(e,j_*\!\GLoneJof(E))$ is stably equivalent to the
  $(d-1)$-th Postnikov section $\postnikovsec{e}{0}{d}$ of $e$.
\end{corollary}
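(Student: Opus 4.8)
The plan is to deduce the statement from Lemma~\ref{lem:A-mod-repetetive-Postn-section}. As in the paragraph preceding the corollary, let $(e^{\mathrm{cof}}, M^{\mathrm{cof}})$ be a cofibrant replacement of the direct image log ring spectrum $(e, j_*\!\GLoneJof(E))$ (we may assume $e$ is positive fibrant), so that $e^{\mathrm{cof}} \to e$ is a stable equivalence --- hence $e^{\mathrm{cof}}$ is connective --- and $M^{\mathrm{cof}} \to j_*\!\GLoneJof(E)$ is a $\cJ$-equivalence. By Corollary~\ref{cor:istarGLoneJofE-repetitive} the commutative $\cJ$-space monoid $j_*\!\GLoneJof(E)$ is repetitive of period $d$, and being repetitive of period $d$ is invariant under $\cJ$-equivalence of commutative $\cJ$-space monoids: this follows directly from Definition~\ref{def:repetitive}, since both group completion and the passage $(-)_{\ge 0}$ to the non-negative $\cJ$-degree part take $\cJ$-equivalences to $\cJ$-equivalences. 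Hence $M^{\mathrm{cof}}$ is repetitive of period $d$, and we shall apply Lemma~\ref{lem:A-mod-repetetive-Postn-section} to the cofibrant pre-log ring spectrum $(e^{\mathrm{cof}}, M^{\mathrm{cof}})$.

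The key step is to produce a $0$-simplex satisfying the hypothesis of that lemma. Since $M^{\mathrm{cof}}$ is repetitive of period $d$, its $\cJ$-degree $d$ part is non-empty, so after enlarging both coordinates if necessary we may choose a $0$-simplex $x \in M^{\mathrm{cof}}(\bld{d_1}, \bld{d_2})_0$ of $\cJ$-degree $d$ with $d_1 \ge 1$. The map of pre-log ring spectra $(e^{\mathrm{cof}}, M^{\mathrm{cof}}) \to (e, j_*\!\GLoneJof(E))$ together with the pullback description $j_*\!\GLoneJof(E) = \GLoneJof(E) \times_{\Omega^{\cJ}(E)} \Omega^{\cJ}(e)$ shows that the image of $x$ in $\Omega^{\cJ}(E)$ lies in the sub commutative $\cJ$-space monoid $\GLoneJof(E)$ of graded units. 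Since $\GLoneJof(E) \to \Omega^{\cJ}(E)$ realizes the inclusion $\pi_*(E)^{\times} \subset (\pi_*(E), \cdot)$, the image of $x$ represents a unit $u \in \pi_d(E)^{\times}$ of the graded ring $\pi_*(E)$. As $d > 0$ and $e^{\mathrm{cof}} \to e \to E$ induces an isomorphism on $\pi_i$ for $i \ge 0$, the homotopy class $[x] \in \pi_d(e^{\mathrm{cof}})$ represented by the image of $x$ in $\Omega^{\cJ}(e^{\mathrm{cof}})$ is the lift of $u$ to $\pi_d(e^{\mathrm{cof}})$, and multiplication by $[x]$ induces an isomorphism $\pi_i(e^{\mathrm{cof}}) \to \pi_{i+d}(e^{\mathrm{cof}})$ for every $i \ge 0$, because these groups agree with $\pi_i(E)$ and $\pi_{i+d}(E)$ and multiplication by the unit $u$ is bijective on $\pi_*(E)$. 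Thus the hypotheses of Lemma~\ref{lem:A-mod-repetetive-Postn-section} are satisfied.

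Applying Lemma~\ref{lem:A-mod-repetetive-Postn-section}, the canonical map $e^{\mathrm{cof}} \to \modmod{e^{\mathrm{cof}}}{M^{\mathrm{cof}}}$ is an isomorphism on $\pi_i$ for $0 \le i < d$, and $\pi_i(\modmod{e^{\mathrm{cof}}}{M^{\mathrm{cof}}})$ vanishes for all other $i$; in particular $\modmod{e^{\mathrm{cof}}}{M^{\mathrm{cof}}}$ is a connective, $(d-1)$-truncated commutative symmetric ring spectrum. By the universal property of the $(d-1)$-th Postnikov section of a connective commutative ring spectrum, this map factors, through a chain of stable equivalences of commutative symmetric ring spectra, as $e^{\mathrm{cof}} \to \postnikovsec{e^{\mathrm{cof}}}{0}{d} \to \modmod{e^{\mathrm{cof}}}{M^{\mathrm{cof}}}$, and an inspection of homotopy groups shows that the second map is a $\pi_*$-isomorphism, hence a stable equivalence. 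Since $e^{\mathrm{cof}} \to e$ is a stable equivalence, $\postnikovsec{e^{\mathrm{cof}}}{0}{d} \simeq \postnikovsec{e}{0}{d}$, and under the convention that $\modmod{e}{j_*\!\GLoneJof(E)}$ denotes $\modmod{e^{\mathrm{cof}}}{M^{\mathrm{cof}}}$ this yields the asserted stable equivalence.

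I expect the main obstacle to lie in the second paragraph: producing a $0$-simplex of $\cJ$-degree $d$ in the cofibrant replacement and, above all, tracing its image through the pullback defining $j_*\!\GLoneJof(E)$ so as to recognize it as a lift of a unit of the graded ring $\pi_*(E)$ --- this is precisely what makes the multiplication-by-$[x]$ hypothesis of Lemma~\ref{lem:A-mod-repetetive-Postn-section} hold. The ensuing homotopy group computation and the comparison with $\postnikovsec{e}{0}{d}$ via the universal property of Postnikov sections are then routine.
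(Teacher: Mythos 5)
Your proposal is correct and follows essentially the same route as the paper: invoke Corollary~\ref{cor:istarGLoneJofE-repetitive} for repetitivity, pick a $0$-simplex of $\cJ$-degree $d$ whose image in $\Omega^{\cJ}(E)$ is a unit so that multiplication by $[x]$ is an isomorphism $\pi_i(e)\to\pi_{i+d}(e)$ for $i\ge 0$, and apply Lemma~\ref{lem:A-mod-repetetive-Postn-section}. The extra care you take with the cofibrant replacement and the $\cJ$-equivalence invariance of repetitivity is left implicit in the paper but is consistent with its conventions.
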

\begin{proof}
  The commutative $\cJ$-space monoid $j_*\!\GLoneJof(E)$ is repetitive
  by Corollary~\ref{cor:istarGLoneJofE-repetitive}. By construction,
  the composite $ j_*\!\GLoneJof(E) \to \Omega^{\cJ}(e) \to
  \Omega^{\cJ}(E)$ maps any $0$-simplex $x \in
  (j_*\!\GLoneJof(E))(\bld{d_1},\bld{d_2})_0$ of $\cJ$-degree $d$ to a
  representative of a unit in $\pi_*(E)$. So the image of $x$ in
  $\Omega^{\cJ}(e)$ represents a homotopy class satisfying the
  assumptions of the previous lemma.
\qed \end{proof}
Combining this corollary with 
Theorem~\ref{thm:repetitive-log-THH-homotopy-cofiber} gives
the following result, which is similar to the statement in Theorem~\ref{thm:THH-cofiber-sequence-with-direct-image} from the introduction, except that we here make the fibrancy and cofibrancy conditions on $E$ and $(e,j_*\!\GLoneJof(E))$ explicit. We require $E$ to be positive fibrant to ensure that $\Omega^{\cJ}(E)$ and  $\GLoneJof(E)$ capture the desired homotopy type, so that $j_*\!\GLoneJof(E)$ is repetitive.   

\begin{theorem}\label{thm:hty-cofiber-on-log-THH-for-e-to-E}
  Let $E$ be a $d$-periodic positive fibrant commutative symmetric
  ring spectrum with connective cover $j\colon e \to E$, and let $(e,j_*\!\GLoneJof(E))$ denote a cofibrant replacement of the corresponding direct image log ring spectrum. Then there is a natural homotopy cofiber sequence
\[\THH(e) \overset{\rho}{\longto}
\THH(e,j_*\!\GLoneJof(E)) \overset{\partial}{\longto} \Sigma \THH(\postnikovsec{e}{0}{d})\] of $\THH(e)$-module spectra with circle
action. \qed
\end{theorem}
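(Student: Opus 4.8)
The plan is to deduce this theorem formally by combining the preceding corollary (the identification of $\modmod{e}{j_*\!\GLoneJof(E)}$ with a Postnikov section) with Theorem~\ref{thm:repetitive-log-THH-homotopy-cofiber}, applied to the cofibrant replacement of the direct image log ring spectrum $(e, j_*\!\GLoneJof(E))$. So essentially no new argument is needed; the work is to check that the hypotheses of Theorem~\ref{thm:repetitive-log-THH-homotopy-cofiber} are in force and then to transport its third term along the stable equivalence provided by the corollary.

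First I would verify that $(e^{\mathrm{cof}}, j_*\!\GLoneJof(E)^{\mathrm{cof}})$ is a cofibrant pre-log ring spectrum whose commutative $\cJ$-space monoid part is repetitive. Cofibrancy is immediate from the cofibrant replacement functor for pre-log ring spectra discussed after Definition~\ref{def:cof-pre-log-ring}. For repetitiveness, Corollary~\ref{cor:istarGLoneJofE-repetitive} shows that $j_*\!\GLoneJof(E)$ is repetitive of period $d$ as soon as $E$ and $e$ are positive fibrant — this is exactly the reason for the positive fibrancy hypothesis on $E$, and one may always replace the connective cover by a positive fibrant model $e$. Since $j_*\!\GLoneJof(E)^{\mathrm{cof}} \to j_*\!\GLoneJof(E)$ is a $\cJ$-equivalence, and repetitiveness is phrased purely in terms of $\cJ$-equivalences and the homotopy invariant group completion, the replacement is again repetitive of period $d$; equivalently, $e^{\mathrm{cof}} \to E$ is still a connective cover of the $d$-periodic spectrum $E$.

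With the hypotheses in place, Theorem~\ref{thm:repetitive-log-THH-homotopy-cofiber} yields a natural homotopy cofiber sequence
\[
\THH(e) \xrightarrow{\rho} \THH(e, j_*\!\GLoneJof(E)) \xrightarrow{\partial} \Sigma\THH\bigl(\modmod{e}{j_*\!\GLoneJof(E)}\bigr)
\]
of $\THH(e)$-module spectra with circle action. It then remains to identify the third term. By the corollary preceding this theorem, $\modmod{e}{j_*\!\GLoneJof(E)}$ is stably equivalent, as a commutative $e$-algebra, to $\postnikovsec{e}{0}{d}$ (the comparison map being the factorization of $e \to \modmod{e}{j_*\!\GLoneJof(E)}$ through the truncation $\postnikovsec{e}{0}{d}$, which is an equivalence by the homotopy group computation of Lemma~\ref{lem:A-mod-repetetive-Postn-section}); moreover $\modmod{e}{j_*\!\GLoneJof(E)}$ is cofibrant by Lemma~\ref{lem:Mcof-M0cof} together with the remark after Definition~\ref{def:A-mod-M}. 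Applying the homotopy invariant functor $\THH$ to commutative $e$-algebras therefore gives a stable equivalence $\THH(\modmod{e}{j_*\!\GLoneJof(E)}) \to \THH(\postnikovsec{e}{0}{d})$ of commutative $\THH(e)$-algebras with circle action, in particular of $\THH(e)$-modules with circle action, and splicing this onto the end of the sequence above produces the asserted homotopy cofiber sequence.

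The only point requiring genuine care — rather than an actual obstacle — is the bookkeeping around the cofibrant replacement: one must make sure the replacement still carries a repetitive monoid and still realizes the connective cover of $E$ up to the relevant equivalences, and that the identification in the preceding corollary is one of $e$-algebras (not merely of underlying spectra), so that $\THH$ transports the module-and-circle-action structure faithfully. Once these are checked, the proof is a formal concatenation of Corollary~\ref{cor:istarGLoneJofE-repetitive}, the preceding corollary, and Theorem~\ref{thm:repetitive-log-THH-homotopy-cofiber}, which is why the statement is marked with \qed.
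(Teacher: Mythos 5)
Your proposal matches the paper's argument exactly: the theorem is stated with \qed precisely because it follows by combining the preceding corollary identifying $\modmod{e}{j_*\!\GLoneJof(E)}$ with $\postnikovsec{e}{0}{d}$ and Theorem~\ref{thm:repetitive-log-THH-homotopy-cofiber}, after checking via Corollary~\ref{cor:istarGLoneJofE-repetitive} that the (cofibrantly replaced) direct image log structure is repetitive. Your additional bookkeeping about the cofibrant replacement preserving repetitiveness and the connective cover is the same point the paper makes in the paragraph preceding the theorem.
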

\begin{remark}\label{rem:more-repetetive}
  Currently we do not know if it is possible to find repetitive
  pre-log structures on ring spectra $A$ that do not arise as the
  connective covers of periodic ring spectra.  Such pre-log structures
  would lead to new examples of homotopy cofiber sequences for log
  $\THH$, where the term $\modmod{A}{M}$ might be more difficult to
  describe.
\end{remark}

\section{The proof of Proposition~\ref{prop:SJ-repetitive-homotopy-cofiber}}\label{sec:proposition-proof}
In this section we prove Proposition~\ref{prop:SJ-repetitive-homotopy-cofiber}, thereby completing the last step in setting up the homotopy cofiber sequence in 
Theorem~\ref{thm:repetitive-log-THH-homotopy-cofiber}. As a beginning, we observe that if $M$ is a commutative $\cJ$-space
monoid, then the $\bZ$-grading of $\cJ$ induces an augmentation
$B^{\cy}(M)_{h\cJ} \to B^{\cy}(\bZ)$ of the cyclic bar
construction. Here $ B^{\cy}(\bZ)$ denotes the cyclic bar construction
of the discrete monoid $\bZ$ as considered in
Section~\ref{subsec:cy-rep-discrete}. Using that $B^{\cy}(M)_{h\cJ}$
is isomorphic to the realization of the bisimplicial set $[q] \mapsto
B^{\cy}_{q}(M)_{h\cJ}$, the augmentation is defined as the realization
of the map
\[ B^{\cy}_q(M)_{h\cJ} \iso \coprod_{(x_0,\dots, x_q)
  \in B^{\cy}_{q}(\bZ)} (M_{\{x_0\}} \boxtimes \dots \boxtimes
M_{\{x_q\}})_{h\cJ} \to B_{q}^{\cy}(\bZ) \] that collapses $
(M_{\{x_0\}} \boxtimes \dots \boxtimes M_{\{x_q\}})_{h\cJ}$ to the
point $(x_0,\dots, x_q) \in B^{\cy}_{q}(\bZ)$. Notice that if we view $B^{\cy}(\bZ)$ as a constant commutative $\cJ$-space monoid, then there is an analogous augmentation $B^{\cy}(M)\to B^{\cy}(\bZ)$ before passing to the homotopy colimit. Using these augmentations, we can relate the repletion map for $B^{\cy}(M)$ to the repletion map for $B^{\cy}(\bZ)$ analyzed in 
Section~\ref{subsec:discrete-localization}.

\begin{proposition}\label{prop:Bcy-Brep-positive-equivalence}
  For a repetitive and cofibrant commutative
  $\cJ$-space monoid~$M$, the repletion map $\rho \: B^{\cy}(M) \to
  B^{\rep}(M)$ restricts to a $\cJ$-equivalence
\[
\rho_{>0}\colon B^{\cy}_{>0}(M) \xrightarrow{\sim} B^{\rep}_{>0}(M)
\]
in positive $\cJ$-degrees. 
\end{proposition}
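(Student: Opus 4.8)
\textit{Plan.} The plan is to combine the description of $B^{\rep}(M)$ as a homotopy pullback with the computation of the cyclic bar construction of a free commutative monoid recalled in Section~\ref{subsec:discrete-localization}, transporting information between the two by means of the $\bZ$-grading augmentation $B^{\cy}(M)_{h\cJ}\to B^{\cy}(\bZ)$ introduced above. Since a map of $\cJ$-spaces is a $\cJ$-equivalence exactly when it is one in each $\cJ$-degree and $X_{>0}=\coprod_{n>0}X_{\{n\}}$, it suffices to show that $\rho$ restricts to a $\cJ$-equivalence $B^{\cy}_{\{n\}}(M)\to B^{\rep}_{\{n\}}(M)$ for each $n>0$. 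Say $M$ is repetitive of period $d$; if $d\nmid n$ both sides are empty, so assume $d\mid n$. All maps occurring in Construction~\ref{constr:rep-bar} preserve $\cJ$-degree and homotopy pullbacks in $\cS^{\cJ}$ decompose by $\cJ$-degree, so $B^{\rep}_{\{n\}}(M)$ is a model for the homotopy pullback of $M_{\{n\}}\to M^{\gp}_{\{n\}}\ot B^{\cy}_{\{n\}}(M^{\gp})$, and $\rho_{\{n\}}$ is the canonical map of $B^{\cy}_{\{n\}}(M)$ into it determined by the augmentation $\epsilon$ and the group completion $\eta_M$. Because $M$ is repetitive, $M_{\{n\}}\to M^{\gp}_{\{n\}}$ is a $\cJ$-equivalence for $n\ge0$, so the projection of this homotopy pullback onto $B^{\cy}_{\{n\}}(M^{\gp})$ is a $\cJ$-equivalence; by the two out of three property the proposition thus reduces to showing that the map $B^{\cy}_{\{n\}}(M)\to B^{\cy}_{\{n\}}(M^{\gp})$ induced by $\eta_M$ is a $\cJ$-equivalence for every $n>0$.

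To establish this, fix $n=dm$ with $m\ge1$. By Corollary~11.4 of \cite{Sagave-S_diagram} it is enough to apply $(-)_{h\cJ}$ and check that the resulting square
\[
\xymatrix@-1pc{
B^{\cy}_{\{n\}}(M)_{h\cJ} \ar[r] \ar[d] & B^{\cy}_{\{n\}}(M^{\gp})_{h\cJ} \ar[d] \\
B^{\cy}(d\bN_0;n) \ar[r] & B^{\cy}(d\bZ;n)
}
\]
is homotopy cartesian with bottom map a weak equivalence; here $B^{\cy}(d\bN_0;n)$ and $B^{\cy}(d\bZ;n)$ denote the total weight $n$ summands of the cyclic bar constructions of the discrete monoids $d\bN_0\subseteq d\bZ$, and the vertical maps are the $\bZ$-grading augmentations, which collapse the fibers $(M_{\{x_0\}}\boxtimes\dots\boxtimes M_{\{x_q\}})_{h\cJ}$, respectively $(M^{\gp}_{\{x_0\}}\boxtimes\dots\boxtimes M^{\gp}_{\{x_q\}})_{h\cJ}$, to the point $(x_0,\dots,x_q)$. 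Under the monoid isomorphism $d\bN_0\iso\langle x\rangle$ the bottom map becomes the repletion $B^{\cy}(\langle x\rangle;m)\to B^{\cy}(\langle x^{\pm1}\rangle;m)$, which is a homotopy equivalence for $m\ge1$ by \eqref{eq:Bcy-of-free-com-mon} and \eqref{eq:Brep-of-free-com-mon}. For the homotopy cartesianness one realizes the square in the cyclic degree $q$: over a point $(x_0,\dots,x_q)$ with all $x_i\ge0$ the induced map of homotopy fibers is $(M_{\{x_0\}}\boxtimes\dots\boxtimes M_{\{x_q\}})_{h\cJ}\to(M^{\gp}_{\{x_0\}}\boxtimes\dots\boxtimes M^{\gp}_{\{x_q\}})_{h\cJ}$, which is a weak equivalence since $M$ and $M^{\gp}$ are cofibrant, hence flat, each $\cJ$-degree summand of a flat $\cJ$-space is flat, and $M_{\{x_i\}}\to M^{\gp}_{\{x_i\}}$ is a $\cJ$-equivalence for $x_i\ge0$. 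Thus the square of bisimplicial sets is pointwise homotopy cartesian; since $M^{\gp}$ is grouplike, $[q]\mapsto B^{\cy}_{q,\{n\}}(M^{\gp})_{h\cJ}$ satisfies the $\pi_*$-Kan condition and its map to $[q]\mapsto B^{\cy}_q(d\bZ;n)$ (which is discrete in the cyclic direction) induces a Kan fibration on vertical path components, so the Bousfield--Friedlander Theorem \cite[Theorem~B.4]{Bousfield-F_Gamma-bisimplicial} applies exactly as in the proof of Lemma~\ref{lem:repletion-as-pullback} and the displayed square is homotopy cartesian. Combining this with the equivalence on the bottom shows that the top map is a weak equivalence, which finishes the argument.

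\textit{Main obstacle.} The technical heart is verifying the hypotheses of the Bousfield--Friedlander Theorem for the square over the discrete grading --- chiefly the $\pi_*$-Kan condition for $[q]\mapsto B^{\cy}_{q,\{n\}}(M^{\gp})_{h\cJ}$ and the assertion that its map to $[q]\mapsto B^{\cy}_q(d\bZ;n)$ is a Kan fibration on vertical path components --- together with the bookkeeping needed to keep the two gradings, the $\cJ$-degree of $M$ and the cyclic degree $q$ of the bar construction, carefully separated. Both the assumption that $M$ is repetitive, so that $M\to M^{\gp}$ is already a $\cJ$-equivalence in non-negative $\cJ$-degrees, and the fact that $M^{\gp}$ is grouplike are used in an essential way.
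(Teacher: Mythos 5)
Your proposal is correct and follows essentially the same route as the paper: reduce via repetitiveness to showing that $M \to M^{\gp}$ induces a $\cJ$-equivalence on $B^{\cy}_{>0}$, then compare with the augmentation to $B^{\cy}(d\bN_0) \to B^{\cy}(d\bZ)$ and apply the Bousfield--Friedlander theorem. The only cosmetic differences are that you work one $\cJ$-degree at a time and check pointwise homotopy cartesianness by direct comparison of fibers using flatness, where the paper passes through the monoidal structure equivalence $B^{\cy}_q(M_{h\cJ}) \to B^{\cy}_q(M)_{h\cJ}$.
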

\begin{proof}
  Because $M$ is repetitive, it follows from the definition of
  $B^{\rep}(M)$ that it is enough to show that the group completion $M
  \to M^{\gp}$ induces a $\cJ$-equivalence $B^{\cy}_{>0}(M) \to
  B^{\cy}_{>0}(M^{\gp})$. Assuming that $M$ is repetitive of period
  $d$, the augmentation introduced above induces a commutative square of
  bisimplicial sets
  \begin{equation}\label{eq:Bcy-P-Q-N-Z-hty-cart}\xymatrix@-1pc{
      B^{\cy}_{\bullet}(M)_{h\cJ} \ar[r] \ar[d] & B^{\cy}_{\bullet}(M^{\gp})_{h\cJ} \ar[d] \\
      B^{\cy}_{\bullet}(d\bN_0) \ar[r] & B^{\cy}_{\bullet}(d\bZ).  
    }\end{equation}
  Since $M$ is cofibrant, it follows from  Lemma~\ref{lem:properties-of-flat} and~\cite[Lemma
  2.11]{Sagave_log-on-k-theory} that there is a weak equivalence $B^{\cy}_{q}(M_{h\cJ})\to B^{\cy}_{q}(M)_{h\cJ}$, and similarly for $M^{\gp}$. This implies 
  that the square is a homotopy cartesian
  square of simplicial sets in every simplicial degree. To see that it is
  homotopy cartesian after realization, we use the
  Bousfield--Fried\-lander theorem~\cite[Theorem
  B.4]{Bousfield-F_Gamma-bisimplicial}: The bisimplicial sets
  $B^{\cy}_{\bullet}(M^{\gp})_{h\cJ}$ and $B^{\cy}_{\bullet}(d\bZ)$
  satisfy the $\pi_*$-Kan condition because $M^{\gp}$ and $d\bZ$
  are grouplike. Moreover, the map $ B^{\cy}_{\bullet}(M^{\gp})_{h\cJ}
  \to B^{\cy}_{\bullet}(d\bZ)$ is a Kan fibration on vertical path
  components because $B^{\cy}_{\bullet}(\pi_0(M^{\gp}_{h\cJ})) \to
  B^{\cy}_{\bullet}(d\bZ)$ is a degree-wise surjective homomorphism of
  simplicial abelian groups. This implies that~\cite[Theorem
  B.4]{Bousfield-F_Gamma-bisimplicial} applies.

  As we observed in Section~\ref{subsec:cy-rep-discrete},
  $B^{\cy}(d\bN_0) \to B^{\cy}(d\bZ)$ restricts to an equivalence on
  the positive components in the $\bZ$-grading. Hence $
  B^{\cy}_{>0}(M)_{h\cJ} \to B^{\cy}_{>0}(M^{\gp})_{h\cJ}$ is a weak
  equivalence because the realization
  of~\eqref{eq:Bcy-P-Q-N-Z-hty-cart} is homotopy cartesian. 
\qed \end{proof}

Next we observe that the group completion map $M \to M^{\gp}$ and the collapse map $\pi$ applied to the cyclic
and replete bar constructions give rise to the following diagram of commutative symmetric ring spectra with circle action
\[
\xymatrix@-1pc{
\bS^{\cJ}[B^{\cy}(M)] \ar[r]^-{\pi} \ar[d]_{\rho}& \bS^{\cJ}[B^{\cy}_{\{0\}}(M)] \ar@{=}[r] \ar[d]_{\rho_{\{0\}}}& \bS^{\cJ}[B^{\cy}(M_{\{0\}})]
\ar[r]^-{\simeq} &\bS^{\cJ}[B^{\cy}(M^{\gp}_{\{0\}})]\ar[d]_{\sigma}\\
\bS^{\cJ}[B^{\rep}(M)] \ar[r]^-{\pi} & \bS^{\cJ}[B^{\rep}_{\{0\}}(M)] \ar[rr]^-{\simeq} &&\bS^{\cJ}[B^{\cy}_{\{0\}}(M^{\gp})],
}
\]
where $\sigma$ is induced by the obvious inclusion $B^{\cy}(M^{\gp}_{\{0\}})\to B^{\cy}_{\{0\}}(M^{\gp})$. Here we are implicitly using Corollary~\ref{cor:bSJ-preserves-we} to ensure that the horizontal maps on the right hand side are stable equivalences as indicated. We shall view this as a commutative diagram of $\bS^{\cJ}[B^{\cy}(M)]$-modules with circle action.  

\begin{proposition}
The left hand square above is homotopy cocartesian in the category of $\bS^{\cJ}[B^{\cy}(M)]$-modules with circle action, hence induces a stable equivalence of the vertical mapping cones $\Cone(\rho)\xrightarrow{\simeq}\Cone(\sigma)$ as $\bS^{\cJ}[B^{\cy}(M)]$-modules with circle action. 
\end{proposition}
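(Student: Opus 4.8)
The plan is to use the $\bZ$-grading of $\cJ$ to split the left-hand square, as a square of $\bS^{\cJ}[B^{\cy}(M)]$-modules with circle action, into its $\cJ$-degree-zero part and its positive part, and then to invoke Proposition~\ref{prop:Bcy-Brep-positive-equivalence} to see that the positive part contributes nothing to the mapping cones.

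First I would record the relevant decompositions. Since $M$ is repetitive, $M_{<0}=\emptyset$, so $B^{\cy}(M)$ is concentrated in $\cJ$-degrees $\ge0$; the same holds for $B^{\rep}(M)$, since it is obtained from the pullback of $M'\to M^{\gp}$ with $M'$ $\cJ$-equivalent to $M$, whence $M'_{<0}=\emptyset$. As the cyclic operators preserve total $\cJ$-degree, the inclusions $B^{\cy}_{\{0\}}(M)\to B^{\cy}(M)$ and $B^{\cy}_{>0}(M)\to B^{\cy}(M)$, and likewise for $B^{\rep}$, are inclusions of cyclic sub-$\cJ$-spaces, giving circle-equivariant coproduct decompositions $B^{\cy}(M)\cong B^{\cy}_{\{0\}}(M)\sqcup B^{\cy}_{>0}(M)$ and $B^{\rep}(M)\cong B^{\rep}_{\{0\}}(M)\sqcup B^{\rep}_{>0}(M)$. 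Applying the left adjoint $\bS^{\cJ}$ turns these into wedge decompositions of commutative symmetric ring spectra with circle action; under them the collapse map $\pi$ is the projection onto the $\cJ$-degree-zero wedge summand, and the left vertical map of the large diagram, which preserves $\cJ$-degrees, decomposes as $\rho=\rho_{\{0\}}\vee\bS^{\cJ}[\rho_{>0}]$, where $\rho_{\{0\}}$ is its middle vertical map and $\rho_{>0}\colon B^{\cy}_{>0}(M)\to B^{\rep}_{>0}(M)$ is the $\cJ$-equivalence of Proposition~\ref{prop:Bcy-Brep-positive-equivalence}.

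Next I would pass to mapping cones inside the category of $\bS^{\cJ}[B^{\cy}(M)]$-modules with circle action; this category is stable, so the left-hand square is homotopy cocartesian precisely when the induced map of vertical mapping cones is a stable equivalence. Under the wedge decompositions $\Cone(\rho)\cong\Cone(\rho_{\{0\}})\vee\Cone(\bS^{\cJ}[\rho_{>0}])$, and the map of cones induced by the two collapse maps $\pi$ is the projection onto the first summand. By Proposition~\ref{prop:Bcy-Brep-positive-equivalence} the map $\rho_{>0}$ is a $\cJ$-equivalence, and since $B^{\cy}_{>0}(M)$ is flat, being a $\cJ$-degree summand of the flat $\cJ$-space $B^{\cy}(M)\cong M\otimes S^1$, Corollary~\ref{cor:bSJ-preserves-we} shows that $\bS^{\cJ}[\rho_{>0}]$ is a stable equivalence, so $\Cone(\bS^{\cJ}[\rho_{>0}])$ is stably contractible. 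Hence the left-hand square is homotopy cocartesian and $\Cone(\rho)\to\Cone(\rho_{\{0\}})$ is a stable equivalence. To reach $\Cone(\sigma)$ I would then use that $\rho_{\{0\}}$ and $\sigma$ sit in a square whose two horizontal maps are the stable equivalences $\bS^{\cJ}[B^{\cy}_{\{0\}}(M)]\xrightarrow{\simeq}\bS^{\cJ}[B^{\cy}(M^{\gp}_{\{0\}})]$ and $\bS^{\cJ}[B^{\rep}_{\{0\}}(M)]\xrightarrow{\simeq}\bS^{\cJ}[B^{\cy}_{\{0\}}(M^{\gp})]$ recorded in the large diagram; by the gluing lemma this square induces a stable equivalence $\Cone(\rho_{\{0\}})\xrightarrow{\simeq}\Cone(\sigma)$ of $\bS^{\cJ}[B^{\cy}(M)]$-modules with circle action, and composing with the previous step yields $\Cone(\rho)\xrightarrow{\simeq}\Cone(\sigma)$.

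The only points I expect to require care are the verification that $\bS^{\cJ}$ sends the $\cJ$-equivalence $\rho_{>0}$ to a stable equivalence, i.e.\ checking the flatness hypothesis needed for Corollary~\ref{cor:bSJ-preserves-we}, and the bookkeeping that keeps the circle action and the $\bS^{\cJ}[B^{\cy}(M)]$-module structure in place throughout the grading decomposition; everything else is formal manipulation of stable cofiber sequences.
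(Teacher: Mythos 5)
Your argument is correct and is essentially the paper's own proof: both decompose $\rho$ as the wedge sum $\rho_{\{0\}}\vee\bS^{\cJ}[\rho_{>0}]$ over the $\cJ$-degree splitting and deduce cocartesianness from Proposition~\ref{prop:Bcy-Brep-positive-equivalence} together with Corollary~\ref{cor:bSJ-preserves-we}, your extra care with the passage from $\Cone(\rho_{\{0\}})$ to $\Cone(\sigma)$ merely spelling out what the paper leaves implicit. One small slip: the hypothesis of Corollary~\ref{cor:bSJ-preserves-we} is not flatness but the existence of a map to a cofibrant commutative $\cJ$-space monoid (here supplied by $B^{\rep}(M)\to B^{\cy}(M^{\gp})$ with $M^{\gp}$ cofibrant), which is what actually licenses applying $\bS^{\cJ}$ to the $\cJ$-equivalence $\rho_{>0}$.
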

\begin{proof}
It suffices to show that the left hand square is homotopy cocartesian as a diagram of symmetric spectra. Clearly the map $\rho$ decomposes as the wedge sum of the restrictions $\rho_{\{0\}}$ and $\rho_{>0}$. The result therefore follows from Proposition~\ref{prop:Bcy-Brep-positive-equivalence} and Corollary~\ref{cor:bSJ-preserves-we} which together show that the latter map is a stable equivalence.
\qed \end{proof}
It remains to analyze the mapping cone $\Cone(\sigma)$. For this we
momentarily return to the cyclic bar construction $B^{\cy}(d\bZ)$ and
write $B^{\cy}_{\{0\}}(d\bZ)$ for the cyclic subobject with
$q$-simplices all tuples $(x_0, x_1, \dots, x_q)$ of elements in
$d\bZ$, subject to the condition that $x_0 + x_1 + \dots + x_q = 0$.
Here the notation is supposed to emphasize the fact that the
augmentation of $B^{\cy}(M^{\gp})$ discussed before
Proposition~\ref{prop:Bcy-Brep-positive-equivalence} restricts to a
map $B^{\cy}_{\{0\}}(M^{\gp})\to B^{\cy}_{\{0\}}(d\bZ)$ if $M$ is repetitive
of period $d$.

Writing $\cB(d\bZ)$ for the
groupoid with one object $*$ and automorphism group $d\bZ$, we may identify 
$B^{\cy}(d\bZ)$ with the cyclic nerve of $\cB(d\bZ)$.  Let
$\bZ/2 = \{0,1\}$ be the cyclic group of order two, and let
$\cE(\bZ/2)$ be the groupoid with objects $0$ and $1$, and a unique
morphism from each object to each object. We use the notation $E\bZ/2$ for the nerve of $\cE(\bZ/2)$. This is isomorphic to the cyclic nerve of $\cE(\bZ/2)$, since the zeroth component of a simplex in the cyclic nerve of a category is redundant if every object is initial.

Let $\phi \: \cE(\bZ/2) \to \cB(d\bZ)$ be the functor that takes the
morphism $0 \to 1$ to $d \: * \to *$, the morphism $1 \to 0$ to $-d \:
* \to *$, and the identity morphisms $0 \to 0$ and $1 \to 1$ to the
identity morphism $0 \: * \to *$.  With the notation introduced above,
the cyclic nerve of $\phi$ is a map $E\bZ/2 \to B^{\cy}(d\bZ)$.  Its
image $C$ is the simplicial subset of $B^{\cy}(d\bZ)$ whose
$q$-simplices are tuples $(x_0, x_1, \dots, x_q)$ of elements in
$\{d,0,-d\}$ satisfying the condition that the signs of the nonzero
terms $x_i$ alternate cyclically.  This is a cyclic subobject of
$B^{\cy}_{\{0\}}(d\bZ)$ that fits in the diagram of cyclic sets
\begin{equation}\label{eq:EZ2-Bcy0dZ-decomposition}
\xymatrix@-1pc{
&& \bZ/2 \ar@{->>}[rr] \ar@{ >->}[d] && \{(0)\} \ar@{ >->}[d] \\
\{0\} \ar@{ >->}[rr] 
&& E\bZ/2 \ar@{->>}[rr]
&& C \ar@{ >->}[rr] 
&& B^{\cy}_{\{0\}}(d\bZ),
}
\end{equation}
where $\bZ/2 = \{0, 1\}$ is viewed as the discrete cyclic set on the
set of $0$-simplices in $E\bZ/2$ and $\{(0)\}$ is the set of
$0$-simplices of $C$. It is immediate from the definition that the maps are
degreewise injective and surjective as indicated.

\begin{lemma}\label{lem:EZ2-Bcy0dZ-decomposition}
The square in diagram~\eqref{eq:EZ2-Bcy0dZ-decomposition} is a pushout, 
and the two remaining maps, $\{0\} \to E\bZ/2$ and $C  \to B^{\cy}_{\{0\}}(d\bZ)$,
are weak equivalences. 
\end{lemma}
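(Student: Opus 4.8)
The plan is to check the pushout statement degreewise and then identify the two weak equivalences by recognizing the realizations of the simplicial sets in play. For the pushout, recall that colimits of cyclic sets are computed degreewise, so it suffices to show that in each simplicial degree $q$ the square of sets is a pushout. A $q$-simplex of $E\bZ/2$ is a string of objects $(i_0,\dots,i_q)$ with $i_k\in\{0,1\}$, since the connecting morphisms in $\cE(\bZ/2)$ are unique, and the cyclic nerve of $\phi$ carries it to the tuple $(x_0,\dots,x_q)$ recording the transitions $x_k\in\{d,0,-d\}$ between cyclically consecutive entries; these transitions sum to zero around the cycle, which is why the cyclic nerve of $\phi$ takes values in $B^{\cy}_{\{0\}}(d\bZ)$. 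The key observation is that a single nonzero transition $x_k$ already determines the two adjacent entries, and the remaining transitions then propagate this to the whole string; hence $\phi$ is injective in degree $q$ away from the two constant strings, while $(0,\dots,0)$ and $(1,\dots,1)$ are the only preimages of the totally degenerate tuple $(0,\dots,0)$ of $C$. Since $(\bZ/2)_q\subseteq (E\bZ/2)_q$ is exactly the pair of constant strings and $\{(0)\}_q\subseteq C_q$ is the totally degenerate tuple, this exhibits $C_q$ as the pushout of $(E\bZ/2)_q\leftarrow(\bZ/2)_q\to\{(0)\}_q$; so the square in~\eqref{eq:EZ2-Bcy0dZ-decomposition} is a pushout of cyclic sets.

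Next, the map $\{0\}\to E\bZ/2$ is a weak equivalence because the groupoid $\cE(\bZ/2)$ has $0$ as an initial object (a unique morphism out of $0$), so its nerve $E\bZ/2$ is contractible and the inclusion of any vertex is a weak equivalence.

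Finally, for $C\to B^{\cy}_{\{0\}}(d\bZ)$: since $\{(0)\}$ is a point, the pushout just established identifies $C$ with the quotient $E\bZ/2$ modulo its sub-simplicial-set $\bZ/2$; as $\bZ/2\hookrightarrow E\bZ/2$ is a cofibration and $E\bZ/2$ is contractible, the realization $|C|$ is weakly equivalent to the unreduced suspension of $|\bZ/2|=S^0$, i.e.\ to $S^1$. In particular $C$ is connected and aspherical with $\pi_1(C)\cong\bZ$, a generator being the loop carried by the nondegenerate $1$-simplex $(-d,d)$ of $C$, which is the image of an edge of $E\bZ/2$ joining the two vertices. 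On the other side, $B^{\cy}_{\{0\}}(d\bZ)$ is the summand of $B^{\cy}(d\bZ)$ on which the coordinate sum vanishes, and for the infinite cyclic group $d\bZ$ this summand is weakly equivalent to $B(d\bZ)\simeq S^1$ (as recalled in Section~\ref{subsec:discrete-localization} for $\langle x^{\pm1}\rangle$), with $\pi_1$ infinite cyclic generated by the class of the $1$-simplex $(-d,d)$. The map $C\to B^{\cy}_{\{0\}}(d\bZ)$ is the degreewise inclusion, hence sends generator to generator and so induces an isomorphism on fundamental groups; since both spaces are aspherical, a $\pi_1$-isomorphism is a weak equivalence, which completes the proof.

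The main obstacle is this last step: the pushout assertion and the contractibility of $E\bZ/2$ are elementary combinatorics together with the standard fact about nerves of categories with an initial object, but recognizing that $C\to B^{\cy}_{\{0\}}(d\bZ)$ is a weak equivalence — rather than merely a map between two copies of $S^1$ — requires the identification $B^{\cy}_{\{0\}}(d\bZ)\simeq B(d\bZ)$ and enough control on generators of $\pi_1$ on both sides (equivalently, that the map is degree $\pm1$ on $H_1$).
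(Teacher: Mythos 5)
Your proposal is correct and follows essentially the same route as the paper: check the pushout degreewise, use the initial object of $\cE(\bZ/2)$ to get contractibility of $E\bZ/2$, identify $|C|\simeq\Sigma S^0\simeq S^1$ from the pushout, and conclude that $C\to B^{\cy}_{\{0\}}(d\bZ)$ is a weak equivalence because it carries the generator $[(-d,d)]$ of $\pi_1(C)$ to a generator of $\pi_1(B^{\cy}_{\{0\}}(d\bZ))\cong\pi_1(B(d\bZ))$. You merely supply more detail than the paper does for the degreewise injectivity of the cyclic nerve of $\phi$ away from the constant strings and for the identification $B^{\cy}_{\{0\}}(d\bZ)\simeq S^1$.
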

\begin{proof}
  It is easy to see that the square is a pushout in every simplicial
  degree and hence a pushout of simplicial sets. Since $\cE(\bZ/2)$ has an initial object its nerve $E\bZ/2$ is contractible and hence $C$ has the homotopy type
  of $\Sigma\{0,1\} \iso S^1$. The map $C \to  B^{\cy}_{\{0\}}(d\bZ) \simeq S^1$
  is a weak equivalence since it maps the generator $[(-d,d)]$ of $\pi_1(C)$ to 
  a generator of $\pi_1(B^{\cy}_{\{0\}}(d\bZ))$. 
\qed \end{proof}
Consider in general a $d$-periodic grouplike and cofibrant commutative $\cJ$-space monoid $N$ with $d>0$. The idea for the next step is to pull $B^{\cy}_{\{0\}}(N)$ back over the
diagram~\eqref{eq:EZ2-Bcy0dZ-decomposition} using the structure map to
$B^{\cy}_{\{0\}}(d\bZ)$. It will be convenient to use the following ad hoc notation.  
For a map of cyclic sets $\theta \colon K \to
B^{\cy}(d\bZ)$, we write $\theta(k) = (\theta(k)_0,\dots, \theta(k)_q)$ for the image of a $q$-simplex $k \in K_q$ and
let $K \odot N$ denote the cyclic $\cJ$-space with
\[ 
(K \odot N)_q = \coprod_{k\in K_q}N_{\{\theta(k)_0\}}\boxtimes \dots \boxtimes N_{\{\theta(k)_q\}}
\]
and the obvious structure maps.

\begin{lemma}\label{lem:J-equiv-on-odot-N}
If $K \to L \to B^{\cy}(d\bZ)$ are maps of cyclic sets such that $K\to L$ is
a weak equivalence on underlying simplicial sets, then the induced map
of $\cJ$-spaces $K\odot N \to L \odot N$ is a $\cJ$-equivalence.  
\end{lemma}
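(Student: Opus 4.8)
The plan is to detect the $\cJ$-equivalence through the functor $(-)_{h\cJ}$ and to identify the realization $(|K\odot N|)_{h\cJ}$ with a homotopy pullback over $B^{\cy}(d\bZ)$ that depends only on the homotopy type of $K$ over $B^{\cy}(d\bZ)$. First I would use that $(-)_{h\cJ}$ preserves all colimits, so that it commutes with realization of simplicial $\cJ$-spaces and with coproducts, giving
\[
(|K\odot N|)_{h\cJ} \iso \bigl|\,[q]\mapsto \coprod_{k\in K_q}(N_{\{\theta(k)_0\}}\boxtimes\dots\boxtimes N_{\{\theta(k)_q\}})_{h\cJ}\,\bigr|.
\]
Since morphisms of $\cJ$ preserve the $\bZ$-grading, each graded piece $N_{\{a\}}$ inherits flatness from the cofibrant, hence flat, commutative $\cJ$-space monoid $N$. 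Using this together with the fact that the lax monoidal structure map $X_{h\cJ}\times Y_{h\cJ}\to (X\boxtimes Y)_{h\cJ}$ is a weak equivalence on flat objects (Lemma~\ref{lem:properties-of-flat} and \cite[Lemma~2.11]{Sagave_log-on-k-theory}), I would identify the right hand side, up to a natural weak equivalence, with the realization of $[q]\mapsto \coprod_{k\in K_q}\prod_i (N_{\{\theta(k)_i\}})_{h\cJ} = K_q\times_{B^{\cy}_q(d\bZ)}B^{\cy}_q(N_{h\cJ})$, that is, with the pullback $K\times_{B^{\cy}(d\bZ)}B^{\cy}(N_{h\cJ})$ along the map $B^{\cy}(N_{h\cJ})\to B^{\cy}(d\bZ)$ induced by the grading homomorphism $N_{h\cJ}\to d\bZ$.

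Next I would show that this pullback already computes the homotopy pullback. Working bisimplicially, the square
\[
\xymatrix@-1pc{
[q]\mapsto K_q\times_{B^{\cy}_q(d\bZ)}B^{\cy}_q(N_{h\cJ}) \ar[r]\ar[d] & B^{\cy}_{\bullet}(N_{h\cJ}) \ar[d]\\
K \ar[r] & B^{\cy}_{\bullet}(d\bZ)
}
\]
is homotopy cartesian in each simplicial degree, because $B^{\cy}_q(N_{h\cJ})\to B^{\cy}_q(d\bZ)$ is a map of simplicial sets whose target is discrete. Moreover $B^{\cy}_{\bullet}(N_{h\cJ})$ and $B^{\cy}_{\bullet}(d\bZ)$ satisfy the $\pi_*$-Kan condition, since $N$ grouplike forces $N_{h\cJ}$ and $d\bZ$ to be grouplike simplicial monoids, and the map between them induces a Kan fibration on vertical path components, since $\pi_0(N_{h\cJ})\to d\bZ$ is surjective because $N$ is repetitive of period $d$. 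Hence the Bousfield--Friedlander theorem \cite[Theorem~B.4]{Bousfield-F_Gamma-bisimplicial} applies and shows that, after realization, the square is homotopy cartesian, so $K\times_{B^{\cy}(d\bZ)}B^{\cy}(N_{h\cJ})$ is a model for the homotopy pullback of $K\to B^{\cy}(d\bZ)\leftarrow B^{\cy}(N_{h\cJ})$, naturally in $K$.

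To finish, since $K\to L$ over $B^{\cy}(d\bZ)$ is a weak equivalence, the induced map of homotopy pullbacks is a weak equivalence; combining this with the natural weak equivalences of the first step shows that $(|K\odot N|)_{h\cJ}\to (|L\odot N|)_{h\cJ}$ is a weak equivalence, which is precisely the assertion that $|K\odot N|\to |L\odot N|$ is a $\cJ$-equivalence.

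I expect the main obstacle to lie in the identification of the first step: the functor $(-)_{h\cJ}$ is lax monoidal but \emph{not} lax symmetric monoidal, so the passage to the ``product model'' is not literally a strict map of bisimplicial sets compatible with the last face operator of $K\odot N$, which uses the symmetry isomorphism of $\boxtimes$. This must be handled with care, in the spirit of \cite{Sagave-S_diagram} and of the proof of Proposition~\ref{prop:Bcy-Brep-positive-equivalence}, where the degreewise weak equivalences $B^{\cy}_q(M_{h\cJ})\xrightarrow{\simeq}B^{\cy}_q(M)_{h\cJ}$ are used only to compute degreewise homotopy types and one never needs a strict comparison of bisimplicial sets at the product level. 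Once that identification is in place the remaining steps are routine; an alternative to the first step is a Ken Brown reduction to generating trivial cofibrations $\Lambda^n_k\to\Delta[n]$ over $B^{\cy}(d\bZ)$, using that $K\mapsto |K\odot N|$ preserves colimits and is suitably cofibration-friendly, after which both sides are $\cJ$-equivalent to the same ``fiber'' over the relevant component of $B^{\cy}(d\bZ)$.
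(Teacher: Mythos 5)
Your overall strategy---detect the $\cJ$-equivalence via $(-)_{h\cJ}$, pass to bisimplicial sets over $B^{\cy}_{\bullet}(d\bZ)$, and invoke the Bousfield--Friedlander theorem---is exactly the paper's. The difference is that you route the comparison through the ``product model'' $K\times_{B^{\cy}(d\bZ)}B^{\cy}(N_{h\cJ})$, and the obstacle you flag at the end is real: because $(-)_{h\cJ}$ is lax monoidal but not lax \emph{symmetric} monoidal, the iterated structure maps $\prod_i(N_{\{a_i\}})_{h\cJ}\to(N_{\{a_0\}}\boxtimes\dots\boxtimes N_{\{a_q\}})_{h\cJ}$ do not assemble into a strict map of simplicial (let alone cyclic) objects, since the last face map of $K\odot N$ uses the symmetry isomorphism of $\boxtimes$. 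As written, your first step therefore does not produce the natural zig-zag of weak equivalences of realizations that the rest of your argument relies on, and the proposal leaves this unresolved.

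The paper avoids the product model entirely. One considers the ladder of bisimplicial sets
\[
\xymatrix@-1pc{
(K\odot N)_{h\cJ} \ar[r] \ar[d] & (L\odot N)_{h\cJ} \ar[r] \ar[d] & B^{\cy}(N)_{h\cJ} \ar[d] \\ K \ar[r] & L \ar[r] & B^{\cy}(d\bZ),
}
\]
in which all maps are strictly defined. The right-hand square and the outer square are \emph{actual pullbacks} in every simplicial degree $q$, and since $K_q\to L_q\to B^{\cy}_q(d\bZ)$ are maps of discrete simplicial sets, these pullbacks are automatically homotopy cartesian degreewise. Bousfield--Friedlander (with the $\pi_*$-Kan and $\pi_0$-fibration hypotheses checked for $B^{\cy}_{\bullet}(N)_{h\cJ}\to B^{\cy}_{\bullet}(d\bZ)$ exactly as in the proof of Proposition~\ref{prop:Bcy-Brep-positive-equivalence}---this is the only place where the degreewise identification with $B^{\cy}_q(N_{h\cJ})$ is needed, and only to verify hypotheses, never to build a comparison map) shows both squares are homotopy cartesian after realization; by pasting, so is the left-hand square, and the conclusion follows since $K\to L$ is a weak equivalence. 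A small terminological slip in your write-up: in this lemma $N$ is assumed $d$-periodic grouplike, not repetitive; surjectivity of $\pi_0(N_{h\cJ})\to d\bZ$ still holds, but for that reason.
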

\begin{proof}
We consider the induced commutative diagram of bisimplicial sets
\[\xymatrix@-1pc{
(K\odot N)_{h\cJ} \ar[r] \ar[d] & (L\odot N)_{h\cJ} \ar[r] \ar[d] & B^{\cy}(N)_{h\cJ} \ar[d] \\ K \ar[r] & L \ar[r] & B^{\cy}(d\bZ).
}
\]
The right hand square and the outer square are homotopy cartesian
in every simplicial degree $q$ because they are actual pullbacks and
$K_q \to L_q \to B^{\cy}_q(d\bZ)$ are maps of discrete simplicial
sets. As in the proof of
Proposition~\ref{prop:Bcy-Brep-positive-equivalence}, it follows that
these two squares are homotopy cartesian after realization. Hence the
left hand square is also homotopy cartesian after realization, and $K\odot N
\to L\odot N$ is a $\cJ$-equivalence since $K \to L$ is a weak
equivalence.
\qed \end{proof}

With notation from diagram~\eqref{eq:EZ2-Bcy0dZ-decomposition} we have the isomorphisms
\[
\{0\}\odot N \iso B^{\cy}(N_{\{0\}}), \quad \bZ/2\odot N \iso \bZ/2\times B^{\cy}(N_{\{0\}}),\quad B^{\cy}_{\{0\}}(d\bZ)\odot N \iso B^{\cy}_{\{0\}}(N)
\]
of cyclic $\cJ$-spaces. Pulling $B^{\cy}_{\{0\}}(N)$ back over diagram~\eqref{eq:EZ2-Bcy0dZ-decomposition} we therefore get the following commutative diagram of cyclic $\cJ$-spaces 
\[
  \xymatrix@-1pc{
    && \bZ/2 \times  B^{\cy}(N_{\{0\}}) \ar[rr] \ar[d]  
    &&  B^{\cy}(N_{\{0\}}) \ar[d]  
    \\
    B^{\cy}(N_{\{0\}}) \ar[rr]^{\simeq} 
    && E\bZ/2 \odot N \ar[rr]
    && C \odot N \ar[rr]^{\simeq} 
    && B^{\cy}_{\{0\}}(N),
    }
\]
where the square is a pushout by construction and the remaining horizontal maps are $\cJ$-equivalences by Lemmas~\ref{lem:EZ2-Bcy0dZ-decomposition} and \ref{lem:J-equiv-on-odot-N}. Applying the functor $\bS^{\cJ}$ and passing to the geometric realizations this in turn gives a commutative diagram
\begin{equation}\label{eq:SJ-EZ2-Bcy0N-decomposition}
  \xymatrix@-1pc{
    & \bS^{\cJ}[\bZ/2 \times  B^{\cy}(N_{\{0\}})] \ar[rr] \ar[d]  
    &&  \bS^{\cJ}[B^{\cy}(N_{\{0\}})] \ar[d]  
    \\
    \bS^{\cJ}[B^{\cy}(N_{\{0\}})] \ar[r]^-{\simeq} 
    & \bS^{\cJ}[E\bZ/2 \odot N] \ar[rr]
    && \bS^{\cJ}[C \odot N] \ar[r]^-{\simeq} 
    & \bS^{\cJ}[B^{\cy}_{\{0\}}(N)]
    }
\end{equation}
of $ \bS^{\cJ}[B^{\cy}(N_{\{0\}})]$-modules with circle action. The square is again a pushout square and the remaining vertical maps are stable equivalences by Corollary~\ref{cor:bSJ-preserves-we}. We claim that the square is in fact homotopy cocartesian and for this it suffices to observe that the vertical map on the left is a levelwise cofibration. Indeed, before geometric realization it is clearly a levelwise cofibration (in fact the inclusion of a wedge summand) in each simplicial degree and the geometric realization is therefore also a levelwise cofibration. Using this we can analyze the mapping cone $C(\sigma)$ of the composite map $\sigma\colon \bS^{\cJ}[B^{\cy}(N_{\{0\}})]\to \bS^{\cJ}[B^{\cy}_{\{0\}}(N)]$ in the diagram.

\begin{proposition}\label{prop:cone-SJBcyN0-Bcy0N}
Let $N$ be a grouplike and cofibrant commutative \mbox{$\cJ$-space} monoid with period $d>0$. Then the mapping cone $C(\sigma)$ is related to $\Sigma\bS^{\cJ}[B^{\cy}(N_{\{0\}})]$ by a chain of stable equivalences of $\bS^{\cJ}[B^{\cy}(N_{\{0\}})]$-modules with circle action.
\end{proposition}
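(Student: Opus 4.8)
Write $R := \bS^{\cJ}[B^{\cy}(N_{\{0\}})]$ for short. The plan is to transport $C(\sigma)$ across the homotopy cocartesian square~\eqref{eq:SJ-EZ2-Bcy0N-decomposition} and thereby reduce the computation to the mapping cone of a codiagonal map.

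First I would use that $\sigma$ factors along the bottom row of~\eqref{eq:SJ-EZ2-Bcy0N-decomposition} as
\[
R \xrightarrow{\sim} \bS^{\cJ}[E\bZ/2\odot N] \longto \bS^{\cJ}[C\odot N] \xrightarrow{\sim} \bS^{\cJ}[B^{\cy}_{\{0\}}(N)],
\]
whose outer two maps are stable equivalences of $R$-modules with circle action. Since the positive stable model structure on $\cC\Spsym$ is left proper and the mapping cone $C(f)=\cone(X)\cup_X Y$ is homotopy invariant by the gluing lemma, $C(\sigma)$ is linked by a chain of stable equivalences of $R$-modules with circle action to the mapping cone of the middle map $\bS^{\cJ}[E\bZ/2\odot N]\to\bS^{\cJ}[C\odot N]$. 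Because~\eqref{eq:SJ-EZ2-Bcy0N-decomposition} is homotopy cocartesian, the comparison of horizontal mapping cones induced by the vertical maps of the square then identifies the latter with the mapping cone $C(\nabla)$ of the top map $\nabla\colon\bS^{\cJ}[\bZ/2\times B^{\cy}(N_{\{0\}})]\to R$, again via a stable equivalence of $R$-modules with circle action.

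Next I would identify $\nabla$ concretely. The cyclic $\cJ$-space $\bZ/2\times B^{\cy}(N_{\{0\}})=\bZ/2\odot N$ has all of its cyclic operators acting as the identity on the constant two-point factor $\bZ/2$, so after applying $\bS^{\cJ}$ and realizing it becomes the wedge $R\vee R$ of two copies of $R$ with the circle acting separately on each wedge summand; in particular $R\vee R$ is the coproduct of two copies of $R$ in the category of $R$-modules with circle action, and under this identification the map induced by the collapse $\bZ/2\to\{(0)\}$ is the codiagonal. Moreover the inclusion of the wedge summand indexed by $1\in\bZ/2$ is $\bS^{\cJ}$ applied to a $B^{\cy}(N_{\{0\}})$-linear map of cyclic $\cJ$-spaces, hence a section $s\colon R\to R\vee R$ of $\nabla$ in $R$-modules with circle action. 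A shear automorphism of $R\vee R\cong R\oplus R$ — an automorphism of $R$-modules with circle action, assembled from the identity, the biproduct structure maps and $s$ — then identifies $\nabla$ with the projection $p\colon R\oplus R\to R$ onto the first summand, so $C(\nabla)\cong C(p)$; and since $p$ is a split epimorphism of $R$-modules with circle action whose kernel is the second summand, $C(p)\simeq\Sigma R$ with the suspended circle action. Concatenating the chains of equivalences obtained above exhibits $C(\sigma)$ as linked by a chain of stable equivalences of $R$-modules with circle action to $\Sigma\bS^{\cJ}[B^{\cy}(N_{\{0\}})]$, as required.

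The argument requires no homotopy-theoretic input beyond Corollary~\ref{cor:bSJ-preserves-we}, the left properness already invoked, and the fact recorded just above the proposition that~\eqref{eq:SJ-EZ2-Bcy0N-decomposition} is homotopy cocartesian. The only substantive work is the bookkeeping: at each stage — the outer equivalences of the bottom row, the comparison of mapping cones furnished by the homotopy cocartesian square, the shear automorphism, and the splitting of $p$ — one must check that the maps in play are simultaneously $\bS^{\cJ}[B^{\cy}(N_{\{0\}})]$-linear and circle-equivariant. This holds because every map occurring is either part of diagram~\eqref{eq:SJ-EZ2-Bcy0N-decomposition} or is obtained by applying $\bS^{\cJ}$ to a $B^{\cy}(N_{\{0\}})$-linear map of cyclic $\cJ$-spaces; I expect verifying circle-equivariance of the shear and of the splitting to be the most delicate (though routine) point.
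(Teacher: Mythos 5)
Your argument is correct and follows essentially the same route as the paper: both pass from $C(\sigma)$ to the mapping cone of the top map of~\eqref{eq:SJ-EZ2-Bcy0N-decomposition} via the zig-zag of mapping cones supplied by the equivalences in the bottom row together with the homotopy cocartesian square, and then identify the cone of the resulting codiagonal $\nabla\colon R\vee R\to R$ (where $R=\bS^{\cJ}[B^{\cy}(N_{\{0\}})]$) with $\Sigma R$. The one caveat is your shear step: $R\vee R$ is a coproduct, not a biproduct, of $R$-module spectra at the point-set level, so the shear automorphism you describe does not exist as an actual map of symmetric spectra; this is easily avoided, since $C(\nabla)\cong(\cone(R)\vee\cone(R))\cup_{R\vee R}R\cong\cone(R)\cup_R\cone(R)\cong\Sigma R$ directly, $R$-linearly and circle-equivariantly, which is the identification the paper uses.
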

\begin{proof}
Diagram~\eqref{eq:SJ-EZ2-Bcy0N-decomposition} gives rise to a commutative
diagram of mapping cones 
\[
\xymatrix@-1.3pc{
& C\big(\bS^{\cJ}[\bZ/2 \times B^{\cy}(N_{\{0\}})] \to \bS^{\cJ}[B^{\cy}(N_{\{0\}})]\big) \ar[d]^-{\simeq} \\
C\big(\bS^{\cJ}[B^{\cy}(N_{\{0\}})] \to \bS^{\cJ}[C \odot N]\big) \ar[r]^-{\simeq} \ar[d]^-{\simeq}
	& C\big(\bS^{\cJ}[E\bZ/2 \odot N] \to \bS^{\cJ}[C \odot N]\big) \ar[d]^-{\simeq} \\
C\big(\bS^{\cJ}[B^{\cy}(N_{\{0\}})] \to \bS^{\cJ}[B^{\cy}_{\{0\}}(N)]\big) \ar[r]^-{\simeq}
	& C\big(\bS^{\cJ}[E\bZ/2 \odot N] \to \bS^{\cJ}[B^{\cy}_{\{0\}}(N)]\big)
}
\]
where each arrow between mapping cones is a stable equivalence of $\bS^{\cJ}[B^{\cy}(N_{\{0\}})]$-modules with circle action.  The result follows by identifying $C(\sigma)$ with the lower left hand term and $\Sigma\bS^{\cJ}[B^{\cy}(N_{\{0\}})]$ with the upper right hand term.
\qed \end{proof}

\begin{proof}[ 
of Proposition~\ref{prop:SJ-repetitive-homotopy-cofiber}]
Let $N=M^{\gp}$. Pulling the stable equivalences in 
Proposition~\ref{prop:cone-SJBcyN0-Bcy0N} back to stable equivalences of $\bS^{\cJ}[B^{\cy}(M)]$-modules, we get the desired chain of stable equivalences 
\[
C(\rho)\xrightarrow{\simeq}C(\sigma)\simeq \Sigma\bS^{\cJ}[B^{\cy}(N_{\{0\}})]\xleftarrow{\simeq}
\Sigma\bS^{\cJ}[B^{\cy}(M_{\{0\}})]
\]
of $\bS^{\cJ}[B^{\cy}(M)]$-modules with circle action.
\qed \end{proof}

\begin{appendix}
\section{Homotopy invariance of \texorpdfstring{$\bS^{\cJ}$}{SJ}}\label{sec:Sj-homotopy-invariance}
Being a left Quillen functor, $\bS^{\cJ}$ takes $\cJ$-equivalences
between cofibrant $\cJ$-spaces to stable equivalences. It is useful to
know that $\bS^{\cJ}$ is homotopically well-behaved on a larger class
of $\cJ$-spaces that includes cofibrant $\cJ$-spaces and the
underlying $\cJ$-spaces of cofibrant commutative $\cJ$-space
monoids. For this purpose it is not sufficient to work with flat
$\cJ$-spaces, because $\bS^{\cJ}$ is not left Quillen with respect to
the flat model structure~\cite[Remark 4.29]{Sagave-S_diagram}.

\begin{definition}\label{def:SJ-good}
  A $\cJ$-space $X$ is \emph{$\mathbb S^{\cJ}\!$-good} if there exists
  a cofibrant $\cJ$-space $X'$ and a $\cJ$-equivalence $X'\to X$ such
  that $\mathbb S^{\cJ}[X']\to \mathbb S^{\cJ}[X]$ is a stable
  equivalence.
\end{definition}

It is clear from the definition that cofibrant $\cJ$-spaces are
$\mathbb S^{\cJ}\!$-good. The terminal $\cJ$-space $T$ is an example
of a $\cJ$-space that is not $\mathbb S^{\cJ}\!$-good. Using that
$\mathbb S^{\cJ}$ is a left Quillen functor we see that if $X$ is
$\mathbb S^{\cJ}\!$-good and $Y\to X$ is any $\cJ$-equivalence with
$Y$ cofibrant, then the induced map $\mathbb S^{\cJ}[Y]\to \mathbb
S^{\cJ}[X]$ is a stable equivalence. This in turn has the following
consequence.
\begin{proposition}\label{prop:SJ-good-equivalence}
  The functor $\mathbb S^{\cJ}$ takes $\cJ$-equivalences between
  $\mathbb S^{\cJ}\!$-good $\cJ$-spaces to stable equivalences. \qed
\end{proposition}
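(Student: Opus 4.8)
The plan is to deduce the statement directly from the consequence of left Quillenness recorded in the paragraph preceding the proposition. First I would fix a $\cJ$-equivalence $f\colon X \to Z$ between $\mathbb S^{\cJ}\!$-good $\cJ$-spaces and choose a cofibrant replacement of $X$, that is, a cofibrant $\cJ$-space $Y$ together with a $\cJ$-equivalence $q\colon Y \to X$; such a replacement exists because the positive projective $\cJ$-model structure on $\cS^{\cJ}$ is cofibrantly generated. Since $q$ is a $\cJ$-equivalence and $f$ is one by hypothesis, the composite $f\circ q\colon Y \to Z$ is again a $\cJ$-equivalence, now with cofibrant source.

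Next I would apply the quoted consequence of the fact that $\mathbb S^{\cJ}$ is left Quillen twice. Applied to the $\mathbb S^{\cJ}\!$-good $\cJ$-space $X$ together with the $\cJ$-equivalence $q$ from the cofibrant object $Y$, it shows that $\mathbb S^{\cJ}[q]\colon \mathbb S^{\cJ}[Y] \to \mathbb S^{\cJ}[X]$ is a stable equivalence. Applied instead to the $\mathbb S^{\cJ}\!$-good $\cJ$-space $Z$ together with the $\cJ$-equivalence $f\circ q$ from the cofibrant object $Y$, it shows that $\mathbb S^{\cJ}[f\circ q] = \mathbb S^{\cJ}[f]\circ \mathbb S^{\cJ}[q]$ is a stable equivalence. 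Since stable equivalences are the weak equivalences of a model structure, the two-out-of-three property applied to the commuting triangle
\[
\xymatrix@-1pc{
\mathbb S^{\cJ}[Y] \ar[r]^-{\mathbb S^{\cJ}[q]} \ar[dr]_-{\mathbb S^{\cJ}[f\circ q]} & \mathbb S^{\cJ}[X] \ar[d]^-{\mathbb S^{\cJ}[f]} \\
& \mathbb S^{\cJ}[Z]
}
\]
then forces $\mathbb S^{\cJ}[f]\colon \mathbb S^{\cJ}[X] \to \mathbb S^{\cJ}[Z]$ to be a stable equivalence, which is the assertion.

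I do not expect any genuine obstacle here: the substantive content has already been absorbed into the lemma-style remark preceding the proposition, so the argument reduces to a short diagram chase. The only point that deserves a moment's attention is that one must invoke the $\mathbb S^{\cJ}\!$-goodness of \emph{both} $X$ and $Z$ — the former to compare $\mathbb S^{\cJ}[Y]$ with $\mathbb S^{\cJ}[X]$ and the latter to compare $\mathbb S^{\cJ}[Y]$ with $\mathbb S^{\cJ}[Z]$ — which is precisely what the hypothesis "a $\cJ$-equivalence between $\mathbb S^{\cJ}\!$-good $\cJ$-spaces" supplies.
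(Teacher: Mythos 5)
Your argument is correct and is precisely the intended one: the paper leaves the proof implicit (the proposition carries a \qed), relying on the observation in the preceding paragraph exactly as you do, followed by the same two-out-of-three argument on the triangle through a cofibrant replacement of the source. Nothing further is needed.
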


The automorphism group of an object $(\bld n_1,\bld n_2)$
in $\cJ$ may evidently be identified with $\Sigma_{n_1}\times \Sigma_{n_2}$.
\begin{definition}A $\cJ$-space $X$ is \emph{$\Sigma$-free in the
    second variable} if $\Sigma_{n_2}$ acts freely on $X(\bld n_1,\bld
  n_2)$ for each object $(\bld n_1,\bld n_2)$ in $\cJ$.
\end{definition}

\begin{lemma}
  If a $\cJ$-space is $\Sigma$-free in the second variable, then it is
  $\bS^{\cJ}\!$-good.
\end{lemma}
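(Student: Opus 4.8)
The plan is to pick a convenient cofibrant replacement and verify the extra condition of Definition~\ref{def:SJ-good} by hand. I would start from a cofibrant replacement $q\colon X'\to X$ in the positive projective $\cJ$-model structure on $\cS^{\cJ}$, obtained by factoring $\emptyset\to X$ as a cofibration followed by an acyclic fibration. Then $X'$ is cofibrant and $q$ is a $\cJ$-equivalence, so the only remaining task is to prove that $\bS^{\cJ}[q]\colon\bS^{\cJ}[X']\to\bS^{\cJ}[X]$ is a stable equivalence. My strategy for this is to show that $\bS^{\cJ}[q]$ is a weak equivalence at every spectrum level $n\geq 1$ and then deduce the stable equivalence formally: such a map induces an isomorphism on the naive homotopy groups $\pi_k(-)=\colim_n\pi_{k+n}((-)_n)$, because the colimit system depends only on the levels $\geq 1$, and a $\pi_*$-isomorphism is a stable equivalence by~\cite{HSS}. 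One should not expect $\bS^{\cJ}[q]$ to be an equivalence at level~$0$: the generating cofibrations of the positive model structure live in bidegrees $(\bld{d_1},\bld{d_2})$ with $d_1\geq 1$, which are empty on every object $(\bld{0},\bld{k})$, so $X'(\bld{0},\bld{k})=\emptyset$ and $\bS^{\cJ}[X']_0={*}$, while $\bS^{\cJ}[X]_0$ need not be contractible. This level-$0$ defect is exactly what the reduction to $\pi_*$-isomorphisms absorbs.

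For the level-$n$ claim with $n\geq 1$, I would use the explicit description~\eqref{eq:OmegaJ-and-SJ-explicitly}, which exhibits $\bS^{\cJ}[q]_n$ as the wedge over $k\geq 0$ of the maps $X'(\bld{n},\bld{k})_+\sm_{\Sigma_k}S^k\to X(\bld{n},\bld{k})_+\sm_{\Sigma_k}S^k$ induced by $q(\bld{n},\bld{k})$. Since $q$ is an acyclic fibration it has the right lifting property against the generating cofibrations $F^{\cJ}_{(\bld{d_1},\bld{d_2})}(\partial\Delta^m)\to F^{\cJ}_{(\bld{d_1},\bld{d_2})}(\Delta^m)$ with $d_1\geq 1$, which by adjunction means that $q(\bld{d_1},\bld{d_2})$ is an acyclic Kan fibration for every such bidegree; in particular $q(\bld{n},\bld{k})$ is an acyclic Kan fibration for each $n\geq 1$, and it is $\Sigma_k$-equivariant for the action in the second variable. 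The target $X(\bld{n},\bld{k})$ carries a free $\Sigma_k$-action by hypothesis, and the source $X'(\bld{n},\bld{k})$ does too, because positive cofibrant $\cJ$-spaces are $\Sigma$-free in the second variable. Hence $\Sigma_k$ acts freely away from the basepoint on $X'(\bld{n},\bld{k})_+\sm S^k$ and on $X(\bld{n},\bld{k})_+\sm S^k$, so in both cases the natural map from the homotopy orbits $(E\Sigma_k)_+\sm_{\Sigma_k}(-)$ to the strict orbits is a weak equivalence; since the homotopy-orbit functor is homotopy invariant, a two-out-of-three argument shows that $q(\bld{n},\bld{k})$ induces a weak equivalence on each wedge summand, and taking the wedge over $k$ completes the level-$n$ statement.

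The remaining input is that every positive cofibrant $\cJ$-space is $\Sigma$-free in the second variable, which I would prove by a cell induction. The base case is that each free $\cJ$-space $F^{\cJ}_{(\bld{d_1},\bld{d_2})}(K)$, with value $\cJ((\bld{d_1},\bld{d_2}),(\bld{n_1},\bld{n_2}))\times K$ at $(\bld{n_1},\bld{n_2})$ and $\Sigma_{n_2}$ acting by post-composition on the first factor, is $\Sigma$-free in the second variable: a permutation fixing a morphism $(\alpha_1,\alpha_2,\rho)$ must fix $\operatorname{im}(\alpha_2)$ and $\operatorname{im}(\rho)$ pointwise, hence all of $\bld{n_2}=\operatorname{im}(\alpha_2)\sqcup\operatorname{im}(\rho)$, so the action is free. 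Then one checks that $\Sigma$-freeness in the second variable is stable under coproducts, under pushouts along the levelwise injections $F^{\cJ}_{(\bld{d_1},\bld{d_2})}(\partial\Delta^m)\to F^{\cJ}_{(\bld{d_1},\bld{d_2})}(\Delta^m)$, under transfinite composition, and under retracts, which finishes the induction. I expect the main obstacle to be the bookkeeping in the level-$n$ computation, in particular the verification that the strict orbit construction $(-)_+\sm_{\Sigma_k}S^k$ is homotopy invariant on the diagram of $\Sigma_k$-free simplicial sets in play, together with phrasing the argument so that the unavoidable level-$0$ discrepancy does no harm; the combinatorics of $\cJ$ that secures $\Sigma$-freeness of the free $\cJ$-spaces is, by contrast, straightforward.
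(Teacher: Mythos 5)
Your proof is correct and follows essentially the same route as the paper: replace $X$ by a positive cofibrant $X'$, use $\Sigma$-freeness to see that the $\Sigma_k$-orbit construction in~\eqref{eq:OmegaJ-and-SJ-explicitly} preserves weak equivalences, and conclude that $\bS^{\cJ}[X']\to\bS^{\cJ}[X]$ is a (positive) level equivalence and hence a stable equivalence. The only real difference is that you establish $\Sigma$-freeness of $X'$ by a cell induction, whereas the paper gets it for free from the observation (as in Corollary~\ref{cor:Sigma-free-if-target-is}) that any $\cJ$-space admitting an equivariant map to a $\Sigma$-free one is itself $\Sigma$-free; your explicit handling of spectrum level $0$ is a detail the paper's phrase ``level equivalence'' glosses over.
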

\begin{proof}
  Let $X$ be $\Sigma$-free in the second variable and let $X' \to X$
  be a cofibrant replacement, which we may assume is a level
  equivalence. Then $X'$ is also $\Sigma$-free in the second
  variable. The freeness assumptions imply that the quotients by the
  $\Sigma_k$-actions arising in the explicit description of
  $\bS^{\cJ}$ given in~\eqref{eq:OmegaJ-and-SJ-explicitly} preserve
  weak equivalences.  Hence $\bS^{\cJ}[X'] \to \bS^{\cJ}[X]$ is a
  level equivalence of symmetric spectra and therefore a stable
  equivalence.
\qed \end{proof}

The following condition for $\bS^{\cJ}$-goodness can often be checked
in practice.
\begin{corollary}\label{cor:Sigma-free-if-target-is}
  Let $X\to Y$ be a map of $\cJ$-spaces such that $Y$ is $\Sigma$-free
  in the second variable. Then $X$ is $\bS^{\cJ}$-good.
\end{corollary}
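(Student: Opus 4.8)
The plan is to deduce this directly from the preceding lemma by observing that $X$ itself is already $\Sigma$-free in the second variable; the real content is that $\Sigma$-freeness in the second variable is inherited by the source of \emph{any} map of $\cJ$-spaces whose target has this property.

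First I would fix an object $(\bld{n_1},\bld{n_2})$ of $\cJ$ and recall, as noted just above the statement, that its automorphism group is $\Sigma_{n_1}\times\Sigma_{n_2}$. Since $X\to Y$ is a natural transformation of functors on $\cJ$, the induced map of simplicial sets $X(\bld{n_1},\bld{n_2})\to Y(\bld{n_1},\bld{n_2})$ is equivariant for $\Sigma_{n_1}\times\Sigma_{n_2}$, hence in particular for the restricted action of the subgroup $\Sigma_{n_2}$. Now for any simplicial degree $q$ and any $q$-simplex $x$ of $X(\bld{n_1},\bld{n_2})$, an element of $\Sigma_{n_2}$ fixing $x$ also fixes its image in $Y(\bld{n_1},\bld{n_2})_q$; since $\Sigma_{n_2}$ acts freely on the latter, that element is the identity. (If $Y(\bld{n_1},\bld{n_2})_q$ is empty, then so is $X(\bld{n_1},\bld{n_2})_q$, and there is nothing to check.) Thus $\Sigma_{n_2}$ acts freely on $X(\bld{n_1},\bld{n_2})$ for every object of $\cJ$, i.e., $X$ is $\Sigma$-free in the second variable.

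It then follows from the previous lemma that $X$ is $\bS^{\cJ}$-good, which completes the argument. I do not expect any genuine obstacle: the only points to verify are the $\Sigma_{n_2}$-equivariance of $X\to Y$ (immediate from naturality of a map of functors) and the elementary fact that freeness of a group action passes to the source of an equivariant map, with the empty-value case disposed of vacuously. The role of the corollary is purely practical — it records that producing a map from $X$ to some $\Sigma$-free $\cJ$-space is frequently the most convenient way to certify $\bS^{\cJ}$-goodness of $X$.
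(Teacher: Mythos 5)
Your argument is correct and coincides with the paper's own proof, which likewise observes that $\Sigma$-freeness in the second variable is inherited by the source of any map into a $\Sigma$-free target and then invokes the preceding lemma; you have merely spelled out the equivariance and free-action details that the paper leaves as "automatic."
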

\begin{proof}
  If $Y$ is $\Sigma$-free in the second variable then it is automatic
  that also $X$ is $\Sigma$-free in the second variable. 
\qed \end{proof}

\begin{lemma}\label{lem:cof-CSJ-SJ-good}
  Let $M$ be a cofibrant commutative $\cJ$-space monoid. Then $M$ is
  $\Sigma$-free in the second variable and hence $\bS^{\cJ}$-good.
\end{lemma}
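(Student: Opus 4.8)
The plan is a cell induction over the generating cofibrations of the positive $\cJ$-model structure on $\cC\cS^{\cJ}$. First I would record the elementary closure properties of the property of being $\Sigma$-free in the second variable: it passes from a $\cJ$-space $Y$ to any $\cJ$-space mapping into $Y$ (a $\Sigma_{n_2}$-fixed simplex would be sent to one), hence in particular to retracts; it is preserved by coproducts; and, since the condition is checked degreewise on $\Sigma_{n_2}$-simplicial sets while pushouts of simplicial sets along monomorphisms and sequential colimits along monomorphisms are formed degreewise, and a disjoint union of a free $\Sigma_{n_2}$-set with a $\Sigma_{n_2}$-stable subset of a free one is again free, it is preserved by pushouts along levelwise cofibrations and by sequential colimits along levelwise cofibrations. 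I would also note that every representable $\cJ$-space $\cJ((\bld{m_1},\bld{m_2}),-)$ is $\Sigma$-free in the second variable: a morphism $(\bld{m_1},\bld{m_2})\to(\bld{n_1},\bld{n_2})$ is the data of an injection $\alpha_1\colon\bld{m_1}\to\bld{n_1}$ together with a bijection $\bld{m_2}\concat(\bld{n_1}\setminus\alpha_1(\bld{m_1}))\xrightarrow{\iso}\bld{n_2}$, and $\Sigma_{n_2}$ acts freely on the set of such bijections by postcomposition. Consequently every free $\cJ$-space, and in particular the monoidal unit $U^{\cJ}=\cJ((\bld 0,\bld 0),-)$, is $\Sigma$-free in the second variable.

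Since $M$ is a retract of a cell complex in $\cC\cS^{\cJ}$ whose cells are attachments of $\mathbb{C}$ applied to generating cofibrations of $\cS^{\cJ}$ in bidegrees $(\bld{d_1},\bld{d_2})$ with $d_1>0$, it suffices, by the closure properties above and an induction along the transfinite composition, to treat such cell complexes. For the inductive step one has a pushout $M'\to M''$ in $\cC\cS^{\cJ}$ along a map $\mathbb{C}(A)\to\mathbb{C}(B)$ with $A\to B$ a generating cofibration of $\cS^{\cJ}$ concentrated in a bidegree $(\bld{d_1},\bld{d_2})$ with $d_1>0$, and with $M'$ already $\Sigma$-free in the second variable. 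Here I would invoke the filtration of such a pushout given in \cite[Proposition~10.1]{Sagave-S_diagram} (already used in the proof of Lemma~\ref{lem:Mcof-M0cof}): it exhibits $M''$ as a sequential colimit along levelwise cofibrations of $\cJ$-spaces of objects $F_k$ with $F_0=M'$, where $F_k$ is a pushout of $F_{k-1}$ along a levelwise cofibration whose target is obtained from $M'\boxtimes F^{\cJ}_{(\bld{kd_1},\bld{kd_2})}(B^{\times k})$ by passing to $\Sigma_k$-orbits, with $\Sigma_k$ permuting the $k$ blocks of the bidegree and acting diagonally on $B^{\times k}$. By the closure properties, together with an inner induction on $k$, the step reduces to showing that $\bigl(M'\boxtimes F^{\cJ}_{(\bld{kd_1},\bld{kd_2})}(B^{\times k})\bigr)/\Sigma_k$ is $\Sigma$-free in the second variable.

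This is the heart of the argument, and it is exactly here that the positivity $d_1>0$ is used. For a $\cJ$-space $X$ and an object $(\bld{e_1},\bld{e_2})$, the $\cJ$-space $X\boxtimes F^{\cJ}_{(\bld{e_1},\bld{e_2})}(K)$ is the left Kan extension of $(\bld{a_1},\bld{a_2})\mapsto X(\bld{a_1},\bld{a_2})\times K$ along $-\concat(\bld{e_1},\bld{e_2})\colon\cJ\to\cJ$, so its value at $(\bld{n_1},\bld{n_2})$ is a colimit of copies of $X(\bld{a_1},\bld{a_2})\times K$ indexed by the morphisms $(\bld{a_1},\bld{a_2})\concat(\bld{e_1},\bld{e_2})\to(\bld{n_1},\bld{n_2})$ in $\cJ$. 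Taking $(\bld{e_1},\bld{e_2})=(\bld{kd_1},\bld{kd_2})$ and $K=B^{\times k}$, the group $\Sigma_{n_2}\times\Sigma_k$ acts on these indexing morphisms, $\Sigma_{n_2}$ by postcomposition in the $\bld{n_2}$-variable of the target and $\Sigma_k$ by block permutation of the $\bld{kd_1}$- and $\bld{kd_2}$-summands of the source (while acting diagonally on $B^{\times k}$). This action on the set of indexing morphisms is free as soon as $kd_1>0$: a $\Sigma_{n_2}$-element is pinned down by its effect on the bijection onto $\bld{n_2}$ carried by such a morphism, and a $\Sigma_k$-element by its block action on the injection out of $\bld{a_1}\concat\bld{kd_1}$, since block permutations of $k\ge 1$ non-empty blocks are faithful precisely when $d_1>0$. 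One then checks that this freeness persists through the defining colimit and through the passage to $\Sigma_k$-orbits, leaving the residual $\Sigma_{n_2}$-action free; this bookkeeping, which tracks the explicit injective form of $\cJ$-morphisms, is the only delicate point and the main obstacle. Granting it, every cell complex, hence $M$, is $\Sigma$-free in the second variable, and the final clause — that $M$ is $\bS^{\cJ}$-good — follows at once from the preceding lemma.
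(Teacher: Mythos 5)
Your overall architecture --- cell induction over the generating cofibrations of $\cC\cS^{\cJ}$, the filtration of a cell attachment from \cite[Proposition~10.1]{Sagave-S_diagram}, and the observation that positivity $d_1>0$ is what makes the block action of $\Sigma_k$ faithful --- matches the paper's proof. The gap is in the step you yourself flag and then grant: the claim that freeness of the $\Sigma_{n_2}\times\Sigma_k$-action on the set of indexing morphisms of the left Kan extension ``persists through the defining colimit.'' This is not a valid principle: the identifications in a colimit can create fixed points even when the action on the indexing data is free. Concretely, $\Sigma_{n_2}$ acts freely by postcomposition on the set of all morphisms $(\bld{a_1},\bld{a_2})\concat(\bld{1},\bld{0})\to(\bld{n_1},\bld{n_2})$, yet for the terminal $\cJ$-space $T$ the value $(T\boxtimes F^{\cJ}_{(\bld{1},\bld{0})}(*))(\bld{n_1},\bld{n_2})$, which is $\pi_0$ of the relevant comma category, carries a highly non-free $\Sigma_{n_2}$-action. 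So freeness on the indexing morphisms cannot suffice by itself; any correct completion must use properties of $M'$, and your argument for the key claim never invokes the inductive hypothesis that $M'$ is $\Sigma$-free in the second variable. In effect, the ``bookkeeping'' you defer is the substance of the lemma, not a routine verification.

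The paper closes this gap by strengthening the induction hypothesis rather than analyzing the colimit: it calls $X$ \emph{strongly free in the second variable} if $\Sigma_{n_2}$ acts freely on $(X\boxtimes(F^{\cJ}_{(\bld{m_1},\bld{m_2})}(*)/G))(\bld{n_1},\bld{n_2})$ for every subgroup $G\subseteq\Sigma_{m_1}\times\Sigma_{m_2}$ whose projection to $\Sigma_{m_1}$ is injective. With this hypothesis the inductive step becomes essentially formal: the term $A\boxtimes(Y^{\boxtimes i}/\Sigma_i)$ in the filtration is handled by the isomorphism identifying $(F^{\cJ}_{(\bld{k_1},\bld{k_2})}(*)^{\boxtimes i}/\Sigma_i)\boxtimes(F^{\cJ}_{(\bld{m_1},\bld{m_2})}(*)/G)$ with a single free $\cJ$-space modulo $\Sigma_i\times G$, together with the observation that $\Sigma_i\times G\to\Sigma_{ik_1+m_1}$ is injective when $k_1\geq 1$; all explicit manipulation of $\cJ$-morphisms is confined to the base case $U^{\cJ}$. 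If you want to keep your weaker hypothesis, you would instead need to establish an explicit coproduct decomposition of $X\boxtimes F^{\cJ}_{(\bld{e_1},\bld{e_2})}(K)$ and track the induced action of $\sigma\in\Sigma_{n_2}$ on the $X$-factor, at which point the $\Sigma$-freeness of $M'$ must enter. Either way, the deferred step is where the proof actually lives.
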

\begin{proof}
  Let us say that a $\cJ$-space $X$ is \emph{strongly free in the
    second variable} if for every subgroup $G \subseteq
  \Sigma_{m_1}\times \Sigma_{m_2}$ such that the composite $G \to
  \Sigma_{m_1}\times \Sigma_{m_2} \to \Sigma_{m_1}$ with the
  projection is injective, the group $\Sigma_{n_2}$ acts freely on
  \begin{equation}\label{eq:strongly-Sigma-cof} 
      (X \boxtimes (F^{\cJ}_{(\bld{m_1},\bld{m_2})}(*)/G))(\bld{n_1},\bld{n_2})\, .
  \end{equation}
  We will prove the lemma by showing the stronger statement that the
  underlying $\cJ$-space of $M$ is strongly free in the second
  variable.

  We first show that $U^{\cJ} = \cJ((\bld{0},\bld{0}),-)$ is strongly
  free in the second variable. Let $G \subseteq
  \Sigma_{m_1}\times\Sigma_{m_2}$ be a subgroup with $G \to
  \Sigma_{m_1}\times\Sigma_{m_2} \to \Sigma_{m_1}$ injective. If a morphism
  $(\alpha_1,\alpha_2,\rho)\colon
  (\bld{m_1},\bld{m_2})\to(\bld{n_1},\bld{n_2})$ represents an element
  \[[(\alpha_1,\alpha_2,\rho)] \in
  \cJ((\bld{m_1},\bld{m_2}),(\bld{n_1},\bld{n_2}))/G \iso (U^{\cJ} \boxtimes (F^{\cJ}_{(\bld{m_1},\bld{m_2})}(*)/G))(\bld{n_1},\bld{n_2}),\] then $\sigma
  [(\alpha_1,\alpha_2,\rho)] = [(\alpha_1,\alpha_2,\rho)]$ for a
  $\sigma \in \Sigma_{n_2}$ implies that there is a
  $(\gamma_1,\gamma_2)\in G$ with
  \[(\mathrm{id}_{\bld{n_1}},\sigma,\mathrm{id}_{\emptyset})(\alpha_1,\alpha_2,\rho)=(\alpha_1,\alpha_2,\rho)(\gamma_1,\gamma_2,\mathrm{id}_{\emptyset}).\]
  By definition of the composition in~$\cJ$ (see~\cite[Definition
  4.2]{Sagave-S_diagram}), this implies $\alpha_1 = \alpha_1
  \gamma_1$.  Since $\alpha_1$ is injective, $\gamma_1 =
  \mathrm{id}_{\bld{m_1}}$. Hence $\gamma_2 = \mathrm{id}_{\bld{m_2}}$
  because $G \to \Sigma_{m_1}$ is injective. So we have
  $(\mathrm{id}_{\bld{n_1}},\sigma,\mathrm{id}_{\emptyset})(\alpha_1,\alpha_2,\rho)=(\alpha_1,\alpha_2,\rho)$. This
  implies $\sigma(i) = i$ for $i\in \alpha_2(\bld{m_2})$. In the third
  variable, we have
  $\left(\sigma|_{\bld{n_2}\setminus\alpha_2}\right)\rho = \rho$ and
  hence $\sigma(i) = i$ for every $i\in \bld{n_2}\setminus
  \alpha_2$. Hence the $\Sigma_{n_2}$-action on $(U^{\cJ} \boxtimes
  (F^{\cJ}_{(\bld{m_1},\bld{m_2})}(*)/G))(\bld{n_1},\bld{n_2}) $ is
  free.

  Now we assume that $f \colon X \to Y$ is a generating cofibration
  for the positive $\cJ$-model structure on $\cS^{\cJ}$ and that the
  square
  \[\xymatrix@-1pc{
    \bC(X) \ar[r] \ar[d] & \bC(Y) \ar[d] \\
    A \ar[r] & B }\] is a pushout in~$\cC\cS^{\cJ}$. We want to show
  that the underlying $\cJ$-space of $B$ is strongly free in the
  second variable if that of $A$ is. For this we use~\cite[Proposition
  10.1]{Sagave-S_diagram}. It provides a filtration $A=F_0(B) \to
  F_1(B) \to \dots $ of $A \to B$ with $\colim_i F_i(B) = B$ such that
  there are pushout squares of
  $\cJ$-spaces \begin{equation}\label{eq:filtration-pushout}\xymatrix@-1pc{
      A\boxtimes (Q^i_{i-1}(f)/\Sigma_i) \ar[r] \ar[d] &
      A\boxtimes (Y^{\boxtimes i}/\Sigma_i) \ar[d]\\
      F_{i-1}(B) \ar[r]&F_{i}(B), }\end{equation} where $Q^i_{i-1}(f)
  \to Y^{\boxtimes i}$ is the iterated pushout product map associated
  with~$f$. Here we use that since we only consider the commutative
  case, the functors $U_i^{\bC}$ appearing in~\cite[Proposition
  10.1]{Sagave-S_diagram} are the forgetful functors (see
  \cite[Example 10.2]{Sagave-S_diagram}). Since the top horizontal map
  in~\eqref{eq:filtration-pushout} is a monomorphism of $\cJ$-spaces
  by~\cite[Proposition 7.1(vii)]{Sagave-S_diagram}, it is enough to
  show that $A\boxtimes (Y^{\boxtimes i}/\Sigma_i)$ is strongly free
  in the second variable. Using that $Y$ is of the form
  $F^{\cJ}_{(\bld{k_1},\bld{k_2})}(K)$ with $k_1 \geq 1$, it is
  therefore enough to show that $A\boxtimes
  ((F^{\cJ}_{(\bld{k_1},\bld{k_2})}(*))^{\boxtimes i}/\Sigma_i) $ is
  strongly free in the second variable if $k_1 \geq 1$. This follows
  from the hypothesis on $A$ because
  \[(F^{\cJ}_{(\bld{k_1},\bld{k_2})}(*)^{\boxtimes i}/\Sigma_i)
  \boxtimes (F^{\cJ}_{(\bld{m_1},\bld{m_2})}(*)/G) \iso
  F^{\cJ}_{(\bld{k_1},\bld{k_2})^{\concat i}\concat
    (\bld{m_1},\bld{m_2})}(*)/(\Sigma_i \times G)\] and $\Sigma_i
  \times G \to \Sigma_{ik_1+m_1}$ is injective if $k_1\geq 1$ and $G
  \to \Sigma_{m_1}$ is injective.

  For a general cofibrant commutative $\cJ$-space monoid $M$, we may
  without loss of generality assume that $M$ is a cell complex
  constructed from the generating cofibrations. This means that there
  is a $\lambda$-sequence $\{M_{\alpha} \colon \alpha < \lambda\}$ in
  $\cC\cS^{\cJ}$ for some ordinal $\lambda$ such that $M_0 = U^{\cJ}$
  and $M_{\alpha} \to M_{\alpha+1}$ is the cobase change of a
  generating cofibration in~$\cC\cS^{\cJ}$. In this situation, the
  above arguments imply that $M$ is strongly free in the second variable. 
\qed \end{proof}
The following consequence of Lemma~\ref{lem:cof-CSJ-SJ-good} can also
easily be verified directly.
\begin{corollary}\label{cor:cof-SJ-SJ-good}
  Positive cofibrant $\cJ$-spaces are $\Sigma$-free in the second variable.
\end{corollary}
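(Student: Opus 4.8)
The plan is to run a cell-induction on the class $\mathcal{C}$ of $\cJ$-spaces that are $\Sigma$-free in the second variable, exploiting that in the present (non-monoidal) situation one does not need the filtration of cell attachments from \cite[Proposition 10.1]{Sagave-S_diagram} that complicates the proof of Lemma~\ref{lem:cof-CSJ-SJ-good}. The argument splits into two independent tasks: first, that the source and target of each generating cofibration of the positive projective $\cJ$-model structure lie in $\mathcal{C}$; and second, that $\mathcal{C}$ is closed under the operations used to build positive cofibrant $\cJ$-spaces out of these cells, namely coproducts, cobase change along generating cofibrations, transfinite composition, and retracts.

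For the first task, recall that the generating cofibrations have the form $F^{\cJ}_{(\bld{d_1},\bld{d_2})}(\partial\Delta[n]) \to F^{\cJ}_{(\bld{d_1},\bld{d_2})}(\Delta[n])$ with $d_1\ge1$. Since $F^{\cJ}_{(\bld{d_1},\bld{d_2})}(K)(\bld{n_1},\bld{n_2}) = \cJ((\bld{d_1},\bld{d_2}),(\bld{n_1},\bld{n_2})) \times K$ and the $\Sigma_{n_2}$-action is carried entirely by the $\cJ$-morphism-set factor, it suffices to know that $\Sigma_{n_2}$ acts freely on $\cJ((\bld{d_1},\bld{d_2}),(\bld{n_1},\bld{n_2}))$ by post-composition. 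This is exactly the $G=\{\mathrm{id}\}$ special case of the computation in the first paragraph of the proof of Lemma~\ref{lem:cof-CSJ-SJ-good} (upon identifying $U^{\cJ}\boxtimes F^{\cJ}_{(\bld{d_1},\bld{d_2})}(*)$ with $F^{\cJ}_{(\bld{d_1},\bld{d_2})}(*)$), and it can also be checked directly in a couple of lines: post-composing a morphism $(\alpha_1,\alpha_2,\rho)$ with the automorphism $(\mathrm{id}_{\bld{n_1}},\sigma,\mathrm{id}_{\emptyset})$ produces $(\alpha_1,\sigma\alpha_2,\sigma\rho)$, so a fixed point forces $\sigma\alpha_2=\alpha_2$ and $\sigma\rho=\rho$; the first identity makes $\sigma$ fix $\alpha_2(\bld{d_2})$ pointwise and the second makes it fix the complement $\bld{n_2}\setminus\alpha_2$ pointwise, so $\sigma = \mathrm{id}_{\bld{n_2}}$.

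For the second task, I would use that colimits in $\cS^{\cJ}$ are formed objectwise in $\cJ$ and degreewise in $\Delta$, so everything reduces to elementary facts about $G$-sets for $G = \Sigma_{n_2}$: a sub-$G$-set of a free $G$-set is free, a coproduct of free $G$-sets is free, a pushout of free $G$-sets along a level-wise injective $G$-equivariant map is free (it is a coproduct of one summand with a $G$-invariant complement in the other), a filtered colimit of free $G$-sets along injective $G$-maps is free, and a retract of a free $G$-set is free. Since the generating cofibrations are level-wise monomorphisms by \cite[Proposition 7.1(vii)]{Sagave-S_diagram} and the initial $\cJ$-space $\emptyset$ is vacuously in $\mathcal{C}$, these closure properties show that any cell complex built from $\emptyset$ by transfinitely composing cobase changes of generating cofibrations lies in $\mathcal{C}$, and hence so does any retract of such a complex, i.e., any positive cofibrant $\cJ$-space. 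The only genuine mathematical input is the freeness of the $\Sigma_{n_2}$-action on the $\cJ$-morphism sets, which is already recorded in the excerpt; the remaining ``obstacle'' is merely the bookkeeping needed to confirm that each step of the cell construction preserves $\Sigma$-freeness in the second variable.
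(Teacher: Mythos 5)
Your argument is correct, but it is not the route the paper takes. The paper's proof is a two-line reduction: for a positive cofibrant $\cJ$-space $X$, the free commutative $\cJ$-space monoid $\bC(X)$ is cofibrant in $\cC\cS^{\cJ}$, hence $\Sigma$-free in the second variable by Lemma~\ref{lem:cof-CSJ-SJ-good}, and the canonical map $X\to\bC(X)$ then forces $X$ to be $\Sigma$-free as well, by the observation underlying Corollary~\ref{cor:Sigma-free-if-target-is} that freeness of a group action is inherited along any equivariant map into an object with free action. Your direct cell induction is exactly the alternative the paper alludes to when it remarks that the corollary ``can also easily be verified directly'': you check freeness of the $\Sigma_{n_2}$-action by postcomposition on the morphism sets $\cJ((\bld{d_1},\bld{d_2}),(\bld{n_1},\bld{n_2}))$ (the $G=\{\mathrm{id}\}$ case of the computation in the proof of Lemma~\ref{lem:cof-CSJ-SJ-good}), and then observe that the class of $\cJ$-spaces that are $\Sigma$-free in the second variable is closed under the levelwise-injective colimits and the retracts occurring in a cell complex. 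All the set-level closure properties you invoke are sound; in particular the pushout step works because the image of the levelwise monomorphism $F^{\cJ}_{(\bld{d_1},\bld{d_2})}(\partial\Delta[n])\to F^{\cJ}_{(\bld{d_1},\bld{d_2})}(\Delta[n])$ has a $\Sigma_{n_2}$-invariant complement in each simplicial degree. What your approach buys is independence from Lemma~\ref{lem:cof-CSJ-SJ-good}, whose proof requires the ``strongly free'' strengthening and the filtration of cell attachments in $\cC\cS^{\cJ}$ from the cited reference; what the paper's approach buys is brevity, since that lemma has already been established at this point and the corollary then needs no induction at all.
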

\begin{proof}
  If $X$ is a positive cofibrant $\cJ$-space, then $\bC(X)$ is a
  cofibrant commutative $\cJ$-space monoid. Since there is a canonical
  map of $\cJ$-spaces $X \to \bC(X)$, the result follows by
  Lemma~\ref{lem:cof-CSJ-SJ-good} and the proof of
  Corollary~\ref{cor:Sigma-free-if-target-is}.  \qed \end{proof}
Combining Proposition~\ref{prop:SJ-good-equivalence},
Corollary~\ref{cor:Sigma-free-if-target-is} and
Lemma~\ref{lem:cof-CSJ-SJ-good} provides the following result.
\begin{corollary}\label{cor:bSJ-preserves-we}
  If $M$ is cofibrant in $\cC\cS^{\cJ}$ and $X \to Y \to M$ is a
  sequence of maps of $\cJ$-spaces with $X \to Y$ a $\cJ$-equivalence,
  then $\bS^{\cJ}[X]\to\bS^{\cJ}[Y]$ is a stable equivalence.\qed
\end{corollary}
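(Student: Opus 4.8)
The plan is to assemble the statement directly from the three results established immediately before it; no new argument is required, so the proof is a matter of feeding the hypotheses into the correct lemmas in the right order. First I would invoke Lemma~\ref{lem:cof-CSJ-SJ-good}, which tells us that the cofibrant commutative $\cJ$-space monoid $M$ is $\Sigma$-free in the second variable. Since we are given a sequence of $\cJ$-space maps $X \to Y \to M$, composition produces maps $X \to M$ and $Y \to M$ whose common target $M$ is $\Sigma$-free in the second variable. Applying Corollary~\ref{cor:Sigma-free-if-target-is} to each of these maps then shows that both $X$ and $Y$ are $\bS^{\cJ}$-good in the sense of Definition~\ref{def:SJ-good}.

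Next, since $X \to Y$ is by hypothesis a $\cJ$-equivalence and we have just seen that $X$ and $Y$ are both $\bS^{\cJ}$-good, Proposition~\ref{prop:SJ-good-equivalence}---which records that $\bS^{\cJ}$ carries $\cJ$-equivalences between $\bS^{\cJ}$-good $\cJ$-spaces to stable equivalences---immediately yields that $\bS^{\cJ}[X] \to \bS^{\cJ}[Y]$ is a stable equivalence. This is exactly the assertion of the corollary, so the argument is complete.

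There is essentially no obstacle here: the genuine work was already isolated in Lemma~\ref{lem:cof-CSJ-SJ-good} (the $\Sigma$-freeness of cofibrant commutative $\cJ$-space monoids, proved via the filtration of cell attachments from \cite[Proposition~10.1]{Sagave-S_diagram}) and in the elementary level-wise computation behind $\bS^{\cJ}$-goodness. The only point that needs care is to notice why one cannot simply appeal to $\bS^{\cJ}$ being left Quillen: the $\cJ$-spaces $X$ and $Y$ arising in our applications---such as replete bar constructions $B^{\rep}(M)$---are typically not cofibrant, and the terminal $\cJ$-space is a genuine counterexample to bare homotopy invariance of $\bS^{\cJ}$. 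Routing the argument through the $\Sigma$-freeness of the \emph{target} $M$, via Corollary~\ref{cor:Sigma-free-if-target-is}, is precisely what repairs this, and checking that the hypothesis ``$X \to Y \to M$ with $M$ cofibrant'' supplies exactly the input required by that corollary is the entire substance of the proof.
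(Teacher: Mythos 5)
Your argument is exactly the paper's: the corollary is stated as the combination of Lemma~\ref{lem:cof-CSJ-SJ-good}, Corollary~\ref{cor:Sigma-free-if-target-is}, and Proposition~\ref{prop:SJ-good-equivalence}, applied in precisely the order you give. The proposal is correct and complete.
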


\begin{remark}
  When working with the topological version of $\cJ$-spaces, the
  $\Sigma$-freeness condition should be replaced by a suitable
  equivariant cofibrancy condition. The analogue of
  Lemma~\ref{lem:cof-CSJ-SJ-good} then continues to hold, but we do not
  have a direct topological analogue of
  Corollary~\ref{cor:Sigma-free-if-target-is}.
\end{remark}
\end{appendix}

%\bibliography{logthh}
% \bib, bibdiv, biblist are defined by the amsrefs package.

\begin{bibdiv}
\begin{biblist}

\bib{Ausoni-R_Kku}{article}{
      author={Ausoni, Christian},
      author={Rognes, John},
       title={Algebraic {$K$}-theory of topological {$K$}-theory},
        date={2002},
        ISSN={0001-5962},
     journal={Acta Math.},
      volume={188},
      number={1},
       pages={1\ndash 39},
         url={http://dx.doi.org/10.1007/BF02392794},
      review={\MR{1947457 (2004f:19007)}},
}

\bib{Angeltveit-R_Hopf-algebra}{article}{
      author={Angeltveit, Vigleik},
      author={Rognes, John},
       title={Hopf algebra structure on topological {H}ochschild homology},
        date={2005},
        ISSN={1472-2747},
     journal={Algebr. Geom. Topol.},
      volume={5},
       pages={1223\ndash 1290 (electronic)},
         url={http://dx.doi.org/10.2140/agt.2005.5.1223},
      review={\MR{2171809 (2007b:55007)}},
}

\bib{Ausoni-R_K-Morava}{article}{
      author={Ausoni, Christian},
      author={Rognes, John},
       title={Algebraic {$K$}-theory of the first {M}orava {$K$}-theory},
        date={2012},
        ISSN={1435-9855},
     journal={J. Eur. Math. Soc. (JEMS)},
      volume={14},
      number={4},
       pages={1041\ndash 1079},
         url={http://dx.doi.org/10.4171/JEMS/326},
      review={\MR{2928844}},
}

\bib{Ausoni_THH-ku}{article}{
      author={Ausoni, Christian},
       title={Topological {H}ochschild homology of connective complex
  {$K$}-theory},
        date={2005},
        ISSN={0002-9327},
     journal={Amer. J. Math.},
      volume={127},
      number={6},
       pages={1261\ndash 1313},
  url={http://muse.jhu.edu/journals/american_journal_of_mathematics/v127/127.6ausoni.pdf},
      review={\MR{2183525 (2006k:55016)}},
}

\bib{Bousfield-F_Gamma-bisimplicial}{incollection}{
      author={Bousfield, A.~K.},
      author={Friedlander, E.~M.},
       title={Homotopy theory of {$\Gamma $}-spaces, spectra, and bisimplicial
  sets},
        date={1978},
   booktitle={Geometric applications of homotopy theory ({P}roc. {C}onf.,
  {E}vanston, {I}ll., 1977), {II}},
      series={Lecture Notes in Math.},
      volume={658},
   publisher={Springer},
     address={Berlin},
       pages={80\ndash 130},
      review={\MR{MR513569 (80e:55021)}},
}

\bib{Barwick-L_regularity}{misc}{
      author={Barwick, Clark},
      author={Lawson, Tyler},
       title={Regularity of structured ring spectra and localization in
  {$K$}-theory},
        date={2014},
        note={\arxivlink{1402.6038}},
}

\bib{Blumberg-M_loc-sequenceKtheory}{article}{
      author={Blumberg, Andrew~J.},
      author={Mandell, Michael~A.},
       title={The localization sequence for the algebraic {$K$}-theory of
  topological {$K$}-theory},
        date={2008},
        ISSN={0001-5962},
     journal={Acta Math.},
      volume={200},
      number={2},
       pages={155\ndash 179},
         url={http://dx.doi.org/10.1007/s11511-008-0025-4},
      review={\MR{2413133 (2009f:19003)}},
}

\bib{Blumberg-M_loc-sequenceTHH}{misc}{
      author={Blumberg, Andrew~J.},
      author={Mandell, Michael~A.},
       title={{Localization for THH(ku) and the topological Hochschild and
  cyclic homology of Waldhausen categories}},
        date={2011},
        note={\arxivlink{1111.4003}},
}

\bib{Boekstedt-M_TC-Z}{article}{
      author={B{\"o}kstedt, M.},
      author={Madsen, I.},
       title={Topological cyclic homology of the integers},
        date={1994},
        ISSN={0303-1179},
     journal={Ast\'erisque},
      number={226},
       pages={7\ndash 8, 57\ndash 143},
        note={$K$-theory (Strasbourg, 1992)},
      review={\MR{1317117 (96d:19004)}},
}

\bib{Boekstedt-M_unramified}{incollection}{
      author={B{\"o}kstedt, M.},
      author={Madsen, I.},
       title={Algebraic {$K$}-theory of local number fields: the unramified
  case},
        date={1995},
   booktitle={Prospects in topology ({P}rinceton, {NJ}, 1994)},
      series={Ann. of Math. Stud.},
      volume={138},
   publisher={Princeton Univ. Press, Princeton, NJ},
       pages={28\ndash 57},
      review={\MR{1368652 (97e:19004)}},
}

\bib{Dundas_relative-K-TC}{article}{
      author={Dundas, Bj{\o}rn~Ian},
       title={Relative {$K$}-theory and topological cyclic homology},
        date={1997},
        ISSN={0001-5962},
     journal={Acta Math.},
      volume={179},
      number={2},
       pages={223\ndash 242},
         url={http://dx.doi.org/10.1007/BF02392744},
      review={\MR{1607556 (99e:19007)}},
}

\bib{EKMM}{book}{
      author={Elmendorf, A.~D.},
      author={Kriz, I.},
      author={Mandell, M.~A.},
      author={May, J.~P.},
       title={Rings, modules, and algebras in stable homotopy theory},
      series={Mathematical Surveys and Monographs},
   publisher={American Mathematical Society},
     address={Providence, RI},
        date={1997},
      volume={47},
        ISBN={0-8218-0638-6},
        note={With an appendix by M. Cole},
      review={\MR{MR1417719 (97h:55006)}},
}

\bib{Friedlander-M_filtrations}{article}{
      author={Friedlander, Eric~M.},
      author={Mazur, Barry},
       title={Filtrations on the homology of algebraic varieties},
        date={1994},
        ISSN={0065-9266},
     journal={Mem. Amer. Math. Soc.},
      volume={110},
      number={529},
       pages={x+110},
        note={With an appendix by Daniel Quillen},
      review={\MR{1211371 (95a:14023)}},
}

\bib{Grayson_higher}{incollection}{
      author={Grayson, Daniel},
       title={Higher algebraic {$K$}-theory. {II} (after {D}aniel {Q}uillen)},
        date={1976},
   booktitle={Algebraic {$K$}-theory ({P}roc. {C}onf., {N}orthwestern {U}niv.,
  {E}vanston, {I}ll., 1976)},
   publisher={Springer},
     address={Berlin},
       pages={217\ndash 240. Lecture Notes in Math., Vol. 551},
      review={\MR{MR0574096 (58 \#28137)}},
}

\bib{Hesselholt_p-typical}{article}{
      author={Hesselholt, Lars},
       title={On the {$p$}-typical curves in {Q}uillen's {$K$}-theory},
        date={1996},
        ISSN={0001-5962},
     journal={Acta Math.},
      volume={177},
      number={1},
       pages={1\ndash 53},
         url={http://dx.doi.org/10.1007/BF02392597},
      review={\MR{1417085 (98a:19001)}},
}

\bib{Hirschhorn_model}{book}{
      author={Hirschhorn, Philip~S.},
       title={Model categories and their localizations},
      series={Mathematical Surveys and Monographs},
   publisher={American Mathematical Society},
     address={Providence, RI},
        date={2003},
      volume={99},
        ISBN={0-8218-3279-4},
      review={\MR{MR1944041 (2003j:18018)}},
}

\bib{Hesselholt-M_local_fields}{article}{
      author={Hesselholt, Lars},
      author={Madsen, Ib},
       title={On the {$K$}-theory of local fields},
        date={2003},
        ISSN={0003-486X},
     journal={Ann. of Math. (2)},
      volume={158},
      number={1},
       pages={1\ndash 113},
         url={http://dx.doi.org/10.4007/annals.2003.158.1},
      review={\MR{1998478 (2004k:19003)}},
}

\bib{Hesselholt-M_K-theoy-finite-alg-Witt}{article}{
      author={Hesselholt, Lars},
      author={Madsen, Ib},
       title={On the {$K$}-theory of finite algebras over {W}itt vectors of
  perfect fields},
        date={1997},
        ISSN={0040-9383},
     journal={Topology},
      volume={36},
      number={1},
       pages={29\ndash 101},
         url={http://dx.doi.org/10.1016/0040-9383(96)00003-1},
      review={\MR{1410465 (97i:19002)}},
}

\bib{HSS}{article}{
      author={Hovey, Mark},
      author={Shipley, Brooke},
      author={Smith, Jeff},
       title={Symmetric spectra},
        date={2000},
        ISSN={0894-0347},
     journal={J. Amer. Math. Soc.},
      volume={13},
      number={1},
       pages={149\ndash 208},
      review={\MR{MR1695653 (2000h:55016)}},
}

\bib{Kato_logarithmic-structures}{incollection}{
      author={Kato, Kazuya},
       title={Logarithmic structures of {F}ontaine-{I}llusie},
        date={1989},
   booktitle={Algebraic analysis, geometry, and number theory ({B}altimore,
  {MD}, 1988)},
   publisher={Johns Hopkins Univ. Press},
     address={Baltimore, MD},
       pages={191\ndash 224},
      review={\MR{1463703 (99b:14020)}},
}

\bib{Loday_cyclic-homology}{book}{
      author={Loday, Jean-Louis},
       title={Cyclic homology},
     edition={Second edition},
      series={Grundlehren der Mathematischen Wissenschaften [Fundamental
  Principles of Mathematical Sciences]},
   publisher={Springer-Verlag},
     address={Berlin},
        date={1998},
      volume={301},
        ISBN={3-540-63074-0},
        note={Appendix E by Mar{\'{\i}}a O. Ronco, Chapter 13 by the author in
  collaboration with Teimuraz Pirashvili},
      review={\MR{1600246 (98h:16014)}},
}

\bib{McCarthy_Relative-K-TC}{article}{
      author={McCarthy, Randy},
       title={Relative algebraic {$K$}-theory and topological cyclic homology},
        date={1997},
        ISSN={0001-5962},
     journal={Acta Math.},
      volume={179},
      number={2},
       pages={197\ndash 222},
         url={http://dx.doi.org/10.1007/BF02392743},
      review={\MR{1607555 (99e:19006)}},
}

\bib{MMSS}{article}{
      author={Mandell, M.~A.},
      author={May, J.~P.},
      author={Schwede, S.},
      author={Shipley, B.},
       title={Model categories of diagram spectra},
        date={2001},
        ISSN={0024-6115},
     journal={Proc. London Math. Soc. (3)},
      volume={82},
      number={2},
       pages={441\ndash 512},
         url={http://dx.doi.org/10.1112/S0024611501012692},
      review={\MR{1806878 (2001k:55025)}},
}

\bib{Quillen_finite-fields}{article}{
      author={Quillen, Daniel},
       title={On the cohomology and {$K$}-theory of the general linear groups
  over a finite field},
        date={1972},
        ISSN={0003-486X},
     journal={Ann. of Math. (2)},
      volume={96},
       pages={552\ndash 586},
      review={\MR{0315016 (47 \#3565)}},
}

\bib{Quillen_higher}{incollection}{
      author={Quillen, Daniel},
       title={Higher algebraic {$K$}-theory. {I}},
        date={1973},
   booktitle={Algebraic {$K$}-theory, {I}: {H}igher {$K$}-theories ({P}roc.
  {C}onf., {B}attelle {M}emorial {I}nst., {S}eattle, {W}ash., 1972)},
   publisher={Springer},
     address={Berlin},
       pages={85\ndash 147. Lecture Notes in Math., Vol. 341},
      review={\MR{0338129 (49 \#2895)}},
}

\bib{Rognes_TLS}{incollection}{
      author={Rognes, John},
       title={Topological logarithmic structures},
        date={2009},
   booktitle={{New Topological Contexts for Galois Theory and Algebraic
  Geometry ({BIRS} 2008)}},
      series={Geom. Topol. Monogr.},
      volume={16},
   publisher={Geom. Topol. Publ., Coventry},
       pages={401\ndash 544},
}

\bib{RSS_LogTHH-II}{misc}{
      author={Rognes, John},
      author={Sagave, Steffen},
      author={Schlichtkrull, Christian},
       title={Logarithmic topological {H}ochschild homology of topological
  {$K$}-theory spectra},
        date={2014},
        note={\arxivlink{1410.2170}},
}

\bib{Sagave_spectra-of-units}{misc}{
      author={Sagave, Steffen},
       title={Spectra of units for periodic ring spectra and group completion
  of graded {$E_{\infty}$}~spaces},
        date={2013},
        note={\arxivlink{1111.6731}},
}

\bib{Sagave_log-on-k-theory}{article}{
      author={Sagave, Steffen},
       title={Logarithmic structures on topological {$K$}-theory spectra},
        date={2014},
     journal={Geom. Topol.},
      volume={18},
       pages={447\ndash 490 (electronic)},
}

\bib{Schlichtkrull_Thom-symmetric}{article}{
      author={Schlichtkrull, Christian},
       title={Thom spectra that are symmetric spectra},
        date={2009},
     journal={Doc. Math.},
      volume={14},
       pages={699\ndash 748},
}

\bib{Schwede_SymSp}{misc}{
      author={Schwede, Stefan},
       title={Symmetric spectra},
        date={2012},
        note={Book project, available at
  \url{http://www.math.uni-bonn.de/people/schwede/}},
}

\bib{Shipley_convenient}{incollection}{
      author={Shipley, Brooke},
       title={A convenient model category for commutative ring spectra},
        date={2004},
   booktitle={Homotopy theory: relations with algebraic geometry, group
  cohomology, and algebraic {$K$}-theory},
      series={Contemp. Math.},
      volume={346},
   publisher={Amer. Math. Soc.},
     address={Providence, RI},
       pages={473\ndash 483},
         url={http://dx.doi.org/10.1090/conm/346/06300},
      review={\MR{2066511 (2005d:55014)}},
}

\bib{Sagave-S_diagram}{article}{
      author={Sagave, Steffen},
      author={Schlichtkrull, Christian},
       title={Diagram spaces and symmetric spectra},
        date={2012},
        ISSN={0001-8708},
     journal={Adv. Math.},
      volume={231},
      number={3-4},
       pages={2116\ndash 2193},
         url={http://dx.doi.org/10.1016/j.aim.2012.07.013},
      review={\MR{2964635}},
}

\bib{Sagave-S_group-compl}{article}{
      author={Sagave, Steffen},
      author={Schlichtkrull, Christian},
       title={Group completion and units in {$\mathcal I$}-spaces},
        date={2013},
        ISSN={1472-2747},
     journal={Algebr. Geom. Topol.},
      volume={13},
      number={2},
       pages={625\ndash 686},
         url={http://dx.doi.org/10.2140/agt.2013.13.625},
      review={\MR{3044590}},
}

\end{biblist}
\end{bibdiv}

\end{document}